\theoremstyle{plain}{
  \newtheorem{theorem}{Theorem}[section]
  \newtheorem{definition}[theorem]{Definition}
  \newtheorem{corollary}[theorem]{Corollary}
  \newtheorem{lemma}[theorem]{Lemma}
  \newtheorem{proposition}[theorem]{Proposition}
  \newtheorem{claim}[theorem]{Claim}
  \newtheorem{notation}[theorem]{Notation}}
\theoremstyle{definition}{
  \newtheorem{remark}[theorem]{Remark}
  \newtheorem{example}[theorem]{Example}
  \newtheorem{note}[theorem]{Note}}
\newcommand{\id}{\mathrm{id}}
\newcommand{\Spec}{\mathrm{Spec}}
\newcommand{\BGL}{\mathrm{BGL}}
\newcommand{\Hom}{\operatorname{Hom}}
\newcommand{\Ker}{\operatorname{Ker}}
\newcommand{\Id}{\mathrm{Id}}
\newcommand{\Nis}{\mathrm{Nis}}
\newcommand{\fib}{\mathrm{fib}}
\newcommand{\cofrepl}{\mathrm{cof}}
\newcommand{\op}{\mathrm{op}}
\newcommand{\cm}{\mathrm{cm}}
\newcommand{\Fr}{\mathrm{Fr}}
\newcommand{\Tr}{\mathrm{Tr}}
\newcommand{\Vect}{\mathrm{Vect}}
\newcommand{\tlder}{\mathcal{L}}
\newcommand{\trder}{\mathcal{R}}
\newcommand{\Sing}{\operatorname{Sing}}
\newcommand{\Ho}{\operatorname{Ho}}
\newcommand{\Func}{\operatorname{Fun}}
\newcommand{\Th}{\mathrm{Th}}
\newcommand{\MGL}{\mathbf{MGL}}
\newcommand{\SmOp}{\mathcal Sm\mathcal Op}
\newcommand{\Sm}{\mathcal Sm}
\newcommand{\Sp}{\mathrm{Sp}}
\newcommand{\Aff}{\mathbf{A}}
\newcommand{\Pro}{\mathbf{P}}
\newcommand{\Gr}{\mathrm{Gr}}
\newcommand{\K}{\mathcal K}
\newcommand{\ZZ}{\mathbb{Z}}
\newcommand{\one}{\mathbb{I}}
\newcommand{\real}{\mathbf{R}}
\newcommand{\M}{\mathbf{M}}
\newcommand{\MS}{\mathbf{MS}}
\newcommand{\MSS}{\mathbf{MSS}}
\newcommand{\sSet}{\mathbf{sSet}}
\newcommand{\Top}{\mathbf{Top}}
\newcommand{\an}{\mathrm{an}}
\newcommand{\SH}{\mathrm{SH}}
\newcommand{\iso}{\cong}
\newcommand{\union}{\cup}
\newcommand{\CC}{\mathbb{C}}
\newcommand{\colim}{\operatornamewithlimits{colim}}
\newcommand{\intHom}{\underline{\Hom}}
\newcommand{\Cyl}{\mathrm{Cyl}}
\newcommand{\defeq}{\colon \!\! =}
\newcommand{\push}{\Box}
\newcommand{\cof}{\mathrm{cof}}
\newcommand{\xra}{\xrightarrow }
\newcommand{\ra}{\rightarrow }
\newcommand{\hra}{\hookrightarrow }
\newcommand{\xla}{\xleftarrow }
\begin{document}

\title{On Voevodsky's algebraic {$K$}-theory spectrum $\mathrm{BGL}$}

\author{I.~Panin\footnote{Steklov Institute of Mathematics at St.~Petersburg, Russia}
\footnote{Universit\"at Bielefeld, SFB 701, Bielefeld, Germany}
\and K.~Pimenov\footnotemark[1]
\and O.~R{\"o}ndigs\footnote{Institut f\"ur Mathematik, Universit\"at Osnabr\"uck, Osnabr\"uck, Germany}}

\date{October 27, 2008}

\maketitle

\begin{abstract}
Under a certain normalization assumption we prove that the
$\Pro^1$-spectrum $\mathrm{BGL}$
of Voevodsky which represents algebraic $K$-theory is unique over
$\Spec(\mathbb{Z})$.
Following an idea of Voevodsky, we equip the $\Pro^1$-spectrum
$\mathrm{BGL}$
with the structure of a commutative
$\Pro^1$-ring spectrum in the motivic
stable homotopy category. Furthermore, we prove that
under a certain normalization assumption
this ring structure is unique over
$\Spec(\mathbb{Z})$.
For an arbitrary Noetherian scheme $S$ of finite Krull
dimension we pull this structure back
to obtain a distinguished monoidal structure on
$\mathrm{BGL}$. This monoidal structure is relevant for
our proof of the motivic Conner-Floyd theorem~\cite{PPR:conner-floyd}.
It has also been used to obtain a motivic version of Snaith's theorem
\cite{Gepner-Snaith}.
\end{abstract}

\section{Acknowledgements}

The authors thank
the SFB-701 at the Universit\"at Bielefeld, the
RTN-Network HPRN-CT-2002-00287, the
RFFI-grant 03-01-00633a,
INTAS-05-1000008-8118 and the Fields Institute for
Research in Mathematical Sciences for their support.

\section{Preliminaries}
\label{WeWant}

This paper is concerned with results in motivic homotopy theory,
which was put on firm foundations by Morel and Voevodsky
in \cite{MV} and \cite{V1}.
Due to technical reasons explained below, the setup in \cite{MV},
as well as other model categories used in motivic homotopy theory,
are inconvenient for our purposes, so we decided to
pursue a slightly different approach.
We refer to the Appendix~\ref{sec:motiv-homot-theory}
for the basic terminology, notation, constructions, definitions,
and results concerning motivic homotopy theory.
For a Noetherian scheme $S$
of finite Krull dimension we write
$\M(S)$,
$\M_\bullet(S)$,
$\mathrm{H}_\bullet (S)$
and
$\SH(S)$
for
the category of motivic spaces,
the category of pointed motivic spaces,
the pointed motivic homotopy category
and
the stable motivic homotopy category over $S$.
These categories are equipped with symmetric monoidal structures.
In particular, a symmetric monoidal structure
$(\wedge,\one)$
is constructed on the motivic stable homotopy category
and its basic properties are proved.
This structure is used extensively over the present text.

Let $S$ be a regular scheme, and let
$K^0(S)$ denote the Grothendieck group of vector bundles over $S$.
Morel and Voevodsky proved in
\cite[Thm.~4.3.13]{MV}
that the Thomason-Trobaugh $K$-theory
\cite{TT}
is represented
in the pointed motivic homotopy category
$\mathrm{H}_\bullet (S)$
by the space
$\mathbb{Z} \times \mathrm{Gr}$ pointed by
$(0,x_0)$.
Here
$\mathrm{Gr}$
is the union of the finite Grassmann varieties
$\bigcup_{n=0}^{\infty} \mathrm{Gr}(n,2n)$, considered as motivic spaces.
There is a unique element
$\xi_{\infty} \in K^0(\mathbb{Z} \times \mathrm{Gr})$
which corresponds to the identity morphism
$\mathrm{id}\colon \mathbb{Z} \times \mathrm{Gr} \to \mathbb{Z} \times \mathrm{Gr}$.
It follows that there exists a unique morphism
$$
\mu_{\otimes}\colon (\mathbb{Z} \times \mathrm{Gr}) \wedge (\mathbb{Z} \times
 \mathrm{Gr}) \to \mathbb{Z} \times \mathrm{Gr}
$$
in
$\mathrm{H}_\bullet (S)$
such that the composition
$(\mathbb{Z} \times \mathrm{Gr}) \times (\mathbb{Z} \times \mathrm{Gr})
\to
(\mathbb{Z} \times \mathrm{Gr}) \wedge (\mathbb{Z} \times \mathrm{Gr})
\xra{\mu_{\otimes}}
\mathbb{Z} \times \mathrm{Gr}$
represents the element
$\xi_{\infty} \otimes \xi_{\infty}$
in
$K^0\bigl((\mathbb{Z} \times \mathrm{Gr}) \times (\mathbb{Z} \times \mathrm{Gr})\bigr)$
(see Lemma~\ref{KofXsmashY}).
Let
$e_{\otimes}\colon S^0 \to \mathbb{Z} \times \mathrm{Gr}$
be the map which corresponds to the point
$(1,x_0)\in \mathbb{Z}\times \mathrm{Gr}$.
The triple
\begin{equation}
(\mathbb{Z} \times \mathrm{Gr},\mu_{\otimes}, e_{\otimes})
\end{equation}
is a commutative monoid in
$\mathrm{H}_\bullet (S)$.

Using this fact, Voevodsky constructed in
\cite{V1}
a $\Pro^1$-spectrum
$$\mathrm{BGL}= (\K_0,\K_1,\K_2, \dots)$$
with structure maps $e_i\colon \K_i\wedge \Pro^1 \to \K_{i+1}$
such that
\begin{itemize}
  \item[(i)] there is a motivic weak
    equivalence $w\colon \mathbb{Z} \times \mathrm{Gr}\to \K_0$,
    and for all $i$ one has
    $\K_i=\K_0$ and $e_i=e_0$,
  \item[(ii)] the morphism
    \begin{equation*} \xymatrix{
      \!\ZZ\times \mathrm{Gr} \times \Pro^1\! \ar[r]^-{\mathrm{can}} &
      \!(\ZZ\times \mathrm{Gr})\wedge \Pro^1\! \ar[r]^-{w\wedge \Pro^1} &
      \K_i \wedge \Pro^1\! \ar[r]^{e_i} &\K_{i+1}\! \ar[r]^-{w^{-1}}&
      \ZZ\times \mathrm{Gr}}
    \end{equation*}
    in $\mathrm{H}_\bullet (S)$
    represents the element
    $\xi_{\infty} \otimes ([\mathcal{O}(-1)]-[\mathcal{O}]) \in
    K^0(\ZZ \times \mathrm{Gr} \times \Pro^1)$,
  \item[(iii)] and the adjoint
    $\K_i \to \Omega_{\Pro^1}(\K_{i+1})$
    of $e_i$ is a motivic weak equivalence.
\end{itemize}

With this spectrum in hand given a smooth $X$ over $S$ we may identify
$K^0(X)$ with $\mathrm{BGL}^{2i,i}(X)$ as follows
\begin{equation}
K^0(X)=\Hom_{\mathrm{H}_\bullet(S)}(X_+,\mathbb{Z} \times \mathrm{Gr})=
\Hom_{\mathrm{H}_\bullet(S)}(X_+,\K_i)=\mathrm{BGL}^{2i,i}(X)
\end{equation}

Our first aim is to recall Voevodsky's
construction to show that this spectrum
is essentially unique. This has also been obtained
in \cite{Riou}. {\em Our second and more important aim is
to give a commutative monoidal structure to the\/} $\Pro^1${\it-spectrum\/}
$\mathrm{BGL}$ {\em which respects the naive multiplicative
structure on the functor\/} $X \mapsto \mathrm{BGL}^{2\ast,\ast}(X)$.
To be more precise, we construct a product
\begin{equation}
\mu_{\mathrm{BGL}}\colon \mathrm{BGL} \wedge \mathrm{BGL} \to \mathrm{BGL}
\end{equation}
in the stable motivic homotopy category
$\SH(S)$ such that for any $X\in \Sm/S$ the diagram
\begin{equation*} \xymatrix@C=3em{
    K^0(X) \times K^0(X)  \ar[r]^-{\otimes} \ar[d]_-\iso&K^0(X) \ar[d]^-\iso \\
    \mathrm{BGL}^{2i,i}(X)\times\mathrm{BGL}^{2j,j}(X)\ar[r]^-{\mu_{\mathrm{BGL}}}
   & \mathrm{BGL}^{2(i+j),i+j}(X)}
\end{equation*}
commutes.
We show in Theorem~\ref{MonoidalStrOnBGL}  that
{\it there is a unique product\/}
\[\mu_{\mathrm{BGL}}\in \Hom_{\SH(\mathbb{Z})}
   (\mathrm{BGL} \wedge \mathrm{BGL}, \mathrm{BGL})\]
satisfying this property.
This induces a product
$\mu_{\mathrm{BGL}}\in \Hom_{\SH(S)}(\mathrm{BGL} \wedge \mathrm{BGL}, \mathrm{BGL})$
for an arbitrary regular scheme $S$
by pull-back along the structural morphism
$S \to \Spec (\mathbb{Z})$.
As well, we show that the product is associative, commutative and unital.
The resulting multiplicative structure on
the bigraded theory $\mathrm{BGL}^{\ast,\ast}$
coincides with the Waldhausen multiplicative structure
on the Thomason-Trobaugh $K$-theory.

\subsection{Recollections on motivic homotopy theory}
\label{Recallection}

The basic definitions, constructions and model structures used in the text
are given in Appendix~\ref{sec:motiv-homot-theory}. The
word ``model structure'' is used in its modern sense and thus
refers to a ``closed model structure'' as originally defined by
Quillen.
Let $S$ be a Noetherian finite-dimensional scheme.
A {\em motivic space over\/} $S$ is a simplicial presheaf on
the site $\Sm/S$ of smooth quasi-projective $S$-schemes.
A {\em pointed motivic space over\/} $S$ is a pointed simplicial presheaf on
the site $\Sm/S$.
We write
$\M_\bullet(S)$
for the category of pointed motivic spaces over $S$.
A {\it closed motivic model structure} $\M_\bullet^{\cm}(S)$
is constructed in~\ref{thm:closed-motivic-model}.
The adjective ``closed'' refers to the fact
that closed embeddings in $\Sm/S$ are forced to become cofibrations.
The resulting homotopy category
$\mathrm{H}^{\cm}_\bullet (S)$
obtained in Theorem~\ref{thm:closed-motivic-model}
is called {\it the motivic homotopy category} of $S$.
By Theorem~\ref{thm:comparison-mv} it
is equivalent to
the Morel-Voevodsky $\Aff^1$-homotopy category \cite{MV}, and
we may drop the superscript in $\mathrm{H}^\cm_\bullet(S)$ for convenience.
The closed motivic model structure
has the properties that
\begin{enumerate}
  \item\label{item:5}
    for any closed $S$-point $x_0\colon S \hra X$ in a smooth
    $S$-scheme, the pointed motivic space $(X,x_0)$ is cofibrant
    in $\M_\bullet^\cm(S)$  (Lemma~\ref{lem:closed-emb-cof}),
  \item\label{item:6} a morphism $f\colon S\to S^\prime$ of base schemes
    induces a left
    Quillen functor $f^\ast\colon \M_\bullet^\cm(S^\prime)\to \M_\bullet^\cm(S)$
    (Theorem~\ref{thm:closed-motivic-model}), and
  \item\label{item:7} taking complex points is a left Quillen functor
    $\real_\CC\colon \M_\bullet^\cm(\CC)\to \Top_\bullet$
    (Theorem \ref{thm:realization}).
\end{enumerate}
Conditions~\ref{item:6} and~\ref{item:7} do not hold for the
Morel-Voevodsky model structure, condition~\ref{item:5} fails for
the so-called projective model structure \cite[Thm.~2.12]{DRO:motivic}.
For a morphism
$f\colon A \to B$
of pointed motivic spaces
we will write
$[f]$
for the class of $f$ in
$\mathrm{H}_\bullet (S)$.

We will consider
$\Pro^1$
as a pointed motivic space over $S$ pointed by
$\infty \colon S\hra  \Pro^1$.
A $\Pro^1$-{\em spectrum\/} $E$ over $S$ consists
of a sequence $E_0,E_1,\ldots$ of pointed motivic spaces over $S$,
together with structure maps $\sigma_n\colon E_n\wedge \Pro^1 \to E_{n+1}$.
A map of $\Pro^1$-spectra is a sequence of maps of pointed motivic
spaces which is compatible with the structure maps.
Let $\SH(S)$
denote the homotopy category of
$\Pro^1$-spectra, as described in Section~\ref{sec:spectra}.
By Theorem~\ref{thm:stable-model} it is canonically equivalent
to the motivic stable homotopy category
constructed in \cite{V1} and \cite{J}.
As we will see below there exists an essentially unique
$\Pro^1$-spectrum $\mathrm{BGL}$ over
$S = \Spec (\mathbb{Z})$
satisfying properties $(i)$ and $(ii)$ from Section~\ref{WeWant}.
In the following, we will construct
$\mathrm{BGL}$
in a slightly different way than Voevodsky did originally in \cite{V1}.
In order to achieve this, we begin with a description of the known
monoidal structure on the Thomason-Trobaugh K-theory \cite{TT}.

\subsection{A construction of $\mathrm{BGL}$}
\label{ConstructionOfBGL}

Let $S$ be a regular scheme. For every
$S$-scheme $X$ consider the category
$\mathrm{Big}(X)$ of big vector bundles over $X$
(see for instance \cite{FS}
for the definition and basic properties).
The assignments
$X \mapsto \mathrm{Big}(X)$
and
$(f\colon Y \to X) \mapsto f^\ast\colon \mathrm{Big}(X) \to \mathrm{Big}(Y)$
form a functor from schemes to the category of small
categories. The reason is that there is an equality
$(f \circ g)^\ast = g^\ast \circ f^\ast$,
not just a unique natural isomorphism.
In what follows we will always consider
the Waldhausen $K$-theory space of
$X$ as the space obtained by
applying Waldhausen's $\mathcal{S}_\bullet$-construction \cite{W}
applied to the category
$\mathrm{Big}(X)$
rather than to the category
$\Vect(X)$ of usual vector bundles on $X$.
This has the advantage that
the assignment taking an $S$-scheme $X$ to
the Waldhausen $K$-theory space of
$X$
becomes a functor on the category of
$S$-schemes, and in particular a pointed motivic
space over $S$.
In what follows a category $\SmOp/S$ will be useful as well.
Its objects are pairs
$(X,U)$ with an $X \in \Sm/S$ and an open $U$ in $X$. Morphisms $(X,U)$ to $(Y,V)$
are morphisms of $X$ to $Y$ in $\Sm/S$ which take $U$ to $V$.

Let $\mathbb{K}^W$ be
the pointed motivic space defined in~\ref{ex:ktt-fibrant}.
It has the properties that it is fibrant
in $\M_\bullet^{\cm}(S)$ and that $\mathbb{K}^W(X)$ is
naturally weakly equivalent to
the Waldhausen $K$-theory space associated to
the category of big vector bundles on $X$.
For $X\in \Sm/S$ the simplicial set $\mathbb{K}^W(X)$
is thus a Kan simplicial set having the Waldhausen
$K$-theory groups
$K^W_\ast(X)$ as its homotopy groups.
These $K$-theory groups coincide with
Quillen's higher $K$-theory groups
\cite[Thm.1.11.2]{TT}.
We write $K_\ast(X)$ for $K^W_\ast(X)$.
It follows immediately from the adjunction isomorphism
\begin{equation}\label{eq:24}
  \Hom_{\mathrm{H}_\bullet(S)}(S^{p,0}\wedge X_+,\mathbb{K}^W)
   \iso \Hom_{\mathrm{H}_\bullet}\bigl(S^p,\mathbb{K}^W(X)\bigr)  = K_p(X)
\end{equation}
that $\mathbb{K}^W$,
regarded as an object in the motivic homotopy category
$\mathrm{H}_\bullet (S)$
(see~\ref{thm:closed-motivic-model})
represents the Quillen K-theory on $\Sm/S$.
Here $S^n = S^{n,0}$ denotes the $n$-fold smash
product of the constant simplicial presheaf $\Delta^1/\partial \Delta^1$
with itself.
For a pointed motivic space $A$ set
$$
K_p(A): =
\Hom_{\mathrm{H}_\bullet(S)}(S^{p,0}\wedge A,\mathbb{K}^W).
$$

For $X\in \Sm/S$ and a closed subset $Z\hra X$,
$K_n(X \ on \ Z)$ denotes the $n$-th Thomason-Trobaugh
$K$-group of perfect complexes on $X$ with support on $Z$
\cite[Defn.3.1]{TT}.
For $A=X/(X-Z)$ with an $X\in \Sm/S$ and a closed subset $Z \subset X$,
there is an isomorphism
$K_p(A) \cong K_p(X on Z)$
natural in the pair $(X, X-Z)$
(see \cite[Thm.5.1]{TT}).
It follows immediately that
$\mathbb{K}^{W}$,
regarded as an object in the motivic homotopy category
$\mathrm{H}_\bullet (S)$
(see~\ref{thm:closed-motivic-model})
represents the Thomason-Trobaugh $K(X \ on \ Z)$-theory on $\SmOp/S$.
The known monoidal structure
\cite[(3.15.4)]{TT} on the Thomason-Trobaugh $K(X \ on \ Z)$-theory
coincides with the one induced by the Waldhausen monoid
$(\mathbb{K}^W, \mu^{W}, e^{W})$
described below.

Using the notation of
Example~\ref{ex:ktt-fibrant},
consider the diagram
\begin{equation}
K^W(X) \wedge K^W(X) = \Omega^1_s(W_1(X)) \wedge \Omega^1_s(W_1(X)) \xra{m}
\Omega^2_s(W_2(X)) \xla{ad} K^W(X)
\end{equation}
with the Waldhausen multiplication $m$
and the adjunction weak equivalence $ad$
described in \cite[p.~342]{W}. The diagram defines a morphism
\begin{equation}
\mathbb{K}^W \wedge \mathbb{K}^W \xra{\mu^{W}} \mathbb{K}^W
\end{equation}
in
$\mathrm{H}_\bullet (S)$
which is the Waldhausen multiplication on
$\mathbb{K}^W$. Together with the unit morphism
$e^{W}\colon S^0 \to \mathbb{K}^W$
it forms the Waldhausen monoid
$(\mathbb{K}^W, \mu^{W}, e^{W})$.

By
\cite[Thm.~4.3.13]{MV}
there is an isomorphism
$\psi \colon \mathbb{Z} \times \mathrm{Gr} \to \mathbb{K}^{W}$
in
$\mathrm{H}_\bullet (S)$.
The pointed motivic space
$\bigl(\mathbb{Z} \times \mathrm{Gr},(0,x_0)\bigr)$
is closed cofibrant by Lemma~\ref{lem:closed-emb-cof}.
Let
$b^{W}\colon \Pro^1 \to \mathbb{K}^{W}$
be a morphism in
$\mathrm{H}_\bullet(S)$
representing the class
$[\mathcal{O}(-1)]-[\mathcal{O}]$
in the kernel of the homomorphism
$\infty^\ast\colon K_0(\Pro^1)\to K_0(k)$.

\begin{definition}
\label{BGL}
Choose a 
pointed motivic space
$\K$, together with a weak equivalence
$i\colon \mathbb{Z} \times \mathrm{Gr} \to \K$
in
$\M^\cm_\bullet(S)$,
as well as a  morphism
$\epsilon\colon \K \wedge \Pro^1 \to \K$
in $\M^\cm_\bullet(S)$
which descends to
\begin{equation*}
  \mu^{W} \circ (\id \wedge b^{W})\colon \mathbb{K}^{W} \wedge
  \Pro^1 \to \mathbb{K}^{W}
\end{equation*}
under the identification of
$\K$
with
$\mathbb{K}^{W}$
in
$\textrm{H}_\bullet(S)$
via the isomorphism
$\psi \circ [i]^{-1}$.
Define
$\mathrm{BGL}$ as the $\Pro^1$-spectrum of the form
$(\K_0,\K_1,\K_2, \dots)$
with
$\K_i=\K$
for all $i$ and with the structure maps
$e_i\colon \K_i \wedge \Pro^1 \to \K_{i+1}$
equal to the map
$\epsilon\colon \K \wedge \Pro^1 \to \K$.
\end{definition}

$\Pro^1$-spectra  as described in Definition~\ref{BGL} will be used
extensively below.

\begin{remark}
The Voevodsky spectrum $\textbf{BGL}$
is obtained if
$\K = Ex^{\Aff^1}(\mathbb{Z} \times \mathrm{Gr})$,
$i\colon \mathbb{Z} \times \mathrm{Gr} \to Ex^{\Aff^1}(\mathbb{Z} \times \mathrm{Gr})$
is the Voevodsky fibrant replacement morphism in the model structure described in
\cite[Thm.~3.7]{V1}
and the structure map
\begin{equation*} e\colon Ex^{\Aff^1}(\mathbb{Z} \times \mathrm{Gr}) \wedge \Pro^1
        \to Ex^{\Aff^1}(\mathbb{Z} \times \mathrm{Gr})
\end{equation*}
described in
\cite[Section 6.2]{V1}. By \cite[Thm.~3.6]{V1}  and
Note~\ref{note:Quillen-eq-MV}, the map
$\mathbb{Z} \times \mathrm{Gr} \to Ex^{\Aff^1}(\mathbb{Z} \times \mathrm{Gr})$
is also a weak equivalence in $\M_\bullet^\cm(S)$. In particular,
$\mathbf{BGL}$ is an example of a $\Pro^1$-spectrum as
described in Definition~\ref{BGL}.
\end{remark}

\begin{remark}
\label{MuBar}
The Waldhausen structure of a commutative monoid on
$\mathbb{K}^{W}$ in
$\mathrm{H}_\bullet(S)$
induces via the
isomorphism
$\psi \circ [i]^{-1}$
the structure of a commutative monoid
$(\K,\bar \mu, \bar e$)) on the motivic space $\K$ in
$\mathrm{H}_\bullet (S)$
such that
$\psi \circ [i]^{-1}$
is an
isomorphism of monoids.
The composition of the inclusion
$\Pro^1 =\mathrm{Gr}(1,2) \hra \lbrace 0\rbrace \times
\mathrm{Gr} \hra \mathbb{Z}\times \mathrm{Gr}$
and the weak equivalence $i$
is denoted
$b\colon \Pro^1 \to \K$.
Clearly
$[\epsilon] = {\bar \mu} \circ [(\id \wedge b)]$
in
$\mathrm{H}_\bullet(S)$.
\end{remark}

\begin{lemma}\label{CofibrantK}
  Given $\K$, $i\colon \mathbb{Z} \times \mathrm{Gr} \to \K$
  and
  $\epsilon\colon \K \wedge \Pro^1 \to \K$
  fulfilling the conditions of Definition
  \ref{BGL},
  there exist
  $\K^{\prime}$, $i^{\prime} \colon \mathbb{Z} \times \mathrm{Gr} \to \K^{\prime}$,
  $\epsilon^{\prime} \colon \K^{\prime} \wedge \Pro^1 \to \K^{\prime}$
  and
  $q \colon \K^{\prime} \to \K$
  such that
  \begin{itemize}
  \item
    $i^{\prime}$ and $\epsilon^{\prime}$
    fulfil the condition of Definition
    \ref{BGL},
  \item
    $\K^{\prime}$
    is cofibrant in
    $\M^\cm_\bullet(S)$,
  \item
    $q$ is a weak equivalence and
    $q \circ \epsilon^{\prime} = \epsilon \circ (q \wedge \id)$.
  \end{itemize}
  Thus the $\Pro^1$-spectra
  $\mathrm{BGL}^{\prime}$ and $\mathrm{BGL}$
  are weakly equivalent via the morphism given by the
  sequence of maps of pointed motivic spaces
  $q, q, q, \dotsc $.
\end{lemma}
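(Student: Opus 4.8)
The plan is to build $(\K',i',\epsilon',q)$ by a cofibrant replacement of $\K$ followed by two lifting steps in the model category $\M_\bullet^\cm(S)$, using that $\ZZ \times \Gr$ and $(\Pro^1,\infty)$ are cofibrant (Lemma~\ref{lem:closed-emb-cof}) and that the monoidal structure on $\M_\bullet^\cm(S)$ satisfies the pushout-product axiom, so that a smash product of cofibrant pointed motivic spaces is again cofibrant.

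First I would take a cofibrant replacement of $\K$ in $\M_\bullet^\cm(S)$, that is, a cofibrant pointed motivic space $\K'$ together with a trivial fibration $q\colon \K' \to \K$; in particular $q$ is a weak equivalence. Since $(\ZZ \times \Gr,(0,x_0))$ is cofibrant, $q$ has the right lifting property against $\ast \hra \ZZ \times \Gr$, so $i$ lifts along $q$ to a morphism $i'\colon \ZZ \times \Gr \to \K'$ with $q \circ i' = i$, and $i'$ is a weak equivalence by the two-out-of-three property. Because $\K'$ and $(\Pro^1,\infty)$ are cofibrant, $\K' \wedge \Pro^1$ is cofibrant, so the composite $\epsilon \circ (q \wedge \id)\colon \K' \wedge \Pro^1 \to \K$ lifts along the trivial fibration $q$ to a morphism $\epsilon'\colon \K' \wedge \Pro^1 \to \K'$ in $\M_\bullet^\cm(S)$ with $q \circ \epsilon' = \epsilon \circ (q \wedge \id)$. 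This identity says exactly that $(q,q,q,\dotsc)$ is a morphism of $\Pro^1$-spectra $\mathrm{BGL}' \to \mathrm{BGL}$; as each $q$ is a weak equivalence, this morphism is a levelwise weak equivalence and hence a weak equivalence of $\Pro^1$-spectra.

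It remains to verify that $(i',\epsilon')$ fulfil the condition of Definition~\ref{BGL}, which is a statement in $\mathrm{H}_\bullet(S)$. Here $[q]$ is an isomorphism with $[q]\circ[i']=[i]$, so $\psi \circ [i']^{-1} = \psi \circ [i]^{-1} \circ [q]$; combining this with $[q]\circ[\epsilon'] = [\epsilon]\circ([q]\wedge\Pro^1)$ in a short diagram chase shows that the image of $[\epsilon']$ under the identification of $\K'$ with $\mathbb{K}^W$ via $\psi\circ[i']^{-1}$ agrees with the image of $[\epsilon]$ under the identification of $\K$ with $\mathbb{K}^W$ via $\psi\circ[i]^{-1}$, which by hypothesis equals $\mu^W\circ(\id\wedge b^W)$. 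The only point needing genuine care is the cofibrancy of $\K'\wedge\Pro^1$, for which I invoke the pushout-product axiom for the monoidal structure on $\M_\bullet^\cm(S)$ recorded in the Appendix; the two lifting steps, the two-out-of-three argument, and the diagram chase are then formal.
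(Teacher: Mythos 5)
Your proof is correct and takes essentially the same approach as the paper: obtain a cofibrant $\K'$ with a trivial fibration $q\colon\K'\to\K$, then lift $\epsilon\circ(q\wedge\id)$ along $q$ using that $\K'\wedge\Pro^1$ is cofibrant. The only (harmless) variation is in how you produce $\K'$ and $i'$: the paper factors $i$ itself as a trivial cofibration $i'$ followed by a fibration $q$ (so $i'$ and the cofibrancy of $\K'$ come for free, and $q$ is a weak equivalence by two-out-of-three), whereas you first take a cofibrant replacement of $\K$ and then lift $i$ along $q$ in a separate step; you also spell out the descent check in $\mathrm{H}_\bullet(S)$, which the paper leaves implicit.
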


\begin{proof}
  Decompose $i$ as $q \circ i^{\prime}$ with a trivial cofibration
  $i^{\prime} \colon \mathbb{Z} \times \mathrm{Gr} \to \K^{\prime}$
  and a fibration
  $q \colon \K^{\prime} \to \K$. Note that $q$ is a weak equivalence since so are
  $i^{\prime}$ and $i$. The pointed motivic space $\K^{\prime}$ is cofibrant in
  $\M^\cm_\bullet(S)$, because so is $\mathbb{Z}\times \mathrm{Gr}$.
  Furthermore $i^{\prime}$ is a weak equivalence. It remains to construct
  $\epsilon^{\prime}$. Consider the commutative diagram of pointed motivic spaces
  \begin{equation*}
    \xymatrix{
    \bullet   \ar[r] \ar[d]   &   \K^{\prime}  \ar[d]^- q \\
    \K^{\prime} \wedge \Pro^1 \ar[r]^-{\epsilon \circ (q \wedge \id)}  &  \K}
  \end{equation*}
  The left vertical arrow is a cofibration and the right hand side one is
  a trivial fibration. Thus there exists a map
  $\epsilon^{\prime} \colon \K^{\prime} \wedge \Pro^1 \to \K^{\prime}$
  of pointed motivic spaces making the diagram commutative.
\end{proof}

\begin{remark}
\label{PullBackBGL}
  Let $f \colon S^{\prime} \to S$ be a morphism of 
  schemes and let
  $\textrm{BGL}=(\K,\K,\K, \dotsc )$ be a $\Pro^1$-spectrum over $S$ as
  described in Definition~\ref{BGL}. Suppose further that $\K$
  is cofibrant in $\M^\cm_\bullet(S)$.
  The $\Pro^1$-spectrum
  $f^\ast(\textrm{BGL})$ over $S^\prime$ is given by
  $(\K^{\prime},\K^{\prime},\K^{\prime}, \dotsc )$, where
  $\K^\prime = f^\ast \K$ and the structure map is
  \begin{equation*}
    \epsilon^\prime\colon f^\ast \K \wedge \Pro^1_{S^\prime} \iso
    f^\ast(\K \wedge \Pro^1_S) \xrightarrow{f^\ast(\epsilon)} \K
  \end{equation*}
  Since $f^\ast \colon \M^\cm_S \to \M^\cm_{S^\prime}$ is a left Quillen
  functor by Theorem~\ref{thm:closed-motivic-model},
  $f^\ast(\mathrm{BGL})$ satisfies the conditions of Definition~\ref{BGL}
  in $M_\bullet(S^{\prime})$ and $H_\bullet(S^{\prime})$
  provided that $S^{\prime}$ is regular.
  If $S^{\prime}$ is noetherian finite dimensional, then by
  \cite[Thm.~6.9]{V1}
  and Remark
  \ref{AllBGLareIsomorphic}
  $f^\ast(\mathrm{BGL})$
  represents the homotopy invariant $K$-theory as introduced in~\cite{We}.
\end{remark}

It will be proved in Section~\ref{UniquenessOfBGL}
that in the case
$S=\Spec(\mathbb{Z})$
there is essentially just one $\Pro^1$-spectrum
$\textrm{BGL}$ in $\SH(S)$.
In the next section, we will construct
a monoidal structure on
$\mathrm{BGL}$
regarded as an object in the stable homotopy category
$\SH(S)$.
In the case of
$S=\Spec(\mathbb{Z})$
such a monoidal structure is unique.
Pulling it back via the structural morphism
$S^{\prime} \xrightarrow{f} \Spec(\mathbb{Z})$
we get a monoidal structure on
$f^\ast(\mathrm{BGL})$
in
$\mathrm{SH}(S^{\prime})$
for an arbitrary Noetherian finite-dimensional base scheme
$S^{\prime}$.

To complete this section we prove certain properties of
$\mathrm{BGL}$. It turns out that if $\K$ is fibrant
in $\M_\bullet^\cm(S)$, then
$\mathrm{BGL}$ is stably fibrant as a $\Pro^1$-spectrum.
In other words,
$\mathrm{BGL}$
is an
$\Omega_{\Pro^1}$-spectrum
which represents the
Thomason-Trobaugh K-theory on
$\Sm/S$.
For $X\in \Sm/S$ we abbreviate $\mathrm{BGL}(X_+)$ as
$\mathrm{BGL}(X)$, which forces us to write
$\mathrm{BGL}(X,x_0)$ for a pointed $S$-scheme $(X,x_0)$.

\begin{lemma}
\label{OmegaSpectrum}
Let $X\in \Sm/S$ and $n\geq 0$. The adjoint of
the structure map $\epsilon\colon \K \wedge \Pro^1 \to \K$
induces an isomorphism
\begin{equation*}
  \Hom_{\textrm{H}_\bullet(S)}(S^{n,0}\wedge X_+, \K_i)
  \to \Hom_{\textrm{H}_\bullet(S)}(S^{n,0}\wedge X_+ \wedge \Pro^1, \K_{i+1}).
\end{equation*}
In particular, if
$\K$ is fibrant
in  $\M_\bullet^\cm(S)$, then
$\mathrm{BGL}$
is stably fibrant. 
\end{lemma}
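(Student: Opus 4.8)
The plan is to reduce the asserted bijection to the projective bundle theorem in algebraic $K$-theory, and then to deduce the "in particular" from the usual criterion for recognizing stably fibrant $\Pro^1$-spectra. Via the isomorphism $\psi \circ [i]^{-1}\colon \K \xra{\sim} \mathbb{K}^W$ in $\mathrm{H}_\bullet(S)$ and Remark~\ref{MuBar}, the map $[\epsilon]$ becomes $\bar\mu \circ [(\id \wedge b)]$, where $b\colon \Pro^1 \to \K$ corresponds to a representative of $\beta := [\mathcal{O}(-1)] - [\mathcal{O}] \in K_0(\Pro^1)$ and $\bar\mu$ induces the Thomason--Trobaugh (= Waldhausen) product. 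By~\eqref{eq:24} the source of the displayed map is $K_n(X)$, and, in the notation $K_p(A) = \Hom_{\mathrm{H}_\bullet(S)}(S^{p,0}\wedge A,\mathbb{K}^W)$, its target is $K_n(X_+ \wedge \Pro^1)$. If $f\colon S^{n,0}\wedge X_+ \to \mathbb{K}^W$ represents $a \in K_n(X)$, then the image of $[f]$ under the map in question is the class of $\bar\mu \circ (f \wedge b)\colon S^{n,0}\wedge X_+ \wedge \Pro^1 \to \mathbb{K}^W$; since $\bar\mu$ realizes the $K$-theory product, this is the exterior product $a \times \beta$.

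Next I would make the target group explicit. Smashing $X_+$ with the cofibre sequence $\{\infty\}_+ \to \Pro^1_+ \to \Pro^1$ gives a cofibre sequence $X_+ \to (X\times\Pro^1)_+ \to X_+\wedge\Pro^1$ of pointed motivic spaces; applying $\Hom_{\mathrm{H}_\bullet(S)}(S^{n,0}\wedge -,\mathbb{K}^W)$ yields a long exact sequence in which the map $K_n(X\times\Pro^1) \to K_n(X)$ is the restriction $\infty^\ast$ along $\id_X\times\infty$. As $\infty^\ast$ is split by $p_X^\ast$, the sequence degenerates and identifies $K_n(X_+\wedge\Pro^1)$ with $\ker\bigl(\infty^\ast\colon K_n(X\times\Pro^1)\to K_n(X)\bigr)$. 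Now the projective bundle formula gives $K_n(X\times\Pro^1) = p_X^\ast K_n(X)\oplus p_X^\ast K_n(X)\cdot\beta$ as a $K_\ast(X)$-module, and $\infty^\ast\beta = [\mathcal{O}_X]-[\mathcal{O}_X] = 0$, so $\ker\infty^\ast = p_X^\ast K_n(X)\cdot\beta$. Hence $a \mapsto p_X^\ast(a)\cdot\beta$ is an isomorphism $K_n(X)\xra{\sim}\ker\infty^\ast$, and under the identifications above this is exactly the map of the lemma; this proves the first assertion.

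For the last statement, recall (Section~\ref{sec:spectra}) that a $\Pro^1$-spectrum all of whose levels equal a fibrant $\K$ is stably fibrant as soon as the adjoint bonding map $\tilde\epsilon\colon\K\to\Omega_{\Pro^1}\K$ is a weak equivalence. Since $\infty\colon S\hra\Pro^1$ is a closed $S$-point, $(\Pro^1,\infty)$ is cofibrant by Lemma~\ref{lem:closed-emb-cof}, so $\Omega_{\Pro^1}\K$ is fibrant and $\tilde\epsilon$ is a morphism of fibrant motivic spaces; such a morphism is a weak equivalence once it induces a bijection on $\Hom_{\mathrm{H}_\bullet(S)}(S^{n,0}\wedge X_+,-)$ for all $X\in\Sm/S$ and $n\geq 0$ — here one uses that the sections of $\K\simeq\mathbb{K}^W$ and of $\Omega_{\Pro^1}\K$ are infinite loop spaces, so their homotopy groups are basepoint independent. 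By the adjunction $\Hom_{\mathrm{H}_\bullet(S)}(S^{n,0}\wedge X_+,\Omega_{\Pro^1}\K)\cong\Hom_{\mathrm{H}_\bullet(S)}(S^{n,0}\wedge X_+\wedge\Pro^1,\K)$ this is precisely the isomorphism of the first part, so $\tilde\epsilon$ is a weak equivalence and $\mathrm{BGL}$ is stably fibrant.

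The only non-formal ingredient is the identification of the induced map with the exterior product by $\beta$, followed by the classical projective bundle theorem; the care required is in tracking the three compatibilities ($\K\simeq\mathbb{K}^W$ as monoids, $[\epsilon]=\bar\mu\circ[(\id\wedge b)]$, and $\bar\mu$ inducing the external $K$-theory product), together with the elementary fact that $\infty^\ast\beta = 0$. The degeneration of the long exact sequence, the loop-space adjunction, and the detection of weak equivalences between fibrant objects are routine.
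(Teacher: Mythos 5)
Your proof is correct and follows essentially the same route as the paper: both identify the displayed map with multiplication by the Bott class $[\mathcal{O}(-1)]-[\mathcal{O}]$ and invoke the projective bundle theorem for $X\times\Pro^1$, the paper phrasing the target as $K_n(X\times\Pro^1\ \mathrm{on}\ X\times\{\infty\})$ and you as $\ker(\infty^\ast)\subset K_n(X\times\Pro^1)$, which agree. Your treatment of the ``in particular'' via grouplike $H$-space structure on the sections is a correct unpacking of what the paper leaves implicit with its ``Whence the Lemma.''
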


\begin{proof}
Recall that for $Y\in \Sm/S$ and a closed subset $Z\hra Y$,
$K_n(Y \ on \ Z)$ denotes the $n$-th Thomason-Trobaugh
$K$-group of perfect complexes on $Y$ with support on $Z$.
It may be obtained as the $n$-th homotopy group of the
homotopy fiber of the map $\mathbb{K}^W(Y)\to \mathbb{K}^W(Y\smallsetminus Z)$.
Abbreviate $\Hom_{\mathrm{H}_\bullet(S)}(-,-)$ as $[-,-]$.
For each smooth $X$ over $S$ the map
\begin{align*}
 K_n(X) & =  [S^{n,0}\smash X_+, \K_i] \to [S^{n,0}\smash X_+ \wedge \Pro^1, \K_{i}
 \wedge \Pro^1] \\
 & \to   [S^{n,0}\smash X_+ \wedge \Pro^1, \K_{i+1}] \iso
K_n(X \times \Pro^1 \ on \ X\times \lbrace \infty \rbrace)
\end{align*}
induced by the structure map
$e_i$ coincides with the multiplication by the class
$[\mathcal{O}(-1)]-[\mathcal{O}]$ in
$K_0(\Pro^1 \ on \ \lbrace \infty \rbrace)$.
This multiplication is known to be an isomorphism
for the Thomason-Trobaugh K-groups, by the projective bundle
theorem \cite[Thm.~4.1]{TT} for $X \times \Pro^1$.
Whence the Lemma.
\end{proof}

In the following statement, the
notation $\Sigma_{\Pro^1}^\infty A (-i)$ will be used
for the $\Pro^1$-spectrum
$\Fr_i A = (\ast,\dotsc,\ast,A,A\wedge \Pro^1,\dotsc,)$
associated to a pointed motivic space $A$ in
Example~\ref{ex:suspension-spectrum}.
Note that $\Sigma_{\Pro^1}^\infty A (-i) \iso \Sigma_{\Pro^1}^\infty A
\wedge S^{-2i,-i}$ in $\SH(S)$, as mentioned in Notation~\ref{not:bigrading}.

\begin{corollary}
\label{AdjunctionIso}
For each pointed motivic space $A$ over $S$ the adjunction map
\begin{equation*}
  \Hom_{\mathrm{H}_\bullet(S)}(A,\K_0) \to
  \Hom_{\SH(S)}(\Sigma^{\infty}_{\Pro^1}A, \mathrm{BGL})
\end{equation*}
is an isomorphism. In particular, for every smooth scheme $X$ over
$S$ and each closed subscheme $Z$ in $X$ one has
$K_p(X \ on\  Z)=\mathrm{BGL}^{-p,0}\bigl(X/(X \smallsetminus Z)\bigr)$.
The family of these isomorphisms form an isomorphism
$Ad: K_\ast \to \mathrm{BGL}^{-*,0}$
of cohomology theories
on the category $\SmOp/S$ in the sense of
\cite{PSorcoh}.
Moreover the adjunction map
$[A,\K_i] \to [\Sigma^{\infty}_{\Pro^1}(A)(-i), \mathrm{BGL}]$
is an isomorphism. In particular, for every smooth scheme $X$ over
$S$ and each closed subscheme $Z$ in $X$ one has
$K_p(X \ on \ Z)=\mathrm{BGL}^{2i-p,i}(X/(X\smallsetminus Z))$.
\end{corollary}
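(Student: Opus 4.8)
The plan is to reduce the entire statement to the free--spectrum/evaluation adjunction on $\Pro^1$-spectra, exploiting that in $\SH(S)$ the object $\mathrm{BGL}$ is represented by a \emph{stably fibrant} spectrum all of whose spaces equal $\K$. First I would arrange this. Both sides of the claimed bijection are insensitive to replacing $\mathrm{BGL}$ by a levelwise weakly equivalent $\Pro^1$-spectrum (the left side depends only on the class of $\K_0$ in $\mathrm{H}_\bullet(S)$, the right only on the class of $\mathrm{BGL}$ in $\SH(S)$), and $\Hom_{\mathrm{H}_\bullet(S)}(A,\K_0)$ is insensitive to replacing $A$ by a cofibrant model. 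Choosing a fibrant replacement $j\colon\K\to\K^f$ in $\M^\cm_\bullet(S)$ and noting that $j\wedge\id$ is again a trivial cofibration (since $\Pro^1$ is cofibrant) into the fibrant object $\K^f$, one lifts $j\circ\epsilon$ to a structure map $\epsilon^f\colon\K^f\wedge\Pro^1\to\K^f$; the resulting $\Pro^1$-spectrum still satisfies Definition~\ref{BGL}, receives a levelwise weak equivalence from $\mathrm{BGL}$, and is stably fibrant by Lemma~\ref{OmegaSpectrum}. So I may assume $\K$ fibrant, $\mathrm{BGL}$ stably fibrant, and $A$ cofibrant.

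Next I would run the adjunction argument. With the notation of Example~\ref{ex:suspension-spectrum}, $\Sigma^\infty_{\Pro^1}A(-i)=\Fr_i A$, and $\Fr_i$ is left Quillen with right adjoint the evaluation functor $\mathrm{Ev}_i$ (Section~\ref{sec:spectra}). Since $A$ is cofibrant and $\mathrm{BGL}$ is stably fibrant, $\mathbf{L}\Fr_i A=\Fr_i A$ and $\mathbf{R}\mathrm{Ev}_i\mathrm{BGL}=\mathrm{Ev}_i\mathrm{BGL}=\K_i=\K$ is fibrant, so the derived adjunction yields a bijection
\[
  \Hom_{\mathrm{H}_\bullet(S)}(A,\K_i)\ \cong\ \Hom_{\SH(S)}\bigl(\Sigma^\infty_{\Pro^1}A(-i),\mathrm{BGL}\bigr).
\]
Unwinding the unit of $\Fr_i\dashv\mathrm{Ev}_i$ identifies this with the adjunction map of the statement: it sends a class $a\colon A\to\K_i$ to the morphism of $\Pro^1$-spectra whose $n$-th component, for $n\geq i$, is $a\wedge\id$ followed by the iterated structure maps of $\mathrm{BGL}$. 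Taking $i=0$ gives the first assertion of the corollary, and general $i$ gives the last one.

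Finally I would deduce the concrete reformulations and the cohomology-theory statement. For $X\in\Sm/S$ and a closed $Z\subset X$ put $A=X/(X\smallsetminus Z)$; as recalled before~\eqref{eq:24} and from \cite[Thm.~5.1]{TT}, one has $\Hom_{\mathrm{H}_\bullet(S)}(S^{p,0}\wedge A,\K)\iso K_p(A)\iso K_p(X\ on\ Z)$ naturally in the pair $(X,X\smallsetminus Z)$. Feeding $S^{p,0}\wedge A$ into the bijection above and using $\Sigma^\infty_{\Pro^1}(S^{p,0}\wedge A)(-i)\iso\Sigma^\infty_{\Pro^1}A\wedge S^{p-2i,-i}$ from Notation~\ref{not:bigrading} turns it into $K_p(X\ on\ Z)\iso\mathrm{BGL}^{2i-p,i}(A)$, and into $K_p(X\ on\ Z)\iso\mathrm{BGL}^{-p,0}(A)$ when $i=0$. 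That the family $Ad\colon K_\ast\to\mathrm{BGL}^{-\ast,0}$ is an isomorphism of cohomology theories on $\SmOp/S$ in the sense of \cite{PSorcoh} then needs only naturality and compatibility with the connecting homomorphisms: naturality is immediate from that of the adjunction and of \cite[Thm.~5.1]{TT}, while the Thomason--Trobaugh localization boundary map and the one for $\mathrm{BGL}^{-\ast,0}$ both arise as the long exact sequence obtained by applying the respective representing object to the single cofiber sequence $(X\smallsetminus Z)_+\to X_+\to X/(X\smallsetminus Z)$, so $Ad$ intertwines them.

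Granting Lemma~\ref{OmegaSpectrum}, most of the above is formal. The steps that I expect to require genuine care are the passage to a stably fibrant model of $\mathrm{BGL}$ that stays levelwise weakly equivalent to the original spectrum (so the left-hand side is undisturbed), and above all the last verification: matching the Thomason--Trobaugh localization sequence with the cofiber-sequence long exact sequence for $\mathrm{BGL}^{-\ast,0}$ compatibly with $Ad$. That is the step I would expect to be the main obstacle.
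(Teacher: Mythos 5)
Your proposal is correct and takes essentially the same route the paper intends: the corollary is stated without an explicit proof because it is meant to follow formally from Lemma~\ref{OmegaSpectrum}, which shows that $\mathrm{BGL}$ is stably fibrant once $\K$ is fibrant, after which the derived $\Fr_i\dashv\mathrm{Ev}_i$ adjunction gives the bijection and the rest is the Thomason--Trobaugh identification already recorded around~(\ref{eq:24}). Your write-up just makes that implicit argument explicit, including the harmless reduction to a levelwise-equivalent stably fibrant model and the (correctly flagged as nontrivial) compatibility of connecting maps.
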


The family of pairings
$\K_i \wedge \K_j \xra{\mu_{ij}}  \K_{i+j}$
in
$\textrm{H}_\bullet(S)$
with
$\mu_{ij}=\bar \mu$
from Remark
\ref{MuBar}
defines a family of pairings
\begin{equation}
\label{NaiveProducts}
\cup\colon \mathrm{BGL}^{p,i}(A) \otimes \mathrm{BGL}^{q,j}(B)
\to \mathrm{BGL}^{p+i,q+j}(A \wedge B)
\end{equation}
for pointed motivic spaces $A$ and $B$. We will refer to the
latter as the
{\em naive product structure\/}
on the functor
$\mathrm{BGL}^{\ast,\ast}$
on the category
$\M_{\ast}(S)$.
It has the following property.

\begin{corollary}
\label{RingIsomorphism}
The isomorphism
$Ad\colon K_\ast \to \mathrm{BGL}^{-*,0}$
of cohomology theories
on $\SmOp/S$
is an isomorphism of ring cohomology theories
in the sense of
\cite{PSorcoh}.
\end{corollary}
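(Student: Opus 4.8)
The plan is to build on Corollary~\ref{AdjunctionIso}, which already gives that $Ad$ is an isomorphism of cohomology theories on $\SmOp/S$, and to show in addition that $Ad$ carries the external product of Thomason--Trobaugh $K$-theory to the naive product \eqref{NaiveProducts} on $\mathrm{BGL}^{-*,0}$ and the unit $[\mathcal{O}]\in K_0(\pt)$ to the unit of the naive product. Once this is done, all the axioms of a ring cohomology theory in the sense of \cite{PSorcoh} for $\mathrm{BGL}^{-*,0}$ with the naive product (bilinearity, associativity, the unit axiom, the graded commutativity appropriate to $K$-theory, and compatibility with pullbacks, support extensions and boundary maps) follow by transporting the corresponding known axioms for $K_\ast$ along the isomorphism $Ad$; then $Ad$ is tautologically an isomorphism of ring cohomology theories.

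To compare the two products I would write both of them out at the level of representing maps in $\mathrm{H}_\bullet(S)$. By the discussion preceding Definition~\ref{BGL}, the monoidal structure \cite[(3.15.4)]{TT} on $K(X\ on\ Z)$-theory is the one induced by the Waldhausen monoid $(\mathbb{K}^W,\mu^W,e^W)$: for pointed motivic spaces $A$, $B$ of the form $X/(X\smallsetminus Z)$, $Y/(Y\smallsetminus W)$ and classes $\alpha\in K_p(A)$, $\beta\in K_q(B)$ represented by maps $S^{p,0}\wedge A\to\mathbb{K}^W$, $S^{q,0}\wedge B\to\mathbb{K}^W$, the external product $\alpha\cup\beta\in K_{p+q}(A\wedge B)$ is represented by
\begin{equation*}
S^{p+q,0}\wedge A\wedge B \xrightarrow{\mathrm{can}} (S^{p,0}\wedge A)\wedge(S^{q,0}\wedge B)\xrightarrow{\alpha\wedge\beta}\mathbb{K}^W\wedge\mathbb{K}^W\xrightarrow{\mu^W}\mathbb{K}^W ,
\end{equation*}
the first arrow being the canonical shuffle isomorphism (Lemma~\ref{KofXsmashY} is used to identify $K_0$ of a smash product and thus to pin down the representing map). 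Transporting along the monoid isomorphism $\psi\circ[i]^{-1}\colon(\K,\bar\mu,\bar e)\to(\mathbb{K}^W,\mu^W,e^W)$ of Remark~\ref{MuBar}, the same formula with $\mathbb{K}^W$, $\mu^W$ replaced by $\K_0$, $\bar\mu$ computes $\alpha\cup\beta$ after the identification. On the other hand, the naive product \eqref{NaiveProducts} in bidegrees $(-p,0)$, $(-q,0)$ is by definition ($\mu_{00}=\bar\mu$) given by exactly this expression, once the adjunction isomorphism of Corollary~\ref{AdjunctionIso} and the isomorphism \eqref{eq:24} are used to rewrite $\mathrm{BGL}^{-p,0}(A)=\Hom_{\SH(S)}\bigl(\Sigma^\infty_{\Pro^1}(S^{p,0}\wedge A),\mathrm{BGL}\bigr)$ as $\Hom_{\mathrm{H}_\bullet(S)}(S^{p,0}\wedge A,\K_0)$. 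Matching the two is then a diagram chase, organized around the compatibility of the symmetric monoidal structure on $\SH(S)$ with the suspension functor $\Sigma^\infty_{\Pro^1}$ and with the identifications $S^{p,0}\wedge S^{q,0}\iso S^{p+q,0}$. For the unit, the class $[\mathcal{O}]\in K_0(\pt)=K_0(S^0)$ is represented by $e^W\colon S^0\to\mathbb{K}^W$, which corresponds to $\bar e\colon S^0\to\K_0$ under $\psi\circ[i]^{-1}$ and hence, under $Ad$, to the element of $\mathrm{BGL}^{0,0}(\pt)$ that is the unit of the naive product, because $(\K_0,\bar\mu,\bar e)$ is a commutative monoid in $\mathrm{H}_\bullet(S)$.

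I expect the one genuinely technical point to be the coherence bookkeeping: keeping track of the canonical isomorphisms relating $S^{p,0}\wedge A\wedge S^{q,0}\wedge B$, $(S^{p,0}\wedge A)\wedge(S^{q,0}\wedge B)$ and $S^{p+q,0}\wedge(A\wedge B)$, and checking that the shuffle permutation appearing when one commutes the simplicial spheres past $A$ agrees with the one implicit in the definition of the product in \cite{PSorcoh}. Since the simplicial spheres $S^{q,0}$ are symmetric for the smash product in $\mathrm{H}_\bullet(S)$, no sign obstruction arises, but the coherence diagrams must be spelled out, together with the verification that the external product of $K$-theory with supports (not merely the product for spaces of the form $X_+$) is the one induced by $\mu^W$ --- which is precisely the statement recalled from \cite[(3.15.4)]{TT} and may be taken as input. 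Everything else is formal, given Corollary~\ref{AdjunctionIso} and Remark~\ref{MuBar}.
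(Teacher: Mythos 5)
Your proposal is correct and matches the paper's implicit argument. The paper leaves this corollary without an explicit proof, treating it as an immediate consequence of the construction: the naive product~(\ref{NaiveProducts}) is defined via $\mu_{ij}=\bar\mu$, which is the transport of the Waldhausen product $\mu^W$ along the monoid isomorphism $\psi\circ[i]^{-1}$ of Remark~\ref{MuBar}, and $\mu^W$ in turn induces the Thomason--Trobaugh product by the discussion preceding Definition~\ref{BGL}; your write-up is a correct spelling-out of exactly this definitional unwinding, with the coherence bookkeeping you flag being the only non-formal step.
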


\subsection{The periodicity element}
\label{sec:periodicity-element}

The aim of this Section is to construct an element
$\beta \in \mathrm{BGL}^{2,1}(S)$,
to show that it is invertible and to check that
for any pointed motivic space $A$ one has
$$
\mathrm{BGL}^{*,0}(A)[\beta, \beta^{-1}] = \mathrm{BGL}^{*,*}(A)
$$
(the Laurent polynomials over $\mathrm{BGL}^{*,0}(A)$).
We will use the naive product structure on
$\mathrm{BGL}$
described just above
Corollary~\ref{RingIsomorphism}.

\begin{definition}
Set
$\beta\colon \!\!=[S^0 \xra{\bar e} \K=\K_1] \in \mathrm{BGL}^{2,1}(S)$,
where $\bar e$ is the unit of the monoid~$\K$ (see Remark~\ref{MuBar}).
\end{definition}

\begin{lemma}
Let
$b\colon \Pro^1 \hra \K$
be the map described in Remark~\ref{MuBar}.
It represents the element
$[\mathcal{O}(-1)]-[\mathcal{O}]$
in
$\mathrm{BGL}^{0,0}(\Pro^1,\infty) = \Ker\bigl(\infty^\ast \colon K_0(\Pro^1)
\to K_0(S)\bigr)$.
There is a relation
\begin{equation}
\label{BottAndSuspensionOne}
\beta \cup \bigl([\mathcal{O}(-1)]-[\mathcal{O}]\bigr)=
\Sigma_{\Pro^1}(1) \in \mathrm{BGL}^{2,1}(\Pro^1,\infty),
\end{equation}
where
$\Sigma_{\Pro^1}$
is the suspension isomorphism and
$1 \in \mathrm{BGL}^{0,0}(S)$
is the unit. There is another relation
\begin{equation}
\label{BottAndSuspensionTwo}
\beta \cup \bigl([\mathcal{O}(1)]-[\mathcal{O}]\bigr)=
- \Sigma_{\Pro^1}(1) \in \mathrm{BGL}^{2,1}(\Pro^1,\infty).
\end{equation}
\end{lemma}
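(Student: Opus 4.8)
The plan is to reduce both displayed relations to the identification, recorded in Remark~\ref{MuBar}, of the spectrum structure map $\epsilon$ of $\mathrm{BGL}$ with the cup product with $b$ in the monoid $\K$, followed by an elementary computation in $K_0(\Pro^1)$. The opening assertion is immediate: $b$ is by construction the composite $\Pro^1 = \Gr(1,2)\hra\{0\}\times\Gr\hra\ZZ\times\Gr\xra{i}\K$, so under the isomorphism $Ad$ of Corollary~\ref{AdjunctionIso} (and the identification of $\K$ with $\mathbb{K}^W$ via $\psi\circ[i]^{-1}$) its class in $\mathrm{BGL}^{0,0}(\Pro^1,\infty)$ is the restriction of the universal class $\xi_\infty$ to $\Gr(1,2)$, namely $[\mathcal{O}(-1)]-[\mathcal{O}]$; and $\mathrm{BGL}^{0,0}(\Pro^1,\infty)=\Ker(\infty^*\colon K_0(\Pro^1)\to K_0(S))$ follows from Corollary~\ref{AdjunctionIso} applied to the cofibre sequence $S^0\xra{\infty_+}\Pro^1_+\to(\Pro^1,\infty)$, using that $\infty^*$ is split by $\pi^*$, $\pi\colon\Pro^1\to S$.

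To prove~\eqref{BottAndSuspensionOne} I would compute both sides as classes in $\mathrm{BGL}^{2,1}(\Pro^1,\infty)=[\Pro^1,\K_1]$. The element $\beta$ is represented by the unit $\bar e\colon S^0\to\K=\K_1$ and $[\mathcal{O}(-1)]-[\mathcal{O}]$ by $b\colon\Pro^1\to\K=\K_0$; since the naive product in these bidegrees is induced by $\bar\mu\colon\K\wedge\K\to\K$, the class $\beta\cup([\mathcal{O}(-1)]-[\mathcal{O}])$ is represented by $\bar\mu\circ(\bar e\wedge b)\colon S^0\wedge\Pro^1\to\K\wedge\K\to\K$. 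On the other hand, the suspension isomorphism for the $\Pro^1$-spectrum $\mathrm{BGL}$ (all of whose structure maps equal $\epsilon$) sends a class represented by $f\colon A\to\K_i$ to the class represented by $\epsilon\circ(f\wedge\id_{\Pro^1})\colon A\wedge\Pro^1\to\K_i\wedge\Pro^1\to\K_{i+1}$; in particular $\Sigma_{\Pro^1}(1)$ is represented by $\epsilon\circ(\bar e\wedge\id_{\Pro^1})$. By Remark~\ref{MuBar}, $[\epsilon]=\bar\mu\circ[(\id\wedge b)]$ in $\mathrm{H}_\bullet(S)$, so $\epsilon\circ(\bar e\wedge\id_{\Pro^1})=\bar\mu\circ(\id_\K\wedge b)\circ(\bar e\wedge\id_{\Pro^1})=\bar\mu\circ(\bar e\wedge b)$ in $\mathrm{H}_\bullet(S)$. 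Thus both sides of~\eqref{BottAndSuspensionOne} are represented by the same morphism and coincide. The only step with genuine content is the identity $[\epsilon]=\bar\mu\circ[(\id\wedge b)]$, i.e.\ that the structure map of $\mathrm{BGL}$ \emph{is} multiplication by $b$ in the monoid $\K$; this is exactly Remark~\ref{MuBar}, so the argument poses no real obstacle.

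For~\eqref{BottAndSuspensionTwo}, the evaluation sequence $0\to\mathcal{O}_{\Pro^1}(-1)\to\mathcal{O}_{\Pro^1}^{\oplus 2}\to\mathcal{O}_{\Pro^1}(1)\to 0$ gives $[\mathcal{O}(1)]+[\mathcal{O}(-1)]=2[\mathcal{O}]$ in $K_0(\Pro^1)$, hence $[\mathcal{O}(1)]-[\mathcal{O}]=-([\mathcal{O}(-1)]-[\mathcal{O}])$; since both classes lie in $\mathrm{BGL}^{0,0}(\Pro^1,\infty)=\Ker(\infty^*)$, this identity holds there as well. As $\cup$ is additive in each variable, applying $\beta\cup(-)$ and invoking~\eqref{BottAndSuspensionOne} gives $\beta\cup([\mathcal{O}(1)]-[\mathcal{O}])=-\,\beta\cup([\mathcal{O}(-1)]-[\mathcal{O}])=-\Sigma_{\Pro^1}(1)$, which is~\eqref{BottAndSuspensionTwo}.
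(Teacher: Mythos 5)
Your proof is correct and follows essentially the same route as the paper: both sides of~\eqref{BottAndSuspensionOne} are identified with the class of $\bar\mu\circ(\bar e\wedge b)$ in $\Hom_{\mathrm{H}_\bullet(S)}(S^0\wedge\Pro^1,\K_1)$, using $[\epsilon]=\bar\mu\circ[(\id\wedge b)]$ from Remark~\ref{MuBar} and the equality $\K_0=\K=\K_1$, and~\eqref{BottAndSuspensionTwo} then follows from the identity $[\mathcal{O}(1)]-[\mathcal{O}]=-([\mathcal{O}(-1)]-[\mathcal{O}])$ in $K_0(\Pro^1)$ together with bilinearity of $\cup$. Your write-up is slightly more elaborate in that it spells out why $b$ represents $[\mathcal{O}(-1)]-[\mathcal{O}]$ and justifies the $K_0(\Pro^1)$ identity via the Euler sequence, which the paper simply asserts, but the argument is the same.
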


\begin{proof}
The element
$\Sigma_{\Pro^1}(1)$
is represented by the morphism
$$
S^0 \wedge \Pro^1 \xra{\bar e \wedge \id} \K_0 \wedge \Pro^1
\xra{\id \wedge b} \K_0 \wedge \K_1 \xra{\mu_{01}} \K_1,
$$
where $\mu_{ij}$ is defined just above
(\ref{NaiveProducts}), and $\bar e$ is the unit of the
monoid $\K=\K_0$.
The element
$\beta \cup \bigl([\mathcal{O}(-1)]-[\mathcal{O}]\bigr)$
is represented by the morphism
$$
S^0 \wedge \Pro^1 \xra{\bar e \wedge b} \K_1 \wedge \K_0 \xra{\mu_{10}} \K_1.
$$
Since
$\K_0=\K=\K_1$
one has
$(\id \wedge b) \circ (\bar e \wedge \id) = \bar e \wedge b$.
This implies the relation~(\ref{BottAndSuspensionOne})
since
$\mu_{10}=\mu_{01}$.
Relation~(\ref{BottAndSuspensionTwo}) follows from the first one
since
$[\mathcal{O}(1)]-[\mathcal{O}]= - [\mathcal{O}(-1)]+[\mathcal{O}]$
in
$K^0(\Pro^1)$.
\end{proof}

\begin{lemma}
\label{Invertibility}
Let
$u \in \mathrm{BGL}^{-2,-1}(S)$
be the unique element such that
$\Sigma_{\Pro^1}(u)=[\mathcal{O}(-1)]-[\mathcal{O}]$
in
$\mathrm{BGL}^{0,0}(\Pro^1,\infty)$.
Then
$\beta \cup u=1$.
\end{lemma}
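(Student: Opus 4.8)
The plan is to deduce the identity $\beta\cup u=1$ from relation~\eqref{BottAndSuspensionOne} of the preceding lemma by applying the suspension isomorphism $\Sigma_{\Pro^1}$, which is a bijection and hence in particular injective.

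First I would unwind the definition of $u$. Since by hypothesis $\Sigma_{\Pro^1}(u)=[\mathcal{O}(-1)]-[\mathcal{O}]$ in $\mathrm{BGL}^{0,0}(\Pro^1,\infty)$, substituting this into the naive product and invoking relation~\eqref{BottAndSuspensionOne} gives
\[
\beta\cup\Sigma_{\Pro^1}(u)=\beta\cup\bigl([\mathcal{O}(-1)]-[\mathcal{O}]\bigr)=\Sigma_{\Pro^1}(1)\qquad\text{in }\,\mathrm{BGL}^{2,1}(\Pro^1,\infty).
\]

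Next I would check that cupping with $\beta$ commutes with $\Sigma_{\Pro^1}$, that is, $\beta\cup\Sigma_{\Pro^1}(u)=\Sigma_{\Pro^1}(\beta\cup u)$. Under the canonical identification $S^0\wedge(\Pro^1,\infty)\cong(\Pro^1,\infty)$ the map $\Sigma_{\Pro^1}$ is nothing but cup-multiplication by the class $\Sigma_{\Pro^1}(1)\in\mathrm{BGL}^{2,1}(\Pro^1,\infty)$, so $\beta\cup\Sigma_{\Pro^1}(u)=\beta\cup\bigl(u\cup\Sigma_{\Pro^1}(1)\bigr)=(\beta\cup u)\cup\Sigma_{\Pro^1}(1)=\Sigma_{\Pro^1}(\beta\cup u)$. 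Here I use only the associativity and unitality of the naive product, which hold because $(\K,\bar\mu,\bar e)$ is a commutative monoid in $\mathrm{H}_\bullet(S)$ by Remark~\ref{MuBar}; one may equally well phrase this compatibility as the statement that $\Sigma_{\Pro^1}$ is a morphism of $\mathrm{BGL}^{\ast,\ast}$-modules, which is classical for Thomason--Trobaugh $K$-theory and transports to $\mathrm{BGL}$ along the ring isomorphism of Corollary~\ref{RingIsomorphism}.

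Combining these equalities gives $\Sigma_{\Pro^1}(\beta\cup u)=\Sigma_{\Pro^1}(1)$, and since $\Sigma_{\Pro^1}$ is an isomorphism we conclude $\beta\cup u=1$. The only step that is not purely formal is the compatibility $\beta\cup\Sigma_{\Pro^1}(-)=\Sigma_{\Pro^1}(\beta\cup-)$: the actual work there is the bookkeeping of smash factors and of the identifications needed to see that ``$\beta\cup(-)$'' and ``$\Sigma_{\Pro^1}$'' commute on the nose, together with recording associativity and unitality of $\cup$ in precisely the form required. Once that is in place, the remainder of the argument is a one-line diagram chase.
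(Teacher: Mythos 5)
Your proposal is correct and follows the same route as the paper: the paper's proof consists of exhibiting the commutative square expressing precisely the compatibility $\beta\cup\Sigma_{\Pro^1}(-)=\Sigma_{\Pro^1}(\beta\cup-)$ that you verify, combining it with relation~(\ref{BottAndSuspensionOne}), and using that $\Sigma_{\Pro^1}$ is an isomorphism. You have simply unpacked the commutativity of that square in terms of associativity and unitality of the naive product.
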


\begin{proof}
Consider the commutative diagram
\begin{equation*}
  \xymatrix{
    \mathrm{BGL}^{2,1}(S) \otimes \mathrm{BGL}^{0,0}(\Pro^1,\infty)
    \ar[r]^-{\cup} &   \mathrm{BGL}^{2,1}(\Pro^1,\infty)     \\
    \mathrm{BGL}^{2,1}(S) \otimes \mathrm{BGL}^{-2,-1}(S)
    \ar[r]^-{\cup} \ar[u]^-{\id \otimes \Sigma_{\Pro^1}}& \mathrm{BGL}^{0,0}(S).
    \ar[u]_-{\Sigma_{\Pro^1}}}
\end{equation*}
Now the Lemma follows from the relation
(\ref{BottAndSuspensionOne}).
\end{proof}

\begin{definition}
\label{AlgebraicDiagonal}
For $\Pro^1$-spectra $E$ and $F$ set
$E^{\mathrm{alg}}(F)= \oplus^{+ \infty}_{- \infty}E^{2i,i}(F)$.
\end{definition}

\begin{proposition}
For every pointed motivic space $A$ the map
\begin{equation}
\label{BGLandTT}
\mathrm{BGL}^{*,0}(A) \otimes_{K_0(S)} \mathrm{BGL}^{\mathrm{alg}}(S) \to
\mathrm{BGL}^{*,*}(A)
\end{equation}
given by
$a \otimes b \mapsto a \cup b$
is a ring isomorphism and
$\mathrm{BGL}^{\mathrm{alg}}(S)= K_0(S)[\beta, \beta^{-1}]$
is the Laurent polynomial ring. One can rewrite this ring isomorphism
as
\begin{equation}
\label{BGLandTT2}
\mathrm{BGL}^{*,0}(A)[\beta,\beta^{-1}] \cong
\mathrm{BGL}^{*,*}(A)
\end{equation}
\end{proposition}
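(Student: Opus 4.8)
The plan is to derive the result from the invertibility of $\beta$ proved in Lemma~\ref{Invertibility}, together with the ring isomorphism $K_0(S)\iso\mathrm{BGL}^{0,0}(S)$ of Corollaries~\ref{AdjunctionIso} and~\ref{RingIsomorphism}.

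First I would record that the naive product turns $\mathrm{BGL}^{\ast,\ast}(A)$ into an associative unital graded module over the bigraded ring $\mathrm{BGL}^{\ast,\ast}(S)$, and that $\mathrm{BGL}^{\mathrm{alg}}(S)=\bigoplus_i\mathrm{BGL}^{2i,i}(S)$ is a subring on which the graded-commutativity sign of the naive product is trivial, so that $\mathrm{BGL}^{\mathrm{alg}}(S)$ is commutative. Both facts follow at once from $(\K,\bar\mu,\bar e)$ being a commutative monoid in $\mathrm{H}_\bullet(S)$ (Remark~\ref{MuBar}) and from $S^0\wedge A=A$. By Lemma~\ref{Invertibility} one has $\beta\cup u=1$ in $\mathrm{BGL}^{\mathrm{alg}}(S)$, so $\beta$ is a unit there with $\beta^{-1}=u$. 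Hence for every pointed motivic space $A$ and all integers $p,i,n$ the module action of $\beta^{n}$ is an isomorphism
\[
\cdot\cup\beta^{n}\colon\ \mathrm{BGL}^{p,i}(A)\ \xrightarrow{\ \iso\ }\ \mathrm{BGL}^{p+2n,\,i+n}(A),
\]
with inverse $\cdot\cup u^{n}$.

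Next I would identify $\mathrm{BGL}^{\mathrm{alg}}(S)$. Applying the displayed isomorphism with $A=S^0$ and $p=i=0$ gives $\mathrm{BGL}^{2i,i}(S)=\beta^{i}\cup\mathrm{BGL}^{0,0}(S)$, and $\mathrm{BGL}^{0,0}(S)=K_0(S)$ as rings by Corollary~\ref{RingIsomorphism}; associativity gives $\beta^{i}\cup\beta^{j}=\beta^{i+j}$. Therefore $\mathrm{BGL}^{\mathrm{alg}}(S)=\bigoplus_{i\in\ZZ}K_0(S)\,\beta^{i}$ is the Laurent polynomial ring $K_0(S)[\beta,\beta^{-1}]$.

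It remains to see that \eqref{BGLandTT} is bijective. Under the identification just obtained its source becomes $\mathrm{BGL}^{\ast,0}(A)\otimes_{K_0(S)}K_0(S)[\beta,\beta^{-1}]=\mathrm{BGL}^{\ast,0}(A)[\beta,\beta^{-1}]$, and the comparison map carries the summand $\mathrm{BGL}^{p,0}(A)\beta^{n}$ into $\mathrm{BGL}^{p+2n,\,n}(A)$ via $\cdot\cup\beta^{n}$; since the weight of the target reads off $n$ (and then $p$), the map is block-diagonal for the bigrading and each block is an isomorphism by the first step, so the map is bijective. It is a ring homomorphism because the naive product is associative and graded-commutative (Remark~\ref{MuBar}), hence a ring isomorphism, and the rewriting \eqref{BGLandTT2} follows immediately. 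The only delicate points are the bigraded bookkeeping that makes the comparison map block-diagonal and the routine verification that the commutative monoid structure on $\K$ in $\mathrm{H}_\bullet(S)$ supplies the module and ring structures used above; there is no substantial obstacle.
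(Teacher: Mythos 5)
Your proposal is correct and follows essentially the same approach as the paper: invertibility of $\beta$ from Lemma~\ref{Invertibility} makes $\cdot\cup\beta$ an isomorphism shifting the weight, and together with $\mathrm{BGL}^{0,0}(S)=K^0(S)$ this gives the ring isomorphism. The paper compresses the whole argument into two sentences; you simply spell out the module/ring bookkeeping and block-diagonal structure that it leaves implicit.
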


\begin{proof}
In fact,
$\mathrm{BGL}^{*,0}(A) \xra{\cup \beta} \mathrm{BGL}^{*+2,1}(A)$
is an isomorphism since $\beta$ is invertible.
Since
$\mathrm{BGL}^{0,0}(S)=K^0(S)$
the map
(\ref{BGLandTT})
is a ring isomorphism.
\end{proof}

Using
the isomorphism
$Ad\colon K_\ast \to \mathrm{BGL}^{-\ast,0}$
of ring cohomology theories
from Corollary~\ref{RingIsomorphism}
we get the following statement.

\begin{corollary}
\label{LaurantPolinomial}
For every $X\in \Sm/S$ and every closed subset $Z\hra X$ one has
\begin{equation}
\label{BGLAndTTrings1}
K_{- \ast}(X \ on \ Z)[\beta, \beta^{-1}] \cong \mathrm{BGL}^{*,*}_Z(X).
\end{equation}
The family of these isomorphisms form an isomorphism of
ring cohomology theories on $\SmOp/S$ in the sense of
\cite{PSorcoh}.
As well, there is an isomorphism
\begin{equation}
\label{BGLAndTTrings2}
K_{- \ast}(X \ on \ Z) = \mathrm{BGL}^{*,*}_Z(X)/(\beta + 1)\mathrm{BGL}^{*,*}_Z(X).
\end{equation}
The family of  these isomorphisms form an isomorphism of
ring cohomology theories on $\SmOp/S$ in the same sense.
\end{corollary}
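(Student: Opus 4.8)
The plan is to obtain the Corollary as a formal consequence of the preceding Proposition (equation~(\ref{BGLandTT2})) together with Corollary~\ref{RingIsomorphism}, applied to the pointed motivic space $A=X/(X\smallsetminus Z)$; recall that by definition $\mathrm{BGL}^{*,*}_Z(X)=\mathrm{BGL}^{*,*}\bigl(X/(X\smallsetminus Z)\bigr)$, and that by Corollary~\ref{AdjunctionIso} and Lemma~\ref{Invertibility} cupping with the invertible class $\beta\in\mathrm{BGL}^{2,1}(S)$ shifts bidegree by $(2,1)$.

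First I would apply the preceding Proposition with $A=X/(X\smallsetminus Z)$. It gives a ring isomorphism $\mathrm{BGL}^{*,0}_Z(X)[\beta,\beta^{-1}]\xra{\;\iso\;}\mathrm{BGL}^{*,*}_Z(X)$, $a\otimes\beta^i\mapsto a\cup\beta^i$, under which $\mathrm{BGL}^{n,i}_Z(X)$ is identified with $\mathrm{BGL}^{n-2i,0}_Z(X)\cdot\beta^i$. Next I would invoke Corollary~\ref{RingIsomorphism}: since $Ad\colon K_\ast\to\mathrm{BGL}^{-\ast,0}$ is an isomorphism of ring cohomology theories on $\SmOp/S$, its value on the pair $(X,X\smallsetminus Z)$ is a ring isomorphism $K_{-*}(X \ on \ Z)\xra{\;\iso\;}\mathrm{BGL}^{*,0}_Z(X)$ (replacing $p$ by $-p$ in $K_p\iso\mathrm{BGL}^{-p,0}$). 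Applying the functor $-[\beta,\beta^{-1}]$ to this isomorphism and composing with the isomorphism of the Proposition yields~(\ref{BGLAndTTrings1}).

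For the claim that these isomorphisms assemble into an isomorphism of ring cohomology theories on $\SmOp/S$ in the sense of \cite{PSorcoh}, the point is that everything in sight is \emph{natural}. The assignment $(X,U)\mapsto X/U$ is functorial into pointed motivic spaces; the cup pairings and the unit $1$ are natural by construction from $\bar\mu$ and $\bar e$ (Remark~\ref{MuBar}); and $\beta$ is a \emph{global} class, being defined over $S$, so $\cup\beta$ commutes with all pullback maps and, in particular, with the boundary homomorphisms of the $\mathrm{BGL}$-cohomology theory on $\SmOp/S$. Hence the Proposition's isomorphism is already an isomorphism of ring cohomology theories $\mathrm{BGL}^{*,0}[\beta,\beta^{-1}]\to\mathrm{BGL}^{*,*}$, and composing it with the isomorphism $Ad$ of ring cohomology theories furnished by Corollary~\ref{RingIsomorphism} exhibits~(\ref{BGLAndTTrings1}) as a morphism of ring cohomology theories.

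Finally, for~(\ref{BGLAndTTrings2}) I would pass to the quotient by $(\beta+1)$ in~(\ref{BGLAndTTrings1}): since $\beta$ is a unit and $\beta\equiv-1$ forces $\beta^{-1}\equiv-1$ consistently, evaluation $\beta\mapsto-1$ identifies $K_{-*}(X \ on \ Z)[\beta,\beta^{-1}]/(\beta+1)$ with the single-graded ring $K_{-*}(X \ on \ Z)$, whence $\mathrm{BGL}^{*,*}_Z(X)/(\beta+1)\mathrm{BGL}^{*,*}_Z(X)\iso K_{-*}(X \ on \ Z)$; naturality in $(X,X\smallsetminus Z)$ and compatibility with products and boundary maps are inherited, as above, so this too is an isomorphism of ring cohomology theories. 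I expect no serious obstacle here: the statement is essentially a repackaging of the Proposition and Corollary~\ref{RingIsomorphism}, and the only care needed is in the bookkeeping of the two gradings (topological degree versus weight), in the sign identifying $\beta$ with the periodicity operator, and in checking that passage to the quotient respects the boundary maps in the \cite{PSorcoh} axioms.
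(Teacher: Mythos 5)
Your proof is correct and follows the same route the paper intends: the paper gives no explicit proof for this Corollary, simply stating it as a consequence of the preceding Proposition (the ring isomorphism $\mathrm{BGL}^{*,0}(A)[\beta,\beta^{-1}]\cong\mathrm{BGL}^{*,*}(A)$) combined with the isomorphism $Ad\colon K_\ast\to\mathrm{BGL}^{-\ast,0}$ of ring cohomology theories from Corollary~\ref{RingIsomorphism}. Your write-up supplies the expected bookkeeping — substituting $A=X/(X\smallsetminus Z)$, transporting along $Ad$, observing naturality of $\cup\beta$ as a global class, and evaluating $\beta\mapsto-1$ for the quotient statement — so it is a faithful elaboration rather than a different argument.
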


\subsection{Uniqueness of $\mathrm{BGL}$}
\label{UniquenessOfBGL}

We prove in this Section that, at least over
$S = \Spec(\mathbb{Z})$,
a $\Pro^1$-spectrum
$\mathrm{BGL}$ as described in Definition~\ref{BGL}
is essentially unique
regarded as an object in the stable homotopy category
$\SH(S)$. This has also been obtained in \cite{Riou}.

Let
$\mathrm{BGL}$
be a
$\Pro^1$-spectrum as described in Definition
\ref{BGL}. Recall that this involves the choice of
a weak equivalence $i \colon \mathbb{Z}\times \mathrm{Gr}\to
\mathcal{K}$ and a structure map $\epsilon\colon
\mathcal{K}\wedge \Pro^1\to \mathcal{K}$.
Let $\mathrm{BGL}^\prime$ be a possibly different
$\Pro^1$-spectrum. More precisely,
take a
pointed motivic space
$\K^{\prime}$
together with a weak equivalence
$i^{\prime}\colon \mathbb{Z} \times \mathrm{Gr} \to \K^{\prime}$
and with a morphism
$\epsilon^{\prime}\colon \K^{\prime} \wedge \Pro^1 \to \K^{\prime}$
which descends to
$$\mu^{W} \circ (\id \wedge b^{W})\colon \mathbb{K}^{W} \wedge \Pro^1 \to \mathbb{K}^{W}$$
under the identification of
$\K^{\prime}$
with
$\mathbb{K}^{W}$
in
$\textrm{H}_\bullet(S)$
via the isomorphism
$\psi \circ [i^{\prime}]^{-1}$.
Let
$\mathrm{BGL}^{\prime}$
be the $\Pro^1$-spectrum of the form
$(\K^{\prime}_0,\K^{\prime}_1,\K^{\prime}_2, \dots)$
with
$\K^{\prime}_i=\K^{\prime}$
for all $i$, and with the structure maps
$\epsilon^{\prime}_i\colon \K^{\prime}_i \wedge \Pro^1 \to \K^{\prime}_{i+1}$
equal to the map
$\epsilon^{\prime}\colon  \K^{\prime} \wedge \Pro^1 \to \K^{\prime}$.

\begin{proposition}
\label{BGLandBGL}
Let $S=\Spec(\mathbb{Z})$. There exists a unique morphism
$\theta\colon \mathrm{BGL} \to \mathrm{BGL}^{\prime}$
in
$\mathrm{SH}(S)$
such that for every integer
$i \geq 0$
the diagram
\begin{equation*}
  \xymatrix{
    \Sigma^{\infty}_{\Pro^1}\K_i(-i)   \ar[r]^-{u_i}
    \ar[d]_-{\Sigma^{\infty}_{\Pro^1} \phi_i (-i)} &
    \mathrm{BGL} \ar[d]^-\theta \\
    \Sigma^{\infty}_{\Pro^1}\K^{\prime}_i(-i)\ar[r]^-{u^{\prime}_i}
    &\mathrm{BGL}^{\prime}}
\end{equation*}
commutes in
$\mathrm{SH}(S)$,
where
$\phi_i= i^{\prime} \circ i^{-1} \in [\K_i,\K^{\prime}_i]_{\mathrm{H}_\bullet (S)}$
and
$u_i$, $u^{\prime}_i$
are the canonical morphisms.
Similarly, there exists a unique morphism
$\theta^{\prime}\colon \mathrm{BGL}^{\prime} \to \mathrm{BGL}$
in
$\mathrm{SH}(S)$
such that for every integer
$i \geq 0$
the diagram
\begin{equation*}\xymatrix{
  \Sigma^{\infty}_{\Pro^1}\K_i^\prime(-i)   \ar[r]^-{u_i^\prime}
    \ar[d]_-{\Sigma^{\infty}_{\Pro^1} \phi_i^\prime(-i)} &\mathrm{BGL}^\prime
    \ar[d]^-{\theta^\prime} \\
  \Sigma^{\infty}_{\Pro^1}\K_i(-i)\ar[r]^-{u_i}&\mathrm{BGL}}
\end{equation*}
commutes in
$\mathrm{SH}(S)$, where
$\theta_i= i \circ (i^{\prime})^{-1} \in [\K^{\prime}_i,\K_i]_{\mathrm{H}_\bullet (S)}$.
\end{proposition}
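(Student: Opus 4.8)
The plan is to construct $\theta$ level-wise using the fact that $\K'$ is fibrant-enough to compute maps out of a cofibrant spectrum, and then verify compatibility with the structure maps so that the collection of level maps assembles to a map of $\Pro^1$-spectra. First I would reduce to the case where both $\K$ and $\K'$ are cofibrant in $\M^\cm_\bullet(S)$: by Lemma~\ref{CofibrantK} each of $\mathrm{BGL}$ and $\mathrm{BGL}'$ is weakly equivalent (as a $\Pro^1$-spectrum, compatibly with the $u_i$) to one built from a cofibrant pointed motivic space, so it suffices to prove the statement in that situation. In the cofibrant case the morphism $\phi_i = i' \circ i^{-1} \colon \K_i \to \K'_i$ in $\mathrm{H}_\bullet(S)$ lifts to an honest map of pointed motivic spaces, and I claim it intertwines $\epsilon$ and $\epsilon'$ \emph{up to homotopy}, i.e.\ $[\phi_{i+1} \circ \epsilon] = [\epsilon' \circ (\phi_i \wedge \id)]$ in $\mathrm{H}_\bullet(S)$. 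This is immediate from Definition~\ref{BGL} and Remark~\ref{MuBar}: both sides, transported to $\mathbb{K}^W$ via $\psi \circ [i]^{-1}$ resp.\ $\psi \circ [i']^{-1}$, equal $\mu^W \circ (\id \wedge b^W)$. Hence the maps $\phi_i$ define a \emph{level} map $\mathrm{BGL} \to \mathrm{BGL}'$ in the homotopy category of $\Pro^1$-spectra with the strict model structure, which after stabilisation gives a morphism $\theta$ in $\SH(S)$; the square with $u_i$ commutes by construction of the stabilisation map.

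For \textbf{uniqueness}, suppose $\theta$ and $\tilde\theta$ both make every square commute. Their difference $\delta = \theta - \tilde\theta \in \Hom_{\SH(S)}(\mathrm{BGL}, \mathrm{BGL}')$ then kills every composite $u_i \colon \Sigma^\infty_{\Pro^1}\K_i(-i) \to \mathrm{BGL}$. Since $\mathrm{BGL}$ is the (homotopy) colimit of the system $\Sigma^\infty_{\Pro^1}\K_0 \to \Sigma^\infty_{\Pro^1}\K_1(-1) \to \cdots$ along the adjoints of the $\epsilon_i$, one gets a Milnor exact sequence
\begin{equation*}
  0 \to {\varprojlim}^1\, \Hom_{\SH(S)}\bigl(\Sigma\Sigma^\infty_{\Pro^1}\K_i(-i), \mathrm{BGL}'\bigr)
   \to \Hom_{\SH(S)}(\mathrm{BGL}, \mathrm{BGL}')
   \to {\varprojlim}\, \Hom_{\SH(S)}\bigl(\Sigma^\infty_{\Pro^1}\K_i(-i), \mathrm{BGL}'\bigr) \to 0.
\end{equation*}
The term on the right is controlled by Corollary~\ref{AdjunctionIso}, which identifies $\Hom_{\SH(S)}(\Sigma^\infty_{\Pro^1}\K_i(-i), \mathrm{BGL}')$ with $[\K_i, \K'_i]_{\mathrm{H}_\bullet(S)}$, i.e.\ with $K^0(\mathbb{Z}\times\Gr)$ up to a shift, and similarly the $\Sigma$-shifted groups with $K_1(\mathbb{Z}\times\Gr)$. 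So $\delta$ vanishes provided the relevant $\varprojlim^1$ term is zero. Here is where $S = \Spec(\mathbb{Z})$ enters: one needs that the $K$-groups $K_1\bigl((\mathbb{Z}\times\Gr)\times \Pro^{\times i}\bigr)$ — more precisely the groups $\Hom_{\SH(\mathbb{Z})}(\Sigma\Sigma^\infty_{\Pro^1}\K_i(-i),\mathrm{BGL}')$, which are computed by the projective-bundle-theorem description as in Lemma~\ref{OmegaSpectrum} — form a Mittag-Leffler system, so that $\varprojlim^1 = 0$. I expect this is the crux: it is exactly the statement that the transition maps in the inverse system stabilise, which over $\Spec(\mathbb{Z})$ follows because the relevant $K_1$-groups are finitely generated (they are built out of $K_1(\mathbb{Z})$ and the cohomology of Grassmannians via the projective bundle and localisation theorems) and the transition maps are eventually surjective by the very relation $\beta \cup ([\mathcal{O}(-1)]-[\mathcal{O}]) = \Sigma_{\Pro^1}(1)$ of the preceding Lemma.

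The statement for $\theta' \colon \mathrm{BGL}' \to \mathrm{BGL}$ is entirely symmetric: interchange the roles of $(\K, i, \epsilon)$ and $(\K', i', \epsilon')$ and replace $\phi_i$ by $\phi'_i = i \circ (i')^{-1}$, which is literally $\phi_i^{-1}$ in $\mathrm{H}_\bullet(S)$. No new input is needed. The main obstacle, to repeat, is the $\varprojlim^1$-vanishing in the uniqueness half; the existence half is formal once one observes that $\phi_i$ intertwines the structure maps up to homotopy, which is forced by the normalisation in Definition~\ref{BGL}.
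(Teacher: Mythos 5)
Your overall architecture --- reduce to the Milnor exact sequence for $\Hom_{\SH(S)}(\mathrm{BGL},\mathrm{BGL}^\prime)$, produce the map from an element of the $\varprojlim$ term, and get uniqueness from vanishing of the $\varprojlim^1$ term --- is exactly the paper's, so you have identified the right skeleton. Two points, however, need correction.

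\textbf{On existence.} The assertion that the homotopy-compatible family $\phi_i$ ``defines a level map $\mathrm{BGL}\to\mathrm{BGL}^\prime$ in the homotopy category of $\Pro^1$-spectra with the strict model structure'' is not right: having $[\phi_{i+1}\circ\epsilon]=[\epsilon^\prime\circ(\phi_i\wedge\id)]$ only \emph{up to homotopy} does not produce a map of $\Pro^1$-spectra, even in the strict homotopy category; that would require strictly commuting squares (or at least coherent homotopies). What the up-to-homotopy compatibility \emph{does} give is precisely that the family $u^\prime_i\circ\Sigma^\infty_{\Pro^1}\phi_i(-i)$ is compatible under the transition maps of the tower, i.e.\ defines an element of $\varprojlim\mathrm{BGL}^{\prime\,2i,i}(\K_i)$. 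Existence then follows from surjectivity of $\mathrm{BGL}^{\prime\,0,0}(\mathrm{BGL})\to\varprojlim$ in the Milnor sequence. So the conclusion is right, but the intermediate ``level map'' step should be deleted; the cofibrant-replacement reduction is then also unnecessary for the existence half.

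\textbf{On uniqueness.} You correctly locate the whole difficulty in the vanishing of $\varprojlim^1\mathrm{BGL}^{2i-1,i}(\K^\prime_i)$, but the justification you sketch does not hold up. The groups in this tower are (after $(2,1)$-periodicity) $K_1(\mathbb{Z}\times\Gr)$ --- the $K_1$-group of the \emph{infinite} Grassmannian over $\Spec(\mathbb{Z})$ --- and these are \emph{not} finitely generated: $K_1(\Gr(n,2n))\cong K_1(\mathbb{Z})\otimes K_0(\Gr(n,2n))$ is a finite $2$-group of rank $\binom{2n}{n}$, and $K_1(\Gr)=\varprojlim_n K_1(\Gr(n,2n))$ is a large profinite group. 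The claim that the transition maps in the tower (indexed by the spectrum level $i$, not by $n$) are ``eventually surjective by the $\beta$-relation'' is also not a consequence of that relation. In the paper the vanishing of this $\varprojlim^1$ is a separate proposition (Proposition~\ref{LimOneBGL}) whose proof is genuinely substantial: one first uses $K_1(\mathbb{Z})=\mathbb{Z}/2$ to identify the tower with $\{\mathrm{BGL}^{2i,i}(\K_i)/2\}$, then applies topological realization (Lemmas~\ref{SigzagWeakEquiv}, \ref{CohOfCellularSpace}, \ref{GeomRealIsom}) to compare to $\mathbb{B}\mathrm{U}$, uses that $\mathbb{B}\mathrm{U}^{\mathrm{odd}}(\mathbb{Z}\times\mathrm{BU})=0$, and finally replaces $\mathrm{BU}$ by finite Grassmannians via a finite-approximation spectrum (Lemma~\ref{FiniteAprox}), at which point the tower consists of honestly finite groups $K^{2i}_{\mathrm{top}}(\Gr(b(i),2b(i));\mathbb{Z}/2)$ and is Mittag-Leffler. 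None of these steps is recoverable from ``finitely generated + $\beta$ invertible,'' so your uniqueness argument has a real gap that would need to be filled by (or replaced with a reference to) an argument of this kind.
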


\begin{proof}
Consider the exact sequence
\begin{equation*}
0 \to {\varprojlim}^{1}\mathrm{BGL}^{2i-1,i}(\K^{\prime}_i)\to
\mathrm{BGL}^{0,0}(\mathrm{BGL}^{\prime})
\to {\varprojlim}\mathrm{BGL}^{2i,i}(\K^{\prime}_i) \to 0
\end{equation*}
from Lemma~\ref{lem:filtered-colimit}. The family of elements
$(u_i \circ \Sigma^{\infty}_{\Pro^1} \theta^{\prime}_i (-i))$
is an element of the group
${\varprojlim}\mathrm{BGL}^{2i,i}(\K^{\prime}_i)$.
Thus there exists the required morphism
$\theta^{\prime}$. To prove its uniqueness, observe that the
${\varprojlim}^{1}$-group
vanishes by Proposition~\ref{LimOneBGL}.
Whence
$\mathrm{BGL}^{0,0}(\mathrm{BGL}^{\prime})=
   {\varprojlim}\mathrm{BGL}^{2i,i}(\K^{\prime}_i)$
and
$\theta^{\prime}$
is indeed unique.
By symmetry there also exists a unique
morphism $\theta$
with the required property.
\end{proof}

\begin{proposition}
\label{BGL=BGL}
Let $S=\Spec(\mathbb{Z})$. The morphism
$\theta\colon \mathrm{BGL} \to \mathrm{BGL}^{\prime}$
is the inverse of
$\theta^{\prime}\colon \mathrm{BGL}^{\prime} \to \mathrm{BGL}$
in
$\mathrm{SH}(S)$, and in particular an isomorphism.
\end{proposition}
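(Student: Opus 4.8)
The plan is to deduce everything from the uniqueness clause of Proposition~\ref{BGLandBGL}, applied in the special case $\mathrm{BGL}^{\prime}=\mathrm{BGL}$ with all comparison maps equal to the identity. First I would record the one piece of input that makes this work: by Lemma~\ref{lem:filtered-colimit} there is a short exact sequence
\begin{equation*}
0 \to {\varprojlim}^{1}\mathrm{BGL}^{2i-1,i}(\K_i) \to \mathrm{BGL}^{0,0}(\mathrm{BGL}) \to {\varprojlim}\,\mathrm{BGL}^{2i,i}(\K_i) \to 0,
\end{equation*}
and the ${\varprojlim}^{1}$-term vanishes by Proposition~\ref{LimOneBGL}. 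Consequently the map sending an endomorphism $f$ of $\mathrm{BGL}$ in $\SH(S)$ to the family $(f\circ u_i)_{i\geq 0}$ is injective; in other words, an endomorphism of $\mathrm{BGL}$ is completely determined by its restrictions along the canonical morphisms $u_i\colon \Sigma^{\infty}_{\Pro^1}\K_i(-i)\to \mathrm{BGL}$.

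Next I would compute $\theta^{\prime}\circ\theta\colon \mathrm{BGL}\to\mathrm{BGL}$ on each $u_i$. From the defining square of $\theta$ in Proposition~\ref{BGLandBGL} we have $\theta\circ u_i = u_i^{\prime}\circ \Sigma^{\infty}_{\Pro^1}\phi_i(-i)$, and from that of $\theta^{\prime}$ we have $\theta^{\prime}\circ u_i^{\prime} = u_i\circ \Sigma^{\infty}_{\Pro^1}\phi_i^{\prime}(-i)$, whence
\begin{equation*}
\theta^{\prime}\circ\theta\circ u_i = u_i\circ \Sigma^{\infty}_{\Pro^1}(\phi_i^{\prime}\circ\phi_i)(-i).
\end{equation*}
Now $\phi_i = i^{\prime}\circ i^{-1}$ and $\phi_i^{\prime} = i\circ (i^{\prime})^{-1}$ in $\mathrm{H}_\bullet(S)$, so $\phi_i^{\prime}\circ\phi_i = \id_{\K_i}$, and therefore $\theta^{\prime}\circ\theta\circ u_i = u_i$ for every $i\geq 0$. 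Since the identity of $\mathrm{BGL}$ evidently satisfies the same relations, the injectivity recorded in the first step forces $\theta^{\prime}\circ\theta = \id_{\mathrm{BGL}}$.

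Finally, interchanging the roles of $\mathrm{BGL}$ and $\mathrm{BGL}^{\prime}$, using the mirror identity $\phi_i\circ\phi_i^{\prime} = \id_{\K_i^{\prime}}$ together with the vanishing of ${\varprojlim}^{1}\mathrm{BGL}^{2i-1,i}(\K_i^{\prime})$, gives $\theta\circ\theta^{\prime} = \id_{\mathrm{BGL}^{\prime}}$ by exactly the same argument; hence $\theta$ is an isomorphism with inverse $\theta^{\prime}$. I do not expect any genuine obstacle here: the only nontrivial ingredient is the ${\varprojlim}^{1}$-vanishing of Proposition~\ref{LimOneBGL}, which is already available, and the rest is a purely formal manipulation of the characterizing diagrams — in particular there is no coherence issue beyond the two relations $\phi_i^{\prime}\circ\phi_i = \id$ and $\phi_i\circ\phi_i^{\prime} = \id$, which already hold in $\mathrm{H}_\bullet(S)$.
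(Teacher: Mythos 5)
Your proof is correct and follows essentially the same route as the paper: compute $\theta'\circ\theta$ on the canonical maps $u_i$, observe it agrees with the identity there, and conclude from the uniqueness clause (equivalently, from the $\varprojlim^1$-vanishing of Proposition~\ref{LimOneBGL}) that $\theta'\circ\theta=\id$, and symmetrically for $\theta\circ\theta'$. The only difference is presentational — you unfold the uniqueness assertion of Proposition~\ref{BGLandBGL} back to the exact sequence and the cancellation $\phi_i'\circ\phi_i=\id$, whereas the paper cites the uniqueness clause directly.
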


\begin{proof}
The composite morphism
$\theta^{\prime} \circ \theta\colon \mathrm{BGL} \to \mathrm{BGL}$
has the property that
for every integer
$i \geq 0$
the diagram
\begin{equation*}\xymatrix{
  \Sigma^{\infty}_{\Pro^1}\K_i(-i)   \ar[r]^-{u_i}
    \ar[d]_-{\id} &    \mathrm{BGL} \ar[d]^-{\theta^\prime \circ \theta} \\
  \Sigma^{\infty}_{\Pro^1}\K_i(-i)\ar[r]^-{u_i}&\mathrm{BGL}}
\end{equation*}
commutes. However, the identity morphism
$\id\colon \mathrm{BGL} \to \mathrm{BGL}$
has the same property.
Thus
$\theta^{\prime} \circ \theta = \id$.
by the uniqueness in Proposition~\ref{BGLandBGL}, and similarly
$\theta \circ \theta^{\prime} = \id$.
\end{proof}

\begin{remark}
\label{BGL=BGLasMonoids}
The isomorphisms
$\theta$
and
$\theta^{\prime}$
are monoid isomorphisms provided
that
$\mathrm{BGL}$
and
$\mathrm{BGL}^{\prime}$
are equipped with the monoidal structures
given by
Theorem~\ref{MonoidalStrOnBGL}.
This follows from the fact that both $i$ and $i^\prime$ are
isomorphisms of monoids in $H_\bullet(S)$.
\end{remark}

\begin{remark}
\label{Useful}
There exists a unique morphism
$e\colon(\mathbb{Z} \times \mathrm{Gr})\wedge \Pro^1 \to \mathbb{Z} \times \mathrm{Gr}$
in
$\mathrm{H}_\bullet (S)$
such that the diagram
\begin{equation*}
  \xymatrix{
    (\mathbb{Z} \times \mathrm{Gr})\wedge \Pro^1
    \ar[r]^-{e}\ar[d]_-{\psi \wedge \id} &
    \mathbb{Z} \times \mathrm{Gr} \ar[d]^-\psi     \\
    \mathbb{K}^{W} \wedge \Pro^1   \ar[r]^-{e^{W}} &  \mathbb{K}^{W}}
\end{equation*}
commutes in
$\mathrm{H}_\bullet (S)$,
where
$e^{W}=\mu^{W} \circ (\id \wedge b^{W})$
 and $\psi$ is described right above Definition
\ref{BGL}. The diagram
\begin{equation*}
  \xymatrix{
    (\mathbb{Z} \times \mathrm{Gr})\wedge \Pro^1
    \ar[r]^-{e}\ar[d]_-{i\wedge \id} &
    \mathbb{Z} \times \mathrm{Gr} \ar[d]^-i     \\
    \K \wedge \Pro^1   \ar[r]^-{\epsilon} &  \K}
\end{equation*}
then commutes as well.
That is, $\epsilon$ descends to $e$ in $\mathrm{H}_\bullet (S)$.

We will need an observation concerning the morphism $e$.
Let $\tau_n$ be the tautological bundle over the Grassmann variety
$\textrm{Gr}(n,2n)$. By Lemma
\ref{KTTofGr}
there exists a unique element
$\xi_{\infty} \in \textrm{K}_0(\mathbb{Z} \times \mathrm{Gr})$
such that for any positive integer $n$ and any $0 \leq m \leq n$
one has
$\xi|_{\lbrace m\rbrace  \times \mathrm{Gr}(n,2n)} = [\tau_n]-n+m $.
By Lemmas
\ref{KofGrsmashGr}
and
\ref{AtensorB}
the element
$\xi_{\infty} \otimes ([\mathcal O(-1)] - [\mathcal O])$
belongs to the subgroup
$\textrm{K}_0((\mathbb{Z} \times \mathrm{Gr}) \wedge \Pro^1)$
of the group
$\textrm{K}_0(\mathbb{Z} \times \mathrm{Gr} \times \Pro^1)$.
The isomorphism $\psi\colon \mathbb{Z}\times\mathrm{Gr}\to
\mathbb{K}^{W}$ in $\mathrm{H}_\bullet(S)$ represents the element
$\xi_{\infty}$ in
$\textrm{K}^{}_0((\mathbb{Z} \times \mathrm{Gr}))$.
Now the definitions of
$e^{W}$ and $e$
show that
\begin{equation*}
  e^{\ast}(\xi_{\infty})= \xi_{\infty}  \otimes ([\mathcal{O}(-1)]-[\mathcal{O}])
\end{equation*}
in
$\textrm{K}_0((\mathbb{Z} \times \mathrm{Gr}) \wedge \Pro^1)$.
\end{remark}

\begin{remark}\label{AllBGLareIsomorphic}
  Let $S$ be a regular scheme.
  Given two triples $(\K_1,i_1,\epsilon_1)$ and $(\K_2,i_2,\epsilon_2)$
  fulfilling the conditions of Definition~\ref{BGL},
  there exists a zig-zag of weak equivalences of triples
  connecting these two. In particular, there exists a
  zig-zag of levelwise weak equivalences of $\Pro^1$-spectra
  over $S$ connecting $\mathrm{BGL}_1$ and $\mathrm{BGL}_2$.
  It follows that
  the $\Pro^1$-spectra
  $\textrm{BGL}_1$ and $\textrm{BGL}_2$ associated to these triples
  are "naturally" isomorphic in $\textrm{SH}(S)$.
  This shows that the strength of
  Proposition
  \ref{BGLandBGL}
  is in its uniqueness assertion.
\end{remark}

\subsection{Preliminary computations I}
\label{PreliminaryCompI}

In this section we prepare for the next section, in which
we show that certain
${\varprojlim}^{1}$-groups vanish.
Let
$\mathrm{BGL}$
be the $\Pro^1$-spectrum defined in~\ref{BGL}.
We will identify in this section the functors
$\mathrm{BGL}^{0,0}$
and
$\mathrm{BGL}^{2i,i}$
on the category
$\mathrm{H}_{\bullet}(S)$
via the iterated $(2,1)$-periodicity isomorphism as follows:
\begin{equation}
\label{Periodicity00}
\mathrm{BGL}^{0,0}(A)\iso\Hom_{\mathrm{H}_\bullet(S)}(A,\K_0)=
       \Hom_{\mathrm{H}_\bullet(S)}(A,\K_i)\iso \mathrm{BGL}^{2i,i}(A).
\end{equation}
Similarly,
\begin{equation}
\label{Periodicity}
\begin{split}\mathrm{BGL}^{-1,0}(A)  & \iso  \Hom_{\mathrm{H}_\bullet(S)}(S^{1,0} \wedge A,\K_0)
          \\
  &= \Hom_{\mathrm{H}_\bullet(S)}(S^{1,0} \wedge A,\K_i)\iso
\mathrm{BGL}^{2i-1,i}(A)
\end{split}
\end{equation}
These identifications respect the naive product structure
(\ref{NaiveProducts})
on the functor
$\mathrm{BGL}^{\ast,\ast}$.
In particular,
the following diagram commutes for every pointed motivic space $A$ over $S$.
\begin{equation}\label{OneDiagram}
  \xymatrix{
    \mathrm{BGL}^{-1,0}(S) \otimes \mathrm{BGL}^{0,0}(A)  \ar[r] \ar[d]_-\iso &
    \mathrm{BGL}^{-1,0}(A) \ar[d]^-\iso \\
    \mathrm{BGL}^{-1,0}(S) \otimes \mathrm{BGL}^{2i,i}(A)\ar[r]&\mathrm{BGL}^{2i-1,i}(A)}
\end{equation}

\begin{remark}
\label{21Periodicity}
The identification
(\ref{Periodicity})
of
$\mathrm{BGL}^{-1,0}(X)$
with
$\mathrm{BGL}^{2i-1,i}(X)$
coincides with the periodicity isomorphism
$\mathrm{BGL}^{-1,0}(X) \xra{\cup \beta^i} \mathrm{BGL}^{2i-1,i}(X)$.
\end{remark}

\begin{lemma}
\label{BGLofGr}
Let $S=\Spec(\mathbb{Z})$. For every integer $i$ the map
$$
\mathrm{BGL}^{-1,0}(S) \otimes_{} \mathrm{BGL}^{2i,i}(\K) \to
\mathrm{BGL}^{2i-1,i}(\K)
$$
induced by the naive product structure is an isomorphism.
The same holds if $\K \wedge \Pro^1$ replaces
$\K$.
\end{lemma}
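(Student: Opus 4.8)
The plan is to reduce the statement to a known computation of the $K$-theory (equivalently $\mathrm{BGL}^{*,*}$) of the space $\mathbb{Z} \times \mathrm{Gr}$, using that $\K$ is weakly equivalent to $\mathbb{Z} \times \mathrm{Gr}$ via $i$ and hence $\mathrm{BGL}^{*,*}(\K) \iso \mathrm{BGL}^{*,*}(\mathbb{Z} \times \mathrm{Gr})$, and then exploit periodicity to turn the claim into a statement purely about $\mathrm{BGL}^{0,0}$ and $\mathrm{BGL}^{-1,0}$. First I would use the identifications \eqref{Periodicity00} and \eqref{Periodicity}, together with Remark~\ref{21Periodicity}, to rewrite the asserted map as the composite
\[
\mathrm{BGL}^{-1,0}(S) \otimes \mathrm{BGL}^{0,0}(\K) \to \mathrm{BGL}^{-1,0}(\K),
\]
i.e.\ the case $i=0$ of the diagram \eqref{OneDiagram}; multiplying by the invertible periodicity element $\beta^i$ (Lemma~\ref{Invertibility}) on both sides shows the general-$i$ statement is equivalent to the $i=0$ one. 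So it suffices to prove that $\mathrm{BGL}^{-1,0}(S) \otimes \mathrm{BGL}^{0,0}(\K) \to \mathrm{BGL}^{-1,0}(\K)$ is an isomorphism.

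Next I would replace $\mathrm{BGL}^{*,0}$ by $K$-theory via Corollary~\ref{RingIsomorphism}, so the claim becomes that $K_1(S) \otimes_{K_0(S)} K_0(\mathbb{Z}\times\mathrm{Gr}) \to K_1(\mathbb{Z}\times\mathrm{Gr})$ is an isomorphism for $S = \Spec(\mathbb{Z})$. Here I would invoke the structure of $K_*$ of the infinite Grassmannian: by a projective-bundle / cellularity argument (the finite Grassmannians $\mathrm{Gr}(n,2n)$ have $K$-theory free as a module over $K_*(S)$ on the classes of the exterior powers of the tautological bundle, and one passes to the colimit), $K_*(\mathbb{Z}\times\mathrm{Gr})$ is a free $K_*(S)$-module; in particular $K_1(\mathbb{Z}\times\mathrm{Gr}) \iso K_1(S)\otimes_{K_0(S)} K_0(\mathbb{Z}\times\mathrm{Gr})$. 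This is exactly the kind of computation recorded in the lemmas cited in Remark~\ref{Useful} (Lemmas~\ref{KTTofGr}, \ref{KofGrsmashGr}, \ref{AtensorB}, \ref{KTTofGr}); over $\Spec(\mathbb{Z})$ the relevant colimit and $\varprojlim^1$ issues are controlled because $K_1(\mathbb{Z}) = \mathbb{Z}/2$ and $K_0(\mathbb{Z}) = \mathbb{Z}$ are particularly simple.

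For the variant with $\K \wedge \Pro^1$ in place of $\K$, I would use the cofiber sequence $S^0 \to \Pro^1_+ \to \Pro^1$ (or equivalently $\K_+ \to (\K\wedge\Pro^1)_+$ built from the $\Pro^1$-factor) together with the projective bundle theorem \cite[Thm.~4.1]{TT} to split $\mathrm{BGL}^{*,*}(\K\wedge\Pro^1)$ as a direct sum of two copies of $\mathrm{BGL}^{*,*}(\K)$, compatibly with the naive product by the element of $\mathrm{BGL}^{-1,0}(S)$; then the result for $\K$ gives the result for $\K\wedge\Pro^1$ summand-wise. The main obstacle, and the step deserving genuine care, is the freeness of $K_*(\mathbb{Z}\times\mathrm{Gr})$ over $K_*(S)$ together with the commutation of the tensor product with the colimit over $n$ and the vanishing of the attendant $\varprojlim^1$-term; this is where the hypothesis $S = \Spec(\mathbb{Z})$ is used, and where one leans on the cited $K$-theory computations of the Grassmannian rather than reproving them.
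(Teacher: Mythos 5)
Your proposal follows essentially the same route as the paper's proof: reduce to $i=0$ via the periodicity identifications, replace $\K$ by $\mathbb{Z}\times\mathrm{Gr}$, translate to $K$-theory via Corollary~\ref{RingIsomorphism}, and then control the passage to the limit over the finite Grassmannians using that $K_1(\mathbb{Z})=\mathbb{Z}/2$ is finitely generated. One small caveat: $K_*(\mathbb{Z}\times\mathrm{Gr})$ is not literally a free $K_*(S)$-module (it is an inverse limit of such); what one actually invokes, as the paper does, is Lemma~\ref{KofCellularVar} at each finite stage plus the surjectivity of the bonding maps (Lemma~\ref{LimOneVanish}) to see that tensoring the inverse system with the finitely generated group $K_1(\mathbb{Z})$ commutes with $\varprojlim$ --- your parenthetical about ``$\varprojlim^1$ issues'' shows you saw this, but the freeness claim as stated is slightly off. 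For the $\K\wedge\Pro^1$ case you propose a projective-bundle splitting where the paper instead cites Lemmas~\ref{KofGrsmashGr} and~\ref{KofSmash} directly; both work, with your splitting being perhaps more conceptual and the paper's citation being shorter.
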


\begin{proof}
The commutativity of the diagram
(\ref{OneDiagram})
shows that it suffices to consider the case $i=0$.
Furthermore we may replace the pointed motivic space $\K$ with
$\mathbb{Z} \times \mathrm{Gr}$
since the map
$i\colon \mathbb{Z} \times \mathrm{Gr} \to \K=\K$
is a weak equivalence. The functor isomorphism
$K^{}_\ast \to \mathrm{BGL}^{-*,0}$
is a ring cohomology isomorphism by Corollary~\ref{RingIsomorphism}.
Thus it remains to check that the map
\begin{equation*}
K^{}_1(S) \otimes_{} K^{}_0(\mathbb{Z} \times \mathrm{Gr}) \to
K^{}_1(\mathbb{Z} \times \mathrm{Gr})
\end{equation*}
is an isomorphism. For a set $M$ and a smooth $S$-scheme $X$ we will write
$M \times X$ for the disjoint union $\bigsqcup_M X$ of $M$
copies of $X$ in the category of motivic spaces over $S$.
Let $[-n,n]$
be the set of integers with absolute value $\leq n$.
By Lemma
\ref{KTTofGr}
and Lemma
\ref{KofCellularVar}
it suffices to check that the natural map
\begin{equation*} A \otimes {\varprojlim}K^{}_0 \bigl([-n,n] \times \mathrm{Gr}(n,2n)\bigr) \to
{\varprojlim} A \otimes K^{}_0 \bigl([-n,n] \times \mathrm{Gr}(n,2n)\bigr)
\end{equation*}
is an isomorphism, where $A = K^{}_1(S)$.
This is the case since
$K^{}_1(S)$
is a finitely generated abelian group (it is just
$\mathbb Z/2\mathbb Z$).
The assertion concerning
$\K \wedge \Pro^1$
is proved similarly using Lemmas
\ref{KofGrsmashGr}
and
\ref{KofSmash} instead.
\end{proof}

To state the next lemma, consider the scheme morphism
$f\colon \Spec(\mathbb{C}) \to S=\Spec(\mathbb{Z})$,
the pull-back functor
$f^\ast\colon \SH(\mathbb{Z}) \to \mathrm{SH}(\mathbb{C})$
described in Proposition~\ref{prop:base-change-spectra}, and
the topological realization functor
$\real_\CC\colon  \mathrm{SH}(\mathbb{C}) \to \SH_{\CC\Pro^1}$
described in Section~\ref{sec:stable-topol-real}. Set
$r= \real_\CC \circ f^\ast\colon  \SH(S) \to \SH_{\CC\Pro^1}$.
The functor $r$ will be called for short
the realization functor below in this Section.

\begin{lemma}
\label{SigzagWeakEquiv}
Let
$\mathbb{B}\mathrm{U}$
be the periodic complex K-theory $\CC\Pro^1$-spectrum with terms
$\mathbb{Z} \times \mathrm{BU}$. There is a zigzag
$\mathbb{B}\mathrm{U} \xla{\sim} E \xra{\sim} r\mathrm{BGL}$
of levelwise weak equivalences of $\CC\Pro^1$-spectra.
\end{lemma}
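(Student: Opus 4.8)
The plan is to produce the middle spectrum $E$ and both weak equivalences levelwise, working term by term. First recall that $r\mathrm{BGL}$ has, in level $i$, the space $\real_\CC(f^\ast \K)$, and that $f^\ast \K$ is weakly equivalent (in $\M_\bullet^\cm(\CC)$) to $f^\ast(\mathbb{Z}\times\mathrm{Gr}) = \mathbb{Z}\times\mathrm{Gr}_\CC$ via $f^\ast(i)$; since $\real_\CC$ is a left Quillen functor by condition~\ref{item:7} of Section~\ref{Recallection} (and $\mathbb{Z}\times\mathrm{Gr}$ is closed cofibrant by Lemma~\ref{lem:closed-emb-cof}, as is $f^\ast$ of it), $\real_\CC(f^\ast(i))$ is a weak equivalence of topological spaces $\real_\CC(\mathbb{Z}\times\mathrm{Gr}_\CC) \to \real_\CC(f^\ast\K)$. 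The classical computation identifies $\real_\CC(\mathbb{Z}\times\mathrm{Gr}_\CC)$ with $\mathbb{Z}\times\mathrm{BU}$ up to weak equivalence, so each level of $r\mathrm{BGL}$ is abstractly $\mathbb{Z}\times\mathrm{BU}$. The point of the zigzag is to make these identifications compatible with the structure maps, which is why one introduces the intermediate $E$: take $E$ to be the $\CC\Pro^1$-spectrum with $E_i$ a functorial cofibrant-fibrant replacement mediating between $\real_\CC(f^\ast(\mathbb{Z}\times\mathrm{Gr}_\CC))$ and $r\mathrm{BGL}_i$, with structure maps induced from those of $r\mathrm{BGL}$ via $\real_\CC(f^\ast(e))$ where $e$ is the descended structure map of Remark~\ref{Useful}.

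The key steps, in order. (1) Apply $r = \real_\CC\circ f^\ast$ to the structure map $\epsilon\colon \K\wedge\Pro^1\to\K$ and to the weak equivalence $i$; using that both functors are left Quillen (hence preserve cofibrant objects and weak equivalences between them) and that $\Pro^1$ realizes to $\CC\Pro^1 \simeq S^2$, obtain that $r\mathrm{BGL}$ is a $\CC\Pro^1$-spectrum whose level maps are the realizations of the maps $e_i = e$ of Remark~\ref{Useful}. (2) Identify $\real_\CC(f^\ast(e))\colon \real_\CC(\mathbb{Z}\times\mathrm{Gr}_\CC)\wedge\CC\Pro^1 \to \real_\CC(\mathbb{Z}\times\mathrm{Gr}_\CC)$ with the Bott structure map of $\mathbb{B}\mathrm{U}$: this is where the content lies. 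By Remark~\ref{Useful}, $e^\ast(\xi_\infty) = \xi_\infty\otimes([\mathcal{O}(-1)]-[\mathcal{O}])$ in $K_0((\mathbb{Z}\times\mathrm{Gr})\wedge\Pro^1)$; realizing, and using that $\real_\CC$ of $[\mathcal{O}(-1)]-[\mathcal{O}]$ on $\Pro^1$ is the topological Bott class on $S^2 = \CC\Pro^1$ (it is the reduced class of the tautological line bundle), one gets that $\real_\CC(f^\ast(e))$ represents $\xi_\infty^{\mathrm{top}}\otimes\beta_{\mathrm{top}}$, which is exactly the Bott periodicity structure map of $\mathbb{B}\mathrm{U}$ under the standard identification $\real_\CC(\mathbb{Z}\times\mathrm{Gr}_\CC)\simeq\mathbb{Z}\times\mathrm{BU}$. (3) Assemble these level identifications into the zigzag: let $E$ be the spectrum whose $i$-th space is (a functorial fibrant replacement of) $r\mathrm{BGL}_i$ with the evident maps to $r\mathrm{BGL}$ and to $\mathbb{B}\mathrm{U}$, each a levelwise weak equivalence by steps (1) and (2), compatible with structure maps by naturality.

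The main obstacle I expect is step (2): making the comparison of $\real_\CC(f^\ast(e))$ with the topological Bott map genuinely precise, rather than merely "abstractly correct". One must pin down that the realization functor sends the algebraic class $[\mathcal{O}(-1)]-[\mathcal{O}] \in K_0(\Pro^1)$ to the generator of $\widetilde{K}^0(S^2)\cong\mathbb{Z}$, that the realization of the multiplicative structure on $\mathbb{K}^W$ (hence on $\K$) matches the product on topological $K$-theory, and that the classical homotopy equivalence $\real_\CC(\mathrm{Gr}(n,2n)_\CC)\simeq BU(n)$ is compatible with stabilization in $n$ so as to give $\real_\CC(\mathbb{Z}\times\mathrm{Gr}_\CC)\simeq\mathbb{Z}\times\mathrm{BU}$ respecting the $K$-theory classes $\xi_\infty$ and $[\tau_n]$; these are standard facts about topological realization of the algebraic $K$-theory spectrum but need to be invoked carefully and in the right order. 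The remaining steps are essentially formal manipulations with left Quillen functors and fibrant replacements.
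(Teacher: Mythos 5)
Your proposal is correct and follows essentially the same route as the paper's proof, which simply cites Remark~\ref{Useful}, Example~\ref{example:stable-real-bgl}, and the fact that Grassmann varieties pull back; your step~(2) is exactly the content of Example~\ref{example:stable-real-bgl}, where the compatibility of $\xi_\infty$ with its topological counterpart $\zeta_\infty$ and the matching of the realized structure map with the Bott periodicity map are established. You have merely unpacked what the paper compresses into three citations.
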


\begin{proof}
  This follows from
  Remark~\ref{Useful},
  ~\ref{example:stable-real-bgl}
  and the fact that Grassmann varieties pull back.
\end{proof}

\begin{lemma}
\label{CohOfCellularSpace}
Let $X \in \Sm/S$, where $S=\Spec(\ZZ)$, and let
$X_0 \subset X_1 \subset \dots \subset X_n = X$
be a filtration by closed subsets such that
for every integer
$i \geq 0$
the $S$-scheme
$X_i - X_{i-1}$
is isomorphic to a disjoint union of several copies of the
affine space
$\Aff^i_S$.
The map
$\mathrm{BGL}^{0,0}(X) \to
( r\mathrm{BGL})^{0}(rX)$
is an isomorphism.
\end{lemma}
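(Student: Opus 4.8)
The plan is to compute each side by a spectral sequence attached to the cellular filtration and to compare these two spectral sequences on the $E_1$-page.

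\smallskip

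\emph{The algebraic side.} Since $\mathrm{BGL}^{0,0}(X)=K_0(X)$ by Corollary~\ref{AdjunctionIso}, I would use the filtration $X_0\subset\dots\subset X_n=X$ and the localization long exact sequences of the cohomology theory $\mathrm{BGL}^{-\ast,0}=K_\ast$ on $\SmOp/\ZZ$ to build a strongly convergent spectral sequence with first page
\[
E_1^{j,m}=K_m\bigl((X\smallsetminus X_{j-1})\text{ on }(X_j\smallsetminus X_{j-1})\bigr)\;\cong\;K_m\bigl(\bigsqcup\nolimits_{k_j}\Aff^j_{\ZZ}\bigr)\;\cong\;\prod\nolimits_{k_j}K_m(\ZZ),
\]
abutting to $K_\ast(X)$, where $k_j$ denotes the number of $j$-cells. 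The first isomorphism is homotopy purity for the smooth closed immersion $X_j\smallsetminus X_{j-1}\hookrightarrow X\smallsetminus X_{j-1}$, which I would deduce from the Thom isomorphism for the oriented ring theory $\mathrm{BGL}$ (Corollary~\ref{RingIsomorphism}) together with the $(2,1)$-periodicity of Corollary~\ref{LaurantPolinomial}; the second is homotopy invariance of $K$-theory. Now $K_{-1}(\ZZ)=0$ and $K_1(\ZZ)=\ZZ/2$ is torsion, while the row $m=0$ is the torsion-free group $\prod\nolimits_{k_j}\ZZ$; hence every differential into or out of that row is a homomorphism from a torsion subquotient to a torsion-free subquotient and therefore vanishes. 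Consequently $E_\infty^{j,0}=E_1^{j,0}$, so $\mathrm{BGL}^{0,0}(X)=K_0(X)$ is free abelian of rank $N:=\sum_j k_j$, with the cellular filtration as its natural filtration and associated graded $\prod\nolimits_{k_j}\ZZ$.

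\smallskip

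\emph{The topological side.} The functor $r=\real_{\CC}\circ f^{\ast}$ is left Quillen, preserves cofibre sequences, and sends open immersions to open immersions, so it carries the cofibre sequences used above into the analogous ones for $rX=X(\CC)$ filtered by the closed subspaces $rX_j=X_j(\CC)$. Using $r\mathrm{BGL}\simeq\mathbb{B}\mathrm{U}$ (Lemma~\ref{SigzagWeakEquiv}), complex orientability of topological $K$-theory, and Bott periodicity, this produces a strongly convergent spectral sequence with
\[
E_1^{j,m}\;\cong\;KU^{-m}\bigl(\bigsqcup\nolimits_{k_j}\CC^j\bigr)\;\cong\;\prod\nolimits_{k_j}KU^{-m}(\mathrm{pt}),
\]
which vanishes for $m$ odd and equals $\prod\nolimits_{k_j}\ZZ$ for $m$ even, abutting to $(r\mathrm{BGL})^{\ast}(rX)=KU^{\ast}(rX)$. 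Thus the whole spectral sequence is concentrated in even $m$, all its differentials vanish, and $(r\mathrm{BGL})^{0}(rX)=KU^{0}(rX)$ is free of rank $N$ with the analogous filtration.

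\smallskip

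\emph{The comparison.} Naturality of the realization transformation $\mathrm{BGL}^{\ast,\ast}(Y)\to(r\mathrm{BGL})^{\ast}(rY)$ with respect to these cofibre sequences yields a morphism between the two spectral sequences. On the row $m=0$ it is, factor by factor, the map $K_0(\ZZ)\to KU^0(\mathrm{pt})$, that is $\ZZ\to\ZZ$ sending the class of a free module to the class of the corresponding trivial bundle; by Lemma~\ref{SigzagWeakEquiv} and Remark~\ref{Useful}, which are arranged so that the universal $K$-theory class realizes to the universal $\mathbb{B}\mathrm{U}$-class, this is an isomorphism. Since on both sides no differential meets the row $m=0$, the morphism is an isomorphism on $E_\infty^{\ast,0}$, and as the filtrations are finite it is an isomorphism on the abutments; in particular $\mathrm{BGL}^{0,0}(X)\to(r\mathrm{BGL})^{0}(rX)$ is an isomorphism. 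I expect the delicate step to be the first one: setting up the purity/localization spectral sequence and observing that the ``odd'' groups $K_m(\ZZ)$, $m\neq 0$, although highly nontrivial, are torsion and hence invisible to the torsion-free even row --- which is precisely why the even part of the algebraic $K$-theory of a cellular $\ZZ$-scheme realizes isomorphically even though the full theory does not.
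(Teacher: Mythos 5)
The proposal is correct, but it takes a genuinely different route from the paper. The paper avoids spectral sequences entirely: it introduces the class $\mathcal{R}$ of $\Pro^1$-spectra $E$ for which $\BGL^{0,0}(E)\to (r\BGL)^0(rE)$ is an isomorphism, observes that $\mathcal{R}$ contains $S^{0,0}$ (the comparison is the rank isomorphism $K^0(\ZZ)\iso\ZZ\iso K^0_{\mathrm{top}}(S^0)$), that it contains every $S^{2m,m}$ by the compatibility of the algebraic $(2,1)$-periodicity with topological Bott periodicity, and that it is a triangulated subcategory in the two-out-of-three sense. Then it applies homotopy purity to the open filtration $U^i = X\smallsetminus X_i$ to express $\Sigma^\infty_{\Pro^1}X_+$ as an iterated extension of wedges of motivic spheres, and concludes by a straightforward descending induction. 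In contrast, you build two Atiyah--Hirzebruch type spectral sequences from the cellular filtration, on the algebraic and the topological side, and compare them; the key fact you exploit is that every differential touching the $m=0$ row shifts the $K$-degree by $\pm 1$, so it lands in or comes from $K_{-1}(\ZZ)=0$ or $K_1(\ZZ)=\ZZ/2$ (torsion), and therefore dies against the torsion-free $m=0$ row. Both arguments are valid. The paper's route is more conceptual and needs no information about $K$-groups of $\ZZ$ beyond $K_0$: the five lemma absorbs everything. Your route is more computational, but it shows quite explicitly \emph{why} the degree-zero part of $\BGL$ survives realization on cellular schemes while the higher rows (containing, e.g., $K_5(\ZZ)=\ZZ$) do not; for the topological side it simply reproduces even-concentration and the standard degeneration of the Atiyah--Hirzebruch spectral sequence for $KU$. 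One small note: for the differentials \emph{out} of the $m=0$ row the vanishing is because the target $K_{-1}(\ZZ)$ is zero, not a torsion-to-torsion-free argument as you phrased it; and for the purity isomorphism $K_m\bigl((X\smallsetminus X_{j-1})\ \mathrm{on}\ (X_j\smallsetminus X_{j-1})\bigr)\iso K_m(\ZZ)^{k_j}$ it is cleanest to cite the Thomason--Trobaugh form of d\'evissage, or combine Theorem~\ref{ThomIsomorphism} with the $(2,1)$-periodicity as you indicate; these are minor and do not affect the correctness.
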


\begin{proof}
  Consider the class $\mathcal{R}$ of $\Pro^1$-spectra $E$ such that
  the homomorphism $\BGL^{0,0}(E) \to r\BGL^0(rE)$ is an isomorphism. It
  contains $S^{0,0}$ because in this case we obtain the isomorphism
  $\BGL^{0,0}(S^{0,0}) \iso K^0(\ZZ)\iso \ZZ \iso K^0_\mathrm{top}(S^0)$
  which identifies the class of an algebraic resp.~complex topological
  vector bundle over $\Spec(\ZZ)$ resp.~$\bullet$ with its rank.
  The $(2,1)$-periodicity isomorphism for $\BGL$ described in
  Remark
  \ref{21Periodicity}
  and the Bott periodicity isomorphism for $r\BGL$ are
  compatible by~\ref{example:stable-real-bgl}. This implies that
  $S^{2m,m}\in \mathcal{R}$ for all $m\in \ZZ$. Finally, if
  $E\to F\to G \to S^{1,0}\wedge E$ is a distinguished triangle
  in $\SH(S)$ such that $E$ and $G$ are in $\mathcal{R}$, then so is
  $F$. 

  For $i\geq 0$ write $U^i \colon\!\!= X \smallsetminus X_i$, so that $U^i$ is an open
  subset of $U^{i-1}$. In particular we have $U^n = \emptyset$ and
  $U^{-1} = X$. The closed subscheme
  $X_i\smallsetminus X_{i-1} = X_i \cap U^{i-1} \hra U^{i-1}$
  is isomorphic to a disjoint union $m_i$ copies of affine spaces $\Aff^i$, and
  is in particular smooth over $S$. Furthermore the normal bundle is
  trivial. The homotopy purity theorem \cite[Thm.~3.2.29]{MV}
  supplies a distinguished triangle
  \begin{equation*}
    \Sigma^\infty_{\Pro^1} U^i_+ \to \Sigma^\infty_{\Pro^1}  U^{i-1}_+ \to
    \Sigma^\infty_{\Pro^1} U^{i-1}/U^i \iso \vee_{j=1}^{m_i} S^{2(n-i),(n-i)}
  \end{equation*}
  of $\Pro^1$-spectra. Since $\mathcal{R}$ contains $\Sigma^\infty_{\Pro^1} U^n =
  \bullet$
  we obtain inductively that $\mathcal{R}$ contains $\Sigma^\infty_{\Pro^1} U^{-1}
  =\Sigma^\infty_{\Pro^1} X_+$.
\end{proof}

\begin{lemma}
\label{GeomRealIsom}
Let
$S=\Spec(\mathbb{Z})$
and let
$r\colon \SH(\mathbb{Z}) \to \SH_{\CC\Pro^1}$
be the topological realization functor.
Then for every integer $i$ the realization
homomorphism
$\mathrm{BGL}^{2i,i}(\K) \to (r\mathrm{BGL})^{2i}(r\K)$
is an isomorphism.
\end{lemma}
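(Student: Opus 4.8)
The plan is to reduce to the universal space $\K\simeq\mathbb{Z}\times\mathrm{Gr}$, present it as a sequential homotopy colimit of cellular varieties, and compare the two Milnor exact sequences. First I would dispose of the weight: by Remark~\ref{21Periodicity} the identification $\mathrm{BGL}^{0,0}(\K)\iso\mathrm{BGL}^{2i,i}(\K)$ is cup product with $\beta^i$, and by~\ref{example:stable-real-bgl} the realization functor $r$ carries $\beta$ to the Bott class; hence the realization maps in weight $0$ and weight $i$ sit in a commuting square with vertical periodicity isomorphisms, and it suffices to treat $i=0$. Moreover the weak equivalence $\mathbb{Z}\times\mathrm{Gr}\to\K$ of Definition~\ref{BGL} becomes an isomorphism after applying $r$ — which, as the composite of total left derived functors of left Quillen functors, inverts weak equivalences and preserves homotopy colimits — so by naturality of the realization map it suffices to prove that
\[
\mathrm{BGL}^{0,0}(\mathbb{Z}\times\mathrm{Gr})\longrightarrow (r\mathrm{BGL})^{0}\bigl(r(\mathbb{Z}\times\mathrm{Gr})\bigr)
\]
is an isomorphism.

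Set $G_n\defeq[-n,n]\times\mathrm{Gr}(n,2n)$. As in the proof of Lemma~\ref{BGLofGr}, the standard closed embeddings $\mathrm{Gr}(n,2n)\hra\mathrm{Gr}(n+1,2n+2)$ exhibit $\mathbb{Z}\times\mathrm{Gr}$ (pointed by $(0,x_0)$) as $\colim_n G_n$ along closed embeddings; in $\M^\cm_\bullet(S)$ closed embeddings are cofibrations, so this colimit is a homotopy colimit, and since $r$ preserves homotopy colimits, $r(\mathbb{Z}\times\mathrm{Gr})$ is the homotopy colimit of the topological spaces $rG_n$ (along cofibrations). Each $G_n$ is a finite disjoint union of copies of $\mathrm{Gr}(n,2n)$ and so carries a cellular filtration by Schubert strata of the kind demanded in Lemma~\ref{CohOfCellularSpace} (the basepoint being a rational point that splits off); that lemma therefore gives, for each $n$, an isomorphism $\mathrm{BGL}^{0,0}(G_n)\xra{\iso}(r\mathrm{BGL})^{0}(rG_n)$. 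The Milnor exact sequence of Lemma~\ref{lem:filtered-colimit} then reads
\[
0 \to {\varprojlim}^{1}_{n}\,\mathrm{BGL}^{-1,0}(G_n) \to \mathrm{BGL}^{0,0}(\mathbb{Z}\times\mathrm{Gr}) \to {\varprojlim}_{n}\,\mathrm{BGL}^{0,0}(G_n) \to 0,
\]
and the realization maps identify it with the analogous sequence for $r\mathrm{BGL}$ and the spaces $rG_n$, the right-hand vertical arrow being the isomorphism just exhibited. So it remains to show that both $\varprojlim^{1}$-terms vanish.

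This is the crux. On the algebraic side, Corollary~\ref{AdjunctionIso} identifies $\mathrm{BGL}^{-1,0}(G_n)$ with $K_1(G_n)$, which by the cellular structure of the Grassmannian (Lemmas~\ref{KTTofGr} and~\ref{KofCellularVar}) is a finite direct sum of copies of $K_1(\mathbb{Z})=\mathbb{Z}/2$, hence a finite group; a tower of finite groups is automatically Mittag--Leffler, so ${\varprojlim}^{1}_{n}\,\mathrm{BGL}^{-1,0}(G_n)=0$. On the topological side $rG_n$ is a finite disjoint union of copies of the complex Grassmannian $\mathrm{Gr}_\CC(n,2n)$, which has a CW-structure with cells only in even dimensions, so $(r\mathrm{BGL})^{-1}(rG_n)=K^{-1}_{\mathrm{top}}(rG_n)=0$ by the Atiyah--Hirzebruch spectral sequence, whence that $\varprojlim^{1}$-term vanishes as well. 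The five lemma then forces the middle vertical arrow of the comparison to be an isomorphism, and together with the reductions above this proves the lemma. I expect the real obstacle to be not any single computation but, on the one hand, the interchange of realization with the colimit — precisely what the closed model structure $\M^\cm_\bullet$ is designed to make automatic, since closed embeddings are cofibrations there — and, on the other, the recognition that the two $\varprojlim^{1}$-groups vanish for unrelated reasons (finiteness of the algebraic $K_1$ of cellular $\mathbb{Z}$-varieties versus parity of the cells), rather than because $\mathrm{BGL}^{-1,0}$ and $(r\mathrm{BGL})^{-1}$ agree termwise on the $G_n$, which they do not.
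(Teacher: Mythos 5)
Your proof is correct and follows the same strategy as the paper's: reduce to $i=0$, replace $\K$ by $\mathbb{Z}\times\mathrm{Gr}$, apply Lemma~\ref{CohOfCellularSpace} to finite cellular approximations, and compare the two Milnor exact sequences of Lemma~\ref{lem:filtered-colimit}. The only real divergence is in how the two $\varprojlim^{1}$-terms are killed: the paper invokes Lemma~\ref{LimOneVanish} (surjectivity of the bonding maps in the $K_1$-tower) on the algebraic side and \cite[Thm.~16.32]{Sw} on the topological side, while you argue from finiteness of $K_1(G_n)\cong (\mathbb{Z}/2)^{\,\mathrm{rk}\,K_0(G_n)}$ (hence Mittag--Leffler) and from the even-cell CW structure of the complex Grassmannian giving $K^{-1}_{\mathrm{top}}(rG_n)=0$ outright. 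Both routes are valid, and your closing observation that the two vanishings occur for unrelated reasons is accurate and arguably makes the dichotomy more transparent than the paper's citations do.
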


\begin{proof}
Clearly it suffices to prove the case $i=0$.
We may replace the pointed motivic space $\K_i$
with
$\mathbb{Z} \times \mathrm{Gr}$
as in the proof of Lemma~\ref{BGLofGr}.
It remains to check that the topological realization homomorphism
$\mathrm{BGL}^{0,0}(\mathrm{Gr}) \to
( r\mathrm{BGL})^{0}(r\mathrm{Gr})$
is an isomorphism.

Since
$\mathrm{Gr}(n,2n)$
has a filtration
satisfying the condition of Lemma
\ref{CohOfCellularSpace},
we see that the map
$\mathrm{BGL}^{0,0}(\mathrm{Gr}(n,2n)) \to
( r\mathrm{BGL})^{0}(\mathrm{Gr}(n,2n))$
is an isomorphism for every $n$. To conclude the statement
for
$\mathrm{Gr}= \cup \mathrm{Gr(n,2n)}$,
use the short exact sequence
from Lemma~\ref{lem:filtered-colimit}.
In the resulting diagram
\begin{equation*}
  \xymatrix@C=0.5cm{
    \varprojlim^1 \BGL^{-1,0}\bigl(\Gr(n,2n)\bigr) \ar[r] \ar[d] &
    \BGL^{0,0}(\Gr) \ar[r] \ar[d] & \varprojlim \BGL^{0,0}\bigl(\Gr(n,2n)\bigr)
    \ar[d] \\
    \varprojlim^1 r\BGL^{-1,0}\bigl(r\Gr(n,2n)\bigr) \ar[r]  &
    r\BGL^{0,0}(r\Gr) \ar[r]  & \varprojlim r\BGL^{0,0}\bigl(r\Gr(n,2n)\bigr)}
\end{equation*}
the map on the right hand side is then an isomorphism.
Furthermore one concludes from \cite[Thm.~16.32]{Sw} that
\begin{equation*}
  {\varprojlim}^{1} r\BGL^{-1,0}\bigl(r\Gr(n,2n)\bigr) = {\varprojlim}^{1}
  K^1_\mathrm{top} \bigl(r\Gr(n,2n)\bigr)=0.
\end{equation*}
On the other hand
$\varprojlim^1 \BGL^{-1,0}\bigl(\Gr(n,2n)\bigr) =
\varprojlim^1 K_1\bigl(\Gr(n,2n)\bigr)=0$
by Lemma~\ref{LimOneVanish}.
The result follows.
\end{proof}

\begin{lemma}
\label{BUnot}
Let
$\mathbb{B}^0\mathrm{U}$
be the sub-spectrum of
$\mathbb{B}\mathrm{U}$
with the $n$-th term equal to the connected component
$\mathrm{BU}$
of the topological space
$\mathbb{Z} \times \mathrm{BU}$ containing
the basepoint $\bullet$.
The inclusion
$\mathbb{B}^0\mathrm{U}\xra{} \mathbb{B}\mathrm{U}$
is a weak equivalence of $\CC\Pro^1$-spectra.
\end{lemma}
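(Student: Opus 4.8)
The plan is to exhibit the cofiber of the inclusion $\mathbb{B}^0\mathrm{U}\hra\mathbb{B}\mathrm{U}$ as a stably contractible $\CC\Pro^1$-spectrum. Since the inclusion is a levelwise cofibration, this is equivalent to its being a stable weak equivalence, which is the relevant notion here (it is manifestly not a levelwise weak equivalence, as $\pi_0(\mathrm{BU})=0\neq\mathbb{Z}=\pi_0(\mathbb{Z}\times\mathrm{BU})$).

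First I would check that $\mathbb{B}^0\mathrm{U}$ really is a sub-$\CC\Pro^1$-spectrum of $\mathbb{B}\mathrm{U}$. Each structure map of $\mathbb{B}\mathrm{U}$ has the form $(\mathbb{Z}\times\mathrm{BU})\wedge\CC\Pro^1\to\mathbb{Z}\times\mathrm{BU}$; its restriction to $\mathrm{BU}\wedge\CC\Pro^1$ has connected source, since a smash product with the connected pointed space $\CC\Pro^1$ has trivial $\pi_0$, hence lands in the path component of the basepoint, which is exactly $\mathrm{BU}$. So the structure maps restrict to maps $\mathrm{BU}\wedge\CC\Pro^1\to\mathrm{BU}$, and $\mathbb{B}^0\mathrm{U}\to\mathbb{B}\mathrm{U}$ is a levelwise cofibration of $\CC\Pro^1$-spectra.

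Next, let $C$ be the levelwise cofiber, so that $C_n=(\mathbb{Z}\times\mathrm{BU})/\mathrm{BU}$ with the induced structure maps. Collapsing the basepoint component $\mathrm{BU}$ of $\mathbb{Z}\times\mathrm{BU}$ produces the pointed space $\bigl(\bigsqcup_{m\neq 0}\mathrm{BU}\bigr)_+$, which has a disjoint basepoint; thus the basepoint component of $C_n$ is a single point. The key step is that every structure map $C_n\wedge\CC\Pro^1\to C_{n+1}$ is null-homotopic: its source is connected, so it factors through the basepoint component of $C_{n+1}$, which is trivial. It follows that in the directed system $\pi_{m+2n}(C_n)\to\pi_{m+2n+2}(C_{n+1})$ whose colimit computes $\pi^s_m(C)$, every transition map factors through a point and is therefore zero; a sequential colimit along zero maps vanishes, so $\pi^s_\ast(C)=0$. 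Hence $C$ is stably contractible and $\mathbb{B}^0\mathrm{U}\to\mathbb{B}\mathrm{U}$ is a weak equivalence of $\CC\Pro^1$-spectra. As a cross-check one may argue directly on stable homotopy groups: $\pi_j(\mathrm{BU})\to\pi_j(\mathbb{Z}\times\mathrm{BU})$ is an isomorphism for $j\geq 1$, and comparing with the Bott-periodicity structure maps, which make $\mathbb{B}\mathrm{U}$ an $\Omega_{\CC\Pro^1}$-spectrum, shows that for $n+2k$ large the two colimit systems defining $\pi^s_n(\mathbb{B}^0\mathrm{U})$ and $\pi^s_n(\mathbb{B}\mathrm{U})$ agree term by term with isomorphic transition maps, whence the inclusion is an isomorphism on $\pi^s_n$ for every $n$. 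The only point requiring genuine care — the main obstacle, such as it is — is the connectivity bookkeeping around path components together with the observation that smashing with the connected space $\CC\Pro^1$ kills $\pi_0$; everything else is formal.
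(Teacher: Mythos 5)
Your argument is correct and rests on the same key observation as the paper's one-line proof: the structure map $(\mathbb{Z}\times\mathrm{BU})\wedge\CC\Pro^1\to\mathbb{Z}\times\mathrm{BU}$ has connected source and hence factors through the basepoint component $\lbrace 0\rbrace\times\mathrm{BU}$, which immediately gives the isomorphism on stable homotopy groups. Your cofiber computation and the direct colimit comparison are just two elaborations of that same point.
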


\begin{proof}
  One has to check that the inclusion induces an isomorphism
  on stable homotopy groups. This follows because the structure
  map $(\ZZ\times BU)\wedge \CC\Pro^1 \to \ZZ\times BU$ factors
  over $\lbrace 0 \rbrace \times BU$.
\end{proof}

\begin{lemma}\label{FiniteAprox}
There exists a 
sub-spectrum $\mathbb{B}^f\mathrm{U}$ of the $\CC\Pro^1$-spectrum
$\mathbb{B}\mathrm{U}$
with the $n$-th term
$\mathrm{Gr}\bigl(b(n),2b(n)\bigr)$
such that the inclusion
$\mathbb{B}^f\mathrm{U} \xra{} \mathbb{B}\mathrm{U}$
is a stable equivalence.% of $\CC\Pro^1$-spectra.
\end{lemma}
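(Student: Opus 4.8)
The plan is to construct the ``size function'' $n \mapsto b(n)$ by induction, exploiting that $\mathrm{BU}$ is exhausted by the finite Grassmannians $\mathrm{Gr}(m,2m)$, that such a Grassmannian smashed with $\CC\Pro^1$ is compact, and that the inclusions $\mathrm{Gr}(m,2m)\hra \mathrm{BU}$ become arbitrarily highly connected as $m$ grows.

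First I would reduce to working inside $\mathbb{B}^0\mathrm{U}$. By Lemma~\ref{BUnot} the inclusion $\mathbb{B}^0\mathrm{U}\hra \mathbb{B}\mathrm{U}$ is a stable equivalence, and, as noted in its proof, the structure map $(\ZZ\times\mathrm{BU})\wedge \CC\Pro^1\to \ZZ\times\mathrm{BU}$ of $\mathbb{B}\mathrm{U}$ already factors through the basepoint component $\mathbb{B}^0\mathrm{U}_n = \{0\}\times\mathrm{BU}$. So it suffices to produce a sub-spectrum $\mathbb{B}^f\mathrm{U}$ of $\mathbb{B}^0\mathrm{U}$ with $n$-th term $\mathrm{Gr}(b(n),2b(n))$ for which $\mathbb{B}^f\mathrm{U}\hra \mathbb{B}^0\mathrm{U}$ is a stable equivalence; composing with the equivalence of Lemma~\ref{BUnot} will then give the assertion, since $\mathbb{B}^f\mathrm{U}\subseteq\mathbb{B}^0\mathrm{U}\subseteq\mathbb{B}\mathrm{U}$. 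Here I view $\mathrm{BU}=\colim_m\mathrm{Gr}(m,2m)$ along the standard closed embeddings $\mathrm{Gr}(m,2m)\hra\mathrm{Gr}(m+1,2m+1)\hra\mathrm{Gr}(m+1,2m+2)$; this presents $\mathrm{BU}$ as a CW complex with the $\mathrm{Gr}(m,2m)$ as finite subcomplexes, and the smallest Schubert cell of $\mathrm{BU}$ not lying in $\mathrm{Gr}(m,2m)$ has dimension $2m+2$, so $\pi_j\bigl(\mathrm{Gr}(m,2m)\bigr)\to\pi_j(\mathrm{BU})$ is an isomorphism for $j\le 2m$.

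Then I would build $b$ inductively. Take $b(0)=1$. Given $b(n)$, the structure map of $\mathbb{B}^0\mathrm{U}$ restricts to a map $\mathrm{Gr}(b(n),2b(n))\wedge\CC\Pro^1\to\mathrm{BU}$ whose source is a finite CW complex, hence compact; it therefore factors through a finite subcomplex, and thus through $\mathrm{Gr}(m_n,N_n)$ for suitable $m_n\le N_n$ (in the model of~\ref{example:stable-real-bgl} this is the classifying map of the genuine bundle built from $\tau_{b(n)}\otimes\mathcal{O}(-1)$ over a finite-dimensional base, which is pulled back from $\mathrm{Gr}(m_n,N_n)$ once $N_n-m_n$ exceeds half that dimension, so $N_n$ in fact grows at least quadratically in $b(n)$). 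Now pick $b(n+1)$ so large that $b(n+1)>b(n)$, that $b(n+1)\ge 2(n+1)$, and that $b(n+1)\ge\max(m_n,\,N_n-m_n)$, so that $\mathrm{Gr}(m_n,N_n)$ embeds into $\mathrm{Gr}(b(n+1),2b(n+1))$ compatibly with the embeddings above. Corestricting the structure maps yields a sub-spectrum $\mathbb{B}^f\mathrm{U}=\bigl(\mathrm{Gr}(b(n),2b(n))\bigr)_n$ of $\mathbb{B}^0\mathrm{U}$, with $\mathbb{B}^f\mathrm{U}\hra\mathbb{B}^0\mathrm{U}$ a map of $\CC\Pro^1$-spectra.

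Finally I would check this inclusion is a stable equivalence. On stable homotopy groups it is the map $\colim_n\pi_{k+2n}\bigl(\mathrm{Gr}(b(n),2b(n))\bigr)\to\colim_n\pi_{k+2n}(\mathrm{BU})=\pi_k(\mathbb{B}^0\mathrm{U})$ induced levelwise by $\mathrm{Gr}(b(n),2b(n))\hra\mathrm{BU}$. Since $b(n)\ge 2n$ we have $2b(n)>k+2n$ for all $n$ large (relative to the fixed $k$), whence by the connectivity estimate of the second step the $n$-th level map is an isomorphism for all large $n$, and therefore the map on colimits is an isomorphism. I expect the one delicate point to be the beginning of the third step: one must fix a point-set model of $\mathbb{B}\mathrm{U}$ (equivalently of $r\mathrm{BGL}$, via~\ref{example:stable-real-bgl} and Lemma~\ref{SigzagWeakEquiv}) in which the structure maps are honest maps of CW complexes that genuinely restrict to finite Grassmannians — in particular in which the ``$-[\mathcal{O}]$'' correction in the definition of the structure map does not obstruct the factorization — so that the corestricted structure maps, and hence $\mathbb{B}^f\mathrm{U}$ itself, are legitimately defined; once that model is in place, everything else is the standard compactness-plus-connectivity argument sketched above.
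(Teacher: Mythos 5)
Your proposal is correct and follows essentially the same route as the paper: both construct the size function $b(n)$ inductively by exhausting $\mathrm{BU}$ by finite Grassmannians (the paper assumes the structure map is cellular, you use compactness of the image in a CW complex — equally valid and slightly more robust), and both deduce the stable equivalence from the fact that $\mathrm{Gr}(m,2m)\hra\mathrm{BU}$ becomes highly connected as $m$ grows (the paper arguing directly with $2k$-skeleta and cellular representatives to check $\pi_i(j)$ is bijective, you passing to the colimit description of stable homotopy groups via the connectivity estimate). The reduction to $\mathbb{B}^0\mathrm{U}$ that you state explicitly is implicit in the paper's use of the structure map $\mathrm{BU}\wedge \CC\Pro^1\to\mathrm{BU}$, and your final caveat about fixing a point-set model is sensible but is already handled by your compactness argument, which does not actually require the structure map to be cellular.
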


\begin{proof}
  The sequence $b(n)$ will be constructed such that
  $b(n)\geq 2n+1$.
  Set $b(0)=1$. We may assume that the structure map
  $e_0\colon \mathrm{B}\mathrm{U} \wedge \CC\Pro^1 \to \mathrm{B}\mathrm{U}$
  is cellular. Since
  $r\mathrm{Gr}(b(0),2b(0)) \wedge \CC\Pro^1$ is a finite cell complex, it
  lands in a Grassmannian
  $r\mathrm{Gr}(b(1),2b(1))$
  for some integer $b(1)\geq 2\cdot 1+1$.
  Continuing this process produces the required sequence
  of $b(n)$'s. The inclusions induce an isomorphism
  $\colim_{n\geq 0} \mathrm{Gr}\bigl(b(n),2b(n)\bigr)\iso \mathrm{Gr}$.

  To observe that the inclusion
  $j\colon \mathbb{B}^f\mathrm{U} \xra{} \mathbb{B}\mathrm{U}$
  is then a stable equivalence, recall that the number of
  $2k$-cells in $\mathrm{Gr}(n,m)$ is given by the number of partitions
  of $k$ into at most $n$ subsets each of which has cardinality
  $\geq m-n$ \cite{Milnor-Stasheff}. In particular,
  the $2k$-skeleton of $\mathrm{BU}$ coincides with the $2k$-skeleton
  of $r\mathrm{Gr}(k,2k)$. To prove the surjectivity of $\pi_i(j)$
  choose an element $\alpha\in \pi_i \mathbb{B}\mathrm{U}$. It
  is represented by a cellular map $a\colon S^{i+2m}\to \mathrm{BU}$ for
  some $m$ with $i+2m\geq 0$. We may choose $m$ such that $m\geq i$.
  Thus $a$ lands in $r\mathrm{Gr}(b(m),2b(m))$
  and gives rise to an element in $\pi_i \mathbb{B}^f\mathrm{U}$
  mapping to $\alpha$. To prove the injectivity of $\pi_i(j)$, choose
  an element $\alpha\in \pi_i \mathbb{B}^f\mathrm{U}$ such that
  $\pi_i(j)(\alpha)= 0$. We may represent $\alpha$ by some map
  $a\colon \pi_{i+2m}(j) \mathrm{Gr}(b(m),2b(m))$ for some $m$ with
  $i+2m> 0$ and $m\geq i$. The composition
  \begin{equation*} S^{i+2m} \ra{a} \mathrm{Gr}(b(m),2b(m)) \hra \mathrm{BU} \end{equation*}
  is nullhomotopic since $\pi_{i+2m} \mathrm{BU} \iso \pi_i \mathbb{B}\mathrm{U}$
  via the homomorphism induced by the structure map.
  The nullhomotopy may be chosen to be cellular and thus lands in
  $\mathrm{Gr}(b(m),2b(m))$. This completes the proof.
\end{proof}

\subsection{Vanishing of certain groups I}
Consider the stable equivalence
$\mathrm{hocolim}_{i\geq 0}\, \Sigma^{\infty}_{\Pro^1}\K_i(-i) \cong \mathrm{BGL}$
(see (\ref{LevelAproxOne}))
and the respecting short exact sequence
$$
0 \to {\varprojlim}^{1}\mathrm{BGL}^{2i-1,i}(\K_i) \to
\mathrm{BGL}^{0,0}(\mathrm{BGL}) \to
\varprojlim \mathrm{BGL}^{2i,i}(\K_i) \to 0
$$
We prove in this section the following result
\begin{proposition}
\label{LimOneBGL}
Let $S=\Spec (\mathbb{Z})$, then
${\varprojlim}^{1}\mathrm{BGL}^{2i-1,i}(\K_i)=0$.
\end{proposition}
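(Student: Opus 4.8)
The plan is to peel off the factor $K_1(\mathbb{Z})=\mathbb{Z}/2$ from the tower $\bigl(\mathrm{BGL}^{2i-1,i}(\K_i)\bigr)_i$, to compare the remaining tower with its topological realization, and then to exploit the finite approximation of $\mathbb{B}\mathrm{U}$ together with the fact that $\varprojlim^1$ of a tower of finitely generated abelian groups is divisible. First I would rewrite the tower. By Remark~\ref{21Periodicity} and Lemma~\ref{BGLofGr} — in both its $\K$- and its $\K\wedge\Pro^1$-version, the latter being needed because the structure map makes each transition $\mathrm{BGL}^{2(i+1)-1,i+1}(\K_{i+1})\to\mathrm{BGL}^{2i-1,i}(\K_i)$ factor through $\mathrm{BGL}^{2(i+1)-1,i+1}(\K_i\wedge\Pro^1)$, and because the naive product is respected, cf.~(\ref{OneDiagram}) — there is an isomorphism of towers $\bigl(\mathrm{BGL}^{2i-1,i}(\K_i)\bigr)_i\cong\bigl(\mathrm{BGL}^{-1,0}(S)\otimes\mathrm{BGL}^{2i,i}(\K_i)\bigr)_i$. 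Now $\mathrm{BGL}^{-1,0}(S)=K_1(\mathbb{Z})=\mathbb{Z}/2$, and $A_i:=\mathrm{BGL}^{2i,i}(\K_i)\cong K_0(\mathbb{Z}\times\mathrm{Gr})$ (via $i$ and Corollary~\ref{AdjunctionIso}) is torsion free: by Lemma~\ref{KofCellularVar} every $K_0(\mathrm{Gr}(n,2n))$ is finitely generated free and every $K_1(\mathrm{Gr}(n,2n))$ is finite, so ${\varprojlim}^1_n K_1(\mathrm{Gr}(n,2n))=0$ by Lemma~\ref{LimOneVanish}, and $K_0(\mathrm{Gr})={\varprojlim}_n K_0(\mathrm{Gr}(n,2n))$ embeds into a product of copies of $\mathbb{Z}$. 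Hence the sequence of towers $0\to(A_i)\xrightarrow{2}(A_i)\to(A_i/2A_i)\to 0$ is short exact, and its six-term $\varprojlim$-sequence yields
\[
{\varprojlim}^1_i\,\mathrm{BGL}^{2i-1,i}(\K_i)\ \cong\ {\varprojlim}^1_i\,(A_i/2A_i)\ \cong\ \bigl({\varprojlim}^1_i A_i\bigr)\big/2\,{\varprojlim}^1_i A_i\,.
\]
So it suffices to show that ${\varprojlim}^1_i A_i$ is $2$-divisible; I will in fact show that it is divisible.

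Next I would go to topology. By Lemma~\ref{GeomRealIsom} the realization maps $A_i=\mathrm{BGL}^{2i,i}(\K_i)\to(r\mathrm{BGL})^{2i}(r\K_i)$ are isomorphisms, and they are maps of towers since transition maps on both sides come from structure maps and $r$ is a functor; hence ${\varprojlim}^1_i A_i\cong{\varprojlim}^1_i(r\mathrm{BGL})^{2i}(r\K_i)$. By Lemma~\ref{SigzagWeakEquiv} one has $r\mathrm{BGL}\simeq\mathbb{B}\mathrm{U}$ and $r\K_i\simeq\mathbb{Z}\times\mathrm{BU}$, and realizing~(\ref{LevelAproxOne}) exhibits $\mathbb{B}\mathrm{U}$ as $\mathrm{hocolim}_i\,\Sigma^{\infty}(r\K_i)\wedge S^{-2i}$. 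The Milnor exact sequence (Lemma~\ref{lem:filtered-colimit}) for this presentation, taken in the cohomological degree for which the $\varprojlim$-column is $K^{2i+1}_{\mathrm{top}}(\mathbb{Z}\times\mathrm{BU})$, reads
\[
0\to{\varprojlim}^1_i\,K^{2i}_{\mathrm{top}}(\mathbb{Z}\times\mathrm{BU})\to\mathbb{B}\mathrm{U}^{1}(\mathbb{B}\mathrm{U})\to{\varprojlim}_i\,K^{2i+1}_{\mathrm{top}}(\mathbb{Z}\times\mathrm{BU})\to 0\,.
\]
Since $\mathrm{BU}$, hence $\mathbb{Z}\times\mathrm{BU}$, has a CW structure with cells only in even dimensions, $K^{2i+1}_{\mathrm{top}}(\mathbb{Z}\times\mathrm{BU})=0$, and therefore ${\varprojlim}^1_i A_i\cong\mathbb{B}\mathrm{U}^{1}(\mathbb{B}\mathrm{U})$.

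Finally I would use the finite approximation. By Lemma~\ref{FiniteAprox} the inclusion $\mathbb{B}^f\mathrm{U}\to\mathbb{B}\mathrm{U}$ is a stable equivalence, and $\mathbb{B}^f\mathrm{U}\simeq\mathrm{hocolim}_n\,\Sigma^{\infty}\mathrm{Gr}\bigl(b(n),2b(n)\bigr)\wedge S^{-2n}$. The same Milnor sequence in the same degree for this presentation, using once more that $K^{\mathrm{odd}}_{\mathrm{top}}$ vanishes on the even cell complexes $\mathrm{Gr}(b(n),2b(n))$, gives $\mathbb{B}\mathrm{U}^{1}(\mathbb{B}\mathrm{U})\cong{\varprojlim}^1_n\,K^{2n}_{\mathrm{top}}\bigl(\mathrm{Gr}(b(n),2b(n))\bigr)$. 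Each $K^{2n}_{\mathrm{top}}(\mathrm{Gr}(b(n),2b(n)))\cong K^0_{\mathrm{top}}(\mathrm{Gr}(b(n),2b(n)))$ is finitely generated free, and the $\varprojlim^1$ of a tower $(B_n)$ of finitely generated free abelian groups is divisible: for every $m\ge 1$ the tower $(B_n/mB_n)$ consists of finite groups, so ${\varprojlim}^1_n(B_n/mB_n)=0$, whence the six-term sequence of $0\to(B_n)\xrightarrow{m}(B_n)\to(B_n/mB_n)\to 0$ forces multiplication by $m$ to be surjective on ${\varprojlim}^1_n B_n$. Consequently $\mathbb{B}\mathrm{U}^{1}(\mathbb{B}\mathrm{U})$, and hence ${\varprojlim}^1_i A_i$, is divisible, and the displayed formula of the first step gives ${\varprojlim}^1_i\mathrm{BGL}^{2i-1,i}(\K_i)=0$.

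I expect the only real difficulty to lie in the bookkeeping of the last two steps: in each Milnor sequence one must pick the cohomological degree so that the surviving column is exactly the desired $\varprojlim^1$ while the adjacent column is a vanishing odd $K$-group, and one must verify that periodicity, the splitting of Lemma~\ref{BGLofGr}, and the realization isomorphisms of Lemma~\ref{GeomRealIsom} are genuinely maps of towers. Once this is arranged the vanishing is forced, and in particular no analysis of the transition maps of the tower $\bigl(K^0_{\mathrm{top}}(\mathrm{Gr}(b(n),2b(n)))\bigr)_n$ is needed.
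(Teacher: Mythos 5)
Your proposal is correct and follows the same overall strategy as the paper's proof: peel off the $\mathbb{Z}/2$ factor via Lemma~\ref{BGLofGr}, pass to the topological side via Lemma~\ref{GeomRealIsom} and Lemma~\ref{SigzagWeakEquiv}, replace $\mathbb{B}\mathrm{U}$ by its finite approximation $\mathbb{B}^f\mathrm{U}$ from Lemma~\ref{FiniteAprox}, and reduce to a $\varprojlim^1$ of a tower built from Grassmannians. The difference lies in the final bookkeeping. The paper carries the reduction mod $m=2$ through the whole chain, switches to $K$-theory with $\mathbb{Z}/m$-coefficients (justified by the vanishing of $\mathbb{B}\mathrm{U}^{\mathrm{odd}}(\mathbb{Z}\times\mathrm{BU})$), and concludes because the terms $K^{\ast}_{\mathrm{top}}\bigl(\mathrm{Gr}(b(i),2b(i));\mathbb{Z}/m\bigr)$ are finite so the $\varprojlim^1$ vanishes outright. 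You instead keep integral coefficients, identify $\varprojlim^1_i \mathrm{BGL}^{2i,i}(\K_i)$ with $\varprojlim^1_n K^{2n}_{\mathrm{top}}\bigl(\mathrm{Gr}(b(n),2b(n))\bigr)$, and show the latter is a divisible abelian group, so that its quotient by $2$ (which you correctly identify with the group in question via torsion-freeness of $K_0(\mathbb{Z}\times\mathrm{Gr})$ and the six-term $\varprojlim$-sequence) is trivial. Your route spares one the introduction of $K$-theory with coefficients, at the cost of the extra torsion-freeness check and the divisibility lemma; the paper's route avoids both of those but needs the universal-coefficient comparison $\mathbb{B}\mathrm{U}^{\ast}(\cdot)/m\iso\mathbb{B}\mathrm{U}^{\ast}(\cdot;\mathbb{Z}/m)$. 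Both give the same result, and the subtleties you flag (that the periodicity identification, the splitting of Lemma~\ref{BGLofGr}, and the realization isomorphism must all be maps of towers) are indeed the points that need checking; they hold for exactly the reasons you indicate, since all these identifications are induced by natural transformations compatible with the structure maps $e_i$.
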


\begin{proof}
The connecting homomorphism in the tower
of groups for the
${\varprojlim}^{1}$-term is the composite map
\begin{equation*}
\mathrm{BGL}^{2i-1,i}(\K_i) \xla{\Sigma^{-1}_{\Pro^1}}
\mathrm{BGL}^{2i+1,i+1}(\K_i \wedge \Pro^1)
\xla{e_i^\ast} \mathrm{BGL}^{2i+1,i+1}(\K_{i+1})
\end{equation*}
where
$\Sigma^{-1}_{\Pro^1}$
is the inverse to the $\Pro^1$-suspension isomorphism and
$e_i^\ast$
is the pull-back induced by the structure map $e_i$.
Set $A=\mathrm{BGL}^{-1,0}(S)$ and consider the diagram
\begin{equation*}
  \xymatrix{
    \mathrm{BGL}^{2i-1,i}\!(\K)   &
    \mathrm{BGL}^{2i+1,i+1}\!(\K \wedge \!\Pro^1\!)
    \ar[l]_-{\Sigma^{-1}_{\Pro^1}} &
    \mathrm{BGL}^{2i+1,i+1}\!(\K) \ar[l]_-{e_i^\ast} \\
    \!A \otimes \mathrm{BGL}^{2i,i}\!(\K)  \ar[u] &
    \!A \otimes \mathrm{BGL}^{2(i+1),i+1}\!(\K \wedge\! \Pro^1\!)
    \ar[l]_-{\id \otimes \Sigma^{-1}_{\Pro^1}} \ar[u] &
    \!A \otimes \mathrm{BGL}^{2(i+1),i+1}\!(\K)
    \ar[u]\ar[l]_-{\id \otimes e_i^\ast} }
\end{equation*}
where the vertical arrows are induced by the naive product structure
on the functor
$\mathrm{BGL}^{*,*}$.
Clearly it commutes. Since
$S$ is regular, the vertical arrows are isomorphisms by Lemma~\ref{BGLofGr}.
It follows that
${\varprojlim}^{1}\mathrm{BGL}^{2i-1,i}(\K_i)=
{\varprojlim}^{1}(A \otimes \mathrm{BGL}^{2i,i}(\K_i))$
where in the last tower of groups the connecting maps are
$\id \otimes (\Sigma^{-1}_{\Pro^1} \circ e_i^\ast)$.
It remains to prove the following assertion.

\begin{claim}
  The equality ${\varprojlim}^{1}(A \otimes \mathrm{BGL}^{2i,i}(\K_i))=0$ holds.
\end{claim}

Since $S=\Spec(\ZZ)$ one obtains
$A= \mathrm{BGL}^{-1,0}(S)=K_1(\ZZ)=\mathbb{Z} /2 \mathbb{Z}$.
It follows that
$A \otimes \mathrm{BGL}^{2i,i}(\K_i)=
   \mathrm{BGL}^{2i,i}(\K_i)/m\mathrm{BGL}^{2i,i}(\K_i)$
with $m=2$ and the connecting maps in the tower are just the mod-$m$
reduction of the maps
$\Sigma^{-1}_{\Pro^1} \circ e_i^\ast$.
Now a chain of isomorphisms completes the proof of the Claim.
\begin{align*}
  {\varprojlim}^{1}\mathrm{BGL}^{2i,i}(\K_i)/m & \iso
  {\varprojlim}^{1}(r\mathrm{BGL})^{2i}(r \K_i)/m \iso
  {\varprojlim}^{1}\mathbb{B}\mathrm{U}^{2i}(r \K_i)/m \\& \iso
  {\varprojlim}^{1}\mathbb{B}\mathrm{U}^{2i}(\mathbb{Z} \times \mathrm{BU})/m \iso
  {\varprojlim}^{1}\mathbb{B}\mathrm{U}^{2i}(\mathbb{Z} \times \mathrm{BU};\mathbb{Z}/m)
  \\ & \iso K_{\mathrm{top}}^1(\mathbb{B}\mathrm{U}; \mathbb{Z}/m)  \iso
  K_{\mathrm{top}}^1(\mathbb{B}^0\mathrm{U}; \mathbb{Z}/m) \\ & \iso
  {\varprojlim}^{1}K_\mathrm{top}^{2i}\bigl(\mathrm{Gr}(b(i),2b(i));\mathbb{Z}/m\bigr)=0
\end{align*}
The first isomorphism follows from Lemma~\ref{GeomRealIsom}.
The second isomorphism is induced by the levelwise weak equivalence
$\mathbb{B}\mathrm{U} \simeq r\mathrm{BGL}$
mentioned in Lemma~\ref{SigzagWeakEquiv}.
The third isomorphism is induced by the image of the weak equivalence
$\K_i \simeq \mathbb{Z} \times \mathrm{Gr}$ under topological realization.
The forth and fifth isomorphism hold since
$\mathbb{B}\mathrm{U}^{2i+1}(\mathbb{Z} \times \mathrm{BU})=0$.
The sixth isomorphism is induced by the stable equivalence
$\mathbb{B}^0\mathrm{U} \simeq \mathbb{B}\mathrm{U}$ from Lemma~\ref{BUnot},
the seventh one is induced by the stable equivalence
$\mathbb{B}^f\mathrm{U} \simeq \mathbb{B}^0\mathrm{U}$ from Lemma~\ref{FiniteAprox}. The last one holds since all groups in the tower
are finite.
\end{proof}

\section{Smash-product, pull-backs, topological realization}
\label{sec:smash-product-pull}
In this section we construct a smash-product
$\wedge$ of $\Pro^1$-spectra,
check its basic properties, consider its behavior with respect to pull-back
and realization functors. We follow here an idea of Voevodsky
\cite[Comments to Thm.~5.6]{V1}
and use results of Jardine
\cite{J}.
In several cases we will not distinguish notationally
between a Quillen functor and its total derived functor,
$\Sigma_{\Pro^1}^\infty$ being the most prominent example.

\subsection{The smash product}

\begin{definition}
\label{Smash}
Let
$V:=\tlder v \colon \SH(S) \to \SH^\Sigma(S) $
and $U:=\trder u \colon \SH^\Sigma(S) \to \SH(S)$
be the equivalence
described in Remark~\ref{rem:monoidal-sh}.
For a pair of $\Pro^1$-spectra $E$ and $F$
set
\begin{equation*}
  E \wedge F:= U(VE \wedge VF)
\end{equation*}
as in Remark~\ref{rem:monoidal-sh}.
\end{definition}

\begin{proposition}
\label{MonoidalStr}
Let $S$ be a Noetherian finite-dimensional base scheme.
The smash-product of $\Pro^1$-spectra over $S$ induces
a closed
symmetric monoidal structure $(\wedge,\one)$ on the motivic stable homotopy category
$\mathrm{SH}(S)$
having the properties required by
Theorem 5.6 of Voevodsky's congress talk
\cite{V1}:
\begin{enumerate}
\item
There is a canonical isomorphism
$E\wedge \Sigma^{\infty}_{\Pro^1}A  \cong (A \wedge E_i ,\id \wedge e_i )$
for every pointed motivic space $A$ and every $\Pro^1$-spectrum.
\item
There is a canonical isomorphism
$(\oplus E_{\alpha}) \wedge F \cong \oplus (E_{\alpha} \wedge F)$
for $\Pro^1$-spectra $E_i,F$.
\item
Smashing with a $\Pro^1$-spectrum preserves distinguished triangles.
To be more precise, if
$E \xra{f} F \to \mathrm{cone}(f) \xra{\epsilon} E[1]$
is a distinguished triangle and $G$ is a $\Pro^1$-spectrum,
the sequence
$E\wedge G \xra{f} F\wedge G  \to \mathrm{cone}(f)\wedge G
\xra{\epsilon} E\wedge G [1]$
is a distinguished triangle, where the last morphism is the composition of
$\epsilon \wedge \id_G$
with
the canonical isomorphism
$E[1]\wedge G \to (E\wedge F)[1]$.
\end{enumerate}
\end{proposition}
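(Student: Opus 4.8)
The plan is to transport, along the equivalence $(V,U)$ of Remark~\ref{rem:monoidal-sh}, the closed symmetric monoidal structure that Jardine \cite{J} constructs on the homotopy category $\SH^\Sigma(S)$ of motivic symmetric $\Pro^1$-spectra, and then to verify the three listed properties individually. Recall that $(\SH^\Sigma(S),\wedge,\one^\Sigma)$ is closed symmetric monoidal and that its smash product is compatible with the triangulation, i.e.\ smashing with a fixed object is an exact functor; recall also that $V=\tlder v$ and $U=\trder u$ form an inverse pair of triangulated equivalences, $V$ being a left adjoint (the derived symmetrization) and $U$ a right adjoint (the derived forgetful functor). Since $(V,U)$ is an equivalence, setting $E\wedge F:=U(VE\wedge VF)$, $\one:=U(\one^\Sigma)$, $\intHom_{\SH(S)}(E,F):=U\,\intHom_{\SH^\Sigma(S)}(VE,VF)$ and transporting the associativity, unit, symmetry and internal-hom adjunction isomorphisms along $V$ and $U$ produces a closed symmetric monoidal structure on $\SH(S)$ for free: every coherence diagram commutes because it is the image under $U$ of a commuting coherence diagram in $\SH^\Sigma(S)$. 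Moreover $\one=U(\one^\Sigma)\cong\Sigma^\infty_{\Pro^1}S^0$ in $\SH(S)$, because $V$ carries $\Sigma^\infty_{\Pro^1}S^0$ to $\one^\Sigma$.

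Property (2) follows because each of the three functors $E\mapsto VE$, $X\mapsto X\wedge VF$, and $Y\mapsto UY$ preserves coproducts: $V$ and $U$ are equivalences, hence preserve all colimits that exist, and $-\wedge VF$ is a left adjoint on $\SH^\Sigma(S)$ by the closed structure; composing gives the canonical isomorphism $(\oplus_\alpha E_\alpha)\wedge F\cong\oplus_\alpha(E_\alpha\wedge F)$. Property (3) follows because the same three functors are exact: $V$ and $U$ are triangulated equivalences and $-\wedge VG$ is exact on $\SH^\Sigma(S)$ by Jardine's compatibility of the smash product with the triangulation. Hence $E\mapsto U(VE\wedge VG)$ is exact and carries the distinguished triangle $E\to F\to\mathrm{cone}(f)\to E[1]$ to a distinguished triangle; the identification of the connecting morphism with the stated composite of $\epsilon\wedge\id_G$ with the canonical isomorphism is precisely the naturality of the structure isomorphisms $V(A[1])\cong(VA)[1]$, $(-[1])\wedge VG\cong(-\wedge VG)[1]$ and $U(B[1])\cong(UB)[1]$.

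The remaining and most delicate point is property (1). Here one uses that $v$ sends the free ordinary $\Pro^1$-spectrum $\Fr_0A=\Sigma^\infty_{\Pro^1}A$ to the free symmetric $\Pro^1$-spectrum on $A$ concentrated in level $0$, namely the symmetric suspension spectrum $\Sigma^\infty_{\Pro^1,\Sigma}A$, and that for $A$ cofibrant the latter is already cofibrant, so $V\Sigma^\infty_{\Pro^1}A\cong\Sigma^\infty_{\Pro^1,\Sigma}A$ with no derived correction. Since $\Sigma^\infty_{\Pro^1,\Sigma}$ is strong symmetric monoidal, the smash product $\Sigma^\infty_{\Pro^1,\Sigma}A\wedge X$ is computed levelwise: its $n$-th space is $A\wedge X_n$, with structure maps $\id_A\wedge\sigma^X_n$ and the evident $\Sigma_n$-action. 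Applying $U=\trder u$ and forgetting the symmetric structure, $U(\Sigma^\infty_{\Pro^1,\Sigma}A\wedge VE)$ is represented by the ordinary $\Pro^1$-spectrum $(A\wedge(VE)_n,\id_A\wedge\sigma_n)$; since $u$ applied to a stably fibrant replacement of $VE$ is stably equivalent to $E$, levelwise weak equivalences identify this with $(A\wedge E_i,\id\wedge e_i)$ in $\SH(S)$, and for $A=S^0$ we recover the unit isomorphism $E\wedge\one\cong E$. The point requiring care throughout is the bookkeeping of the cofibrant and fibrant replacements needed to compute the derived functors $V$ and $U$, together with the verification that the comparison maps in this chain are weak equivalences; this is where the bulk of the work lies.
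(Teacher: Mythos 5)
Your proposal takes the same route as the paper: transport the symmetric monoidal structure from $\SH^\Sigma(S)$ along the equivalence $(V,U)$, which is precisely what the paper's one-line proof ("Follows from Remark~\ref{rem:monoidal-sh} and Theorem~\ref{thm:quillen-eq}") asserts. You simply supply the detail that the paper suppresses, in particular the verification of properties (1)--(3), with the right identifications at each step.
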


\begin{proof}
  Follows from Remark~\ref{rem:monoidal-sh} and
  Theorem~\ref{thm:quillen-eq}.
\end{proof}

In the following we use that the homotopy
colimit of a sequence
$ E = E_0\to E_1 \to \dotsm $
of morphisms in the homotopy category $\SH(S)$ may be computed in three steps:
\begin{enumerate}
  \item Lift $E$ to a sequence $E^\prime = E_0^\prime \to E_1^\prime \to \dotsm$ of
    cofibrations (in fact, arbitrary maps suffice) of $\Pro^1$-spectra.
  \item Take the colimit $\colim_{i\geq 0} E^\prime_i$
    of $E^\prime$ in the category of $\Pro^1$-spectra.
  \item Consider $\colim_{i\geq 0} E^\prime_i$ as an object in $\SH(S)$.
\end{enumerate}

\begin{lemma}
\label{SmashAndHocolim}
Let $E=\mathrm{hocolim}_{i\geq 0} E_i$ be a sequential
homotopy colimit of $\Pro^1$-spectra.
For every $\Pro^1$-spectrum $F$ there is an exact sequence of abelian groups
\begin{equation}
0 \to {\varprojlim}^{1}F^{p-1,q}(E_i) \to F^{p,q}(E) \to
\varprojlim F^{p,q}(E_i) \to 0.
\end{equation}
\end{lemma}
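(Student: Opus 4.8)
The plan is to model the sequential homotopy colimit by the telescope construction and feed the resulting distinguished triangle into the cohomology functor $F^{\ast,\ast}$. Following the three-step recipe for sequential homotopy colimits recalled just above, lift the diagram to a sequence $f_i\colon E_i\to E_{i+1}$ of maps of $\Pro^1$-spectra with $E\cong\colim_{i\geq 0}E_i$, set $W:=\bigvee_{i\geq 0}E_i$, and let $s\colon W\to W$ be the shift map whose restriction to the $i$-th summand is $E_i\xrightarrow{f_i}E_{i+1}\hookrightarrow W$. The telescope is the cofibre of $\id-s$, so there is a distinguished triangle
$$W\xrightarrow{\id-s}W\longrightarrow E\longrightarrow W[1]$$
in $\SH(S)$, where $[1]=S^{1,0}\wedge(-)$ is the suspension.

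Next I would apply the functor $F^{p,q}(-)=\Hom_{\SH(S)}(-,S^{p,q}\wedge F)$. Being representable, it is cohomological, hence turns distinguished triangles into long exact sequences; and it sends coproducts to products by the universal property of $\bigvee$ (cf.\ Proposition~\ref{MonoidalStr}(2)), so that $F^{p,q}(W)\cong\prod_{i\geq 0}F^{p,q}(E_i)$ for every $(p,q)$. Together with the canonical identification $\Hom_{\SH(S)}(W[1],S^{p,q}\wedge F)\cong\Hom_{\SH(S)}(W,S^{p-1,q}\wedge F)=F^{p-1,q}(W)$, the triangle above yields the long exact sequence
$$\cdots\to F^{p-1,q}(W)\xrightarrow{(\id-s)^\ast}F^{p-1,q}(W)\to F^{p,q}(E)\to F^{p,q}(W)\xrightarrow{(\id-s)^\ast}F^{p,q}(W)\to\cdots.$$
Under the isomorphism $F^{p,q}(W)\cong\prod_i F^{p,q}(E_i)$ a direct bookkeeping check shows that $(\id-s)^\ast$ becomes the map $(y_i)_i\mapsto(y_i-f_i^\ast y_{i+1})_i$; by the standard description of the derived limit of the tower $\cdots\to F^{p,q}(E_{i+1})\xrightarrow{f_i^\ast}F^{p,q}(E_i)\to\cdots$, the kernel of this map is $\varprojlim F^{p,q}(E_i)$ and its cokernel is ${\varprojlim}^{1}F^{p,q}(E_i)$.

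Finally, splicing the long exact sequence at $F^{p,q}(E)$: the image of the incoming map $F^{p-1,q}(W)\to F^{p,q}(E)$ is the cokernel of the preceding $(\id-s)^\ast$, i.e.\ ${\varprojlim}^{1}F^{p-1,q}(E_i)$, and it equals the kernel of $F^{p,q}(E)\to F^{p,q}(W)$, whose image is the kernel of the next $(\id-s)^\ast$, i.e.\ $\varprojlim F^{p,q}(E_i)$. This produces exactly the asserted short exact sequence
$$0\to{\varprojlim}^{1}F^{p-1,q}(E_i)\to F^{p,q}(E)\to\varprojlim F^{p,q}(E_i)\to 0.$$
The only points that require genuine attention are that the telescope actually computes $\mathrm{hocolim}$ (guaranteed by the three-step recipe once the lift is replaced by a sequence of cofibrations) and the additivity of $F^{p,q}$ over countable wedges; the identification of $(\id-s)^\ast$ with the tower differential, and hence of its kernel and cokernel with $\varprojlim$ and ${\varprojlim}^{1}$, is the classical algebraic computation underlying the Milnor sequence and is not an obstacle. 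Indeed this Lemma is just the motivic incarnation of that sequence, and every ingredient is already available from the triangulated and monoidal structure on $\SH(S)$ set up in this section.
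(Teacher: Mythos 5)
Your argument is correct and coincides with the paper's: the paper defers to Lemma~\ref{lem:filtered-colimit}(2), whose proof forms exactly the same mapping telescope (as a coequalizer $C\simeq\colim_i D(i)$), extracts the distinguished triangle $\bigvee D(i)\xrightarrow{f-g}\bigvee D(i)\to\colim_i D(i)\to\bigvee S^{1,0}\wedge D(i)$, applies $[-,E]$, and splices the resulting long exact sequence into the Milnor short exact sequence. Your $\id-s$ is the paper's $f-g$, and the remaining identifications ($F^{p,q}$ of a wedge as a product, kernel/cokernel as $\varprojlim$ and ${\varprojlim}^1$) are handled the same way.
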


\begin{proof}
  This is Lemma~\ref{lem:filtered-colimit}.
\end{proof}

By Lemma~\ref{lem:spectrum-colim}, any $\Pro^1$-spectrum $E$
can be expressed as the homotopy colimit
\begin{equation}
\label{LevelAproxOne}
\mathrm{hocolim}\ \Sigma^{\infty}_{\Pro^1}E_i(-i) \cong E.
\end{equation}

\begin{corollary}
\label{FiniteLevelOne}
For two $\Pro^1$-spectra $E$ and $F$ there is a canonical
short exact sequence
\begin{equation}
\label{ShortExactSequence}
0 \to {\varprojlim}^{1}F^{p+2i-1,q+i}(E_i) \to F^{p,q}(E) \to
\varprojlim F^{p+2i,q+i}(E_i) \to 0.
\end{equation}
\end{corollary}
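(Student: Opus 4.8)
The plan is to obtain (\ref{ShortExactSequence}) as a direct consequence of the Milnor-type exact sequence of Lemma~\ref{SmashAndHocolim}, applied to the canonical presentation of $E$ as a sequential homotopy colimit recorded in (\ref{LevelAproxOne}), followed by a bookkeeping translation of the two towers back to the pointed motivic spaces $E_i$.

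First I would invoke Lemma~\ref{lem:spectrum-colim} to write $E\iso \mathrm{hocolim}_{i\geq 0}\,\Sigma^{\infty}_{\Pro^1}E_i(-i)$, where the transition maps of the diagram $\bigl(\Sigma^{\infty}_{\Pro^1}E_i(-i)\bigr)_{i\geq 0}$ are those induced by the structure maps $E_i\wedge \Pro^1\to E_{i+1}$ of the spectrum $E$. Feeding this diagram and the given spectrum $F$ into Lemma~\ref{SmashAndHocolim} yields a short exact sequence
$$0\to {\varprojlim}^{1}F^{p-1,q}\bigl(\Sigma^{\infty}_{\Pro^1}E_i(-i)\bigr)\to F^{p,q}(E)\to \varprojlim F^{p,q}\bigl(\Sigma^{\infty}_{\Pro^1}E_i(-i)\bigr)\to 0.$$

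Next I would identify the groups occurring in the two towers. By Notation~\ref{not:bigrading} there is an isomorphism $\Sigma^{\infty}_{\Pro^1}E_i(-i)\iso \Sigma^{\infty}_{\Pro^1}E_i\wedge S^{-2i,-i}$ in $\SH(S)$, so the bigraded suspension isomorphism for the cohomology theory $F^{\ast,\ast}$ gives $F^{p,q}\bigl(\Sigma^{\infty}_{\Pro^1}E_i(-i)\bigr)\iso F^{p+2i,q+i}(E_i)$ and, similarly, $F^{p-1,q}\bigl(\Sigma^{\infty}_{\Pro^1}E_i(-i)\bigr)\iso F^{p+2i-1,q+i}(E_i)$, where $F^{\ast,\ast}(E_i)$ abbreviates $F^{\ast,\ast}(\Sigma^{\infty}_{\Pro^1}E_i)$ as usual. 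Substituting these identifications into the displayed sequence produces exactly (\ref{ShortExactSequence}).

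The argument is entirely formal, so I do not expect a serious obstacle; the one point deserving care is that the isomorphisms $F^{p,q}\bigl(\Sigma^{\infty}_{\Pro^1}E_i(-i)\bigr)\iso F^{p+2i,q+i}(E_i)$ are compatible with the transition maps of the towers, i.e.\ that under them the maps induced by (\ref{LevelAproxOne}) become the composite of restriction along $E_i\wedge \Pro^1\to E_{i+1}$ with the $\Pro^1$-suspension isomorphism. This follows from the construction of the maps in (\ref{LevelAproxOne}) together with naturality of the suspension isomorphism. Canonicity of the resulting sequence is inherited from that of Lemma~\ref{SmashAndHocolim} and of the presentation (\ref{LevelAproxOne}).
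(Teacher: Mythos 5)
Your argument is exactly the intended one: the paper treats Corollary~\ref{FiniteLevelOne} as an immediate consequence of (\ref{LevelAproxOne}) (from Lemma~\ref{lem:spectrum-colim}) together with the Milnor sequence of Lemma~\ref{SmashAndHocolim}, reindexed via $\Sigma^{\infty}_{\Pro^1}E_i(-i)\iso \Sigma^{\infty}_{\Pro^1}E_i\wedge S^{-2i,-i}$ as in Notation~\ref{not:bigrading}. Your added remark about compatibility of the suspension isomorphisms with the transition maps of the towers is the right point to check, and it holds for the reasons you give.
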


\begin{corollary}
\label{FiniteLevelTwo}
For a pair of spectra $E$ and $F$ and each spectrum $G$
one has a canonical exact sequence of the form
\begin{equation}
0 \to {\varprojlim}^{1}G^{p+4i-1,q+2i}(E_i \wedge F_i) \to G^{p,q}(E \wedge F)
\to  \varprojlim G^{p+4i,q+2i}(E_i \wedge F_i) \to 0.
\end{equation}
\end{corollary}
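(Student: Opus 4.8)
The plan is to realize $E\wedge F$ directly as a sequential homotopy colimit of suspension spectra of the pointed motivic spaces $E_i\wedge F_i$, and then to quote the one-variable Milnor-type sequence of Lemma~\ref{SmashAndHocolim}.

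First I would start from the presentations $E\cong\mathrm{hocolim}_i\,\Sigma^{\infty}_{\Pro^1}E_i(-i)$ and $F\cong\mathrm{hocolim}_j\,\Sigma^{\infty}_{\Pro^1}F_j(-j)$ provided by (\ref{LevelAproxOne}). Smashing with a fixed $\Pro^1$-spectrum commutes with coproducts and with the formation of cones by Proposition~\ref{MonoidalStr}(2),(3); since a sequential homotopy colimit is the cone of $\bigoplus(\id-\mathrm{sh})$ on a coproduct, it follows that $-\wedge X$ preserves sequential homotopy colimits. (Concretely one may instead lift $E$ and $F$ to mapping telescopes of cofibrations of $\Pro^1$-spectra, where $\wedge$ commutes with the point-set colimits since it is a left adjoint in the closed structure of Proposition~\ref{MonoidalStr}.) Performing this in each variable and using that the diagonal $\mathbb{N}\hookrightarrow\mathbb{N}\times\mathbb{N}$ is cofinal, I obtain
\[ E\wedge F\;\cong\;\mathrm{hocolim}_i\bigl(\Sigma^{\infty}_{\Pro^1}E_i(-i)\wedge\Sigma^{\infty}_{\Pro^1}F_i(-i)\bigr). \]
Since $\Sigma^{\infty}_{\Pro^1}$ is monoidal and the twist $(-i)$ is smashing with $S^{-2i,-i}$, while $S^{-2i,-i}\wedge S^{-2i,-i}\cong S^{-4i,-2i}$, the $i$-th term is canonically $\Sigma^{\infty}_{\Pro^1}(E_i\wedge F_i)(-2i)$, so $E\wedge F\cong\mathrm{hocolim}_i\,\Sigma^{\infty}_{\Pro^1}(E_i\wedge F_i)(-2i)$.

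Next I would apply Lemma~\ref{SmashAndHocolim} to this homotopy colimit with cohomology theory $G^{*,*}$, giving a short exact sequence
\[ 0\to{\varprojlim}^{1}G^{p-1,q}\bigl(\Sigma^{\infty}_{\Pro^1}(E_i\wedge F_i)(-2i)\bigr)\to G^{p,q}(E\wedge F)\to\varprojlim G^{p,q}\bigl(\Sigma^{\infty}_{\Pro^1}(E_i\wedge F_i)(-2i)\bigr)\to 0. \]
It remains to reindex the bigrading: for a pointed motivic space $A$ one has $G^{a,b}\bigl(\Sigma^{\infty}_{\Pro^1}A(-2i)\bigr)=G^{a+4i,b+2i}(A)$ by the defining convention for the reduced $G$-cohomology of a pointed space together with the bidegree shift by $(4i,2i)$ caused by $(-2i)$. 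Substituting $(a,b)=(p,q)$ in the right-hand term and $(a,b)=(p-1,q)$ in the $\varprojlim^{1}$-term yields precisely the asserted sequence, and the connecting maps of the tower are those induced by the transition maps $E_i\wedge F_i\to E_{i+1}\wedge F_{i+1}$, as required for canonicity.

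The step I expect to be the main obstacle is the reduction to the diagonal in the first paragraph: homotopy colimits in the triangulated category $\SH(S)$ are only well-defined up to non-canonical isomorphism, so one must be slightly careful to justify both that $-\wedge X$ commutes with sequential homotopy colimits and that the iterated homotopy colimit over $\mathbb{N}\times\mathbb{N}$ can be replaced by the sequential one along the cofinal diagonal. Carrying the argument out at the level of telescopes of $\Pro^1$-spectra — where ordinary colimits literally commute with smashing after a levelwise cofibrant replacement, as in the three-step recipe recalled just before Lemma~\ref{SmashAndHocolim} — makes this rigorous; everything afterwards is bookkeeping with the bigrading.
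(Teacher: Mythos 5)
Your proof follows the same route as the paper's: realize $E\wedge F$ as the sequential homotopy colimit $\mathrm{hocolim}_i\,\Sigma^\infty_{\Pro^1}(E_i\wedge F_i)(-2i)$, apply the $\varprojlim$/$\varprojlim^1$ sequence of Lemma~\ref{SmashAndHocolim}, and reindex the bigrading using $G^{a,b}\bigl(\Sigma^\infty_{\Pro^1}A(-2i)\bigr)\iso G^{a+4i,b+2i}(A)$. The paper's own proof simply cites the colimit presentation --- equation~(\ref{LevelAproxTwo}), ``as deduced in Lemma~\ref{lem:hocolim-smash}'' --- from the appendix, whereas you re-sketch that appendix lemma; your outline (truncations, cofinality of the diagonal in $\mathbb{N}\times\mathbb{N}$, monoidality of $\Sigma^\infty_{\Pro^1}$) is precisely the skeleton of the paper's proof of Lemma~\ref{lem:hocolim-smash}.

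One imprecision is worth flagging in your parenthetical remark. The smash product of $\Pro^1$-spectra is \emph{not} a left adjoint on the model category $\MS^\cm(S)$; by Definition~\ref{Smash} it exists only at the level of the homotopy category, having been transported through the Quillen equivalence with symmetric $\Pro^1$-spectra. So the phrase ``$\wedge$ commutes with the point-set colimits since it is a left adjoint in the closed structure of Proposition~\ref{MonoidalStr}'' does not literally make sense --- Proposition~\ref{MonoidalStr} is a statement about $\SH(S)$. The way to make your telescope argument rigorous is exactly what Lemma~\ref{lem:hocolim-smash} does: pass to $v(E)$ and $v(F)$ in the category $\MSS^\cm(S)$ of symmetric $\Pro^1$-spectra, where $\wedge$ is a genuine left Quillen bifunctor commuting with colimits, run the cofinality argument there, and transfer back along the Quillen equivalence $(v,u)$ of Theorem~\ref{thm:quillen-eq}. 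With that correction, your proposal is correct and is the paper's argument.
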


\begin{proof}
  For a pair of spectra $E$ and $F$ one has a canonical isomorphism of the form
  \begin{equation}
    \label{LevelAproxTwo}
    \mathrm{hocolim}\ (\Sigma^{\infty}_{\Pro^1}(E_i \wedge F_i)(-2i)) \cong E \wedge F
  \end{equation}
  as deduced in Lemma~\ref{lem:hocolim-smash}. The result
  follows from Corollary~\ref{SmashAndHocolim}.
\end{proof}

\subsection{A monoidal structure on $\mathrm{BGL}$}
\label{MonoidOnBGL}

For a $\Pro^1$-spectrum $E$ and an integer $i\geq 0$
$u_i\colon \Sigma^{\infty}_{\Pro^1}E_i(-i) \to E$
denotes the canonical map from
\ref{ex:suspension-spectrum}.
Let
$\mathrm{BGL}$
be the $\Pro^1$-spectrum defined in~\ref{BGL}.
Recall that this involves the choice of
a weak equivalence $i \colon \mathbb{Z}\times \mathrm{Gr}\to
\mathcal{K}$ and a structure map $\epsilon\colon
\mathcal{K}\wedge \Pro^1\to \mathcal{K}$. Following Lemma
\ref{CofibrantK} we may and will assume
additionally that the pointed motivic space $\K$ is cofibrant.
The aim of this section is to prove the following statement.
\begin{theorem}
\label{MonoidalStrOnBGL}
  Assume that the pointed motivic space $\mathcal{K}$ is cofibrant.
  Consider
  the family of pairings
  $\K_i \wedge \K_j \xra{\mu_{ij}}  \K_{i+j}$
  in $H_\bullet(S)$
  with
  $\mu_{ij}=\bar \mu$
  from Remark
  \ref{MuBar}.
  For
  $S=\Spec(\mathbb{Z})$
  there is a unique
  morphism
  $\mu_{\mathrm{BGL}}\colon \mathrm{BGL} \wedge \mathrm{BGL} \xra{} \mathrm{BGL}$
  in the motivic stable homotopy category
  $\SH(S)$
  such that for every $i$ the diagram
  \begin{equation*}
    \xymatrix@C=4em{
      \Sigma^{\infty}_{\Pro^1}\K_i(-i) \wedge \Sigma^{\infty}_{\Pro^1}\K_i(-i)
      \ar[r]^-{\Sigma^{\infty}(\mu_{ii})} \ar[d]_-{u_i\wedge u_i}&
      \Sigma^{\infty}_{\Pro^1}\K_{2i}(-2i)    \ar[d]^-{u_{2i}}  \\
      \mathrm{BGL} \wedge \mathrm{BGL}\ar[r]^-{\mu_{\mathrm{BGL}}}& \mathrm{BGL}}
  \end{equation*}
  commutes.
  Let
  $e_{\mathrm{BGL}}\colon \one \to \mathrm{BGL}$
  in
  $\SH(S)$
  be adjoint to the unit
  $e_{\K}\colon S^{0,0} \to \K$.
  Then
  \begin{equation*}
    (\mathrm{BGL},\mu_{\mathrm{BGL}},e_{\mathrm{BGL}})
  \end{equation*}
  is a commutative monoid in
  $\SH(S)$.
\end{theorem}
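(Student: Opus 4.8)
The plan is to build the product $\mu_{\mathrm{BGL}}$ by first assembling it levelwise as a compatible family of maps out of the level-approximation spectra and then invoking the exact sequence of Corollary~\ref{FiniteLevelTwo} to glue these into a single morphism, exactly as in the uniqueness argument of Proposition~\ref{BGLandBGL}. More precisely, for each $i$ the pairing $\mu_{ii}\colon \K_i\wedge\K_i\to\K_{2i}$ in $\mathrm{H}_\bullet(S)$ gives, after applying $\Sigma^\infty_{\Pro^1}(-)(-2i)$ and composing with $u_{2i}$, an element of $\mathrm{BGL}^{0,0}\bigl((\Sigma^\infty_{\Pro^1}\K_i(-i))\wedge(\Sigma^\infty_{\Pro^1}\K_i(-i))\bigr)=\mathrm{BGL}^{4i,2i}(\K_i\wedge\K_i)$; the key point is that these elements are compatible under the structure maps $e_i$ (this uses $\mu_{ij}=\bar\mu$ and the relation $[\epsilon]=\bar\mu\circ[(\id\wedge b)]$ from Remark~\ref{MuBar}, together with the fact that $\bar\mu$ is a commutative, associative, unital product in $\mathrm{H}_\bullet(S)$), hence they define an element of $\varprojlim \mathrm{BGL}^{4i,2i}(\K_i\wedge\K_i)$. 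By Corollary~\ref{FiniteLevelTwo} applied with $E=F=G=\mathrm{BGL}$ and $(p,q)=(0,0)$, this element lifts to a morphism $\mu_{\mathrm{BGL}}\in\mathrm{BGL}^{0,0}(\mathrm{BGL}\wedge\mathrm{BGL})=\Hom_{\SH(S)}(\mathrm{BGL}\wedge\mathrm{BGL},\mathrm{BGL})$, and the lift is unique once we show the relevant $\varprojlim^1$-term vanishes.

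The vanishing of $\varprojlim^1 \mathrm{BGL}^{4i-1,2i}(\K_i\wedge\K_i)$ is the technical heart of the argument, and it should follow by the same strategy used for Proposition~\ref{LimOneBGL}: reduce modulo the torsion group $A=\mathrm{BGL}^{-1,0}(S)=K_1(\ZZ)=\ZZ/2\ZZ$ via the analogue of Lemma~\ref{BGLofGr} (now with $\K\wedge\K$, or rather $\K_i\wedge\K_i$, in place of $\K$), then apply the topological realization functor $r$ and the realization isomorphism (the analogue of Lemma~\ref{GeomRealIsom} for $\K\wedge\K$, which reduces to cellular-space computations via Lemma~\ref{CohOfCellularSpace} since $\mathrm{Gr}(n,2n)\times\mathrm{Gr}(n,2n)$ is again cellular), and finally compute the resulting $\varprojlim^1$ of topological $K$-theory groups of finite Grassmannian approximations of $\mathrm{BU}\times\mathrm{BU}$, which vanishes because those groups are finite. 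I expect this is where most of the work goes — extending Lemmas~\ref{BGLofGr}, \ref{GeomRealIsom}, and the finite-approximation arguments from $\K$ to $\K\wedge\K$ — but the paper has clearly set up all the needed tools (Lemmas~\ref{KofGrsmashGr}, \ref{KofSmash}, \ref{CohOfCellularSpace}, \ref{FiniteAprox}) for exactly this purpose.

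Once $\mu_{\mathrm{BGL}}$ is constructed and unique, commutativity, associativity, and unitality are proved by the standard trick: each of the relevant diagrams (e.g.\ $\mu_{\mathrm{BGL}}\circ(\mu_{\mathrm{BGL}}\wedge\id)$ versus $\mu_{\mathrm{BGL}}\circ(\id\wedge\mu_{\mathrm{BGL}})$, or $\mu_{\mathrm{BGL}}\circ\tau$ versus $\mu_{\mathrm{BGL}}$, or $\mu_{\mathrm{BGL}}\circ(e_{\mathrm{BGL}}\wedge\id)$ versus the canonical identification) is a morphism in the appropriate $\mathrm{BGL}^{0,0}$-group, which by the triple-level analogue of Corollary~\ref{FiniteLevelTwo} (for associativity one needs $E\wedge E\wedge E$; the $\varprojlim^1$-vanishing is handled the same way) injects into $\varprojlim$ of the corresponding groups at finite level $\mathrm{BGL}^{*,*}(\K_i\wedge\K_i\wedge\K_i)$. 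On each finite level the identity in question holds because it holds for the commutative monoid structure $(\K,\bar\mu,\bar e)$ in $\mathrm{H}_\bullet(S)$ (Remark~\ref{MuBar}), which is itself inherited from the Waldhausen monoid $(\mathbb{K}^W,\mu^W,e^W)$. Hence both sides of each diagram map to the same element of the $\varprojlim$, and injectivity forces them to agree in $\SH(S)$, so $(\mathrm{BGL},\mu_{\mathrm{BGL}},e_{\mathrm{BGL}})$ is a commutative monoid.
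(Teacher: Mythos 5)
Your proposal is correct and follows essentially the same route as the paper: identify $\mu_{\mathrm{BGL}}$ with an element of $\mathrm{BGL}^{0,0}(\mathrm{BGL}\wedge\mathrm{BGL})$, use the short exact sequence of Corollary~\ref{FiniteLevelTwo}, feed in the compatible family $\{u_{2i}\circ\Sigma^\infty(\mu_{ii})\}$, and obtain existence and uniqueness from the vanishing of $\varprojlim^1$ (Proposition~\ref{LimOneBGLBGL}, with Proposition~\ref{LimOneBGLBGLBGL} for associativity and Proposition~\ref{LimOneBGL} for the unit), which is in turn proved by exactly the reduce-mod-$K_1(\ZZ)=\ZZ/2$, realize-over-$\CC$, and finite-Grassmannian-approximation chain you describe. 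The only point worth noting is that you make explicit the verification that the family $\{u_{2i}\circ\Sigma^\infty(\mu_{ii})\}$ is compatible in the inverse system, whereas the paper takes this for granted; your flagging of it is in fact the right instinct.
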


\begin{proof}
The morphism
$\mu_{\mathrm{BGL}}$
we are looking for is an element of the group
$\mathrm{BGL}^{0,0}(\mathrm{BGL} \wedge \mathrm{BGL})$.
This group fits in the exact sequence
\begin{equation*}
0 \to {\varprojlim}^{1}\mathrm{BGL}^{4i-1,2i}(\K_i^{\wedge 2}) \to
\mathrm{BGL}^{0,0}(\mathrm{BGL} \wedge \mathrm{BGL}) \to
\varprojlim \mathrm{BGL}^{4i,2i}(\K_i^{\wedge 2}) \to 0
\end{equation*}
by Corollary~\ref{FiniteLevelTwo}.
The family of elements
$\lbrace u_{2i} \circ \Sigma^{\infty}(\mu_{ii}) \rbrace $
is an element of the
$\varprojlim $ group.
The
${\varprojlim}^{1}$
group vanishes by Proposition~\ref{LimOneBGLBGL}
below, whence there exist a unique
element $\mu_{\mathrm{BGL}}$
whose image in the
$\varprojlim $
group coincides with the element
$\lbrace u_{2i} \circ \Sigma^{\infty}(\mu_{ii}) \rbrace $.
That morphism
$\mu_{\mathrm{BGL}}$
is the required one.

In fact, the identities
$u_{2i} \circ \Sigma^{\infty}(\mu_{ii})=
\mu_{\mathrm{BGL}} \circ (u_i \wedge u_i)$
hold by the very construction of
$\mu_{\mathrm{BGL}}$.
The operation
$\mu_{\mathrm{BGL}}$
is associative because the group
${\varprojlim}^{1}\mathrm{BGL}^{8i-1,4i}(\K_i \wedge \K_i \wedge \K_i)$
vanishes by Proposition~\ref{LimOneBGLBGLBGL}.
That
$\mu_{\mathrm{BGL}}$ is commutative
follows from the vanishing of the group
${\varprojlim}^{1}\mathrm{BGL}^{4i-1,2i}(\K_i \wedge \K_i)$
(see Proposition~\ref{LimOneBGLBGL}).
The fact that
$e_{\mathrm{BGL}}$
is a two-sided unit for the multiplication
$\mu_{\mathrm{BGL}}$
follows by Proposition~\ref{LimOneBGL}, which shows that the group
${\varprojlim}^{1}\mathrm{BGL}^{2i-1,i}(\K_i)$
vanishes.
\end{proof}

\begin{definition}\label{def:BGL-over-base}
Let $S$ be a Noetherian finite-dimensional scheme, with
$f\colon  S \to \Spec(\mathbb{Z})$
being the canonical morphism.
Let
$f^\ast\colon \mathrm{SH}(\mathbb{Z}) \to \SH(S)$
be the
strict symmetric monoidal pull-back functor from~\ref{prop:base-change-spectra}.
Set
\begin{align*}
  \mu^S_{\mathrm{BGL}} & \colon\!\! = f^\ast(\mathrm{BGL}) \wedge f^\ast(\mathrm{BGL})
  \xra{\mathrm{can}} f^\ast(\mathrm{BGL} \wedge \mathrm{BGL})
  \xra{f^\ast(\mu_{\mathrm{BGL}})} f^\ast(\mathrm{BGL})\\
  e^S_{\mathrm{BGL}} & \colon\!\! = S^0 \xra{can} f^\ast(S^0)
  \xra{f^\ast(e_{\mathrm{BGL}})} f^\ast(\mathrm{BGL})
\end{align*}
and
$\mathrm{BGL}_S = f^\ast(\mathrm{BGL})$.
Then
$(\mathrm{BGL}_S, \mu^S_{\mathrm{BGL}}, e^S_{\mathrm{BGL}})$
is a commutative monoid in
$\SH(S)$.
Note that $\mathrm{BGL}^S$ satisfies the
conditions from Definition~ref{BGL} in $\MS(S)$
by Remark~\ref{PullBackBGL}.
\end{definition}

We will sometimes refer to a monoid in $\SH(S)$ as a $\Pro^1$-{\em ring spectrum}.

\begin{corollary}
  The multiplicative structure on the functor
  $\mathrm{BGL}_S^{*,*}$
  induced by the pairing
  $\mu^S_{\mathrm{BGL}}$
  and the unit
  $e^S_{\mathrm{BGL}}$
  coincides with the naive product structure
  (\ref{NaiveProducts}).
\end{corollary}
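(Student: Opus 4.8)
The plan is to show that the two pairings agree on every pair of cohomology classes, by representing each class at a finite level of $\mathrm{BGL}_S$ and invoking the defining square of Theorem~\ref{MonoidalStrOnBGL} after pull-back along the structural morphism $f\colon S\to\Spec(\mathbb{Z})$.

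\smallskip

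\emph{Step 1: the finite-level identity over $S$.} I would first observe that $\mu^S_{\mathrm{BGL}}$ satisfies the analogue over $S$ of the defining square of Theorem~\ref{MonoidalStrOnBGL}. The functor $f^\ast\colon\SH(\mathbb{Z})\to\SH(S)$ is strict symmetric monoidal (Proposition~\ref{prop:base-change-spectra}), and by Remark~\ref{PullBackBGL} it carries $\mathrm{BGL}$ to $\mathrm{BGL}_S$, each of whose terms is $f^\ast\K$; consequently $f^\ast$ sends the canonical maps $u_i\colon\Sigma^{\infty}_{\Pro^1}\K_i(-i)\to\mathrm{BGL}$ to the canonical maps $u^S_i\colon\Sigma^{\infty}_{\Pro^1}(f^\ast\K)(-i)\to\mathrm{BGL}_S$. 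Since the Waldhausen $K$-theory space, its Waldhausen multiplication, and the Morel--Voevodsky representability isomorphism $\psi$ are functorial in the base, $f^\ast$ carries the commutative monoid $(\K,\bar\mu,\bar e)$ of Remark~\ref{MuBar} to the corresponding monoid over $S$, so $f^\ast\mu_{ii}=\mu^S_{ii}$ and $f^\ast e_{\mathrm{BGL}}=e^S_{\mathrm{BGL}}$. Applying $f^\ast$ to the commuting square of Theorem~\ref{MonoidalStrOnBGL} and using $\mu^S_{\mathrm{BGL}}=f^\ast\mu_{\mathrm{BGL}}$ (Definition~\ref{def:BGL-over-base}) then gives, for every $i\geq 0$, the identity
\begin{equation*}
\mu^S_{\mathrm{BGL}}\circ(u^S_i\wedge u^S_i)=u^S_{2i}\circ\Sigma^{\infty}_{\Pro^1}(\mu^S_{ii})\qquad\text{in }\SH(S).
\end{equation*}

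\smallskip

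\emph{Step 2: comparison on classes.} Next I would fix pointed motivic spaces $A,B$ over $S$ and classes $a\in\mathrm{BGL}^{p,i}_S(A)$, $b\in\mathrm{BGL}^{q,j}_S(B)$. Because all terms of $\mathrm{BGL}_S$ coincide with $f^\ast\K$, the adjunction isomorphism of Corollary~\ref{AdjunctionIso} lets us pick a common level $k$ (any sufficiently large one) and morphisms $\alpha\colon A'\to f^\ast\K$, $\gamma\colon B'\to f^\ast\K$ in $\mathrm{H}_\bullet(S)$, where $A'$ and $B'$ are suitable sphere-suspensions of $A$ and $B$, such that $a=u^S_k\circ\Sigma^{\infty}_{\Pro^1}(\alpha)(-k)$ and $b=u^S_k\circ\Sigma^{\infty}_{\Pro^1}(\gamma)(-k)$ as morphisms of $\Pro^1$-spectra. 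That this change of level does not alter the naive product is precisely the compatibility of the $(2,1)$-periodicity identifications~(\ref{Periodicity00}),~(\ref{Periodicity}) with the naive product structure, recorded in diagram~(\ref{OneDiagram}) and in the text preceding Remark~\ref{21Periodicity}; so in the naive structure $a\cup b$ is represented at level $2k$ by $\bar\mu\circ(\alpha\wedge\gamma)\colon A'\wedge B'\to f^\ast\K$. On the other hand, using these representatives, the identity of Step~1, the equality $\mu^S_{kk}=\bar\mu$, and the canonical isomorphism $\Sigma^{\infty}_{\Pro^1}(A')(-k)\wedge\Sigma^{\infty}_{\Pro^1}(B')(-k)\cong\Sigma^{\infty}_{\Pro^1}(A'\wedge B')(-2k)$ supplied by Proposition~\ref{MonoidalStr}, one computes
\begin{align*}
\mu^S_{\mathrm{BGL}}\circ(a\wedge b)
&=\mu^S_{\mathrm{BGL}}\circ(u^S_k\wedge u^S_k)\circ\Sigma^{\infty}_{\Pro^1}(\alpha\wedge\gamma)(-2k)\\
&=u^S_{2k}\circ\Sigma^{\infty}_{\Pro^1}\bigl(\bar\mu\circ(\alpha\wedge\gamma)\bigr)(-2k).
\end{align*}
By Corollary~\ref{AdjunctionIso} the right-hand side represents exactly the class of $\bar\mu\circ(\alpha\wedge\gamma)$, which is the naive product $a\cup b$ as defined in~(\ref{NaiveProducts}). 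Finally $e^S_{\mathrm{BGL}}$, being adjoint to $\bar e$, represents the element $1\in\mathrm{BGL}^{0,0}_S(S)=K^0(S)$, which is the unit for the naive product by Corollary~\ref{RingIsomorphism}. Hence the two multiplicative structures on $\mathrm{BGL}^{\ast,\ast}_S$ coincide.

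\smallskip

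The argument is formal, and the only real work is bookkeeping. In Step~2 one must organise the ``common level'' representation of classes carefully, keeping track of the canonical isomorphisms among the twisted suspension spectra $\Sigma^{\infty}_{\Pro^1}(A')(-k)$ for varying $k$ together with the periodicity identifications~(\ref{Periodicity00}),~(\ref{Periodicity}), so that representing a class at a level different from the one dictated by its bidegree is unambiguous and is simultaneously compatible with both pairings. The point I expect to require the most care is, however, the verification in Step~1 that $f^\ast$ genuinely carries the triple $(\K,\bar\mu,\bar e)$ and the canonical maps $u_i$ over $\Spec(\mathbb{Z})$ to their counterparts over $S$: this rests on the naturality of Waldhausen's $\mathcal{S}_\bullet$-construction in the exact category and on the naturality of the Morel--Voevodsky representability isomorphism, and since the whole reduction hinges on it, it should be spelled out in detail rather than left implicit.
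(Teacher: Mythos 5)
Your proof is correct and follows the same route the paper has in mind: the paper's proof of this corollary is the single sentence ``Follows from Theorem~\ref{MonoidalStrOnBGL},'' and the content of that citation is precisely the commuting square $\mu_{\mathrm{BGL}}\circ(u_i\wedge u_i)=u_{2i}\circ\Sigma^{\infty}(\mu_{ii})$ that you pull back along $f^\ast$ and then apply to finite-level representatives via Corollary~\ref{AdjunctionIso}. You have merely made explicit the level-bookkeeping and the functoriality of $(\K,\bar\mu,\bar e)$ and of the $u_i$ under base change, which the paper leaves implicit.
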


\begin{proof}
  Follows from Theorem~\ref{MonoidalStrOnBGL}.
\end{proof}

\begin{corollary}
  The functor isomorphism
  $[X,\K_0] \to [\Sigma^{\infty}_{\Pro^1}(X), \mathrm{BGL}_S]$
  respects the multiplicative structures on both sides.
  In particular, the isomorphism
  $Ad\colon K^{}_\ast \to \mathrm{BGL}^{-\ast,0}_S$
  of cohomology theories
  on $\SmOp/S$
  is an isomorphism of ring cohomology theories
  in the sense of
  \cite{PSorcoh}.
\end{corollary}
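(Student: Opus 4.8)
The plan is to derive this from the two preceding results together with Corollary~\ref{RingIsomorphism}; almost all of the work has already been done, so what remains is to match up the various product structures.

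First I would identify the map in question. By Corollary~\ref{AdjunctionIso}, applied over $S$ to $\mathrm{BGL}_S = f^\ast(\mathrm{BGL})$ as in Definition~\ref{def:BGL-over-base}, the map $[X,\K_0]\to[\Sigma^{\infty}_{\Pro^1}(X),\mathrm{BGL}_S]$ is the relevant component of the adjunction isomorphism $Ad$. Thus its source is $K^0(X)$ equipped with the Waldhausen (tensor) product and its target is $\mathrm{BGL}_S^{0,0}(X)$ equipped with the ring structure induced by $(\mu^S_{\mathrm{BGL}},e^S_{\mathrm{BGL}})$, and the claim is precisely that $Ad$ intertwines these two products and carries $[\mathcal{O}_X]$ to $e^S_{\mathrm{BGL}}$.

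Next I would invoke the previous corollary, which states that the multiplicative structure on $\mathrm{BGL}_S^{\ast,\ast}$ coming from $(\mu^S_{\mathrm{BGL}},e^S_{\mathrm{BGL}})$ coincides with the naive product structure~(\ref{NaiveProducts}). Hence it suffices to show that $Ad$ is a ring isomorphism onto $\mathrm{BGL}_S^{-\ast,0}$ with its naive product. But the naive product is built from the pairings $\mu_{ij}=\bar\mu$ on $\K$, and by Remark~\ref{MuBar} the monoid $(\K,\bar\mu,\bar e)$ is the transport of the Waldhausen monoid $(\mathbb{K}^W,\mu^W,e^W)$ along the isomorphism $\psi\circ[i]^{-1}$ in $\mathrm{H}_\bullet(S)$; and, as recalled in Section~\ref{ConstructionOfBGL} via \cite[(3.15.4)]{TT}, the Waldhausen monoid structure on $\mathbb{K}^W$ induces exactly the classical tensor product on Thomason--Trobaugh $K$-theory, with unit $[\mathcal{O}]$. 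This is the content of Corollary~\ref{RingIsomorphism}, so the ring statement follows, and the support version (the pairs $(X,X\smallsetminus Z)$ and the groups $K_p(X \ on\ Z)$) follows the same way from the relative form of Corollary~\ref{AdjunctionIso}.

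The only genuinely necessary check — and the closest thing to an obstacle — is the compatibility of the bigrading bookkeeping with products: one must observe that the periodicity identifications of Section~\ref{PreliminaryCompI} and the level-wise maps $u_i\colon \Sigma^{\infty}_{\Pro^1}\K_i(-i)\to\mathrm{BGL}$ used to define $\mu_{\mathrm{BGL}}$ in Theorem~\ref{MonoidalStrOnBGL} are multiplicative, so that comparing the pairings in bidegree $(0,0)$ suffices. This is already built into the definition of the naive product structure and into Theorem~\ref{MonoidalStrOnBGL}, so no new computation is required; the corollary is essentially a repackaging of results already established.
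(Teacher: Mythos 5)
Your argument is correct and follows the same essential route as the paper, which simply writes ``Follows from Theorem~\ref{MonoidalStrOnBGL}.'' You have usefully unpacked that one-line citation into the chain Theorem~\ref{MonoidalStrOnBGL} $\Rightarrow$ preceding corollary (the $\mu^S_{\mathrm{BGL}}$-product equals the naive product) together with Corollary~\ref{RingIsomorphism} (the naive product makes $Ad$ a ring isomorphism), which is exactly what is implicit in the paper's proof.
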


\begin{proof}
  Follows from Theorem~\ref{MonoidalStrOnBGL}.
\end{proof}

\begin{remark}
\label{Weibel}
  Let $S$ be a finite dimensional Noetherian scheme, with
  $f\colon S \to \Spec(\mathbb{Z})$ being the canonical morphism.
  Then by
  \cite[Thm.~6.9]{V1}
  the $\Pro^1$-spectrum
  $\mathrm{BGL}_S$ over $S$ as defined in~\ref{def:BGL-over-base}
  represents homotopy invariant $K$-theory as introduced in~\cite{We}.
  The triple
  $(\mathrm{BGL}_S, \mu^S_{\mathrm{BGL}}, e^S_{\mathrm{BGL}})$ is a distinguished
  monoidal structure on the the $\Pro^1$-spectrum
  $\mathrm{BGL}_S$.
\end{remark}

\subsection{Preliminary computations II}
Let
$\mathrm{BGL}$
be the $\Pro^1$-spectrum defined in~\ref{BGL}.
We will identify in this section the functors
$\mathrm{BGL}^{0,0}$
and
$\mathrm{BGL}^{2i,i}$,
$\mathrm{BGL}^{-1,0}$
and
$\mathrm{BGL}^{2i-1,i}$
on the motivic unstable category $\mathrm{H}_\bullet(S)$
as in Section~\ref{PreliminaryCompI}.

\begin{lemma}
\label{BGLofGrGr}
Let
$S=\Spec(\mathbb{Z})$.
For every integer $i$ the map
\begin{equation*}
\mathrm{BGL}^{-1,0}(S) \otimes_{}
\mathrm{BGL}^{2i,i}(\K_i \wedge \K_i) \to
\mathrm{BGL}^{2i-1,i}(\K_i \wedge \K_i)
\end{equation*}
induced by the naive product structure is an isomorphism.
The same holds if we replace
$\K_i \wedge \K_i$
by
$\K_i \wedge \K_i \wedge \Pro^1$
or by
$\K_i \wedge \K_i \wedge \Pro^1 \wedge \Pro^1$.
\end{lemma}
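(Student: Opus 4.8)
The plan is to reproduce the proof of Lemma~\ref{BGLofGr}, with the single Grassmannian space $\K_i$ replaced throughout by the smash product $\K_i\wedge\K_i$ (and, in the two variants, by $\K_i\wedge\K_i\wedge\Pro^1$ and by $\K_i\wedge\K_i\wedge\Pro^1\wedge\Pro^1$). First I would invoke the commutativity of the periodicity diagram~(\ref{OneDiagram}), applied with $A=\K_i\wedge\K_i$, to reduce the claim to the case $i=0$: that diagram exhibits the $i$-th instance of the map in question as the $0$-th instance transported along the $(2,1)$-periodicity isomorphisms, so one of them is an isomorphism exactly when the other is, and the extra $\Pro^1$-factors are carried along unchanged. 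Then, since $i\colon\mathbb{Z}\times\mathrm{Gr}\to\K$ is a weak equivalence and $\mathbb{Z}\times\mathrm{Gr}$ is closed cofibrant, the induced map $(\mathbb{Z}\times\mathrm{Gr})^{\wedge 2}\to\K^{\wedge 2}$ — and likewise after smashing with one or two copies of $\Pro^1$ — is a weak equivalence, so I may and will replace $\K_0\wedge\K_0$ throughout by $(\mathbb{Z}\times\mathrm{Gr})^{\wedge 2}$.

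At this point the isomorphism $Ad\colon K_\ast\to\mathrm{BGL}^{-\ast,0}$ of ring cohomology theories from Corollary~\ref{RingIsomorphism} translates the assertion into the statement that the product map
\begin{equation*}
K_1(S)\otimes K_0\bigl((\mathbb{Z}\times\mathrm{Gr})^{\wedge 2}\bigr)\to K_1\bigl((\mathbb{Z}\times\mathrm{Gr})^{\wedge 2}\bigr)
\end{equation*}
is an isomorphism, and similarly with $(\mathbb{Z}\times\mathrm{Gr})^{\wedge 2}\wedge\Pro^1$, resp.\ $(\mathbb{Z}\times\mathrm{Gr})^{\wedge 2}\wedge\Pro^1\wedge\Pro^1$, in place of $(\mathbb{Z}\times\mathrm{Gr})^{\wedge 2}$. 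Here the smash-product $K$-theory computations enter: by Lemmas~\ref{KofGrsmashGr} and \ref{KofSmash} together with Lemma~\ref{KTTofGr}, the groups $K_p$ of all these smash products are the inverse limits of the groups $K_p$ of the finite products
\begin{equation*}
\bigl([-n,n]\times\mathrm{Gr}(n,2n)\bigr)\times\bigl([-n,n]\times\mathrm{Gr}(n,2n)\bigr)
\end{equation*}
(smashed, where appropriate, with the relevant copies of $\Pro^1$), each of which is a cellular $S$-scheme in the sense of Lemma~\ref{KofCellularVar}; the transition maps in these towers are split surjective, so the corresponding $\varprojlim^{1}$-groups vanish and one gets genuine identifications of $K_0$ and $K_1$ with the inverse limits.

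Since $K_1(S)=K_1(\mathbb{Z})=\mathbb{Z}/2\mathbb{Z}$ is a finitely generated abelian group, tensoring with it commutes with these inverse limits, exactly as in the proof of Lemma~\ref{BGLofGr}; this reduces the claim to its finite level, namely to the fact that for a cellular $S$-scheme $Y$ the product map $K_1(S)\otimes K_0(Y)\to K_1(Y)$ is an isomorphism, which holds because $K_\ast(Y)$ is free over $K_\ast(S)$ on a basis of $K_0(Y)$ by Lemma~\ref{KofCellularVar}. The two variants carrying extra $\Pro^1$-factors are handled identically, using the projective-bundle-theorem form of Lemmas~\ref{KofGrsmashGr} and \ref{KofSmash}, precisely as in the $\K\wedge\Pro^1$ part of Lemma~\ref{BGLofGr}.

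The step I expect to be the main obstacle is the bookkeeping needed to present the $K$-theory of the smash products $(\mathbb{Z}\times\mathrm{Gr})^{\wedge 2}$, and of their further smashings with $\Pro^1$, as inverse limits of the $K$-groups of honest cellular schemes with split surjective transition maps — that is, checking that Lemmas~\ref{KofGrsmashGr}, \ref{KofSmash}, \ref{KTTofGr} and \ref{KofCellularVar} combine to yield exactly the colimit presentation one needs, so that both the $\varprojlim^{1}$-terms vanish and tensoring with $K_1(\mathbb{Z})$ passes through $\varprojlim$. Once that is in place, the rest is a routine repetition of the argument for Lemma~\ref{BGLofGr}.
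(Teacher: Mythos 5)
Your proposal follows the paper's own proof essentially verbatim: reduce to $i=0$ by diagram~(\ref{OneDiagram}), replace $\K$ by $\ZZ\times\Gr$ via the weak equivalence $i$, translate through the ring isomorphism $Ad\colon K_\ast\to\BGL^{-\ast,0}$, and then settle the resulting $K$-theory statement by the same $\varprojlim$/$\varprojlim^1$ analysis as in Lemma~\ref{BGLofGr}, now invoking Lemmas~\ref{KofGrsmashGr} and~\ref{KofSmash} in place of Lemmas~\ref{KTTofGr} and~\ref{KofCellularVar}. One small bookkeeping slip: the inverse-limit presentation is over the finite \emph{smash} products $\bigl([-n,n]\times\Gr(n,2n)\bigr)\wedge\bigl([-n,n]\times\Gr(n,2n)\bigr)$ (not products), and the finite-level isomorphism is therefore Lemma~\ref{KofSmash} rather than Lemma~\ref{KofCellularVar} directly; since you cite both lemmas and the paper's own Lemma~\ref{KofSmash} indeed reduces smash products to products via Lemma~\ref{KofXsmashY}, this is a phrasing issue rather than a gap.
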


\begin{proof}
Since diagram
(\ref{OneDiagram})
commutes,
it suffices to consider the case $i=0$.
Furthermore we may replace the pointed motivic space $\K_i$ with
$\mathbb{Z} \times \mathrm{Gr}$
since the map
$i\colon \mathbb{Z} \times \mathrm{Gr} \to \K_i=\K$
is a motivic weak equivalence. The functor isomorphism
$\mathbb{K}^{}_\ast \to \mathrm{BGL}^{- *,0}$
is an isomorphism of ring cohomology theories.
Thus it remains to check that the map
\begin{equation*}
K^{}_1(S) \otimes_{} K^{}_0((\mathbb{Z} \times \mathrm{Gr}) \wedge (\mathbb{Z} \times \mathrm{Gr})) \to
K^{}_1((\mathbb{Z} \times \mathrm{Gr}) \wedge (\mathbb{Z} \times \mathrm{Gr}))
\end{equation*}
is an isomorphism.
This can be checked arguing as in the proof of Lemma
\ref{BGLofGr}
and using Lemma
\ref{KofGrsmashGr}
and Lemma
\ref{KofSmash}.
The cases of
$\K_i \wedge \K_i \wedge \Pro^1$
and
$\K_i \wedge \K_i \wedge \Pro^1 \wedge \Pro^1$
are proved by the same arguments.
\end{proof}

\begin{lemma}
\label{GeomRealIsom2}
Suppose that
$S=\Spec(\mathbb{Z})$,
and let
$r\colon \SH(S) \to \SH_{\CC\Pro^1}$
be the topological realization functor.
Then for every integer $i$ the
homomorphism
$\mathrm{BGL}^{2i,i}(\K \wedge \K) \to ( r\mathrm{BGL})^{2i}\bigl(r(\K \wedge \K)\bigr)
 \iso ( r\mathrm{BGL})^{2i}(r\K \wedge r\K)$
is bijective.
\end{lemma}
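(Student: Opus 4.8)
The proof is parallel to that of Lemma~\ref{GeomRealIsom}, with $\K$ replaced throughout by $\K\wedge\K$; the only additional geometric input is that a finite product of cellular Grassmann varieties is again cellular, with algebraic cells isomorphic to affine spaces. First I would reduce to the case $i=0$: by Remark~\ref{21Periodicity} the $(2,1)$-periodicity isomorphism for $\mathrm{BGL}$ and the Bott periodicity isomorphism for $r\mathrm{BGL}$ are compatible (see~\ref{example:stable-real-bgl}), so the realization homomorphism in bidegree $(2i,i)$ is conjugate by periodicity isomorphisms to the one in bidegree $(0,0)$. Since $i\colon \mathbb{Z}\times\mathrm{Gr}\to\K$ is a motivic weak equivalence, so is $i\wedge i$, and since $r$ is monoidal it suffices to prove that
\begin{equation*}
\mathrm{BGL}^{0,0}\bigl((\mathbb{Z}\times\mathrm{Gr})\wedge(\mathbb{Z}\times\mathrm{Gr})\bigr)\to
r\mathrm{BGL}^{0}\bigl(r(\mathbb{Z}\times\mathrm{Gr})\wedge r(\mathbb{Z}\times\mathrm{Gr})\bigr)
\end{equation*}
is an isomorphism.

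Next I would establish the corresponding statement at finite level. Put $G_n=[-n,n]\times\mathrm{Gr}(n,2n)$, pointed by $(0,x_0)$, so that $\mathbb{Z}\times\mathrm{Gr}=\colim_n G_n$ with the maps $G_n\to G_{n+1}$ closed embeddings, and hence $(\mathbb{Z}\times\mathrm{Gr})\wedge(\mathbb{Z}\times\mathrm{Gr})\iso\mathrm{hocolim}_n\,(G_n\wedge G_n)$ and likewise after realization. Taking the product of the standard cellular filtrations of the factors of $G_n\times G_n$ and using $\Aff^a\times\Aff^b\iso\Aff^{a+b}$, one sees that $\Sigma^{\infty}_{\Pro^1}(G_n\times G_n)$, and hence the cofiber $\Sigma^{\infty}_{\Pro^1}(G_n\wedge G_n)$ of $\Sigma^{\infty}_{\Pro^1}(G_n\vee G_n)\to\Sigma^{\infty}_{\Pro^1}(G_n\times G_n)$, is obtained from finitely many motivic sphere spectra $S^{2m,m}$ by distinguished triangles. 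Therefore it belongs to the class $\mathcal{R}$ appearing in the proof of Lemma~\ref{CohOfCellularSpace}, so that $\mathrm{BGL}^{0,0}(G_n\wedge G_n)\to r\mathrm{BGL}^{0}(r(G_n\wedge G_n))$ is an isomorphism for every $n$. (Alternatively this may be deduced from Lemma~\ref{GeomRealIsom} applied to the factors together with the cofiber sequence $G_n\vee G_n\to G_n\times G_n\to G_n\wedge G_n$ and the five lemma.)

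Finally I would pass to the colimit. Applying the $\varprojlim^1$-sequence of Lemma~\ref{SmashAndHocolim} on both sides of the realization map produces a morphism of short exact sequences, exactly as in the proof of Lemma~\ref{GeomRealIsom}, whose outer terms involve $\varprojlim$ and $\varprojlim^1$ of the groups $\mathrm{BGL}^{0,0}(G_n\wedge G_n)$, $\mathrm{BGL}^{-1,0}(G_n\wedge G_n)$ and their topological counterparts. The $\varprojlim$-terms are matched by the finite-level statement just proved. On the algebraic side $\varprojlim^1\mathrm{BGL}^{-1,0}(G_n\wedge G_n)=\varprojlim^1 K_1(G_n\wedge G_n)=0$, the groups in this tower being finite (compare Lemma~\ref{LimOneVanish}); on the topological side $\varprojlim^1 r\mathrm{BGL}^{-1,0}(r(G_n\wedge G_n))=\varprojlim^1 K^1_{\mathrm{top}}(r(G_n\wedge G_n))=0$ by \cite[Thm.~16.32]{Sw}, since the spaces $r(G_n\wedge G_n)$ have cells only in even degrees. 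The five lemma then shows that the middle vertical map is an isomorphism, which is the assertion. The main obstacle is the bookkeeping of the two $\varprojlim^1$-towers together with the identification of the finite-level pieces as cellular spectra; once the cellularity of products of Grassmannians is in hand, the remainder is a routine adaptation of the proof of Lemma~\ref{GeomRealIsom}.
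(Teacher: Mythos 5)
Your proposal is correct, but it takes a genuinely different route from the paper at the crucial step. The paper's proof, after the same reduction to $i=0$ and to $\mathbb{Z}\times\mathrm{Gr}$, invokes Example~\ref{example:smash-product} to see that $\Sigma^\infty_{\Pro^1}\bigl((\mathbb{Z}\times\mathrm{Gr})\wedge(\mathbb{Z}\times\mathrm{Gr})\bigr)$ is a retract of $\Sigma^\infty_{\Pro^1}\bigl(\mathbb{Z}\times\mathrm{Gr}\times\mathbb{Z}\times\mathrm{Gr}\bigr)$ in $\SH(S)$, and so reduces to the realization homomorphism for the \emph{product} $\mathbb{Z}\times\mathrm{Gr}\times\mathbb{Z}\times\mathrm{Gr}$; that space is an increasing union of honest cellular $S$-schemes $[-n,n]\times\mathrm{Gr}(n,2n)\times[-m,m]\times\mathrm{Gr}(m,2m)$, so Lemma~\ref{CohOfCellularSpace} applies verbatim at each finite stage, and the colimit is handled as in Lemma~\ref{GeomRealIsom} with the paper's own Lemma~\ref{LimOneVanish} covering both $\varprojlim^1$-terms. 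You instead bypass the retract trick and work with the smash $G_n\wedge G_n$ directly at finite level, placing its suspension spectrum into the class $\mathcal{R}$ of the proof of Lemma~\ref{CohOfCellularSpace} via the cofiber sequence $G_n\vee G_n\to G_n\times G_n\to G_n\wedge G_n$ (both the wedge and the product are built from spheres $S^{2m,m}$, and $\mathcal{R}$ is closed under cofibers). Your colimit passage is likewise a direct analogue of Lemma~\ref{GeomRealIsom}, with the algebraic $\varprojlim^1$-vanishing justified by finiteness of $K_1(G_n\wedge G_n)$ (which indeed follows from $K_1(\ZZ)=\ZZ/2$ and Lemma~\ref{KofSmash}) rather than by surjectivity of bondings as in Lemma~\ref{LimOneVanish}. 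Both approaches are valid; the retract trick in the paper is slightly slicker because it keeps the cellularity argument at the level of smooth $S$-schemes, to which Lemma~\ref{CohOfCellularSpace} applies literally, whereas your route requires the extra observation that $\mathcal{R}$ is triangulated-closed so as to accommodate the smash, which is not itself a scheme.
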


\begin{proof}
Since the $(2,1)$-periodicity isomorphism for $\BGL$ described in
Remark~\ref{21Periodicity}
and the Bott periodicity isomorphism for $r\BGL$ are
compatible by~\ref{example:stable-real-bgl},
it suffices to consider the case $i=0$.
We may replace the pointed motivic space $\K$
with
$\mathbb{Z} \times \mathrm{Gr}$
as in the proof of Lemma~\ref{GeomRealIsom}.
It remains to check that the realization homomorphism
\begin{eqnarray*}
  \mathrm{BGL}^{0,0}((\mathbb{Z} \times \mathrm{Gr}) \wedge (\mathbb{Z} \times \mathrm{Gr}))& \to&
( r\mathrm{BGL})^{0}(r((\mathbb{Z} \times \mathrm{Gr}) \wedge (\mathbb{Z} \times \mathrm{Gr}))) \\
& = & (r\mathrm{BGL})^{0}(r(\mathbb{Z} \times \mathrm{Gr}) \wedge r(\mathbb{Z} \times \mathrm{Gr}))
\end{eqnarray*}
is an isomorphism.
By Example~\ref{example:smash-product}
the $\Pro^1$-spectrum
$\Sigma^\infty_{\Pro^1}((\mathbb{Z} \times \mathrm{Gr})\wedge (\mathbb{Z} \times \mathrm{Gr}))$
is a retract of
$\Sigma^\infty_{\Pro^1}(\mathbb{Z} \times \mathrm{Gr}\times \mathbb{Z} \times \mathrm{Gr})$
in $\SH(S)$, whence it suffices to consider the
topological realization homomorphism for
$\mathbb{Z} \times \mathrm{Gr} \times \mathbb{Z} \times \mathrm{Gr}$.
Since the latter is an increasing union of the
cellular $S$-schemes
$[-n,n] \times \mathrm{Gr}(n,2n)\times [-m,m] \times \mathrm{Gr}(m,2m)$,
the result follows with the help of Lemma~\ref{CohOfCellularSpace}
as in the proof of Lemma~\ref{GeomRealIsom}.

\end{proof}

\subsection{Vanishing of certain groups II}
Consider the stable equivalence
$\mathrm{hocolim}\ \Sigma^{\infty}_{\Pro^1}(\K
\wedge \K)(-2i) \cong \mathrm{BGL} \wedge \mathrm{BGL}$
displayed in~(\ref{LevelAproxTwo})
and the corresponding short exact sequence
\begin{equation*}
0 \to {\varprojlim}^{1}\mathrm{BGL}^{4i-1,2i}(\K^{\wedge 2}) \to
\mathrm{BGL}^{0,0}(\mathrm{BGL} \wedge \mathrm{BGL}) \to
\varprojlim \mathrm{BGL}^{4i,2i}(\K^{\wedge 2}) \to 0
\end{equation*}
from Corollary~\ref{FiniteLevelTwo}.
We prove in this section the following result
\begin{proposition}
\label{LimOneBGLBGL}
If $S=\Spec (\mathbb{Z})$ then
${\varprojlim}^{1}\mathrm{BGL}^{4i-1,2i}(\K \wedge \K)=0$.
\end{proposition}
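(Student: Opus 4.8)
The plan is to mimic the proof of Proposition~\ref{LimOneBGL} step by step, with $\K_i$ systematically replaced by $\K_i \wedge \K_i$. The group $\varprojlim^{1}\mathrm{BGL}^{4i-1,2i}(\K\wedge\K)$ is the $\varprojlim^{1}$-term of the tower whose $i$-th entry is $\mathrm{BGL}^{4i-1,2i}(\K_i\wedge\K_i)$ and whose connecting homomorphism, read off from the level approximation~(\ref{LevelAproxTwo}), is the composite
\[
\mathrm{BGL}^{4i-1,2i}(\K_i \wedge \K_i)
\xleftarrow{\Sigma^{-2}_{\Pro^1}}
\mathrm{BGL}^{4i+3,2i+2}(\K_i \wedge \K_i \wedge \Pro^1 \wedge \Pro^1)
\xleftarrow{(e_i\wedge e_i)^{*}}
\mathrm{BGL}^{4i+3,2i+2}(\K_{i+1} \wedge \K_{i+1}),
\]
suitably interpreted as in the proof of Proposition~\ref{LimOneBGL}. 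Because the naive product structure is compatible with these connecting maps (the analogue of diagram~(\ref{OneDiagram}) for $\K\wedge\K$), Lemma~\ref{BGLofGrGr} supplies a tower isomorphism $\bigl\{\mathrm{BGL}^{4i-1,2i}(\K_i\wedge\K_i)\bigr\}_i \cong \bigl\{A \otimes \mathrm{BGL}^{4i,2i}(\K_i\wedge\K_i)\bigr\}_i$, where $A := \mathrm{BGL}^{-1,0}(\Spec(\mathbb{Z})) = K_1(\mathbb{Z}) = \mathbb{Z}/2\mathbb{Z}$; hence $A\otimes\mathrm{BGL}^{4i,2i}(\K_i\wedge\K_i) = \mathrm{BGL}^{4i,2i}(\K_i\wedge\K_i)/2$, with connecting maps the mod-$2$ reductions of $\Sigma^{-2}_{\Pro^1}\circ(e_i\wedge e_i)^{*}$. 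It therefore suffices to show $\varprojlim^{1}\bigl(\mathrm{BGL}^{4i,2i}(\K\wedge\K)/2\bigr)=0$.

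Next I would transport this tower to topology exactly as in Proposition~\ref{LimOneBGL}. Applying Lemma~\ref{GeomRealIsom2} with $i$ replaced by $2i$ identifies $\mathrm{BGL}^{4i,2i}(\K\wedge\K)$ with $(r\mathrm{BGL})^{4i}(r\K\wedge r\K)$; the zigzag $\mathbb{B}\mathrm{U}\xleftarrow{\sim}E\xrightarrow{\sim}r\mathrm{BGL}$ of Lemma~\ref{SigzagWeakEquiv} replaces $r\mathrm{BGL}$ by $\mathbb{B}\mathrm{U}$; and the realization $r\K\simeq\mathbb{Z}\times\mathrm{BU}$ replaces $r\K\wedge r\K$ by $(\mathbb{Z}\times\mathrm{BU})^{\wedge 2}$. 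Since $\mathrm{BU}$ has only even cells, so does $(\mathbb{Z}\times\mathrm{BU})^{\wedge 2}$, whence its odd $\mathbb{B}\mathrm{U}$-cohomology vanishes and $\mathbb{B}\mathrm{U}^{4i}\bigl((\mathbb{Z}\times\mathrm{BU})^{\wedge 2}\bigr)/2 \cong \mathbb{B}\mathrm{U}^{4i}\bigl((\mathbb{Z}\times\mathrm{BU})^{\wedge 2};\mathbb{Z}/2\bigr)$. The tower of these mod-$2$ groups then computes, up to the usual $\varprojlim^{1}$-shift, $K^{1}_{\mathrm{top}}\bigl((\mathbb{Z}\times\mathrm{BU})^{\wedge 2};\mathbb{Z}/2\bigr)$.

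I would finish by a finite-approximation argument. By Example~\ref{example:smash-product} the spectrum $\Sigma^{\infty}_{\Pro^1}\bigl((\mathbb{Z}\times\mathrm{Gr})\wedge(\mathbb{Z}\times\mathrm{Gr})\bigr)$ is a retract of $\Sigma^{\infty}_{\Pro^1}\bigl((\mathbb{Z}\times\mathrm{Gr})\times(\mathbb{Z}\times\mathrm{Gr})\bigr)$, and the latter is the increasing union of the finite cellular schemes $[-n,n]\times\mathrm{Gr}(n,2n)\times[-n,n]\times\mathrm{Gr}(n,2n)$, whose realizations are finite CW-complexes with finite mod-$2$ topological $K$-groups (compare Lemmas~\ref{BUnot} and~\ref{FiniteAprox}). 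Thus the tower in question consists of finite groups, and a tower of finite groups has vanishing $\varprojlim^{1}$. This gives the claim.

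The one point that requires genuine care — rather than mechanical transcription — is the bookkeeping of the double $\Pro^1$-suspension together with the weight-$2i$ $(2,1)$-periodicity, so that every isomorphism in the chain above is genuinely compatible with the tower maps; and, relatedly, organizing the finite approximation of the smash $(\mathbb{Z}\times\mathrm{BU})^{\wedge 2}$ and checking the vanishing of its odd $\mathbb{B}\mathrm{U}$-cohomology. No new geometric ingredient is needed: everything substantive has already been isolated in Lemmas~\ref{BGLofGrGr}, \ref{GeomRealIsom2}, \ref{SigzagWeakEquiv}, \ref{BUnot} and~\ref{FiniteAprox} together with Corollary~\ref{FiniteLevelTwo}.
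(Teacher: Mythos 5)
Your proposal follows the paper's own proof essentially step for step: reduce via Lemma~\ref{BGLofGrGr} and $K_1(\mathbb{Z})\cong\mathbb{Z}/2$ to a mod-$2$ tower, pass to topology via Lemmas~\ref{GeomRealIsom2} and~\ref{SigzagWeakEquiv}, use vanishing of the odd $K$-theory of $\mathbb{Z}\times\mathrm{BU}$ to switch to $\mathbb{Z}/2$-coefficients, and conclude via the finite approximations of Lemmas~\ref{BUnot} and~\ref{FiniteAprox}. The suspension bookkeeping you flag as needing care is exactly what the paper spells out: the connecting map of the tower is $(\Sigma\circ\Sigma)^{-1}\circ\mathrm{tw}\circ(\epsilon\wedge\epsilon)^\ast$, with $\mathrm{tw}$ interchanging the two inner factors of the four-fold smash product so as to bring the two $\Pro^1$'s together.
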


\begin{proof}
  For a pointed motivic space $A$ we abbreviate $A\wedge A$ as
  $A^{\wedge 2}$. The connecting homomorphism in the tower
  of groups forming the
  ${\varprojlim}^{1}$-term is the composite map
  \begin{equation*} \xymatrix@C=0.0cm{\mathrm{BGL}^{4i-1,2i}(\K \wedge \K)\!\!\! & &
           \mathrm{BGL}^{4(i+1)-1,2(i+1)}(\K^{\wedge 2})
     \ar[dl]_-{(\epsilon \wedge \epsilon)^\ast} \\
     &\mathrm{BGL}^{4(i+1)-1,2(i+1)}\bigl((\K \wedge \Pro^1)^{\wedge 2}\bigr)
     \ar[ul]_-{\ (\Sigma \circ \Sigma)^{-1}\circ \mathrm{tw}}\!\!\! &
     }
  \end{equation*}
  where
  $\Sigma$
  is the $\Pro^1$-suspension isomorphism,
  $\mathrm{tw}$ is induced by interchanging the two pointed motivic spaces in the
  middle of the four-fold smash product, and
  $\epsilon \colon \K\wedge \Pro^1\to \K$
  is the structure map of $\mathrm{BGL}$.
  Set $A=\mathrm{BGL}^{-1,0}(S)$ and write
  $\mathrm{B}$ for $\mathrm{BGL}$.
  Consider the diagram
  \begin{equation*}
    \xymatrix{
      A \otimes \mathrm{B}^{4i+4,2i+2}(\K^{\wedge 2}) \ar[r]
      \ar[d]_-{\id \otimes (\epsilon \wedge \epsilon)^\ast} &
      \mathrm{B}^{4i+3,2i+2}(\K^{\wedge 2})
      \ar[d]^-{(\epsilon \wedge \epsilon)^\ast} \\
      A \otimes \mathrm{B}^{4i+4,2i+2}\bigl((\K \wedge \Pro^1)^{\wedge 2}\bigr)
      \ar[d]_-{\id \otimes (\Sigma \circ \Sigma)^{-1} \circ \mathrm{tw}} \ar[r] &
      \mathrm{B}^{4i+3,2i+2)}\bigl((\K \wedge \Pro^1)^{\wedge 2}\bigr)
      \ar[d]^-{(\Sigma \circ \Sigma)^{-1}\circ \mathrm{tw}} \\
      A \otimes \mathrm{B}^{4i,2i}(\K^{\wedge 2}) \ar[r]&
      \mathrm{B}^{4i-1,2i}(\K^{\wedge 2})}
  \end{equation*}
where the horizontal arrows are induced by the naive product structure
on the functor
$\mathrm{BGL}^{\ast,\ast}$.
Clearly it commutes.
The horizontal arrows are isomorphisms by Lemma~\ref{BGLofGrGr}.
It follows that
${\varprojlim}^{1}\mathrm{BGL}^{4i-1,2i}(\K^{\wedge 2})=
{\varprojlim}^{1}\bigl(A \otimes \mathrm{BGL}^{4i,2i}(\K^{\wedge 2})\bigr)$
where in the last tower of groups the connecting maps are
$\id \otimes \bigl((\Sigma \circ \Sigma)^{-1}_{\Pro^1} \circ \mathrm{tw})
  \circ (\epsilon \wedge \epsilon)^\ast\bigr)$.
It remains to prove the following statement.

\begin{claim}
  One has ${\varprojlim}^{1}(A \otimes \mathrm{BGL}^{4i,2i}(\K^{\wedge 2}))=0$.
\end{claim}

Since
$A= \mathrm{BGL}^{-1,0}(S)=K^{}_1(S)=\mathbb{Z} /2 \mathbb{Z}$,
there is an isomorphism
$A \otimes \mathrm{BGL}^{4i,2i}(\K^{\wedge 2})=
\mathrm{BGL}^{4i,2i}(\K^{\wedge 2})/m\mathrm{BGL}^{4i,2i}(\K^{\wedge 2})$
with $m=2$. The connecting map in the tower are just the mod-$m$
reduction of the maps
$(\Sigma \circ \Sigma)^{-1} \circ \mathrm{tw} \circ (\epsilon \wedge \epsilon)^\ast$.
Now a chain of isomorphisms completes the proof of the Claim.
\begin{align*}
  {\varprojlim}^{1}\mathrm{BGL}^{4i,2i}(\K^{\wedge 2})/m & \iso
  {\varprojlim}^{1}(r\mathrm{BGL})^{4i}(r \K^{\wedge 2})/m \iso
  {\varprojlim}^{1}\mathbb{B}\mathrm{U}^{4i}(r \K^{\wedge 2})/m \\
  & \iso{\varprojlim}^{1}\mathbb{B}\mathrm{U}^{4i}\bigl((\mathbb{Z} \times \mathrm{BU})
  \wedge (\mathbb{Z} \times \mathrm{BU})\bigr)/m \\
  & \iso {\varprojlim}^{1}\mathbb{B}\mathrm{U}^{4i}\bigl((\mathbb{Z} \times \mathrm{BU})
  \wedge (\mathbb{Z} \times \mathrm{BU}); \mathbb{Z}/m\bigr)\\ & \iso
  K_\mathrm{top}^1(\mathbb{B}\mathrm{U} \wedge \mathbb{B}\mathrm{U}; \mathbb{Z}/m)
  \iso K_\mathrm{top}^1(\mathbb{B}^0\mathrm{U} \wedge \mathbb{B}^0\mathrm{U};
  \mathbb{Z}/m)
  \\ & \iso {\varprojlim}^{1}K_\mathrm{top}^{4i}\bigl(\mathrm{Gr}(b(i),2b(i)) \wedge
  \mathrm{Gr}(b(i),2b(i));\mathbb{Z}/m\bigr)=0
\end{align*}
The first isomorphism follows from Lemma~\ref{GeomRealIsom2}.
The second isomorphism is induced by the levelwise weak equivalence
$\mathbb{B}\mathrm{U} \simeq r\mathrm{BGL}$
mentioned in Lemma~\ref{SigzagWeakEquiv}.
The third isomorphism is induced by the image of the weak equivalence
$\K_i \simeq \mathbb{Z} \times \mathrm{Gr}$ under topological realization.
The forth and fifth isomorphism hold since
$\mathbb{B}\mathrm{U}^{4i+1}(\mathbb{Z} \times \mathrm{BU})=0$.
The sixth isomorphism is induced by the stable equivalence
$\mathbb{B}^0\mathrm{U} \simeq \mathbb{B}\mathrm{U}$ from Lemma~\ref{BUnot},
the seventh one is induced by the stable equivalence
$\mathbb{B}^f\mathrm{U}\simeq \mathbb{B}^0\mathrm{U}$ from Lemma~\ref{FiniteAprox}. The last one holds since all groups in the tower
are finite.
\end{proof}

\subsection{Vanishing of certain groups III}
Consider the stable equivalence
\begin{equation*}
\mathrm{hocolim}\ \Sigma^{\infty}_{\Pro^1}(\K
\wedge \K \wedge \K )(-3i) \cong
\mathrm{BGL} \wedge \mathrm{BGL} \wedge \mathrm{BGL}
\end{equation*}
from~(\ref{LevelAproxTwo})
and the induced short exact sequence
\begin{equation*}
0 \to {\varprojlim}^{1}\mathrm{BGL}^{8i-1,4i}(\K^{\wedge 3}) \to
\mathrm{BGL}^{0,0}(\mathrm{BGL}^{\wedge 3}) \to
\varprojlim \mathrm{BGL}^{8i,4i}(\K^{\wedge 3}) \to 0.
\end{equation*}

\begin{proposition}
\label{LimOneBGLBGLBGL}
If $S=\Spec (\mathbb{Z})$ then
${\varprojlim}^{1}\mathrm{BGL}^{8i-1,4i}(\K \wedge \K \wedge \K)=0$.
\end{proposition}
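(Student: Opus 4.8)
The plan is to run the argument of Propositions~\ref{LimOneBGL} and~\ref{LimOneBGLBGL} with one extra smash factor; the only genuinely new work is to supply the three-fold versions of the auxiliary Lemmas~\ref{BGLofGrGr} and~\ref{GeomRealIsom2}. Throughout I write $A^{\wedge 3}$ for $A\wedge A\wedge A$.

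First I would unwind the connecting homomorphism of the tower $\{\mathrm{BGL}^{8i-1,4i}(\K^{\wedge 3})\}$ whose $\varprojlim^{1}$ is in question. As in the two-fold case it is the composite of the pull-back $(\epsilon\wedge\epsilon\wedge\epsilon)^\ast$ along the structure maps of $\mathrm{BGL}$ (which produces the $\Pro^1$-factors of $(\K\wedge\Pro^1)^{\wedge 3}$) with the inverse of the appropriate iterated $\Pro^1$-suspension isomorphism and a twist map collecting those $\Pro^1$-factors; the only point needing care here is the bookkeeping of the $\Pro^1$-twists in the level filtration of $\mathrm{BGL}^{\wedge 3}$, which has no analogue in the one- or two-fold settings.

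Next, imitating the proof of Proposition~\ref{LimOneBGLBGL}, I would set $A=\mathrm{BGL}^{-1,0}(S)=K_1(\mathbb{Z})=\mathbb{Z}/2\mathbb{Z}$ and form the commutative diagram whose vertical maps are the naive products $A\otimes\mathrm{BGL}^{8i,4i}(\K^{\wedge 3})\to\mathrm{BGL}^{8i-1,4i}(\K^{\wedge 3})$ together with the variants obtained by smashing $\K^{\wedge 3}$ with the relevant copies of $\Pro^1$. If these maps are isomorphisms --- the three-fold analogue of Lemma~\ref{BGLofGrGr} --- then ${\varprojlim}^{1}\mathrm{BGL}^{8i-1,4i}(\K^{\wedge 3})={\varprojlim}^{1}\bigl(A\otimes\mathrm{BGL}^{8i,4i}(\K^{\wedge 3})\bigr)={\varprojlim}^{1}\mathrm{BGL}^{8i,4i}(\K^{\wedge 3})/2$, and everything reduces to the vanishing of this last group. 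The three-fold Lemma~\ref{BGLofGrGr} I would prove exactly as the original: reduce to $i=0$ via diagram~(\ref{OneDiagram}), replace $\K$ by $\mathbb{Z}\times\mathrm{Gr}$, invoke the ring-cohomology isomorphism $K_\ast\iso\mathrm{BGL}^{-\ast,0}$ of Corollary~\ref{RingIsomorphism}, and reduce to checking that $K_1(\mathbb{Z})\otimes K_0\bigl((\mathbb{Z}\times\mathrm{Gr})^{\wedge 3}\wedge(\Pro^1)^{\wedge j}\bigr)\to K_1\bigl((\mathbb{Z}\times\mathrm{Gr})^{\wedge 3}\wedge(\Pro^1)^{\wedge j}\bigr)$ is an isomorphism; this follows from the Künneth computations of Lemmas~\ref{KofGrsmashGr} and~\ref{KofSmash}, iterated once more to accommodate the third smash factor, together with the finiteness of $K_1(\mathbb{Z})=\mathbb{Z}/2\mathbb{Z}$, which allows tensoring with it to commute with the inverse limits appearing there.

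Finally I would finish by the same chain of isomorphisms used for Proposition~\ref{LimOneBGLBGL}:
\begin{align*}
  {\varprojlim}^{1}\mathrm{BGL}^{8i,4i}(\K^{\wedge 3})/2 &\iso {\varprojlim}^{1}(r\mathrm{BGL})^{8i}(r\K^{\wedge 3})/2 \iso {\varprojlim}^{1}\mathbb{B}\mathrm{U}^{8i}(r\K^{\wedge 3})/2 \\
  &\iso {\varprojlim}^{1}\mathbb{B}\mathrm{U}^{8i}\bigl((\mathbb{Z}\times\mathrm{BU})^{\wedge 3}\bigr)/2 \iso {\varprojlim}^{1}\mathbb{B}\mathrm{U}^{8i}\bigl((\mathbb{Z}\times\mathrm{BU})^{\wedge 3};\mathbb{Z}/2\bigr) \\
  &\iso K^{1}_{\mathrm{top}}\bigl(\mathbb{B}\mathrm{U}^{\wedge 3};\mathbb{Z}/2\bigr) \iso K^{1}_{\mathrm{top}}\bigl((\mathbb{B}^{0}\mathrm{U})^{\wedge 3};\mathbb{Z}/2\bigr) \\
  &\iso {\varprojlim}^{1} K^{8i}_{\mathrm{top}}\bigl(\mathrm{Gr}(b(i),2b(i))^{\wedge 3};\mathbb{Z}/2\bigr)=0.
\end{align*}
The first isomorphism is the three-fold version of Lemma~\ref{GeomRealIsom2}, which I would establish just like that lemma by exhibiting $\Sigma^{\infty}_{\Pro^1}\bigl((\mathbb{Z}\times\mathrm{Gr})^{\wedge 3}\bigr)$ as a retract of $\Sigma^{\infty}_{\Pro^1}\bigl((\mathbb{Z}\times\mathrm{Gr})^{\times 3}\bigr)$ and applying Lemma~\ref{CohOfCellularSpace} to the cellular scheme $(\mathbb{Z}\times\mathrm{Gr})^{\times 3}$, which is exhausted by the cellular finite-dimensional schemes $\bigl([-n,n]\times\mathrm{Gr}(n,2n)\bigr)^{\times 3}$. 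The remaining isomorphisms are formal: they come respectively from Lemma~\ref{SigzagWeakEquiv}, from realizing the weak equivalence $\K\simeq\mathbb{Z}\times\mathrm{Gr}$, from the vanishing of the odd topological $K$-theory of the CW complex $(\mathbb{Z}\times\mathrm{BU})^{\wedge 3}$ (which has cells in even dimensions only), from smashing together the stable equivalences $\mathbb{B}^{0}\mathrm{U}\simeq\mathbb{B}\mathrm{U}$ of Lemma~\ref{BUnot} and $\mathbb{B}^{f}\mathrm{U}\simeq\mathbb{B}^{0}\mathrm{U}$ of Lemma~\ref{FiniteAprox}, and from the Mittag-Leffler vanishing of ${\varprojlim}^{1}$ over the tower of finite groups $K^{8i}_{\mathrm{top}}(\mathrm{Gr}(b(i),2b(i))^{\wedge 3};\mathbb{Z}/2)$. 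I expect no essential obstacle beyond the bookkeeping; the two places requiring genuine care are the precise form of the connecting map noted above and verifying that the two-fold Künneth and cellularity inputs survive the passage to three smash factors, which they do.
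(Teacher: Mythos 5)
Your proposal is correct and follows exactly the same approach as the paper: the paper's own proof of this Proposition is the one-line remark ``This is proved in the same way as Proposition~\ref{LimOneBGLBGL}'', and your write-up is precisely that argument with one additional smash factor, including the correct identification of the three-fold analogues of Lemmas~\ref{BGLofGrGr} and~\ref{GeomRealIsom2} as the only new inputs.
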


\begin{proof}
  This is proved in the same way as Proposition~\ref{LimOneBGLBGL}.
\end{proof}

\subsection{$\mathrm{BGL}$ as an oriented commutative $\Pro^1$-ring spectrum}

Following Adams and Morel, we define an orientation of a commutative
$\Pro^1$-ring spectrum. However we prefer to use a Thom class rather than
a Chern class.
Let $\Pro^\infty=\bigcup \Pro^n$ be the motivic space
pointed by $\infty\in \Pro^1 \hra \Pro^\infty$.
$\mathcal{O}(-1)$
be the tautological line bundle over
$\Pro^{\infty}$. It is also known
as the Hopf bundle. If $V\to X$ is a vector bundle over $X\in \Sm/S$, with
zero section $z\colon X\hra V$, let $\Th_X(V) = V/(V\smallsetminus z(X))$
be the Thom space of $V$, considered as a pointed motivic space over $S$.
For example $\Th_X(\Aff^n_X)\simeq S^{2n,n}$.
Define $\Th_{\Pro^\infty}\bigl(\mathcal{O}(-1)\bigr)$
as the obvious colimit of the Thom spaces $\Th_{\Pro^n}\bigl(\mathcal{O}(-1)\bigr)$.

\begin{definition}
\label{OrientationViaThom}
Let $E$ be a commutative $\Pro^1$-ring spectrum. An orientation of
$E$ is an element
$th \in E^{2,1}(\Th_{\Pro^\infty}(\mathcal{O}(-1)))=
      E^{2,1}_{\Pro^{\infty}}(\mathcal{O}(-1))$
such that its restriction to the Thom space of the fibre over the distinguished
point coincides with the element
$\Sigma_{\Pro^1}(1) \in E^{2,1}(\Th(1))= E^{2,1}(\Pro^1,\infty)$.
\end{definition}

\begin{remark}
\label{ThomAndChern}
Let $th$ be an orientation of $E$. Set
$c:=z^\ast(th) \in E^{2,1}(\Pro^{\infty})$.
Then
\cite[Prop.~6.5.1]{PY}
implies that
$c|_{\Pro^1}= - \Sigma_{\Pro^1}(1)$.
The class
$th(\mathcal{O}(-1)) \in E^{2,1}_{\Pro^{\infty}}(\mathcal{O}(-1))$
given by
(\ref{ThomClass})
coincides with the element $th$
by
\cite[Thm.~3.5]{PSorcoh}.
Thus another possible definition of an orientation of $E$
is the following.
\end{remark}

\begin{definition}
\label{OrientationViaChern}
Let $E$ be a commutative $\Pro^1$-ring spectrum. An orientation of
$E$ is an element
$c \in E^{2,1}(\Pro^{\infty})$
such that
$c|_{\Pro^1}= - \Sigma_{\Pro^1}(1)$
(of course the element $c$ should be regarded as the first
Chern class of the Hopf bundle
$\mathcal{O}(-1)$
on
$\Pro^{\infty}$).
\end{definition}

\begin{remark}
\label{ThomAndChern2}
Let $c$ be an orientation of the
commutative $\Pro^1$-ring spectrum $E$. Consider
the element
$th(\mathcal{O}(-1)) \in E^{2,1}_{\Pro^{\infty}}(\mathcal{O}(-1))$
given by~(\ref{ThomClass})
and set $th= th(\mathcal{O}(-1))$.
It is straightforward to check that
$th|_{\Th(1)}= \Sigma_{\Pro^1}(1)$.
Thus $th$ is an orientation of $E$.
Clearly
$c =z^\ast(th) \in E^{2,1}(\Pro^{\infty})$,
whence the two definitions of orientations of $E$
are equivalent.
\end{remark}

\begin{example}\label{OrientationOfMGLandK}
Set
$c^K =(- \beta) \cup \bigl([\mathcal{O}]-[\mathcal{O}(1)]\bigr)
\in \mathrm{BGL}^{2,1}(\Pro^{\infty})$.
The relation
(\ref{BottAndSuspensionTwo})
shows that $c^K$ is an orientation of
$\mathrm{BGL}$.
Consider
$th(\mathcal{O}(-1)) \in \mathrm{BGL}^{2,1}_{\Pro^{\infty}}(\mathcal{O}(-1))$
given by
(\ref{ThomClass})
and set
$th^K = th(\mathcal{O}(-1))$.
The class
$th^K$
is the same orientation of
$\mathrm{BGL}$.
\end{example}

The orientation of
$\mathrm{BGL}$
described in Example~\ref{OrientationOfMGLandK}
has the following property.
The map
(\ref{BGLAndTTrings2})
$$
\mathrm{BGL}^{*,*} \to K^{}_\ast
$$
which takes
$\beta$ to \ $-1$
is an oriented morphism of oriented cohomology
theories, provided that $K^{}_\ast$ is oriented
via the Chern structure
$L/X \mapsto [\mathcal{O}]- [L^{-1}] \in K_0(X)$.

\subsection{$\mathrm{BGL}^{*,*}$ as an oriented ring cohomology theory}

An oriented $\Pro^1$-ring spectrum $(E,c)$ defines an oriented cohomology theory on
$\SmOp$ in the sense of
\cite[Defn.~3.1]{PSorcoh}
as follows.
The restriction of the functor $E^{*,*}$ to the category
$\SmOp$ is a ring cohomology theory.
By
\cite[Thm.~3.35]{PSorcoh}
it remains to construct
a Chern structure on
$E^{*,*}|_{\SmOp}$
in the sense of
\cite[Defn.~3.2]{PSorcoh}.
The functor isomorphism
$\Hom_{\mathrm{H}_\bullet(S)}(- , \Pro^{\infty}) \to \mathrm{Pic}(-)$
on the category
$\Sm/S$
provided by
\cite[Thm.~4.3.8]{MV}
takes the class of the canonical map
$\Pro^\infty_+ \to {\Pro^{\infty}}$ to the class of the tautological line bundle
$\mathcal{O}(-1)$
over
$\Pro^{\infty}$.
Now for a line bundle $L$ over $X\in \Sm/S$
set
$c(L)=f^\ast_L(c) \in E^{2,1}(X)$,
where the morphism
$f_L\colon X_+ \to \Pro^{\infty}$
in $\mathrm{H}_\bullet(S)$ corresponds to the class $[L]$ of $L$ in the group
$\mathrm{Pic}(X)$. Clearly,
$c(\mathcal{O}(-1))=c$.
The assignment
$L/X \mapsto c(L)$
is a Chern structure on
$E^{*,*}|_{\SmOp}$
since
$c|_{\Pro^1}= - \Sigma_{\Pro^1}(1) \in E^{2,1}(\Pro^1,\infty)$.
With that Chern structure
$E^{*,*}|_{\SmOp}$
is an oriented ring cohomology theory
in the sense of
\cite{PSorcoh}.
In particular,
$(\mathrm{BGL},c^K)$
defines an oriented ring cohomology theory on
$\SmOp$.

This Chern structure induces
 a theory of Thom classes
\[ V/X \mapsto th(V) \in E^{2\mathrm{rank}(V),\mathrm{rank}(V)}\bigl(\Th_X(V)\bigr)\]
on
$E^{\ast,\ast}|_{\SmOp}$
in the sense of
\cite[Defn.~3.32]{PSorcoh} as follows.
There is a unique theory of Chern classes
$V \mapsto c_i(V) \in E^{2i,i}(X)$
such that for every line bundle $L$ on $X$ one has
$c_1(L)=c(L)$. Now for a rank $r$ vector bundle
$V$ over $X$ consider the vector bundle
$W:= {\bf 1} \oplus V$
and the associated projective vector bundle
$\Pro(W)$
of lines in $W$.
Set
\begin{equation}
\label{ThomBarClass}
\bar th(V)= c_r(p^\ast(V) \otimes \mathcal{O}_{\Pro(W)}(1)) \in E^{2r,r}(\Pro(W)).
\end{equation}
It follows from
\cite[Cor.~3.18]{PSorcoh}
that the support extension map
\begin{equation*}E^{2r,r}\bigl(\Pro(W)/(\Pro(W)\smallsetminus \Pro(\mathbf{1}))\bigr)
\to E^{2r,r}\bigl(\Pro(W)\bigr)
\end{equation*}
is injective and
$\bar th(E) \in E^{2r,r}\bigl(\Pro(W)/(\Pro(W)\smallsetminus \Pro(\mathbf{1}))\bigr) $.
Set
\begin{equation}
\label{ThomClass}
th(E)= j^\ast(\bar th(E)) \in E^{2r,r}\bigl(\Th_X(V)\bigr),
\end{equation}
where
$j\colon \Th_X(V) \to \Pro(W)/ (\Pro(W) \smallsetminus \Pro({\bf 1}))$
is the canonical motivic weak equivalence of pointed motivic spaces
induced by the open embedding $V\hra \Pro(W)$.
The assignment $V/X$ to $th(V)$ is a theory of Thom classes
on $E^{*,*}|_{\SmOp}$
(see the proof of
\cite[Thm.~3.35]{PSorcoh}). So the Thom classes are natural,
multiplicative and satisfy the following Thom isomorphism property.

\begin{theorem}
\label{ThomIsomorphism}
For a rank $r$ vector bundle
$p\colon V \to X$
on $X\in \Sm/S$ the map
\begin{equation*}
-\cup th(V)\colon  E^{*,*}(X) \to E^{*+2r,*+r}\bigl(\Th_X(V)\bigr)
\end{equation*}
is an isomorphism of the two-sided
$E^{*,*}(X)$-modules,
where
$-\cup th(V)$
is written for the composition map
$\bigl(-\cup th(V)\bigr) \circ p^\ast$.
\end{theorem}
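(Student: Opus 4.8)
The plan is to deduce Theorem~\ref{ThomIsomorphism} from the projective bundle theorem, exactly as the Thom isomorphism is obtained for any oriented ring cohomology theory in the sense of \cite{PSorcoh}; for $E=\mathrm{BGL}$ the orientation is $c^K$ from Example~\ref{OrientationOfMGLandK}, and the quickest route is simply to quote the corresponding statement from \cite{PSorcoh}. The geometric content is the following. Put $W:=\mathbf{1}\oplus V$ and $\zeta:=c_1\bigl(\mathcal{O}_{\Pro(W)}(1)\bigr)\in E^{2,1}(\Pro(W))$. By construction (see~(\ref{ThomBarClass})--(\ref{ThomClass})) the canonical motivic weak equivalence $j\colon\Th_X(V)\xra{\ \sim\ }\Pro(W)/(\Pro(W)\smallsetminus\Pro(\mathbf{1}))$ carries $th(V)$ to the class $\bar th(V)=c_r\bigl(p^*(V)\otimes\mathcal{O}_{\Pro(W)}(1)\bigr)$, which lies in the support-extension group $E^{2r,r}\bigl(\Pro(W)/(\Pro(W)\smallsetminus\Pro(\mathbf{1}))\bigr)$; by \cite[Cor.~3.18]{PSorcoh} that group injects into $E^{*,*}(\Pro(W))$ with image the kernel of the restriction to the open complement $\Pro(W)\smallsetminus\Pro(\mathbf{1})$. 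So I would be reduced to showing that $x\mapsto p^*(x)\cup\bar th(V)$ is an isomorphism of $E^{*,*}(X)$-modules from $E^{*,*}(X)$ onto that kernel.

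First I would pin down the geometry: $\Pro(W)\smallsetminus\Pro(\mathbf{1})$ is the total space of a line bundle over $\Pro(V)$ (a line in $W$ meeting $\mathbf{1}$ trivially is the graph of a homomorphism from its image in $V$ to $\mathbf{1}$), hence $\Aff^1$-weakly equivalent to $\Pro(V)$, and under this equivalence the restriction becomes the map induced by the closed embedding $\Pro(V)\hra\Pro(W)$. Next I would apply the projective bundle theorem of \cite{PSorcoh}: $E^{*,*}(\Pro(W))$ is free over $E^{*,*}(X)$ with basis $1,\zeta,\dots,\zeta^r$, while $E^{*,*}(\Pro(V))$ is free with basis $1,\zeta_V,\dots,\zeta_V^{r-1}$, where $\zeta_V=\zeta|_{\Pro(V)}$ since $\mathcal{O}_{\Pro(W)}(-1)$ restricts to $\mathcal{O}_{\Pro(V)}(-1)$. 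Hence the restriction map is surjective and its kernel is a free $E^{*,*}(X)$-module of rank one, generated by any element of it whose coefficient of $\zeta^r$ is a unit. I would then check that $\bar th(V)$ is such an element. That it lies in the kernel is clear: over $\Pro(V)$ the bundle $p^*(V)\otimes\mathcal{O}_{\Pro(W)}(1)$ contains the trivial subbundle $\mathcal{O}_{\Pro(V)}(-1)\otimes\mathcal{O}_{\Pro(V)}(1)\iso\mathbf{1}$, so its top Chern class vanishes by the Whitney sum formula, i.e.\ $\bar th(V)$ restricts to $0$ on $\Pro(V)$. For the coefficient of $\zeta^r$, the splitting principle together with the behaviour of first Chern classes under tensor products ($c_1(L\otimes M)=F(c_1 L,c_1 M)$ for the formal group law $F$ of $(E,c)$) gives $\bar th(V)=\prod_{i=1}^{r}\bigl(\zeta+_F x_i\bigr)$, with $x_1,\dots,x_r$ the Chern roots of $p^*(V)$; expanding in the basis $1,\zeta,\dots,\zeta^r$ shows the coefficient of $\zeta^r$ is a unit of $E^{*,*}(X)$ — it is $1$ when $F$ is additive, and for $\mathrm{BGL}$, whose law is the multiplicative one attached to $\beta$, it is $\prod_i(1+\beta x_i)$, which is the class in $K^0(X)=\mathrm{BGL}^{0,0}(X)$ of an invertible element (essentially $[\det(V)^{-1}]$). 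Thus $\bar th(V)$ generates the rank-one free kernel, so $x\mapsto p^*(x)\cup\bar th(V)$ is an isomorphism onto it; transporting this back along $j^*$ yields the isomorphism $-\cup th(V)\colon E^{*,*}(X)\to E^{*+2r,*+r}(\Th_X(V))$, and two-sidedness of the module structure is immediate from commutativity of $\mu_{\mathrm{BGL}}$ together with the projection formula for $p^*$.

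The step I expect to be the main obstacle is the verification that the coefficient of $\zeta^r$ in $\bar th(V)$ is a unit — equivalently, that $\bar th(V)$ generates the rank-one free kernel rather than merely lying in it. For a general oriented theory this is the one point at which one must invoke the explicit formal group law and the normalization built into the orientation, and it is precisely the computation carried out in the course of establishing the Thom-class results of \cite{PSorcoh}. All the remaining ingredients — the reduction along $j^*$, the identification of $\Pro(W)\smallsetminus\Pro(\mathbf{1})$ as (homotopically) $\Pro(V)$, the Whitney-formula vanishing, and the module bookkeeping — are formal once the projective bundle theorem is available.
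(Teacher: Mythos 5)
Your proposal is correct and takes essentially the same route as the paper, which simply cites \cite[Defn.~3.32.(4)]{PSorcoh} after having invoked (the proof of) \cite[Thm.~3.35]{PSorcoh} to see that $V/X\mapsto th(V)$ is a theory of Thom classes. Your elaboration of the underlying [PSorcoh] argument -- projective bundle theorem, identification of $\Pro(W)\smallsetminus\Pro(\mathbf{1})$ with $\Pro(V)$, the rank-one free kernel, and the unit leading coefficient -- is the content of that citation, with the one place you flag as delicate (that the $\zeta^r$-coefficient is a unit) resolved using nilpotence of first Chern classes for an arbitrary oriented $E$, and for $\mathrm{BGL}$ explicitly by the multiplicative formal group law as you indicate.
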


\begin{proof}
  See \cite[Defn.~3.32.(4)]{PSorcoh}.
\end{proof}

\appendix

\section{Motivic homotopy theory}
\label{sec:motiv-homot-theory}

The aim of this section is to present details on the model
structures we use to perform homotopical calculations. Our
reference on model structures is \cite{Hovey:book}. For the
convenience of the reader who is not familiar with model structures,
we recall the basic features and purposes of the theory below,
after discussing categorical prerequisites.

\subsection{Categories of motivic spaces}
\label{sec:categ-presh}

Let $S$ be a
Noetherian separated scheme of finite Krull dimension ({\em base
scheme\/} for short). The category of smooth
quasi-projective $S$-schemes is denoted $\Sm/S$. A smooth morphism
is always of finite type. In particular, $\Sm/S$ is equivalent to
a small category.

The category
of compactly generated topological spaces is denoted $\Top$,
the category of simplicial sets is denoted $\sSet$. The
set of $n$-simplices in $K$ is $K_n$.

\begin{definition}\label{def:motivic-space}
  A motivic space over $S$ is a functor
  $A\colon \Sm/S^\op\to \sSet$. The category of
  motivic spaces over $S$ is denoted $\M(S)$.
\end{definition}

For $X\in \Sm/S$ the motivic space sending $Y\in \Sm/S$ to the
discrete simplicial set $\Hom_{\Sm/S}(Y,X)$ is denoted $X$ as
well. More generally, any scheme $X$ over $S$ defines a motivic
space $X$ over $S$. Any simplicial set $K$ defines a constant
motivic space $K$. A pointed motivic space is a pair $(A,a_0)$,
where $a_0\colon S\to A$.
Usually the basepoint will be omitted from the
notation. The resulting category is denoted
$\M_\bullet(S)$.

\begin{definition}\label{def:base-change}
  A morphism $f\colon S\to S^\prime$ of base schemes
  defines the functor
  \begin{equation*}
    f_\ast\colon M_\bullet(S)\to M_\bullet(S^\prime)
  \end{equation*}
  sending $A$ to $(Y\to S^\prime)\mapsto A(S\times_{S^\prime} Y)$.
  Left Kan extension produces a left adjoint
  $f^\ast\colon M_\bullet(S^\prime)\to M_\bullet(S)$ of $f_\ast$.
\end{definition}

If $A$ is a motivic space, let $A_+$ denote the pointed motivic
space $(A\coprod S,i)$, where $i\colon S\to A\coprod S$ is the
canonical inclusion. The category $\M_\bullet(S)$ is closed symmetric
monoidal, with smash product $A\wedge B$ defined by
the sectionwise smash product
\begin{equation}\label{eq:1}
  \bigl(A\wedge B\bigr)(X) \defeq A(X)\wedge B(X)
\end{equation}
and with internal hom $\intHom_{M_\bullet(S)}(A,B)$ defined by
\begin{equation}\label{eq:2}
  \intHom_{M_\bullet(S)}(A,B)(x\colon X\to S)_n\defeq
  \Hom_{\M_\bullet(S)}(A\wedge \Delta^n_+,x_\ast x^\ast B).
\end{equation}
In particular, $M_\bullet(S)$ is also enriched over the category of
pointed simplicial sets, with enrichment $\sSet_\bullet(A,B)\defeq
\intHom_{M_\bullet(S)}(A,B)(S)$. The {\em mapping cylinder\/} of a
map $f\colon A\to B$ is the pushout of the diagram
\begin{equation}\label{eq:10}
\xymatrix{
  A\wedge \partial \Delta^1_+  \ar[d]^-{\rho} \ar[r]^-{\cong} &
     A\coprod A \ar[r]^-{\id_A\coprod f} & A\coprod B \ar[d] \\
  A\wedge \Delta^1_+ \ar[rr] && \Cyl(f)}.
\end{equation}
The composition of the canonical maps $A\hra \Cyl(f) \to B$ is
$f$.

The {\em pushout product\/} of two maps $f\colon A\to C$ and
$g\colon B\to D$ of motivic spaces over $S$ is the map $f\push
g\colon A\wedge D\cup_{A\wedge B} C\wedge B \to C\wedge D$ induced
by the commutative diagram
\begin{equation}\label{eq:pushout-product}
  \xymatrix{
  A\wedge B \ar[r] \ar[d]& A\wedge D \ar[d]\\
  C\wedge B \ar[r] & C\wedge D.}
\end{equation}
The functor $f^\ast\colon M_\bullet(S^\prime) \to M_\bullet(S)$
induced by $f\colon S \to S^\prime$ is strict
symmetric monoidal in the sense that
there are isomorphisms
\begin{equation}\label{eq:strict}
    \xymatrix{f^\ast(A)\wedge f^\ast(B) \ar[r]^-\cong  & f^\ast(A\wedge B)
    \quad \mathrm{and}
   \quad f^\ast(S^\prime_+) \ar[r]^-\cong &  S_+}
\end{equation}
which are natural in $A$ and $B$. The isomorphisms~(\ref{eq:strict})
are induced by
the corresponding isomorphisms for the strict symmetric monoidal
pullback functor sending $X\in
\Sm_{S^\prime}$ to $S\times_{S^\prime} X\in \Sm/S$. This ends the categorical
considerations.

\subsection{Model categories}
\label{sec:model-categories}

The basic purpose of a model structure is to give a framework
for the construction of a homotopy category. Suppose
$w\mathcal{C}$ is a class of morphisms in
a category $\mathcal{C}$ one
wants to make invertible. Call them weak equivalences.
One can define the homotopy ``category''
of the pair $(\mathcal{C},w\mathcal{C})$ to be the target of the
universal
``functor'' $\Gamma\colon \mathcal{C}\to \Ho(\mathcal{C},w\mathcal{C})$
such that every weak equivalence is mapped to an
isomorphism. In general, this homotopy ``category'' may not
be a category: it has hom-classes, but not necessarily
hom-sets. If one requires the existence of two auxiliary classes
of morphisms $f\mathcal{C}$ (the fibrations) and $c\mathcal{C}$
(the cofibrations), together with certain compatibility axioms,
one does get a homotopy category $\Ho(\mathcal{C},w\mathcal{C})$
and an explicit description of
the hom-sets in it.

\begin{theorem}[Quillen]\label{thm:quillen-hocat}
  Let $(w\mathcal{C},f\mathcal{C},c\mathcal{C})$ be a model
  structure on a bicomplete category $\mathcal{C}$.
  Then the universal functor to the
  homotopy category $\Gamma\colon \mathcal{C}\to
  \Ho(\mathcal{C},w\mathcal{C})$
  exists and is the identity on objects.
  The set of morphisms in $\Ho(\mathcal{C},w\mathcal{C})$
  from $\Gamma A$ to $\Gamma B$ is the set of morphisms in
  $\mathcal{C}$ from $A$ to $B$ modulo a homotopy
  equivalence relation, provided that $\emptyset \to A$
  is a cofibration and $B\to \ast$ is a fibration.
\end{theorem}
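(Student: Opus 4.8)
The plan is to first analyse the situation on the full subcategory $\mathcal{C}_{cf}\subset\mathcal{C}$ of objects that are both cofibrant and fibrant, where the homotopy relation is well behaved, and then to transport everything to all of $\mathcal{C}$ by means of (co)fibrant replacements. Bicompleteness is used only to supply the initial object $\emptyset$, the terminal object $\ast$, and the pushouts and pullbacks occurring below.

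First I would recall cylinder and path objects, which the factorization axiom provides: a \emph{cylinder} on $A$ is a factorization $A\amalg A\to\Cyl(A)\xrightarrow{\sim}A$ of the fold map into a cofibration followed by a weak equivalence, and dually a \emph{path object} on $B$ is a factorization $B\xrightarrow{\sim}\mathrm{Path}(B)\to B\times B$ of the diagonal. Call $f,g\colon A\to B$ \emph{left homotopic} if they extend over some cylinder on $A$, and \emph{right homotopic} if they factor through some path object on $B$. The standard lemmas, all proved by repeated use of the lifting axiom (see \cite[\S1]{Hovey:book}), are: (a) if $A$ is cofibrant, left homotopy is an equivalence relation on $\Hom(A,B)$, and dually right homotopy is one when $B$ is fibrant; (b) if $A$ is cofibrant and $B$ is fibrant, the two relations coincide --- write $\sim$ --- may be tested against any single cylinder or path object, and are compatible with composition, so that $\mathcal{C}_{cf}/\!\!\sim$ is a category with quotient functor $\pi\colon\mathcal{C}_{cf}\to\mathcal{C}_{cf}/\!\!\sim$; and (c) the invariance statement: precomposition along a weak equivalence between cofibrant objects, and postcomposition along a weak equivalence between fibrant objects, induce bijections on $\sim$-classes provided the remaining object is fibrant, resp.\ cofibrant.

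The second ingredient is Whitehead's theorem for model categories: a weak equivalence between objects of $\mathcal{C}_{cf}$ becomes an isomorphism under $\pi$; this follows by factoring it as a trivial cofibration followed by a trivial fibration (both endpoints stay in $\mathcal{C}_{cf}$) and noting that a trivial cofibration between fibrant objects and a trivial fibration between cofibrant objects each admit a homotopy inverse. Now I would build $\Ho(\mathcal{C})$: using factorization fix a cofibrant replacement $Q$ (a trivial fibration $QX\xrightarrow{\sim}X$ with $Q\emptyset=\emptyset$) and a fibrant replacement $RQX$ of $QX$ (a trivial cofibration $QX\xrightarrow{\sim}RQX$), so that $RQ$ lands in $\mathcal{C}_{cf}$ and each $X$ carries a natural zig-zag $X\xleftarrow{\sim}QX\xrightarrow{\sim}RQX$. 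Set $\Ho(\mathcal{C})$ to have the objects of $\mathcal{C}$ and $\Hom_{\Ho(\mathcal{C})}(A,B):=\Hom_{\mathcal{C}_{cf}/\sim}(RQA,RQB)$, with $\Gamma$ the identity on objects and $\Gamma(f)$ the homotopy class of any lift $RQf$ of $f$; (b) makes this well defined and functorial, and by Whitehead's theorem $\Gamma$ inverts weak equivalences since $RQ$ carries them to weak equivalences in $\mathcal{C}_{cf}$. For the universal property, any $F\colon\mathcal{C}\to\mathcal{D}$ inverting weak equivalences identifies left-homotopic maps (apply $F$ to a cylinder: the two structure maps $A\to\Cyl(A)$ become the common inverse of the image of $\Cyl(A)\xrightarrow{\sim}A$), and the zig-zags above then force a unique $\bar F\colon\Ho(\mathcal{C})\to\mathcal{D}$ with $\bar F\circ\Gamma=F$. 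So $\Gamma$ is the universal functor and is the identity on objects by construction.

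Finally, for the explicit description of the hom-sets: let $A$ be cofibrant and $B$ fibrant, and choose cofibrant-fibrant replacements $A\xrightarrow{\sim}RA$ (a trivial cofibration, as $A$ is cofibrant, so $RA$ stays cofibrant) and $QB\xrightarrow{\sim}B$ (a trivial fibration, as $B$ is fibrant, so $QB$ stays fibrant). Applying (c) twice --- along the weak equivalence $QB\to B$ of fibrant objects with $A$ cofibrant, and along $A\to RA$ of cofibrant objects with $QB$ fibrant --- gives bijections $\Hom(A,B)/\!\!\sim\;\xrightarrow{\cong}\;\Hom(A,QB)/\!\!\sim\;\xrightarrow{\cong}\;\Hom(RA,QB)/\!\!\sim$, and the last group is $\Hom_{\mathcal{C}_{cf}/\sim}(RA,QB)$, which is identified with $\Hom_{\Ho(\mathcal{C})}(\Gamma A,\Gamma B)$ because $A\to RA$ and $QB\to B$ are isomorphisms in $\Ho(\mathcal{C})$. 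Tracing the construction shows this composite is $\Gamma$, so morphisms $\Gamma A\to\Gamma B$ are exactly maps $A\to B$ modulo homotopy. The main obstacle is assembling the package (a), (b), (c) together with Whitehead's theorem, all of which rest on carefully chosen lifting diagrams; once these are in hand the construction of $\Ho(\mathcal{C})$ and the verification of universality are formal, and for the detailed verifications I would refer to \cite[Chapter~1]{Hovey:book}.
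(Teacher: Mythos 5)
The paper does not prove this theorem: it is stated as a classical result of Quillen, with the preceding text pointing the reader to \cite{Hovey:book} for the theory. Your sketch is a correct outline of the standard argument given there (Hovey, Chapter~1): homotopy on $\mathcal{C}_{cf}$, Whitehead's theorem, construction of $\Ho(\mathcal{C})$ via functorial $RQ$, the universal property, and the identification of hom-sets for $A$ cofibrant and $B$ fibrant via the two invariance bijections along $A\xrightarrow{\sim}RA$ and $QB\xrightarrow{\sim}B$; so it matches, in substance and structure, the source the paper defers to.
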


Here $\emptyset$ is the initial object and $\ast$
is the terminal object in $\mathcal{C}$. An object
$A$ resp.~$B$ as in Theorem~\ref{thm:quillen-hocat}
is called {\em cofibrant\/} resp.~{\em fibrant}. Every object
$\Gamma A$ in the homotopy category is isomorphic to
an object $\Gamma C$, where $C$ is both fibrant and cofibrant.
A (co)fibration which is also a weak equivalence is usually
called a {\em trivial\/} or {\em acyclic\/} (co)fibration.

To describe the standard way to construct model structures on a bicomplete
category, one needs a definition.
\begin{definition}\label{defn:rlp}
  Let $f\colon A\to B$ and $g\colon C\to D$
  be morphisms in $\mathcal{C}$. If every commutative
  diagram
  \begin{equation*}\xymatrix{
    A  \ar[r] \ar[d]_f & C \ar[d]^g \\
    B  \ar[r] & D}
  \end{equation*}
  admits a morphism $h\colon B\to C$ such that the resulting diagram
  \begin{equation*}\xymatrix{
    A \ar[r] \ar[d]_f  & C \ar[d]^g\\
    B \ar[r] \ar[ru]^h & D}
  \end{equation*}
  commutes (a {\em lift\/} for short), then $f$ has the left
  lifting property with respect to $g$, and $g$ has the
  right lifting property with respect to $f$.
\end{definition}

Here is the standard way of constructing a model structure
on a given bicomplete category.
Choose the class of weak equivalences
such that it contains all identities, is closed under retracts
and satisfies the two-out-of-three axiom. Pick a set $I$
(the {\em generating cofibrations\/}) and
define a cofibration to be a morphism which is a retract of
a transfinite composition of cobase changes of morphisms in $I$.
Pick a set $J$ (the {\em generating acyclic cofibrations\/})
of weak equivalences which are also cofibrations
and define the fibrations to be those morphisms which have
the right lifting property with respect to every morphism in $J$.
Some technical conditions have to be fulfilled in order to
conclude that this indeed is a model structure, which is
then called {\em cofibrantly generated}.
See \cite[Thm.~2.1.19]{Hovey:book}.

\begin{example}\label{ex:model-structure}
  In $\Top$, let the weak equivalences be the
  weak homotopy equivalences, and set
  \begin{equation*}
    I = \lbrace \partial D^n \hookrightarrow D^n \rbrace _{n\geq 0} \quad
    J = \lbrace D^n \times \lbrace 0\rbrace  \hookrightarrow
    D^n \times I \rbrace _{n\geq 0}.
  \end{equation*}
  Then the fibrations are precisely the Serre fibrations,
  and the cofibrations are retracts of generalized cell complexes
  (``generalized'' refers to the fact that cells do not have to
  be attached in order of dimension).
  In $\sSet$, let the weak equivalences be those maps
  which map to (weak) homotopy equivalences under
  geometric realization. Set
  \begin{equation*}
    I = \lbrace \partial \Delta^n \hookrightarrow \Delta^n \rbrace _{n\geq 0} \quad
  J = \lbrace \Lambda^n_j \hookrightarrow \Delta^n  \rbrace _{n\geq 1,0\leq j\leq n}
  \end{equation*}
  where $\Lambda^n_j$ is the sub-simplicial set of $\partial \Delta^n$
  obtained by removing the $j$-th face. Then the fibrations are
  precisely the Kan fibrations, and the cofibrations are
  the inclusions.
\end{example}

\begin{example}\label{ex:presheaves-model}
  For the purpose of this paper, model structures on presheaf
  categories $\Func(\mathcal{C}^\op,\sSet)$
  with values in simplicial sets are relevant.
  There is a canonical one, due to Quillen,
  which is usually referred to as the
  {\em projective\/} model structure. It has as weak equivalences
  those morphisms $f\colon A\to B$ such that
  $f(c)\colon A(c)\to B(c)$ is a weak equivalence
  for every $c\in \mathrm{Ob}\mathcal{C}$ (the {\em objectwise\/}
  or {\em sectionwise\/} weak equivalences). Set
  \begin{eqnarray*}
    I &= &\lbrace \Hom_{\mathcal{S}}(-,c)\times
        (\partial \Delta^n \hookrightarrow \Delta^n)\rbrace _{n\geq 0,\, c \in \mathrm{Ob}\mathcal{C}} \\
     J& = &\lbrace \Hom_{\mathcal{S}}(-,c)\times
        (\Lambda^n_j \hookrightarrow \Delta^n)  \rbrace _{n\geq 1,\,0\leq j\leq n,\,c \in \mathrm{Ob}\mathcal{C}}
  \end{eqnarray*}
  so that by adjointness, the fibrations are precisely the
  sectionwise Kan fibrations. There is another cofibrantly
  generated model structure with the same
  weak equivalences, due to
  Heller \cite{Heller}, such that the cofibrations are precisely
  the injective morphisms (whence the name {\em injective\/} model
  structure). The description of $J$ involves the
  cardinality of the set of morphisms in $\mathcal{C}$ and is not
  explicit. Neither is the characterization of the fibrations.
\end{example}

The morphisms of model categories are called Quillen functors.
A Quillen functor of model categories $\mathcal{M}\to \mathcal{N}$
is an adjoint pair $(F,G)\colon \mathcal{M}\to \mathcal{N}$
such that $F$ preserves cofibrations and $G$ preserves fibrations.
This condition ensures that $(F,G)$ induces an adjoint pair on
homotopy categories $(\mathcal{L}F,\mathcal{R}G)$, where
$\mathcal{L}F$ is the total left derived functor of $F$. A Quillen
functor is a Quillen equivalence if the total left derived is an
equivalence. For example, geometric realization is a
strict symmetric monoidal left Quillen equivalence $|-|\colon
\sSet \to \Top$, and similarly in the pointed setting.

If a model category has a closed symmetric monoidal structure as
well, one has the following statement, proven in \cite[Thm.~4.3.2]{Hovey:book}.

\begin{theorem}[Quillen]\label{thm:monoidal-model}
  Let $\mathcal{C}$ be a bicomplete category with a model
  structure. Suppose that $(\mathcal{C},\otimes, \one)$
  is closed symmetric monoidal. Suppose further that
  these structures are compatible in the following sense:
  \begin{itemize}
    \item The pushout product of two cofibrations is a cofibration, and
    \item the pushout product of an acyclic cofibration with a cofibration
      is an acyclic cofibration.
  \end{itemize}
  Then $A\otimes -$ is a left Quillen functor
  for all cofibrant objects $A\in \mathcal{C}$.
  In particular, there is an induced (total derived) closed
  symmetric monoidal structure on $\Ho(\mathcal{C},w\mathcal{C})$.
\end{theorem}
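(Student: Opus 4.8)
The plan is to run the standard argument of \cite[Thm.~4.3.2]{Hovey:book} in two parts: first the Quillen-functor assertion, then the descent of the monoidal structure to the homotopy category.

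\emph{Step 1 ($A\otimes-$ is left Quillen for cofibrant $A$).} Since $(\C,\otimes,\one)$ is closed, $A\otimes-$ has the right adjoint $\intHom(A,-)$, so it suffices to show that $A\otimes-$ preserves cofibrations and acyclic cofibrations. Let $f\colon X\to Y$ be one of these. For every object $Z$ the functor $-\otimes Z$ is a left adjoint, hence preserves colimits; in particular $\emptyset\otimes Z\cong\emptyset$, so the pushout-product map $(\emptyset\to A)\push f\colon A\otimes X\cup_{\emptyset\otimes X}\emptyset\otimes Y\to A\otimes Y$ is canonically identified with $\id_A\otimes f\colon A\otimes X\to A\otimes Y$. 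Applying the two compatibility bullets to the cofibration $\emptyset\to A$ (here we use that $A$ is cofibrant) paired with $f$ --- the first bullet when $f$ is a cofibration, the second when $f$ is an acyclic cofibration --- shows that $\id_A\otimes f$ is again a cofibration, resp.\ an acyclic cofibration. Hence $(A\otimes-,\intHom(A,-))$ is a Quillen pair.

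\emph{Step 2 (descent to $\Ho(\C)$).} By Ken Brown's lemma a left Quillen functor carries weak equivalences between cofibrant objects to weak equivalences; applying this to $A\otimes-$ for every cofibrant $A$, and transferring the conclusion to the other variable via the symmetry isomorphism, we find that $\otimes$ restricted to pairs of cofibrant objects preserves weak equivalences in each variable separately. Fixing a cofibrant replacement functor $Q$, set $A\otimes^{\tlder}B:=QA\otimes QB$; by the previous sentence this passes to a well-defined bifunctor $\tlder(\otimes)$ on $\Ho(\C)$, the total left derived functor of $\otimes$. Products of cofibrant objects are again cofibrant (Step~1), so the associativity and symmetry constraints of $\C$ restrict to isomorphisms between cofibrant objects and hence descend, the pentagon and hexagon identities being inherited from $\C$; the only delicate point is the unit axiom, i.e.\ that $Q\one\otimes X\to\one\otimes X\cong X$ is a weak equivalence for cofibrant $X$, and this is automatic in the situations of interest because $\one$ is itself cofibrant (for instance $\one=S^{0,0}$ is cofibrant in $\M^\cm_\bullet(S)$ by property~\ref{item:5}), so one may take $Q\one=\one$. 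Finally, for cofibrant $A$ the Quillen adjunction of Step~1 descends to an adjunction $\tlder(A\otimes-)\dashv\trder\intHom(A,-)$ on $\Ho(\C)$; these adjunctions are natural in $A$ and exhibit $\Ho(\C)$ as \emph{closed} symmetric monoidal with internal hom the total right derived functor of $\intHom$.

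\emph{Expected main obstacle.} The genuinely non-formal input is the pushout-product compatibility hypothesis itself, which has to be verified by hand for $\M^\cm_\bullet(S)$ and for the category of $\Pro^1$-spectra; once it is granted the remainder is purely diagrammatic, the two spots that still require attention being the unit axiom (dispatched by cofibrancy of $\one$) and the compatibility bookkeeping needed to check that cofibrant replacement makes all the coherence diagrams descend to $\Ho(\C)$.
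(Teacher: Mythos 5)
The paper gives no proof of this statement at all; it is cited verbatim to Hovey \cite[Thm.~4.3.2]{Hovey:book}, so there is nothing in-paper to compare against. Your argument reproduces the standard Hovey proof correctly: identifying $\id_A\otimes f$ with the pushout product $(\emptyset\to A)\push f$ and invoking the two pushout-product bullets gives Step 1, and Ken Brown's lemma plus the symmetry and adjunction bookkeeping gives Step 2. You also correctly notice that the theorem as stated in the paper omits Hovey's third hypothesis (the unit axiom, that $Q\one\otimes X\to \one\otimes X$ be a weak equivalence for cofibrant $X$) and that this gap is harmless here because $\one$ is cofibrant in all the model categories the paper applies this to; that is exactly the right way to treat the discrepancy, and is worth noting since the statement as printed is strictly speaking incomplete without either the unit axiom or cofibrancy of $\one$ as an additional hypothesis.
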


One abbreviates the hypotheses of Theorem~\ref{thm:monoidal-model} by
saying that $\mathcal{C}$ is a symmetric monoidal model category.
This ends our introduction to model category theory.

\subsection{Model structures for motivic spaces}
\label{sec:model-struct-motiv}

To equip $\M_\bullet(S)$ with a model structure suitable for the
various requirements (compatibility with base change, taking
complex points, finiteness conditions, having the correct motivic
homotopy category), we construct a preliminary model structure first.
Start with the following
construction, which is a special case of
the considerations in~\cite{Isaksen:flasque}. Choose any
$X\in Sm_S$ and a finite set
\begin{equation*}
  \lbrace i^j\colon Z_j \rightarrowtail X\rbrace _{j=1}^m
\end{equation*}
of closed embeddings in $\Sm/S$. Regarding $i^j$ as a monomorphism
of motivic spaces, one may form the categorical union (not the
categorical coproduct!) $i \colon \union_{j=1}^m
Z_j\hra X$. That is, $\union_{j=1}^m Z_j$ is the coequalizer in
the category of motivic spaces of the diagram
\begin{equation}\label{eq:5}
  \coprod_{j,j'} Z_j\times_{X} Z_{j'} \rightrightarrows  \coprod_{j=1}^m Z_j
\end{equation}
Call the resulting monomorphism $i\colon
\union_{j=1}^m Z_j\hra X$ {\em acceptable}. The closed embedding
$\emptyset \rightarrowtail X$ is acceptable as well. Consider the set
$\mathrm{Ace}$ of acceptable monomorphisms. Let $I_S^c$ be the set
of pushout product maps
\begin{equation}\label{eq:3}
  \lbrace i_+\push (\partial \Delta^n \hookrightarrow \Delta^n)_+
    \rbrace _{i\in\mathrm{Ace},\, n\geq 0}
\end{equation}
and let $J_S^c$ be the set of pushout product maps
\begin{equation}\label{eq:4}
  \lbrace i_+\push (\Lambda^n_j \hookrightarrow \Delta^n)_+\rbrace _{i\in\mathrm{Ace},\,
    n\geq 1,\, 0\leq j\leq n}
\end{equation}
defined via diagram~(\ref{eq:pushout-product}).

\begin{definition}\label{def:closed-schemewise}
  A map $f\colon A\to B$ in $\M_\bullet(S)$
  is a {\em schemewise weak equivalence\/} if $f\colon A(X) \to B(X)$
  is a weak equivalence of simplicial sets for all $X\in \Sm/S$.
  It is a {\em closed schemewise fibration\/} if $f\colon A\to B$
  has the right lifting property with respect to $J_S^c$.
  It is a {\em closed cofibration\/} if it has the left
  lifting property with respect to all acyclic closed schemewise fibrations
  (closed schemewise fibrations
  which are also schemewise weak equivalences).
\end{definition}

\begin{theorem}\label{thm:closed-schemewise}
  The classes described in Definition~\ref{def:closed-schemewise}
  are a closed symmetric monoidal
  model structure on $\M_\bullet(S)$, denoted
  $\M_\bullet^{cs}(S)$.
  A morphism $f\colon S\to T$ of base schemes
  induces a strict symmetric monoidal
  left Quillen functor $f^\ast\colon \M^{cs}_\bullet(T)\to \M^{cs}_\bullet(S)$.
\end{theorem}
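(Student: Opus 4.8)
The plan is to produce $\M_\bullet^{cs}(S)$ by the standard recognition criterion for cofibrantly generated model structures \cite[Thm.~2.1.19]{Hovey:book}, taking $I_S^c$ and $J_S^c$ from~(\ref{eq:3}) and~(\ref{eq:4}) as generating cofibrations and generating acyclic cofibrations, and then to verify the two pushout-product conditions of Theorem~\ref{thm:monoidal-model} together with the base-change statement. The formal inputs are immediate: $\M_\bullet(S)$ is locally presentable, so all objects are small; and schemewise weak equivalences satisfy two-out-of-three and are closed under retracts, both being checked sectionwise in $\sSet$.

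The computational core is a sectionwise observation: for an acceptable monomorphism $i\colon \union_j Z_j\hra X$ and any $Y\in \Sm/S$ the map $i(Y)$ is an inclusion of discrete simplicial sets, so $\bigl(i_+\push(\Lambda^n_k\hra\Delta^n)_+\bigr)(Y)$ is, up to isomorphism, a coproduct of copies of the anodyne map $\Lambda^n_k\hra\Delta^n$ together with identities, and similarly with $\partial\Delta^n\hra\Delta^n$ in place of the horn. Since anodyne maps are closed under coproducts, cobase change and transfinite composition, every relative $J_S^c$-cell complex is a sectionwise trivial cofibration, in particular a schemewise weak equivalence; and since each horn inclusion is a composite of cobase changes of boundary inclusions, $J_S^c\subseteq I_S^c$-cof. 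This gives $J_S^c\text{-cell}\subseteq W\cap I_S^c\text{-cof}$. Next, testing a map $p\colon A\to B$ against the subfamily of $I_S^c$ indexed by the (acceptable) embeddings $\emptyset\hra Y$, and using that the simplicial mapping space $\underline{\Hom}(Y_+,A)$ is just $A(Y)$ for representable $Y$, one finds that $p\in I_S^c\text{-inj}$ forces $A(Y)\to B(Y)$ to be a trivial Kan fibration for every $Y\in \Sm/S$; hence $I_S^c\text{-inj}\subseteq W$, and together with $I_S^c\text{-inj}\subseteq J_S^c\text{-inj}$ (from $J_S^c\subseteq I_S^c$-cof) this yields $I_S^c\text{-inj}\subseteq W\cap J_S^c\text{-inj}$. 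The remaining hypothesis of the criterion I would establish in the form $W\cap J_S^c\text{-inj}\subseteq I_S^c\text{-inj}$: given $p$ a schemewise weak equivalence with the right lifting property against $J_S^c$, one must show that for each acceptable $i\colon L\hra X$ the relative matching map
\[
\underline{\Hom}(X_+,A)\longrightarrow
\underline{\Hom}(X_+,B)\times_{\underline{\Hom}(L_+,B)}\underline{\Hom}(L_+,A)
\]
is a trivial Kan fibration; it is a Kan fibration by the horn-lifting hypothesis, and one proves it is a weak equivalence by showing the corresponding square of mapping spaces is homotopy cartesian, which reduces to controlling $\underline{\Hom}(L_+,p)$ in terms of the sections of $A$ and $B$ at the $Z_j$ and their pairwise intersections. \emph{This is the step I expect to be the main obstacle}: the intersections $Z_j\cap Z_{j'}$ need not be smooth over $S$, so one must argue carefully that the categorical-union construction is homotopically well behaved against $p$; this is precisely where restricting to \emph{acceptable} rather than arbitrary monomorphisms is used, and it is the technical heart of the flasque model structure of Isaksen \cite{Isaksen:flasque}, of which the present construction is a special case. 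Granting this, \cite[Thm.~2.1.19]{Hovey:book} produces $\M_\bullet^{cs}(S)$ with fibrations the closed schemewise fibrations and cofibrations $I_S^c$-cof; since the trivial fibrations are then $I_S^c$-inj, the cofibrations coincide with the closed cofibrations of Definition~\ref{def:closed-schemewise}.

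For the monoidal structure I would verify the two conditions of Theorem~\ref{thm:monoidal-model} on generators. By associativity and symmetry of the pushout product, the pushout product of $i_+\push\alpha_+$ and $i'_+\push\alpha'_+$ in $I_S^c$ is isomorphic to $(i\push i')_+\push(\alpha_+\push\alpha'_+)$. Here $i\push i'$ is the categorical union, inside the smooth $S$-scheme $X\times Y$, of the closed embeddings $Z_j\times Y\hra X\times Y$ and $X\times W_l\hra X\times Y$, hence is again an acceptable monomorphism; while $\alpha_+\push\alpha'_+$ is a cofibration of pointed simplicial sets built from the maps $(\partial\Delta^k\hra\Delta^k)_+$, and is anodyne as soon as $\alpha$ or $\alpha'$ is a horn inclusion. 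Thus $(i\push i')_+\push(\alpha_+\push\alpha'_+)$ is built from members of $I_S^c$, so it is a closed cofibration, and it is built from members of $J_S^c$ when one of the simplicial maps is a horn, so it is an acyclic cofibration in that case; these are the two pushout-product axioms. The unit $\one=S^0$ is cofibrant, as the map into it from the base-point space is $(\emptyset\hra S)_+\push(\partial\Delta^0\hra\Delta^0)_+\in I_S^c$. Hence $(\M_\bullet^{cs}(S),\wedge,\one)$ is a symmetric monoidal model category.

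Finally, for a morphism $f\colon S\to T$, the functor $f^\ast\colon \M_\bullet(T)\to\M_\bullet(S)$ is a left adjoint, is strict symmetric monoidal by~(\ref{eq:strict}), preserves constant motivic spaces, and sends a closed embedding $Z\hra X$ in $\Sm/T$ to the closed embedding $S\times_T Z\hra S\times_T X$ in $\Sm/S$ (base change preserves smoothness and closed immersions). Since $f^\ast$ preserves all colimits it commutes with forming categorical unions and pushout products, so it carries $I_T^c$ into $I_S^c$-cof and $J_T^c$ into $J_S^c$-cof; therefore $f^\ast$ preserves cofibrations and acyclic cofibrations and is a strict symmetric monoidal left Quillen functor.
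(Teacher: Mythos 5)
Your proof is correct and takes essentially the same route as the paper: existence of the model structure is delegated to Isaksen's flasque construction, the pushout-product axiom is reduced to the observation that the pushout product of two acceptable monomorphisms is again acceptable (via $i_+\push i'_+=(i\push i')_+$), and the base-change statement is verified on the generating sets $I_T^c$, $J_T^c$. The paper's proof is considerably terser but rests on exactly these three ingredients; your first two paragraphs merely unpack how the reduction to \cite{Isaksen:flasque} goes via Hovey's recognition criterion.
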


\begin{proof}
  The existence of the model structure
  follows from \cite{Isaksen:flasque}. The pushout product
  axiom follows, because the pushout product of
  two acceptable mono\-morphism is again acceptable.
  To conclude the last statement, it suffices to
  check that $f^\ast$ maps any map in $I_T^c$ resp.~$J_T^c$
  to a closed cofibration resp.~schemewise weak equivalence.
  In fact, if $i\colon \union_{j=1}^m Z_j\hra X$
  is an acceptable monomorphism over $T$, then
  $f^\ast(i)$ is the acceptable monomorphism obtained from
  the closed embeddings
  \begin{equation*}
    \lbrace  T\times_S Z_j \rightarrowtail T\times_S X. \rbrace
  \end{equation*}
  Because $f^\ast$ is strict symmetric monoidal
  and a left adjoint, it preserves the
  pushout product. Hence $f^\ast$ even maps the set
  $I_T^c$ to the set $I_S^c$, and likewise for $J_T^c$.
  The result follows.
\end{proof}

The resulting homotopy category is equivalent -- via the identity
functor -- to the usual homotopy category of the diagram category
$\M_\bullet(S)$ (obtained via the projective model structure from
Example~\ref{ex:presheaves-model}), since
the weak equivalences are just the objectwise ones. The model
structure $\M_\bullet^{cs}(S)$ has the following advantage over
the projective model structure.

\begin{lemma}\label{lem:closed-emb-cof}
  Let $i\colon Z \rightarrowtail X$ be a closed embedding
  in $\Sm/S$. Then the induced map $i_+\colon Z_+ \to X_+$
  is a closed cofibration in $\M_\bullet(S)$. In particular,
  for any closed
  $S$-point $x_0\colon S \rightarrowtail X$ in a smooth
  $S$-scheme, the pointed motivic space
  $(X,x_0)$ is closed cofibrant.
\end{lemma}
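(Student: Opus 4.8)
The plan is to recognize $i_+$ directly as one of the generating closed cofibrations, the point being that the model structure $\M_\bullet^{cs}(S)$ was built precisely so that acceptable monomorphisms of motivic spaces become cofibrations, and a single closed embedding is such a monomorphism. First I would note that, taking the one-element family $\{i\colon Z\rightarrowtail X\}$, the categorical union appearing in the coequalizer~(\ref{eq:5}) reduces to $i$ itself, so $i$ lies in the set $\mathrm{Ace}$ of acceptable monomorphisms.

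Next I would unwind the pushout product $i_+\push(\partial\Delta^0\hookrightarrow\Delta^0)_+$ which occurs for $n=0$ in the generating set $I_S^c$ of~(\ref{eq:3}). Here $(\partial\Delta^0)_+=\ast$ is the zero object and $(\Delta^0)_+$ is the monoidal unit of $(\M_\bullet(S),\wedge)$; since $A\wedge\ast\cong\ast$ and $A\wedge(\Delta^0)_+\cong A$ naturally in a pointed motivic space $A$, the square~(\ref{eq:pushout-product}) defining this pushout product has domain $Z_+\wedge(\Delta^0)_+\cup_{Z_+\wedge\ast}X_+\wedge\ast\cong Z_+$, codomain $X_+\wedge(\Delta^0)_+\cong X_+$, and the induced map is $i_+$ itself. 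Thus $i_+\in I_S^c$, and by construction every map of $I_S^c$ is a closed cofibration in the model structure $\M_\bullet^{cs}(S)$ of Theorem~\ref{thm:closed-schemewise} (it has the left lifting property against all acyclic closed schemewise fibrations). Hence $i_+\colon Z_+\to X_+$ is a closed cofibration.

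For the final assertion I would realize $(X,x_0)$ as a cobase change of $(x_0)_+$. For a closed $S$-point $x_0\colon S\rightarrowtail X$ the pointed motivic space $(X,x_0)$ is the quotient $X_+/S_+$ formed along $(x_0)_+\colon S_+\to X_+$, i.e.\ the pushout of $\ast\leftarrow S_+\xra{(x_0)_+}X_+$ in which $S_+\to\ast$ is the projection to the zero object; this is checked sectionwise, collapsing the image of the section $x_0$ together with the disjoint basepoint to the basepoint of $(X,x_0)$. Since $x_0$ is a closed embedding, $(x_0)_+$ is a closed cofibration by the first part, and closed cofibrations are stable under cobase change; therefore the structure map $\ast\to(X,x_0)$ is a closed cofibration, that is, $(X,x_0)$ is closed cofibrant. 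The only thing to be careful about is the bookkeeping with disjoint basepoints and the zero object in the identifications above; there is no substantive obstacle here, since the real work is already contained in the construction of $\M_\bullet^{cs}(S)$ via acceptable monomorphisms.
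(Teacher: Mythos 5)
Your proof is correct and takes essentially the same approach as the paper's: the paper observes directly that $i_+=i_+\push(\partial\Delta^0_+\hookrightarrow\Delta^0_+)$ lies in the generating set $I_S^c$, and then deduces the second assertion from stability of cofibrations under cobase change. You merely spell out the two identifications the paper leaves implicit — that a single closed embedding is an acceptable monomorphism, and that $(X,x_0)$ is the pushout of $\ast\leftarrow S_+\xra{(x_0)_+}X_+$ — both of which are exactly as you describe.
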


\begin{proof}
  The first statement follows, because
  $i_+ = i_+\push (\partial \Delta^0_+ \hookrightarrow \Delta^0_+)$
  is contained in the set of generating closed cofibrations.
  The second statement follows, because cofibrations are closed
  under cobase change.
\end{proof}

Not all pointed motivic spaces are
closed cofibrant. Let $(-)^{cs} \to \Id_{\M_\bullet(S)}$ denote a
cofibrant replacement functor, for example the one obtained from
applying the small object argument to $I_S^c$. That is, the map
$A^{cs}\to A$ is a natural closed schemewise fibration and a
schemewise weak equivalence, and $A^{cs}$ is closed cofibrant.
Dually, let $\Id_{\M_\bullet(S)}\to (-)^{cf}$ denote the
fibrant replacement functor obtained by applying the small
object argument to $J_S^c$. The
closed schemewise fibrations may be characterized explicitly.

\begin{lemma}\label{lem:char-cl-sch-fib}
  A map $f\colon A\to B$ is a closed schemewise fibration if and only if
  the following two conditions hold.
  \begin{enumerate}
    \item \label{item:1} $f(X)\colon A(X) \to B(X)$
      is a Kan fibration for every $X\in \Sm/S$,
      and
    \item \label{item:2} for every finite set
      $\lbrace  Z_j \rightarrowtail X\rbrace _{j=1}^m $ of closed
      embeddings in $\Sm/S$, the induced map
      \begin{equation*}
    A(X) \to B(X)\times_{\sSet_\bullet(\union_{j=1}^m Z_j,B)}
         \sSet_\bullet(\union_{j=1}^m Z_j,A)
      \end{equation*}
     is a Kan fibration.
  \end{enumerate}
\end{lemma}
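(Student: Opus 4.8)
The plan is to unwind the definition of a closed schemewise fibration --- namely, having the right lifting property with respect to the set $J_S^c$ of~(\ref{eq:4}) --- by means of the two-variable adjunction relating the smash product on $\M_\bullet(S)$, its internal hom, and the simplicial enrichment $\sSet_\bullet(-,-)$. Recall from~(\ref{eq:1}), (\ref{eq:2}) and the definition $\sSet_\bullet(A,B)=\intHom_{\M_\bullet(S)}(A,B)(S)$ that $\M_\bullet(S)$ is enriched, tensored, and cotensored over $\sSet_\bullet$, the tensoring being $A\wedge K$ for $K$ a pointed simplicial set regarded as a constant motivic space; in particular there is a natural isomorphism $\Hom_{\M_\bullet(S)}(A\wedge K,B)\cong\Hom_{\sSet_\bullet}(K,\sSet_\bullet(A,B))$.

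First I would invoke the resulting pushout-product (\emph{SM7}) lifting equivalence: for an acceptable monomorphism $i\colon U\hra X$, with $U=\union_{j=1}^m Z_j$ (or $U=\emptyset$), a map $h\colon K\to L$ of pointed simplicial sets, and the fixed map $f\colon A\to B$, the map $f$ has the right lifting property with respect to the pushout product $i_+\push h$ if and only if the Leibniz map
\[
  \widehat{\sSet_\bullet}(i_+,f)\colon \sSet_\bullet(X_+,A)\longrightarrow
  \sSet_\bullet(U_+,A)\times_{\sSet_\bullet(U_+,B)}\sSet_\bullet(X_+,B)
\]
has the right lifting property with respect to $h$. Letting $h$ range over the horn inclusions $(\Lambda^n_j\hookrightarrow\Delta^n)_+$ for $n\geq 1$, $0\leq j\leq n$ --- which, together with the acceptable monomorphisms $i$, generate $J_S^c$ --- this shows that $f$ is a closed schemewise fibration if and only if $\widehat{\sSet_\bullet}(i_+,f)$ is a Kan fibration for every acceptable monomorphism $i$.

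Next I would make these mapping spaces explicit. By the enriched Yoneda lemma applied to the representable presheaf $X\in\Sm/S$ one has a natural isomorphism $\sSet_\bullet(X_+,A)\cong A(X)$; and since $U=\union_{j=1}^m Z_j$ is, by definition, the colimit~(\ref{eq:5}) of representables, the contravariant functor $\sSet_\bullet((-)_+,A)$ sends it to the corresponding limit, which is exactly the matching object $\sSet_\bullet(\union_{j=1}^m Z_j,A)$ appearing in condition~(\ref{item:2}). It then remains to sort out the two families of acceptable monomorphisms. For $i=(\emptyset\rightarrowtail X)$ one has $\sSet_\bullet(\emptyset_+,A)=\ast$, so $\widehat{\sSet_\bullet}(i_+,f)$ is simply $f(X)\colon A(X)\to B(X)$, and requiring this to be a Kan fibration for all $X$ is condition~(\ref{item:1}). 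For $i=(\union_{j=1}^m Z_j\hra X)$ the map $\widehat{\sSet_\bullet}(i_+,f)$ is precisely the map of condition~(\ref{item:2}), and requiring it to be a Kan fibration is that condition. Hence $f$ has the right lifting property with respect to $J_S^c$ if and only if both~(\ref{item:1}) and~(\ref{item:2}) hold, which is the assertion of the lemma.

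The argument is entirely formal and I do not expect a serious obstacle. The one step that needs care is the first: setting up correctly the two-variable adjunction between $\M_\bullet(S)$ and $\sSet_\bullet$ coming from~(\ref{eq:1}) and~(\ref{eq:2}), and deducing from it the Leibniz lifting equivalence for the pushout product; together with the bookkeeping needed to check that $\sSet_\bullet((-)_+,A)$ converts the colimit~(\ref{eq:5}) defining $\union_{j=1}^m Z_j$ into the matching-object limit of~(\ref{item:2}), and that the disjoint basepoints attached throughout are irrelevant to every lifting property in play.
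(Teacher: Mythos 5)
Your argument is correct and is exactly the approach the paper takes: the paper's proof is the one-liner ``Follows by adjointness from the definition,'' and you have spelled out that adjointness argument in full (the two-variable adjunction, the Leibniz/pushout-product lifting equivalence, the enriched Yoneda identification $\sSet_\bullet(X_+,A)\cong A(X)$, and the passage of $\sSet_\bullet((-)_+,A)$ from the coequalizer~(\ref{eq:5}) to the matching limit). You treat $i=(\emptyset\rightarrowtail X)$ as a separate case yielding condition~(\ref{item:1}), whereas the paper simply notes that~(\ref{item:1}) is the $m=0$ instance of~(\ref{item:2}); these are the same observation.
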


\begin{proof}
  Follows by adjointness from the definition.
  Note that condition~\ref{item:1} is a special case of condition~\ref{item:2}
  by taking the empty family.
\end{proof}

To obtain a motivic model structure, one localizes
$\M_\bullet^{cs}(S)$ as follows. Recall that an elementary
distinguished square (or simply {\em Nisnevich square\/}) is a
pullback diagram
\begin{equation*}\xymatrix{
  V \ar[r] \ar[d] &   Y  \ar[d]^-p \\
  U \ar[r]^-j &  X}
\end{equation*}
in $\Sm/S$, where $j$ is an open embedding and
$p$ is an \'etale morphism inducing an
isomorphism $Y \smallsetminus V \cong X \smallsetminus U$ of reduced closed
subschemes. Say that a pointed motivic space $C$ is {\em closed
motivic fibrant\/} if it is closed schemewise fibrant, the map
\begin{equation*}
  C\bigl(X\times_S \mathbb{A}^1_S \xra{\mathrm{pr}} X\bigr)
\end{equation*}
is a weak equivalence of simplicial sets for every $X\in \Sm/S$,
the square
\begin{equation*}\xymatrix{
  C(V)  &   C(Y)  \ar[l] \\
  C(U) \ar[u] &  C(X) \ar[u] \ar[l]}
\end{equation*}
is a homotopy pullback square of simplicial sets for every
Nisnevich square in $\Sm/S$ and $C(\emptyset)$ is contractible.

\begin{example}\label{ex:ktt-fibrant}
  Let $X\mapsto K^W(X)$ be the pointed motivic space sending
  $X\in \Sm/S$ to the loop space of the first term of the Waldhausen $K$-theory
  spectrum $W(X)$ associated to the category of big vector bundles over
  $X$ \cite{FS}, with isomorphisms as weak equivalences \cite{W}.
  That is,
  \begin{equation*}
    K^W(X) = \Omega_s W_1(X) = \Omega_s \Sing | w\mathcal{S}_\bullet
    (\mathrm{Vect}^\mathrm{big}(X),\mathrm{iso})|
  \end{equation*}
  By
  \cite[Thm. 1.11.7, Prop.~3.10]{TT}
  the space $K^W(X)$ has
  the same homotopy type as the zeroth space
  (see \cite[1.5.3]{TT}) of
  the Thomason-Trobaugh $K$-theory spectrum $K^{naive}(X)$ of $X$
  as it is defined in
  \cite[Defn.~3.2]{TT}.

  Since in our case $S$ is regular,
  then so is $X$ and thus $X$ has an ample family of line bundles by
  \cite[Examples 2.1.2.]{TT}. It follows that the zeroth space of
  the Thomason-Trobaugh $K$-theory spectrum
  $K^{\mathrm{naive}}(X)$ has
  the same homotopy type as the zeroth space
  (see \cite[1.5.3]{TT}) of
  the Thomason-Trobaugh $K$-theory spectrum $K(X \ on \ X)$ of $X$
  \cite[Thm.~1.11.7, Cor.~3.9]{TT}
  as it is defined in
  \cite[Defn.~3.1]{TT}.

  Thus for a regular $S$ and $X\in \Sm/S$ the following results hold.
  The projection induces a weak equivalene
  $K^W(X) \to K^W(X\times_S \Aff^1_S)$ \cite[Prop.~6.8]{TT}.
  By \cite[Thm.~10.8]{TT} the square
  \begin{equation*}
    \xymatrix{
    K^W(X) \ar[r] \ar[d]_-{K^W(p)} & K^W(U) \ar[d]\\
    K^W(Y) \ar[r] & K^W(V)}
  \end{equation*}
  associated to a Nisnevich square in $\Sm/S$ is a homotopy pullback square.
  Hence $K^W$ is fibrant in the projective motivic model structure
  on $\M_\bullet(S)$. However, if $i\colon Z\hra X$ is a closed embedding
  in $\Sm/S$, the map
  $$K^W(i)\colon K^W(X) \to K^W(Z)$$ is
  not necessarily a Kan fibration.
  In particular, $K^W$ is not closed schemewise fibrant. Choose
  a closed cofibration which is also a schemewise weak equivalence
  $K^W \to \mathbb{K}^W$ such that $\mathbb{K}^W$ is closed schemewise
  fibrant. It follows immediately that $\mathbb{K}^W$ is closed motivic
  fibrant, and that $\mathbb{K}^W(X)$ has the homotopy type of
  the zero term of
  the Waldhausen $K$-theory spectrum of $X$.
\end{example}

\begin{definition}\label{defn:motivic-model}
  A map $f\colon A\to B$ is a {\em motivic weak equivalence\/} if
  the map
  \begin{equation*}
    \sSet_\bullet(f^{cs},C)\colon \sSet_\bullet(B^{cs},C)\to
    \sSet_\bullet(A^{cs},C)
  \end{equation*}
  is a weak equivalence of simplicial sets for every closed
  motivic fibrant $C$. It is a {\em closed motivic fibration\/}
  if it has the right lifting property with respect to all
  acyclic closed cofibrations (closed cofibrations
  which are also motivic equivalences).
\end{definition}

\begin{example}\label{ex:motivic-weq}
  Suppose that $f\colon A\to B$ is a map in $\M_\bullet(S)$
  inducing weak equivalences
  $x^\ast f\colon x^\ast A\to x^\ast B$ of simplicial sets on
  all Nisnevich stalks $x^\ast\colon \M_\bullet(S)\to \sSet$.
  Then $f$ is a motivic weak equivalence. If $f\colon A\to B$
  is an $\Aff^1$-homotopy equivalence (for example, the
  projection of a vector bundle), then it is a motivic
  weak equivalence.
\end{example}

\begin{example}\label{ex:pione-suspension}
  The canonical covering of $\Pro^1$ shows that
  it is motivic weakly equivalent as a pointed
  motivic space to the suspension
  $S^1\wedge (\Aff^1-\lbrace 0\rbrace ,1)$, where $S^1 =\Delta^1/\partial \Delta^1$.
  Set $S^{1,0}\defeq S^1$ and $S^{1,1}\defeq (\Aff^1-\lbrace 0\rbrace ,1)$, and
  define
    \begin{equation*} S^{p,q}\defeq \bigl(S^{1,0}\bigr)^{\wedge p-q} \wedge
    \bigl(S^{1,1}\bigr)^{\wedge q} \quad \mathrm{for}\quad p\geq q\geq 0 \end{equation*}
  To generalize the example of $\Pro^1$, one can show that
  if $\Pro^{n-1} \hra \Pro^n$ is a linear embedding, then
  $\Pro^n/\Pro^{n-1}$ is motivic weakly equivalent to $S^{2n,n}$.
\end{example}

To prove that the classes from Definition~\ref{defn:motivic-model}
are part of a model structure, it is
helpful to characterize the closed motivic fibrant objects via a
lifting property. Let $J_S^{\cm}$ be the union of the set $J_S^c$
from~(\ref{eq:4}) and the set $J_S^m$ of pushout products of maps
$(\partial \Delta^n \hra \Delta^n)_+$ with maps of the form
\begin{equation}
\label{eq:6}
\xymatrix@R=0.5cm{
    X_+ \ar[r]^{\mathrm{zero}_+} & (\Aff^1_S\times_S X)_+ \\
    U_+\cup_{V_+}\Cyl(h_+)  \ar[r] &
       \Cyl\bigl( U_+\cup_{V_+}\Cyl(h_+) \to X_+\bigr) \\
    \ast \ar[r] & \emptyset_+}
\end{equation}
where $h$ is the open embedding appearing on top of a Nisnevich
square
\begin{equation*}\xymatrix{
  V \ar[r]^-h \ar[d] &   Y  \ar[d]^-p \\
  U \ar[r]^-j &  X}
\end{equation*}
in $\Sm/S$.

\begin{lemma}\label{lem:char-closed-mot-fib}
  A pointed motivic space $C$
  is closed motivic fibrant if and only if
  the map $C\to \ast$ has the right lifting property
  with respect to the set $J_S^{\cm}$.
\end{lemma}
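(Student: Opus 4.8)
The plan is to split the set $J_S^{\cm}=J_S^c\cup J_S^m$ and handle the two resulting lifting conditions separately. By Definition~\ref{def:closed-schemewise}, a morphism has the right lifting property with respect to $J_S^c$ exactly when it is a closed schemewise fibration, so for the morphism $C\to\ast$ this part of the lifting condition says precisely that $C$ is closed schemewise fibrant --- the first clause in the definition of closed motivic fibrant. Thus it remains to prove: for a closed schemewise fibrant $C$, the morphism $C\to\ast$ has the right lifting property with respect to $J_S^m$ if and only if $C$ is $\Aff^1$-invariant, the descent square attached to every Nisnevich square is a homotopy pullback, and $C(\emptyset)$ is contractible.

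For this I would use the $\sSet_\bullet$-enrichment of $\M_\bullet^{cs}(S)$ together with the pushout-product (SM7) adjunction. Write $k\colon A\to B$ for any one of the three maps displayed in~(\ref{eq:6}); then $J_S^m$ consists of the pushout products $(\partial\Delta^n\hra\Delta^n)_+\push k$ for $n\geq 0$. Since the maps $(\partial\Delta^n\hra\Delta^n)_+$ generate the cofibrations of $\sSet_\bullet$, the morphism $C\to\ast$ has the right lifting property with respect to all of these (for a fixed $k$) if and only if the induced map of pointed mapping simplicial sets
\begin{equation*}
  \sSet_\bullet(B,C)\longrightarrow \sSet_\bullet(A,C)
\end{equation*}
is a trivial Kan fibration. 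First I would check that each of the three $k$ is a closed cofibration between closed cofibrant objects: a representable with disjoint basepoint is closed cofibrant (lifting $\ast\to E$, $X_+\to D$ against an acyclic closed schemewise fibration $E\to D$ reduces by Yoneda to lifting a vertex of $D(X)$ along the trivial Kan fibration $E(X)\to D(X)$); the zero section $X\hra\Aff^1_S\times_S X$ is a closed embedding, so the first map is a closed cofibration by Lemma~\ref{lem:closed-emb-cof}; the second map is the inclusion of $U_+\cup_{V_+}\Cyl(h_+)$ --- a pushout along the closed cofibration $V_+\hra\Cyl(h_+)$ --- into its mapping cylinder over $X_+$, hence again a closed cofibration between cofibrant objects; and $\ast\to\emptyset_+$ is a closed cofibration by the same Yoneda argument applied to the empty scheme. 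Granting this, SM7 makes $\sSet_\bullet(B,C)\to\sSet_\bullet(A,C)$ automatically a Kan fibration, so being a trivial Kan fibration is equivalent to being a weak equivalence of simplicial sets.

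Finally I would identify these mapping spaces. One has $\sSet_\bullet(X_+,C)\cong C(X)$ for $X\in\Sm/S$ and for the empty scheme; moreover $\Cyl(h_+)\to Y_+$ and $\Cyl\bigl(U_+\cup_{V_+}\Cyl(h_+)\to X_+\bigr)\to X_+$ are weak equivalences, and since $V_+\hra\Cyl(h_+)$ is a cofibration the pushout $U_+\cup_{V_+}\Cyl(h_+)$ is a homotopy pushout, so $\sSet_\bullet\bigl(U_+\cup_{V_+}\Cyl(h_+),C\bigr)$ computes the homotopy limit of $C(U)\to C(V)\leftarrow C(Y)$. Hence, up to weak equivalence, the three maps $\sSet_\bullet(B,C)\to\sSet_\bullet(A,C)$ are
\begin{equation*}
  C(\Aff^1_S\times_S X)\to C(X),\qquad
  C(X)\to \mathrm{holim}\bigl(C(U)\to C(V)\leftarrow C(Y)\bigr),\qquad
  C(\emptyset)\to \ast,
\end{equation*}
and these are weak equivalences --- for all $X$, for all Nisnevich squares, and always, respectively --- exactly when $C$ is $\Aff^1$-invariant (use $\mathrm{pr}\circ\mathrm{zero}=\id_X$ and two-out-of-three), the Nisnevich descent square is a homotopy pullback, and $C(\emptyset)$ is contractible. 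Together with the $J_S^c$-statement this gives the Lemma. The step I expect to cause the most trouble is the bookkeeping in this last paragraph: checking that the mapping cylinders in~(\ref{eq:6}) are built so that the relevant inclusions are genuine cofibrations between cofibrant objects and that $U_+\cup_{V_+}\Cyl(h_+)$ is a homotopy pushout, so that $\sSet_\bullet(-,C)$ really does compute the asserted homotopy limits; once this is in place, the remainder is a routine application of the pushout-product adjunction.
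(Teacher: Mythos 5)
Your proof is correct and is exactly the argument the paper gestures at with ``adjointness, the Yoneda lemma and the construction of $J_S^{\cm}$'': you lift against $J_S^c$ to recover closed schemewise fibrancy, then use the pushout-product adjunction to convert lifting against $J_S^m$ into requiring the three induced maps of mapping spaces to be weak equivalences (Kan fibrations for free by SM7), and identify those mapping spaces via Yoneda and the mapping-cylinder construction with the $\Aff^1$-, Nisnevich-, and $\emptyset$-conditions in the definition of closed motivic fibrant. This is the intended expansion of the paper's one-line proof, only written out in full.
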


\begin{proof}
  This follows from adjointness, the Yoneda lemma and the construction
  of $J_S^{\cm}$.
\end{proof}

\begin{theorem}\label{thm:closed-motivic-model}
  The classes of motivic weak equivalences, closed motivic fibrations
  and closed cofibrations constitute a symmetric monoidal
  model structure on $\M_\bullet(S)$, denoted $\M_\bullet^{\cm}(S)$.
  The resulting homotopy category is denoted $\mathrm{H}^\cm_\bullet(S)$
  and called the pointed motivic unstable homotopy category of $S$.
  A morphism $f\colon S\to S^\prime$ of base schemes
  induces a strict symmetric monoidal
  left Quillen functor $f^\ast\colon M_\bullet^{\cm}(S^\prime)\to
  M_\bullet^{\cm}(S)$.
\end{theorem}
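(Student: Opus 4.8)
The plan is to obtain $\M_\bullet^{\cm}(S)$ as a left Bousfield localization of the closed schemewise model structure $\M_\bullet^{cs}(S)$ of Theorem~\ref{thm:closed-schemewise}. First I would verify the two prerequisites for running a localization. The structure $\M_\bullet^{cs}(S)$ is cofibrantly generated by $I_S^c$ and $J_S^c$, and it is cellular (equivalently, combinatorial), being produced by a small-object argument on a presheaf category, so the standard existence theorem for left Bousfield localizations applies once left properness is known. Left properness holds because every closed cofibration is in particular a sectionwise monomorphism of simplicial sets (it is a retract of transfinite compositions of cobase changes of maps in $I_S^c$, each of which is sectionwise a cofibration), while colimits and schemewise weak equivalences in $\M_\bullet(S)$ are computed sectionwise and $\sSet$ is left proper.

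Next I would take the localizing set $\Sigma$ to consist of the maps appearing in the construction of $J_S^m$: the $\Aff^1$-projections $(\Aff^1_S\times_S X)_+\to X_+$, the Nisnevich descent maps $U_+\cup_{V_+}\Cyl(h_+)\to X_+$, and $\ast\to\emptyset_+$, all of which have closed cofibrant domain and codomain by Lemma~\ref{lem:closed-emb-cof}. The left Bousfield localization $L_\Sigma\M_\bullet^{cs}(S)$ then exists, with the closed cofibrations as its cofibrations, the $\Sigma$-local objects as its fibrant objects, and the $\Sigma$-local equivalences (those $f$ for which $\sSet_\bullet(f^{cs},C)$ is a weak equivalence for every $\Sigma$-local $C$) as its weak equivalences. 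The crucial step is to identify the $\Sigma$-local objects with the closed motivic fibrant objects: since such a $C$ is schemewise fibrant and the spaces in $\Sigma$ are closed cofibrant, the function-complex condition against the three kinds of maps in $\Sigma$ unwinds respectively to the statements that $C(X)\to C(\Aff^1_S\times_S X)$ is a weak equivalence, that the square attached to a Nisnevich square is a homotopy pullback, and that $C(\emptyset)$ is contractible---exactly the defining conditions, compatibly with Lemma~\ref{lem:char-closed-mot-fib}. Hence motivic weak equivalences $=$ $\Sigma$-local equivalences and closed motivic fibrations $=$ $\Sigma$-local fibrations, and $J_S^{\cm}=J_S^c\cup J_S^m$ is a set of generating acyclic cofibrations.

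For the symmetric monoidal assertion I would invoke the criterion, built on Theorem~\ref{thm:monoidal-model}, that a left Bousfield localization of a symmetric monoidal model category along $\Sigma$ is again symmetric monoidal provided the monoidal unit is cofibrant and, for every closed cofibrant $A$ and every $s\in\Sigma$, the map $\id_A\wedge s$ is a $\Sigma$-local equivalence. The unit $S_+$ is closed cofibrant (take $Z=\emptyset$ in Lemma~\ref{lem:closed-emb-cof}). For the second condition, $\id_A\wedge-$ carries an $\Aff^1$-projection to the projection of an $\Aff^1$-cylinder on $A\wedge X_+$, which is a motivic weak equivalence by Example~\ref{ex:motivic-weq}; and, since $-\wedge A$ preserves pushouts and mapping cylinders, $\id_A\wedge-$ carries a Nisnevich descent map to the descent map built from the same data, so two-out-of-three reduces its being a motivic weak equivalence to the descent statement again. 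Granting this, the closed schemewise pushout product axiom of Theorem~\ref{thm:closed-schemewise} propagates to $\M_\bullet^{\cm}(S)$. Finally, for base change: by Theorem~\ref{thm:closed-schemewise} the functor $f^\ast\colon\M_\bullet^{cs}(S^\prime)\to\M_\bullet^{cs}(S)$ is a strict symmetric monoidal left Quillen functor, the strict monoidality being given by~(\ref{eq:strict}). By the compatibility of left Quillen functors with left Bousfield localization it remains to check that $f^\ast$ sends the maps of $\Sigma$ over $S^\prime$ to motivic weak equivalences over $S$; arguing as in the proof of Theorem~\ref{thm:closed-schemewise}, $f^\ast$ commutes with finite products, $\Cyl$ and pushouts, so it carries an $\Aff^1$-projection over $S^\prime$ to one over $S$ and a Nisnevich square over $S^\prime$ to one over $S$ (pullbacks of open embeddings, \'etale maps and isomorphisms of complements are of the same type), whence $f^\ast$ maps $J_{S^\prime}^{\cm}$ into $J_S^{\cm}$ and $I_{S^\prime}^c$ into $I_S^c$ and is left Quillen for the closed motivic structures. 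The step I expect to be hardest is the monoidality check---specifically, that smashing a closed cofibrant space with the Nisnevich descent maps of~(\ref{eq:6}) yields motivic weak equivalences, where the explicit pushout-of-cylinders presentation must be handled carefully; the identification of $\Sigma$-local objects with closed motivic fibrant objects is the other delicate point, though by now routine.
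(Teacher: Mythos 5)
Your overall strategy coincides with the paper's: obtain $\M_\bullet^{\cm}(S)$ as a left Bousfield localization of $\M_\bullet^{cs}(S)$ at the maps of~(\ref{eq:6}), verify the pushout--product axiom afterwards, and handle base change via Dugger's lemma. The paper carries out the localization by an explicit $\kappa$-bounded/Zorn factorization argument rather than invoking the Hirschhorn existence theorem as a black box, but those two routes are interchangeable here, and your appeals to left properness (closed cofibrations are sectionwise monos, weak equivalences and colimits are sectionwise) and to cellularity are correct. The identification of $\Sigma$-local objects with closed motivic fibrant objects is also as in the paper. However, two specific points do not go through as written.

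First, you conclude that ``$J_S^{\cm}=J_S^c\cup J_S^m$ is a set of generating acyclic cofibrations.'' This does not follow and is in fact the very point the paper flags: $J_S^{\cm}$ detects fibrant objects (Lemma~\ref{lem:char-closed-mot-fib}) and, by localization theory, closed motivic fibrations with closed motivic fibrant codomain (Lemma~\ref{lem:fib-filtered}), but not arbitrary closed motivic fibrations. The generating acyclic cofibrations that Hirschhorn's machine produces --- or, in the paper's hands, the set $J_S^\kappa$ of acyclic monomorphisms with $\kappa$-bounded codomain --- are a genuinely larger set. This overclaim is not load-bearing for the existence of the model structure or for the Dugger's-lemma step, but it should be excised.

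Second, and more seriously, the verification that $\id_A\wedge s$ is a motivic equivalence for closed cofibrant $A$ and $s$ a Nisnevich descent map does not reduce by a two-out-of-three argument ``to the descent statement again'': after smashing $U_+\cup_{V_+}\Cyl(h_+)\to X_+$ with an arbitrary pointed motivic space $A$, the source and target are no longer representable objects and the Nisnevich descent axiom, which is formulated for squares of schemes, does not directly apply. You correctly identified this as the hardest step, but your proposed resolution does not close it. The paper's proof closes it by citing \cite[Lemma~2.20]{DRO:motivic}, which asserts that motivic weak equivalences are stable under $-\wedge A$ for \emph{arbitrary} $A$; this is deduced from the fact that the \emph{injective} motivic model structure (Remark~\ref{rem:injective}) is symmetric monoidal, where every object is cofibrant. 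That external input is what makes the pushout--product axiom propagate from $\M_\bullet^{cs}(S)$ to $\M_\bullet^{\cm}(S)$.
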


\begin{proof}
  The existence of the model structure follows
  by standard Bousfield localization
  techniques. Here are some details. The problem is that
  $J^{\cm}_S$ might be to small in order to characterize all
  closed motivic fibrations. Let $\kappa$ be
  a regular cardinal strictly bigger than the cardinality
  of the set of morphisms in $\Sm/S$.
  A motivic space $A$ is $\kappa$-bounded if the union
  \begin{equation*}
     \coprod_{n\geq 0, X\in \Sm/S} A(X)_n
  \end{equation*}
  has cardinality
  $\leq \kappa$. Let $J^\kappa_S$ be
  a set of isomorphism classes of acyclic monomorphisms
  whose target is $\kappa$-bounded. One may show that given an acyclic
  monomorphism $j\colon A\hookrightarrow B$ and a $\kappa$-bounded
  subobject $C\subseteq B$, there exists a $\kappa$-bounded
  subobject $C'\subseteq B$ containing $C$ such that
  $j^{-1}(C')\hookrightarrow C'$ is an acyclic monomorphism.
  Via Zorn's lemma, one then gets that a map $f\in \M_\bullet(S)$
  has the right lifting property with respect to all acyclic
  monomorphisms if (and only if) it has the right lifting property
  with respect to the set $J^\kappa_S$. Such a map is in particular
  a closed motivic fibration. Any given map $f\colon A\to B$ can
  now be factored (via the small object
  argument) as an acyclic monomorphism $j\colon A\hra C$
  followed by a closed motivic fibration. Factoring $j$ as a closed
  cofibration followed by an acyclic closed schemewise fibration
  in the model structure of Theorem~\ref{thm:closed-schemewise}
  implies the existence of the model structure.

  To prove that the model structure is symmetric monoidal, it suffices
  -- by the corresponding statement~\ref{thm:closed-schemewise}
  for $\M_\bullet^{cs}(S)$ -- to check that the pushout product of a
  generating closed
  cofibration and an acyclic closed cofibration
  is again a motivic equivalence. However, from the fact that
  the injective motivic model structure is symmetric monoidal,
  one knows that motivic equivalences are closed under
  smashing with arbitrary motivic spaces \cite[Lemma~2.20]{DRO:motivic}.
  The first
  sentence is now proven.

  Concerning the third sentence, Theorem~\ref{thm:closed-schemewise}
  already implies that $f^\ast$ preserves closed cofibrations.
  To prove that $f^\ast$ is a left Quillen functor, it
  suffices (by Dugger's lemma \cite[Cor.~A2]{Dugger})
  to check that it maps the set $J_{S^\prime}^{\cm}$ to motivic
  weak equivalences in $\M_\bullet(S)$. One may calculate that
  $f^\ast(J_{S^\prime}^{\cm}) = J_S^{\cm}$, whence the statement.
\end{proof}

The closed motivic model structure is cofibrantly generated.
As remarked in the proof of
Theorem~\ref{thm:closed-motivic-model},
the set $J_S^{\cm}$ is perhaps not big enough to yield a full
set of generating trivial cofibrations.  Still the following Lemma,
whose analog for the projective motivic model structure
is~\cite[Cor.~2.16]{DRO:motivic}, is valid.

\begin{lemma}\label{lem:fib-filtered}
  Motivic equivalences and closed motivic fibrations with closed
  motivic fibrant codomain are closed under filtered colimits.
\end{lemma}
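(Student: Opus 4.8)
My plan is to deduce the statement from the explicit descriptions of closed motivic fibrant objects and of closed schemewise fibrations, together with two classical facts about simplicial sets: filtered colimits commute with finite limits, and they preserve both weak equivalences and Kan fibrations. The first reduction: by the general theory of left Bousfield localisation underlying Theorem~\ref{thm:closed-motivic-model}, a map with closed motivic fibrant codomain is a closed motivic fibration precisely when it is a closed schemewise fibration whose domain is closed motivic fibrant. So it suffices to prove separately that (a) closed motivic fibrant objects, (b) closed schemewise fibrations, and (c) motivic equivalences are each stable under filtered colimits.

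For (b) I would invoke Lemma~\ref{lem:char-cl-sch-fib}: evaluation at a fixed $X\in\Sm/S$, the functors $\sSet_\bullet(\union_{j=1}^m Z_j,-)$, and the fibre products appearing there all commute with filtered colimits, and a filtered colimit of Kan fibrations is again a Kan fibration because the horns $\Lambda^n_k\hra\Delta^n$ are compact; hence the two conditions of Lemma~\ref{lem:char-cl-sch-fib} pass to filtered colimits. For (a) I would use Lemma~\ref{lem:char-closed-mot-fib}: being closed motivic fibrant is the right lifting property of $C\to\ast$ against the set $J_S^{\cm}$, and the right lifting property against maps with compact domains is inherited by filtered colimits of the targets.

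For (c) I would build a functorial fibrant replacement by running the small object argument on $J_S^{\cm}$: this produces, for each pointed motivic space $A$, a $J_S^{\cm}$-cell complex $\eta_A\colon A\to LA$, which is therefore an acyclic closed cofibration, with $LA$ closed motivic fibrant; since $J_S^{\cm}$ consists of maps between compact objects, $L$ can be arranged to commute with filtered colimits. Given a filtered system $\{f_i\colon A_i\to B_i\}$ of motivic equivalences, two-out-of-three in the naturality squares for $\eta$ shows each $Lf_i$ is a motivic equivalence, and since it is a map between closed motivic fibrant objects it is a schemewise equivalence. Hence $\colim_i Lf_i$ is a schemewise equivalence, being a filtered colimit of such; as $L$ commutes with filtered colimits this map is $L(\colim_i f_i)$. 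Since $\eta_{\colim A_i}$ and $\eta_{\colim B_i}$ are motivic equivalences, two-out-of-three in the naturality square for $\eta$ applied to $\colim_i f_i$ shows $\colim_i f_i$ is a motivic equivalence.

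The step I expect to be the main obstacle is the compactness input used in (a), (b) and (c): one must check that the domains and codomains of the maps in $J_S^{\cm}$ --- in particular the acceptable monomorphisms $\union_{j=1}^m Z_j\hra X$, whose source is only a finite categorical union of representables --- are compact enough that mapping out of them, and the small object argument on $J_S^{\cm}$, commute with filtered colimits. For the projective motivic model structure this is the content of \cite[Cor.~2.16]{DRO:motivic}; here the point is to see that admitting acceptable monomorphisms among the generators does not spoil it, which is the technical heart of the proof.
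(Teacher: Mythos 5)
Your proof is correct and follows essentially the same route as the paper, which characterizes closed motivic fibrations with closed motivic fibrant codomain by the right lifting property against $J_S^{\cm}$, invokes finite presentability of the (co)domains of $J_S^{\cm}$ and $I_S^c$, and for the motivic-equivalence half defers to the colimit-commuting fibrant replacement argument of \cite[Lemma 3.5]{DRO:enriched} that you reproduce. The ``main obstacle'' you flag is not in fact one: a categorical union $\union_{j=1}^m Z_j$ is the coequalizer~(\ref{eq:5}) of a finite diagram of representables, hence finitely presentable, so all (co)domains appearing in $J_S^{\cm}$ remain finitely presentable and the small object argument commutes with filtered colimits exactly as in the projective case.
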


\begin{proof}
  By localization theory~\cite[3.3.16]{Hirschhorn} and
  Lemma~\ref{lem:char-closed-mot-fib}, a map with closed motivic
  fibrant codomain is a closed motivic fibration if and only if it
  has the right lifting property with respect to $J_S^{\cm}$.
  The domains and codomains of the maps in $J_S^{\cm}$ are
  finitely presentable, which implies the statement about
  closed motivic fibrations with closed motivic fibrant codomain.
  The statement about motivic equivalences follows, because also
  the domains of the generating closed cofibrations in
  $I_S^c$ are finitely presentable. See \cite[Lemma 3.5]{DRO:enriched}
  for details.
\end{proof}

Let $\mathcal{M}_\bullet(S)$ be the category of
simplicial objects in the category of pointed Nisnevich sheaves
on $\Sm/S$. The functor mapping a (pointed) presheaf to its
associated (pointed) Nisnevich sheaf determines by degreewise
application a functor
$a_{Nis}\colon \M_\bullet(S) \to \mathcal{M}_\bullet(S)$.
Let $\mathcal{M}_\bullet(S) \xra{i} \M_\bullet(S)$
be the inclusion functor, the right adjoint of $a_\Nis$.

\begin{theorem}\label{thm:comparison-mv}
  The pair $(a_\Nis,i)$ is a Quillen equivalence
  to the Morel-Voevodsky model structure. The
  functor $a_\Nis$ is strict symmetric monoidal.
  In particular, the total left derived functor
  of $a_\Nis$ is a strict symmetric monoidal equivalence
  \begin{equation*}
    \mathrm{H}^\cm_\bullet (S) \to \mathrm{H}_\bullet(S)
  \end{equation*}
  to the unstable pointed $\Aff^1$-homotopy category
  from \cite{MV}.
\end{theorem}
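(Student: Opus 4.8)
The plan is to establish the statement in three stages: $(a_{\Nis},i)$ is a Quillen adjunction; it is a Quillen equivalence; and $a_{\Nis}$ is strict symmetric monoidal and moreover preserves \emph{all} motivic weak equivalences, so that its total left derived functor is the asserted strict symmetric monoidal equivalence. Throughout I would use that $a_{\Nis}$ is a left adjoint (hence commutes with colimits, in particular with cobase change, transfinite composition and the pushout product), that it is exact (hence preserves monomorphisms), and that Nisnevich stalks are filtered colimits of sections, so that they are unchanged by sheafification and carry sectionwise weak equivalences to stalkwise ones.

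First I would check that $a_{\Nis}$ is left Quillen. Since the cofibrations of $\M_\bullet^\cm(S)$ are generated by the pushout--product maps $I_S^c$ of~(\ref{eq:3}) and $a_{\Nis}$ commutes with pushout products and colimits, it suffices that $a_{\Nis}$ carries every acceptable monomorphism to a monomorphism of simplicial Nisnevich sheaves; this is immediate from exactness of sheafification, and the cofibrations in the Morel--Voevodsky structure are exactly the monomorphisms. To see that $a_{\Nis}$ also respects trivial cofibrations I would invoke Dugger's criterion \cite[Cor.~A2]{Dugger} --- exactly as in the proof of Theorem~\ref{thm:closed-motivic-model} --- and check that $a_{\Nis}$ sends the set $J_S^{\cm}=J_S^c\cup J_S^m$ to Morel--Voevodsky weak equivalences. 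The maps in $J_S^c$ are sectionwise weak equivalences, hence become stalkwise (local) weak equivalences after sheafification; the maps in $J_S^m$ coming from the $\Aff^1$-projections become $\Aff^1$-weak equivalences, and those coming from a Nisnevich square become isomorphisms of simplicial Nisnevich sheaves, since such a square is in particular a cover. Hence $a_{\Nis}$ is left Quillen.

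Next comes the Quillen equivalence. For every motivic space $A$ the unit $A\to i\,a_{\Nis}A$ is the sheafification map, which is an isomorphism on every Nisnevich stalk and hence a motivic weak equivalence by Example~\ref{ex:motivic-weq}; and the counit $a_{\Nis}\,iB\to B$ is an isomorphism because $i$ is fully faithful. So it remains to show that $i$ both preserves and reflects weak equivalences: then the unit is a natural weak equivalence, the counit an isomorphism, and the induced functors on homotopy categories are mutually inverse. This reduces to matching the two families of weak equivalences --- a map of simplicial Nisnevich sheaves should be a Morel--Voevodsky equivalence if and only if, viewed via $i$ as a map of presheaves, it is a motivic equivalence. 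I expect this identification of the localizing classes to be the main obstacle; it is the standard comparison between the descent model structure on simplicial presheaves, whose fibrant objects are the closed motivic fibrant ones characterised in Lemma~\ref{lem:char-closed-mot-fib}, and the local injective model structure on simplicial Nisnevich sheaves, transported through the two $\Aff^1$-localizations, and it rests on the fact that both localizing conditions (Nisnevich descent and $\Aff^1$-invariance) are stalk-local.

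Finally, $a_{\Nis}$ is strict symmetric monoidal already on the point-set level, because degreewise sheafification is compatible with the sectionwise smash product~(\ref{eq:1}) (sheafification being a monoidal left adjoint) and takes $S_+$ to the monoidal unit of simplicial Nisnevich sheaves. Once $i$ is known to preserve and reflect weak equivalences, the commuting square relating $f$, $a_{\Nis}f$ and the units shows by two-out-of-three that $a_{\Nis}$ preserves all motivic weak equivalences; hence its total left derived functor coincides with the functor induced directly on homotopy categories, and by the previous step this is a strict symmetric monoidal equivalence $\mathrm{H}^\cm_\bullet(S)\to\mathrm{H}_\bullet(S)$.
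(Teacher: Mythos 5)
Your overall skeleton matches the paper's: verify $a_{\Nis}$ preserves cofibrations (monomorphisms, preserved by sheafification), verify trivial cofibrations via Dugger's criterion on $J_S^{\cm}$, observe the unit is a stalkwise isomorphism hence a motivic weak equivalence, and note that point-set sheafification is strict symmetric monoidal. Two points deserve comment. First, a factual slip: the maps in $J_S^m$ arising from a Nisnevich square do \emph{not} become isomorphisms of simplicial Nisnevich sheaves after $a_{\Nis}$. What is true is that the elementary distinguished square becomes a (homotopy) pushout square of Nisnevich sheaves, so that $a_{\Nis}\bigl(U_+\cup_{V_+}\Cyl(h_+)\to\Cyl(\cdots\to X_+)\bigr)$ is a trivial cofibration in the Morel--Voevodsky structure; it is still a nontrivial mapping-cylinder inclusion, not an isomorphism. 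The conclusion you draw (a weak equivalence) is correct, but the reason you give is not.

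Second, for the Quillen equivalence the paper takes a genuinely different route from yours, and it is worth noticing why. Rather than reducing to the assertion that $i$ preserves and reflects weak equivalences --- which you rightly identify as the substantial folklore input --- the paper upgrades its left-Quillen verification to the statement that $a_{\Nis}$ preserves \emph{all} motivic weak equivalences. The mechanism is the fibrant replacement $(-)^{\fib}$ built by the small object argument from $J_S^{\cm}$: if $f$ is a motivic weak equivalence, then $f^{\fib}$ is a weak equivalence between closed motivic fibrant objects, hence a \emph{schemewise} weak equivalence (localization does not change the weak equivalences between local objects), and $a_{\Nis}$ sends schemewise weak equivalences to stalkwise ones; two-out-of-three with $a_{\Nis}(A\to A^{\fib})$, which is a Morel--Voevodsky trivial cofibration, finishes it. This avoids having to directly compare the two localizing classes on the sheaf side. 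Your route is not wrong, but it places the burden on a comparison lemma you acknowledge but do not prove; the paper's route packages that comparison more efficiently via the fibrant-replacement trick. If you keep your route, you should at least sketch why a Morel--Voevodsky weak equivalence of simplicial sheaves is, via $i$, a motivic weak equivalence of presheaves (the forward direction you need for the derived unit to be invertible) rather than merely calling it standard.
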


\begin{proof}
  Recall that the cofibrations in the Morel-Voevodsky
  model structure are precisely the monomorphisms.
  Since every closed cofibration is a monomorphism and
  Nisnevich sheafification preserves these, $a_\Nis$
  preserves cofibrations. The unit $A\to i\bigl(a_\Nis(A)\bigr)$
  of the adjunction is an isomorphism on all Nisnevich stalks,
  hence a motivic weak equivalence by Example~\ref{ex:motivic-weq}
  for every motivic space $A$. In particular, $a_\Nis$ maps schemewise
  weak equivalences as well as the maps in $J_S^m$
  described in~(\ref{eq:6}) to weak equivalences. Let
  $\Id_{M_\bullet(S)}\to (-)^\fib$ be the fibrant
  replacement functor in $\M_\bullet^{\cm}(S)$ obtained from
  the small object argument applied to $J_S^{\cm}$. Hence
  if $f$ is a motivic weak equivalence, then $f^\fib$ is
  a schemewise weak equivalence. One concludes
  that $a_\Nis$ preserves all weak equivalences, thus is a
  left Quillen functor. Since the unit $A\to i\bigl(a_\Nis(A)\bigr)$
  is a motivic weak equivalence for every $A$, the functor
  $a_\Nis$ is a Quillen equivalence.
\end{proof}

\begin{note}
\label{note:Quillen-eq-MV}
  Note that a map $f$ in $\M_\bullet(S)$ is a motivic weak equivalence
  if and only if $a_\Nis(f)$ is a weak equivalence in the Morel-Voevodsky
  model structure on simplicial sheaves. Conversely, a map of simplicial
  sheaves is a weak equivalence if and only if it is a motivic weak
  equivalence when considered as a map of motivic spaces.
\end{note}

\begin{remark}\label{rem:injective}
  Starting with the injective
  model structure on simplicial pre\-sheaves
  mentioned in Example~\ref{ex:presheaves-model},
  there is a model structure $\M_\bullet^{\mathrm{im}}(S)$
  on the category of pointed motivic spaces with
  motivic weak equivalences as weak equivalences and
  monomorphisms as cofibrations. It has the advantage
  that every object is cofibrant, but the disadvantage that
  it does not behave well under base change \cite[Ex.~3.1.22]{MV}
  or geometric realization (to be defined below).
  The identity functor is a left Quillen equivalence
  $\Id\colon \M_\bullet^{\cm}(S)\to \M_\bullet^{\mathrm{im}}(S)$,
  since the homotopy categories coincide.
\end{remark}

Further, let $Spc_\bullet(S)$ be the category of pointed Nisnevich
sheaves on $\Sm/S$. Recall the cosimplicial smooth scheme over $S$
whose value at $n$ is
\begin{equation}\label{eq:7}
  \Delta_S^n =
 \Spec\bigl(\mathcal{O}_S[X_0,\dotsc,X_n]/(\sum_{i=0}^n X_i =1)\bigr)
\end{equation}
The functor $Spc_\bullet(S)\to \mathcal{M}_\bullet(S)$ sending $A$ to
the simplicial object $\Sing_S(A)_n = A(-\times \Delta_S^n)$
has a left adjoint $|-|_S\colon \mathcal{M}_\bullet(S)$. It maps
$B$ to the coend
\begin{equation*}
 |B|_S = \int_{n\in \Delta} B_n\times \Delta_S^n
\end{equation*}
in the category of pointed Nisnevich sheaves.
The following statement is proved in \cite{MV}.

\begin{theorem}[Morel-Voevodsy]\label{thm:comparison-v1}
  There is a model structure
  on the category $Spc_\bullet(S)$
  such that the pair $(|-|_S,\Sing_S)$ is a Quillen equivalence
  to the Morel-Voevodsky model structure.
  The functor $|-|_S$ is strict symmetric monoidal.
  In particular, the total left derived functor of $|-|_S$
  is a strict symmetric monoidal equivalence from
  Voevodsky's pointed homotopy category to the
  unstable pointed $\Aff^1$-homotopy category
\end{theorem}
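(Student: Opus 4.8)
The plan is to realize the statement as a transfer of model structures along the adjunction $(|-|_S,\Sing_S)$, essentially reproducing the comparison of \cite[\S2.3]{MV} (compare also \cite[\S2]{V1}); recall that here $Spc_\bullet(S)$ is the category of pointed Nisnevich sheaves of sets, with all homotopical content supplied by the cosimplicial affine $S$-scheme $\Delta^\bullet_S$ of (\ref{eq:7}). First I would equip $Spc_\bullet(S)$ with the model structure whose weak equivalences resp.\ fibrations are the maps $f$ with $\Sing_S(f)$ a motivic weak equivalence resp.\ a motivic fibration in the Morel--Voevodsky structure, the cofibrations being characterized by the left lifting property against acyclic fibrations (in \cite{MV} these turn out to be exactly the monomorphisms). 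To verify this is a cofibrantly generated model structure I would invoke the standard transfer machinery: $Spc_\bullet(S)$ is locally presentable, and $\Sing_S$ commutes with filtered colimits since every $\Delta^n_S$ is of finite type over the Noetherian base, so the small object argument applies to $|-|_S$ of the generating (acyclic) cofibrations of the Morel--Voevodsky structure; the remaining acyclicity condition — that every map with the left lifting property against the transferred fibrations is a transferred weak equivalence — is the point carried out in \cite{MV}, and it uses crucially that $\Delta^\bullet_S$ is $\Aff^1$-contractible at every level. With this structure in place, $(|-|_S,\Sing_S)$ is a Quillen pair essentially by fiat.

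Next I would check that this Quillen pair is a Quillen \emph{equivalence}. Since the weak equivalences of $Spc_\bullet(S)$ were defined to be precisely the maps inverted by $\Sing_S$, the right adjoint $\Sing_S$ reflects all weak equivalences, so only one thing remains: for every cofibrant $X\in\mathcal{M}_\bullet(S)$ the derived unit $X\to\Sing_S\bigl(|X|_S^{\fib}\bigr)$ is a motivic weak equivalence, and as fibrant replacement and $\Sing_S$ preserve weak equivalences this reduces to showing that the adjunction unit $\eta_X\colon X\to\Sing_S|X|_S$ is already a motivic weak equivalence. This is the heart of the argument and the place where $\Aff^1$-locality is indispensable: $\eta_X$ is the algebraic analogue of the classical weak homotopy equivalence $X\to\Sing|X|$ of simplicial sets. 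The idea is to filter $\Sing_S X$ by its simplicial skeleta and use that $\Delta^\bullet_S$ is $\Aff^1$-contractible at each level to conclude that the endofunctor $\Sing_S\circ|-|_S$ of $\mathcal{M}_\bullet(S)$ does not change the $\Aff^1$-local homotopy type, so that $\eta_X$ becomes an $\Aff^1$-local weak equivalence — exactly the comparison of \cite[\S2.3]{MV}. Granting it, $(|-|_S,\Sing_S)$ is a Quillen equivalence and $\tlder(|-|_S)$ is an equivalence from Voevodsky's pointed homotopy category $\Ho\bigl(Spc_\bullet(S)\bigr)$ onto $\mathrm{H}_\bullet(S)$.

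Finally I would record that $|-|_S$ is strict symmetric monoidal. Unwinding the coend, the classical shuffle decomposition of $\Delta^m\times\Delta^n$ — which applies verbatim to the cosimplicial affine scheme $\Delta^\bullet_S$ — supplies a natural isomorphism $|Y\wedge Y'|_S\cong|Y|_S\wedge|Y'|_S$ by passage to the diagonal of the bicosimplicial pointed sheaf $(m,n)\mapsto(Y_m\wedge Y'_n)\times(\Delta^m_S\times_S\Delta^n_S)$, while $|S^0|_S\cong S^0$ is immediate, and one checks the coherence constraints. Since both $\mathcal{M}_\bullet(S)$ and $Spc_\bullet(S)$ are symmetric monoidal model categories and the total left derived functor of a strict symmetric monoidal left Quillen equivalence is a strict symmetric monoidal equivalence of the derived monoidal homotopy categories (Theorem~\ref{thm:monoidal-model}, cf.\ \cite[Thm.~4.3.2]{Hovey:book}), it follows that $\tlder(|-|_S)\colon\Ho\bigl(Spc_\bullet(S)\bigr)\to\mathrm{H}_\bullet(S)$ is strict symmetric monoidal, as required. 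I expect the main obstacle to be precisely the weak-equivalence statement for the unit $\eta_X$ (equivalently, that the counit $|\Sing_S A|_S\to A$ is a weak equivalence for motivically fibrant $A$): this is the one genuinely geometric input, resting on the $\Aff^1$-contractibility of $\Delta^\bullet_S$; everything else in the argument is formal bookkeeping with transferred model structures.
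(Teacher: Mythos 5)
The paper does not actually prove this theorem; it simply records it as a result of~\cite{MV} (``The following statement is proved in~\cite{MV}''). Your reconstruction is sound in its broad strokes for the model-category part: transferring weak equivalences and fibrations to $Spc_\bullet(S)$ along $\Sing_S$, invoking the small object argument to produce factorizations, and reducing the Quillen-equivalence claim to the assertion that the unit $\eta_X\colon X\to\Sing_S|X|_S$ is an $\Aff^1$-local weak equivalence for cofibrant $X$, resting on the $\Aff^1$-contractibility of the cosimplicial scheme $\Delta^\bullet_S$. This is indeed the geometric content of the comparison in~\cite[\S 2.3]{MV}.

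The monoidality step, however, has a genuine gap. You assert that ``the classical shuffle decomposition of $\Delta^m\times\Delta^n$ applies verbatim to the cosimplicial affine scheme $\Delta^\bullet_S$'', and this is precisely where the topological and algebraic pictures part ways. Milnor's theorem $|K\times L|\cong|K|\times|L|$ succeeds topologically because the $\binom{m+n}{m}$ realized $(m{+}n)$-simplices of the shuffle decomposition of the simplicial set $\Delta^m\times\Delta^n$, glued along their facets, reassemble into the product polytope $|\Delta^m|\times|\Delta^n|$. No analogous identity holds for $\Delta^\bullet_S$. Spelling out the coend, $|\Delta^1\times\Delta^1|_S$ is the pushout in Nisnevich sheaves of two copies of $\Delta^2_S\cong\Aff^2_S$ along a linearly embedded $\Delta^1_S\cong\Aff^1_S$ (the diagonal edge of the square). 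Over a point $\Spec k$ this pushout has strictly more $k$-rational points than $\Aff^2_S=\Delta^1_S\times_S\Delta^1_S=|\Delta^1|_S\times|\Delta^1|_S$ (a pushout of two planes along a line is not a plane), so these are not isomorphic as sheaves, and the comparison map $|Y\wedge Y'|_S\to|Y|_S\wedge|Y'|_S$ is not an isomorphism here. The proposed proof of strict monoidality therefore does not stand: whatever justifies that clause of the theorem, it cannot be the shuffle decomposition, which is a combinatorial fact about the simplicial set $\Delta^m\times\Delta^n$ with no algebraic-geometric counterpart.
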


\subsection{Topological realization}
\label{sec:topol-real}

In the case where the base scheme is the complex numbers, there is
a topological realization functor $\real_\CC\colon
\M_\bullet^{\cm}(\CC)\to \Top_\bullet$ which is a strict symmetric
monoidal left Quillen functor. It is defined as follows. If $X\in
\Sm_\CC$, the set $X(\CC)$ of complex points is a topological
space when equipped with the analytic topology. Call this
topological space $X^\an$. It is a smooth manifold, and in
particular a compactly generated topological space. One may view
$X\mapsto X^\an$ as a functor $\Sm_\CC\to \Top$. Note that if
$i\colon Z \rightarrowtail X$ is a closed embedding in $\Sm_\CC$, then the
resulting map $i^\an$ is the closed embedding of a smooth
submanifold, and in particular a cofibration of compactly
generated topological spaces. Every motivic space $A$ is a
canonical colimit
\begin{equation*}
\colim_{X\times \Delta^n \to A} X\times \Delta^n \xra{\cong} A
\end{equation*}
and one defines
\begin{equation*}
 \real_\CC(A)\defeq \colim_{X\times \Delta^n \to A} X^\an \times |\Delta^n| \in \Top.
\end{equation*}
Observe that if $A$ is pointed, then so is $\real_\CC(A)$.

\begin{theorem}\label{thm:realization}
  The functor $\real_\CC\colon\M_\bullet^{\cm}(\CC)\to \Top_\bullet$
  is a strict symmetric monoidal left Quillen functor.
\end{theorem}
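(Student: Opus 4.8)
The plan is to establish the three assertions in turn --- that $\real_\CC$ is a left adjoint, that it is strict symmetric monoidal, and that it is left Quillen --- with only the last carrying genuine content, to be handled by the same Bousfield-localization mechanism (Dugger's lemma \cite[Cor.~A2]{Dugger}) as in the proof of Theorem~\ref{thm:closed-motivic-model}. By construction $\real_\CC$ is the left Kan extension along the Yoneda embedding of the functor $X\times\Delta^n\mapsto X^\an\times|\Delta^n|$, hence preserves all colimits; since $\Top_\bullet$ is cocomplete and $\M_\bullet(\CC)$ is locally presentable it therefore has a right adjoint $\mathrm{Sing}_\CC$, with $\mathrm{Sing}_\CC(T)(X)_n=\Top_\bullet\bigl((X^\an\times|\Delta^n|)_+,T\bigr)$. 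Strict symmetric monoidality is then formal: both $\real_\CC(-\wedge-)$ and $\real_\CC(-)\wedge\real_\CC(-)$ preserve colimits in each variable separately, so one only needs the comparison isomorphism on the generators $X_+$ and $\Delta^n_+$, where it reduces to the identities $(X\times_S Y)^\an=X^\an\times Y^\an$ and $|\Delta^m\times\Delta^n|=|\Delta^m|\times|\Delta^n|$ together with $S^\an=\mathrm{pt}$ (giving $\real_\CC(S_+)=S^0$); coherence is inherited from the source.

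Next I would check that $\real_\CC\colon\M_\bullet^{cs}(\CC)\to\Top_\bullet$ is left Quillen, i.e.\ that it sends $I_\CC^c$ to cofibrations and $J_\CC^c$ to trivial cofibrations. Being a strict monoidal colimit-preserving left adjoint, $\real_\CC$ preserves pushout products, so this reduces to showing that an acceptable monomorphism $i\colon\union_{j=1}^m Z_j\hra X$ realizes to a cofibration of spaces. Because $(-)^\an$ preserves fibre products, $\real_\CC$ of the coequalizer~(\ref{eq:5}) is the set-theoretic union $\union_j Z_j^\an\subseteq X^\an$ with the subspace topology --- a finite union of closed sets --- and its inclusion into the complex manifold $X^\an$ is a cofibration, since a closed subanalytic subset can be triangulated compatibly so that $(X^\an,\union_j Z_j^\an)$ is a CW-pair. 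The relevant pushout products with $(\partial\Delta^n\hra\Delta^n)_+$ resp.\ $(\Lambda^n_j\hra\Delta^n)_+$ then realize to pushout products of this cofibration with the cofibration $(S^{n-1}\hra D^n)_+$ resp.\ the trivial cofibration $(|\Lambda^n_j|\hra|\Delta^n|)_+$, which are (trivial) cofibrations of pointed spaces.

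To pass to the localized structure $\M_\bullet^{\cm}(\CC)$, Dugger's lemma reduces the claim to checking that $\real_\CC$ sends every map of $J_\CC^{\cm}$ to a weak equivalence; as the $J_\CC^c$-part is already handled and smashing a cofibration with a trivial cofibration yields a trivial cofibration, it suffices that each of the three families in~(\ref{eq:6}) realizes to a trivial cofibration. The zero-section $X_+\to(\Aff^1_S\times_S X)_+$ realizes to $X^\an_+\hra(X^\an\times\CC)_+$, a closed embedding that is a deformation retract since $\CC$ is contractible; the map $\ast\to(\emptyset_S)_+$ realizes to the identity of $\ast$, as $(\emptyset_S)^\an=\emptyset$. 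The remaining map, the inclusion $U_+\cup_{V_+}\Cyl(h_+)\to\Cyl\bigl(U_+\cup_{V_+}\Cyl(h_+)\to X_+\bigr)$ of the source into its mapping cylinder, is always a cofibration, and is a weak equivalence exactly when $U_+\cup_{V_+}\Cyl(h_+)\to X_+$ is one; after realization this amounts to the statement that the analytification of an elementary Nisnevich square is a homotopy pushout square of topological spaces --- and this is the main obstacle. It follows from the fact that an \'etale morphism induces a local homeomorphism $p^\an\colon Y^\an\to X^\an$ which, restricting to a homeomorphism $Y^\an\smallsetminus V^\an\xrightarrow{\;\cong\;}X^\an\smallsetminus U^\an$ of the closed complements, is injective on a neighbourhood of $Y^\an\smallsetminus V^\an$ and hence admits a section over an open neighbourhood $\widetilde W$ of $X^\an\smallsetminus U^\an$; then $\{U^\an,\widetilde W\}$ is an open cover of $X^\an$ exhibiting the square as a homotopy pushout by a Mayer--Vietoris argument. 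I would either carry out this argument in detail --- the only delicate point being the injectivity of $p^\an$ near the closed complement --- or invoke the analogous statement from \cite{MV}. With these inputs Dugger's lemma gives that $\real_\CC$ is left Quillen, completing the proof.
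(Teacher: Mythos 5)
Your proposal follows essentially the same structure as the paper's proof: define the right adjoint $\Sing_\CC$ levelwise, get strict monoidality from $(X\times_S Y)^\an\iso X^\an\times Y^\an$, triangulate compatibly to show acceptable monomorphisms realize to CW-pair inclusions (hence generating closed cofibrations go to cofibrations), and then apply Dugger's lemma and run through the three families in~(\ref{eq:6}). The only genuine divergence is in your treatment of the Nisnevich square, and it is worth noting. The paper argues that the continuous bijection $U^\an\cup_{V^\an}Y^\an\to X^\an$ (from the \emph{strict} pushout) is a homeomorphism --- a short argument using that $p^\an$ is an open map --- and deduces the weak equivalence from there. You instead try to exhibit the square as a homotopy pushout via a Mayer--Vietoris argument with the open cover $\lbrace U^\an,\widetilde W\rbrace$, where $\widetilde W$ is a neighbourhood of $X^\an\smallsetminus U^\an$ over which a section of $p^\an$ exists. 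This works, but is more involved and depends on a lemma you correctly flag as the delicate point: that an \'etale map which is bijective over a closed subscheme is in fact injective (hence an open immersion, after shrinking) on a full analytic neighbourhood of the preimage. That is a true statement, but it needs a citation or an argument, and in addition one has to compare the open cover $\lbrace U^\an,\widetilde W\rbrace$ with the original span $U^\an\leftarrow V^\an\to Y^\an$ (since $\widetilde W$ need not equal all of $Y^\an$), which you do not spell out. The paper's route avoids both points. Both arguments are sound; yours trades the paper's homeomorphism observation for a somewhat longer descent-type argument.
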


\begin{proof}
  The right adjoint of $\real_\CC$ maps the compactly generated
  pointed topological space $Z$ to the pointed
  motivic space $\Sing_\CC(Z)$ which
  sends $X\in \Sm_\CC$ to the pointed
  simplicial set $\sSet_{\Top}(X^\an, Z)$.
  To conclude that $\real_\CC$ is strict symmetric monoidal, it
  suffices to observe that there is a canonical
  homeomorphism $(X\times Y)^\an \iso X^\an \times Y^\an$, and that
  geometric realization is strict symmetric monoidal.

  Suppose now that $i\colon \union_{j=1}^m Z_j\hra X$ is an
  acceptable monomorphism. One computes $\real_\CC(\union_{j=1}^m Z_j)$
  as the coequalizer
  \begin{equation*}
     \coprod_{j,j'}\bigl(Z_j\times_{X} Z_{j'}\bigr)^\an
       \rightrightarrows   \coprod_{j=1}^m Z^\an_j.
  \end{equation*}
  in $\Top$.
  Every map $\bigl(Z_j\times_{X} Z_{j'}\bigr)^\an \to Z_j^\an$
  is a closed embedding of smooth submanifolds of complex projective
  space. In particular, one may equip $Z_j^\an$ with a cell complex
  structure such that $\bigl(Z_j\times_{X} Z_{j'}\bigr)^\an$ is
  a subcomplex for every $j'$. Then $\real_\CC(\union_{j=1}^m Z_j)$ is the union
  of these subcomplexes, and in particular again a subcomplex.
  It follows that $\real_\CC(i)$ is a cofibration of topological
  spaces. Since $\real_\CC$ is compatible with pushout products,
  it maps the generating closed cofibrations to cofibrations
  of topological spaces.

  To conclude that $\real_\CC$ preserves trivial cofibrations
  as well, it suffices by Dugger's lemma \cite[Cor.~A2]{Dugger} to check
  that $\real_\CC$ maps every map in $J_\CC^m$ to a weak
  homotopy equivalence. In fact, since the domains and codomains
  of the maps $\partial \Delta^m \hra \Delta^m$ are cofibrant,
  it suffices to check the latter for the maps in
  diagram~(\ref{eq:6}). In the first case, one obtains the map
  $ X^\an \hra (\Aff^1_\CC\times X)^\an \cong \mathbb R^2 \times X^\an$, in the
  second case one obtains up to simplicial homotopy equivalence
  the canonical map $U^\an\cup_{p^{-1}(V)^\an}Y^\an \to X^\an$ for a Nisnevich
  square
  \begin{equation*} \xymatrix{ V \ar[r] \ar[d] & Y \ar[d]^-p \\
    U \ar[r] & X} \end{equation*}
  This is in fact a homeomorphism of topological spaces. The result
  follows.
\end{proof}

Suppose now that $R\hra \CC$ is a subring of the complex
numbers. Let $f\colon \Spec(\CC)\to \Spec(R)$ denote the
resulting morphism of base schemes. The realization with
respect to $R$ (or better $f$) is defined as the composition
\begin{equation}\label{eq:8}
  \real_R = \real_\CC\circ f^\ast \colon
  \M_\bullet(R) \to \M_\bullet(\CC) \to \Top_\bullet.
\end{equation}
It is a strict symmetric monoidal Quillen functor.
The most relevant case is $R=\mathbb{Z}$.

\begin{example}\label{example:real-bgl}
  The topological realization of the Grassmannian
  $\Gr(m,n)$ (over any base with a complex point)
  is the complex Grassmannian with the
  usual topology. Since $\real_\CC$ commutes with
  filtered colimits, $\real_\CC(\Gr)$ is the infinite
  complex Grassmannian, which in turn is
  the classifying space $\mathrm{B}U$ for
  the infinite unitary group. Because $\real_\CC$ is a left
  Quillen functor, the topological realization
  of any closed cofibrant motivic space
  weakly equivalent to $\Gr$ 
  is homotopy equivalent to $\mathrm{B} U$.
\end{example}

\subsection{Spectra}
\label{sec:spectra}

\begin{definition}\label{defn:spectrum}
  Let $\Pro^1_S$ denote the pointed projective line over $S$.
  The category
  $\MS(S)$ of $\Pro^1$-{\em spectra\/} over $S$ has the following objects.
  A $\Pro^1$-{\em spectrum\/} $E$ consists of a sequence $(E_0,E_1,E_2,\ldots)$
  of pointed motivic spaces over $S$, and structure maps
  $\sigma_n^E\colon E_n \wedge \Pro^1 \to E_{n+1}$ for every $n\geq 0$.
  A map of $\Pro^1$-spectra is a sequence of maps of pointed motivic
  spaces which is compatible with the structure maps.
\end{definition}

\begin{example}\label{ex:suspension-spectrum}
  Any pointed motivic space $B$ over $S$ gives rise to
  a $\Pro^1_S$-suspension spectrum
  \begin{equation*}
    \Sigma^\infty_{\Pro^1} B =
    (B, B\wedge \Pro^1, B\wedge \Pro^1\wedge \Pro^1, \ldots)
  \end{equation*}
  having identities as structure maps. More generally, let
  $\Fr_n B$ denote the $\Pro^1$-spectrum having values
  \begin{equation*}
    (\Fr_n B)_{n+m} = \begin{cases}
      B\wedge {\Pro^1}^{\wedge m}  & m\geq 0 \\
      \ast                 & m < 0
  \end{cases}
  \end{equation*}
  and identities as structure maps, except for $\sigma_{n-1}^{\Fr_n B}$.
  The functor $B\mapsto \Fr_n B$ is left adjoint to the functor
  sending the $\Pro^1$-spectrum $E$ to $E_n$.
  We often write $\Sigma^\infty_{\Pro^1} B(-n)$ for $\Fr_n B$.
  For a $\Pro^1$-spectrum $E$ let
  $u_n \colon \Sigma^\infty_{\Pro^1} E_n(-n) \to E$
  be a map of $\Pro^1$-spectra adjoint to the identity map
  $E_n \to E_n$.
\end{example}

\begin{remark}\label{r:prominent-A}
  In Definition~\ref{defn:spectrum},
  one may replace $\Pro^1$ by any
  pointed motivic space $A$,
  giving the category $\MS_A(S)$ of $A$-spectra over $S$.
  Essentially the only relevant example for us is when
  $A$ is weakly equivalent to the pointed projective line $\Pro^1$.
  The Thom space $T = \Aff^1/\Aff^1-\lbrace 0\rbrace $ of the trivial
  line bundle over $S$ admits motivic weak equivalences
  \begin{equation}\label{eq:11}
    \xymatrix{\Pro^1 \ar[r]^-\sim & \Pro^1/\Aff^1 & T \ar[l]_-\sim }
  \end{equation}
  The motivic space $\Pro^1$ itself is not always the ideal
  suspension coordinate. For example,
  the algebraic cobordism spectrum $\MGL$ naturally comes as
  a $T$-spectrum.
  In order to switch between $T$-spectra and $\Pro^1$-spectra,
  consider the following general construction.
  A map $\phi\colon A\to B$ induces a functor $\phi^\ast\colon \MS_B(S)
  \to \MS_A(S)$ sending the $B$-spectrum $(E_0,E_1,\dotsc,\sigma_n^E)$
  to the $A$-spectrum
  \begin{equation*}
    (E_0,E_1,\dotsc)\quad\mathrm{with\ structure\ maps}
    \quad \sigma_n^{\phi^\ast E}=
    \sigma_n^E\circ (E_n\wedge \phi)
  \end{equation*}
  Its left adjoint $\phi_\sharp$ maps the $A$-spectrum
  $(F_0,F_1,\dotsc,\sigma_n^F)$ to the $B$-spectrum
  \begin{equation}\label{eq:9}
    \Bigl( F_0,  B\wedge F_0 \cup_{A\wedge F_0} F_1,
     B\wedge\bigl( B\wedge F_0 \cup_{A\wedge F_0} F_1\bigr)\cup_{A\wedge F_1} F_2\bigr),
     \dotsc \Bigr)
  \end{equation}
  having the canonical maps as structure maps. Note that
  for the purpose of constructing a model structure on $A$-spectra
  over $S$, the pointed motivic space $A$ has to be cofibrant in
  the model structure under consideration.
\end{remark}

The next goal is to construct a model structure on $\MS(S)$ having the motivic
stable homotopy category as its homotopy category.

\begin{definition}\label{defn:omega-spectrum}
  Let $\Omega_{\Pro^1} = \intHom_{\M_\bullet(S)}(\Pro^1_S,-)$
  denote the right adjoint of $\Pro^1\wedge -$. For a
  $\Pro^1$-spectrum $E$ with structure maps
  $\sigma^E_n\colon E_n \wedge \Pro^1\to E_{n+1}$,
  let $\omega^E_n\colon E_n \to \Omega_{\Pro^1} E_{n+1}$
  denote the adjoint structure map. A
  $\Pro^1$-spectrum $E$ is {\em closed stably fibrant\/} if
\begin{itemize}
  \item $E_n$ is closed motivic fibrant for every $n\geq 0$, and
  \item $\omega^E_n\colon E_n \to \Omega_{\Pro^1} E_{n+1}$ is a
    motivic weak equivalence for every $n\geq 0$.
\end{itemize}
\end{definition}

Any $\Pro^1$-spectrum $E$
admits a closed stably fibrant replacement.
First replace $E$ by a levelwise fibrant $\Pro^1$-spectrum
$E^\ell$ as follows. Let $E^\ell_0 = E_0^\fib$ for
a fibrant replacement in $\M_\bullet^{\cm}(S)$. Given
$E_n \to E^\ell_n$, set
\begin{equation*}E^\ell_{n+1} \colon\!\!=
  \Bigl(E^\ell_{n}\wedge \Pro^1 \cup_{E_n\wedge \Pro^1} E_{n+1}\Bigr)^\fib\end{equation*}
which yields a levelwise motivic weak equivalence
$E\to E^\ell$ of $\Pro^1$-spectra. To continue, observe that
the adjoint structure maps of any
$\Pro^1$-spectrum $F$ may be viewed as a natural transformation
\begin{equation*}
  q \colon F \to Q (F) \end{equation*}
where $Q (F)$ is the $\Pro^1$-spectrum with terms
$\Omega_{\Pro^1} F_1,\Omega_{\Pro^1} F_2,\dotsc$ and structure maps
\begin{equation*}
 \Omega_{\Pro^1}(\omega_{n+1}^F) \colon \Omega_{\Pro^1} F_{n+1} \to
 \Omega_{\Pro^1}^2 F_{n+2}.
\end{equation*}
Define $Q^\infty(E)$ as the colimit of the sequence
\begin{equation*}
  \xymatrix{  E^\ell \ar[r]^{q} & Q(E^\ell) \ar[r]^{Q(q)} &
          Q^2(E^\ell) \ar[r] & \dotsm.}
\end{equation*}

\begin{definition}\label{defn:stable-equivalence}
  A map $f\colon E\to F$ of $\Pro^1$-spectra is a {\em stable
  equivalence\/} if the map $Q^\infty(f)_n$ is a
  weak equivalence for every $n\geq 0$. It is a {\em closed
  stable fibration\/} if $f_n\colon E_n\to F_n$ is
  a closed motivic fibration and the induced map
  $E_n\to F_n\times_{Q^\infty(E)_n}Q^\infty(F)_n$
  is a motivic weak equivalence for every $n\geq 0$. It
  is a {\em closed cofibration\/}
  if $f_n\colon E_n\to F_n$ and $F_n \wedge \Pro^1 \cup_{E_n \wedge \Pro^1} E_{n+1}
  \to F_{n+1}$ are closed cofibrations for every $n\geq 0$.
\end{definition}

\begin{theorem}\label{thm:stable-model}
  The classes from Definition~\ref{defn:stable-equivalence}
  are a model structure on the category of $\Pro^1$-spectra,
  denoted $\MS^\cm(S)$.
  The identity functor on $\Pro^1$-spectra
  from Jardine's stable model structure to $\MS^\cm(S)$
  is a left Quillen equivalence. In particular,
  the homotopy category $\Ho\bigl(\MS^\cm(S)\bigr)$
  is equivalent to the motivic stable homotopy category $\SH(S)$.
\end{theorem}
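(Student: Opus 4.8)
The plan is to obtain $\MS^\cm(S)$ as a stabilization of a level model structure on $\MS(S)$ and then to compare it with Jardine's model structure from~\cite{J}. First I would equip $\MS(S)$ with the \emph{closed level model structure}: a map $f$ is a level equivalence (resp.\ level fibration) if each $f_n$ is a motivic weak equivalence (resp.\ closed motivic fibration) in $\M_\bullet^\cm(S)$, and the cofibrations are those of Definition~\ref{defn:stable-equivalence}. Since $\M_\bullet^\cm(S)$ is cofibrantly generated (Theorem~\ref{thm:closed-motivic-model} and the discussion following Lemma~\ref{lem:fib-filtered}), symmetric monoidal, and has $\Pro^1$ cofibrant (Lemma~\ref{lem:closed-emb-cof}), this is the usual projective-type spectrum model structure, with generating (acyclic) cofibrations $\{\Fr_n i\}$ for $i$ a generating (acyclic) closed cofibration; its existence is routine. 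One records in addition that it is left proper and cellular (equivalently combinatorial), since $\M_\bullet^\cm(S)$ is — being a left Bousfield localization of the flasque-type structure $\M_\bullet^{cs}(S)$ of Theorem~\ref{thm:closed-schemewise}, which has these properties by Isaksen's construction~\cite{Isaksen:flasque} — and both pass to the associated category of $\Pro^1$-spectra.

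Next I would left Bousfield localize the closed level structure at the set of stabilization maps $\zeta_n^A\colon \Fr_{n+1}(A\wedge \Pro^1)\to \Fr_n A$, with $A$ ranging over the (co)domains of the generating closed cofibrations; the localization exists by the previous paragraph, and its cofibrations are unchanged, hence equal to the closed cofibrations of Definition~\ref{defn:stable-equivalence}. The main point is then to identify the local objects and local equivalences with the closed stably fibrant $\Pro^1$-spectra of Definition~\ref{defn:omega-spectrum} and the stable equivalences of Definition~\ref{defn:stable-equivalence}. For this I would run the Bousfield--Friedlander argument through the explicit functor $Q^\infty$: because $\Pro^1$ is finitely presentable, $\Omega_{\Pro^1}$ commutes with filtered colimits, so for a levelwise closed motivic fibrant spectrum $E^\ell$ the telescope $Q^\infty(E^\ell)$ is again levelwise fibrant, is an $\Omega_{\Pro^1}$-spectrum, and $E^\ell\to Q^\infty(E^\ell)$ is a $\zeta_n^A$-local equivalence. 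This shows that the fibrant (i.e.\ $\zeta_n^A$-local) objects are precisely the closed stably fibrant spectra, and that a map $f$ is a local equivalence if and only if $Q^\infty(f)$ is a levelwise weak equivalence, which is exactly the defining condition of a stable equivalence. A direct check then matches the local fibrations with the closed stable fibrations, and the remaining factorization and lifting axioms are either inherited from the level structure or formal consequences of the localization. I expect the convergence of $Q^\infty$ and the telescope comparison — resting on compactness of $\Pro^1$ — to be the main technical obstacle; the rest is model-category bookkeeping.

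Finally I would compare with Jardine's stable model structure from~\cite{J}, which is obtained by the same stabilization applied to the \emph{injective} level model structure on $\MS(S)$ (built from $\M_\bullet^{\mathrm{im}}(S)$, cf.\ Remark~\ref{rem:injective}), localized at the same maps $\zeta_n^A$. The injective and closed level structures on $\MS(S)$ share the same weak equivalences — the levelwise motivic weak equivalences — so localizing at the same set of maps produces the same class of stable equivalences. Since every closed cofibration of $\M_\bullet(S)$ is a monomorphism (Remark~\ref{rem:injective}), every closed stable cofibration is a cofibration in Jardine's structure, so the identity functor relating the two stable model structures is a left Quillen functor; being the identity on underlying categories and matching weak equivalences exactly, it is a left Quillen equivalence. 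Hence $\Ho(\MS^\cm(S))$ is canonically equivalent to the homotopy category of Jardine's model structure, which is by definition the motivic stable homotopy category $\SH(S)$ of~\cite{V1} and~\cite{J}. This completes the proof.
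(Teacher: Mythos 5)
Your proposal is correct and follows essentially the same route as the paper's proof. The paper cites Jardine's Theorem~2.9 of~\cite{J} for the existence of the stabilized model structure (after noting $\Pro^1$ is closed cofibrant), observes that the stable equivalences coincide with Jardine's because both the stabilization constructions and the unstable weak equivalences do, and deduces the left Quillen equivalence from the fact that closed cofibrations are monomorphisms; your version repackages the existence step as an explicit Hirschhorn-style left Bousfield localization of a closed level model structure followed by a $Q^\infty$-based identification of the local equivalences, but the substance — the same stabilization, the same identification with Definition~\ref{defn:stable-equivalence}, the same comparison with Jardine via the closed-cofibrations-are-monomorphisms observation — is identical.
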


\begin{proof}
  Recall that $\Pro^1$ is closed cofibrant by Lemma~\ref{lem:closed-emb-cof}.
  The existence of the model structure follows as
  in \cite[Thm.~2.9]{J}. Moreover, the stable equivalences
  coincide with the ones in \cite{J}, because so do the stabilization
  constructions and the unstable weak equivalences. Since
  every closed cofibration of motivic spaces is in particular
  a monomorphism, $\Id_{\MS(S)}$ is a left Quillen equivalence.
  Note that the closed cofibrations are generated by the set
  \begin{equation}\label{eq:17}
    \lbrace \Fr_m(g)
      \rbrace_{m \geq 0,\, g\in I^c_S}
  \end{equation}
  with $I^c_S$ defined in~(\ref{eq:3}).
  One may also describe a set of generating acyclic cofibrations.
\end{proof}

\begin{remark}\label{rem:stable-model}
  We will identify
  the homotopy category $\Ho\bigl(\MS^\cm(S)\bigr)$
  with $\SH(S)$ via the equivalence from Theorem~\ref{thm:stable-model}.
  Note that one may form the smash product of a motivic space $A$
  and a $\Pro^1$-spectrum $E$ by setting $(A\wedge E)_n \defeq A \wedge E_n$
  and $\sigma_n^{A\wedge E} \defeq A\wedge \sigma_n^E $. If
  $A$ is closed cofibrant, $A\wedge -$ is a left Quillen functor by
  Theorem~\ref{thm:closed-motivic-model}. Because
  $\Pro^1\wedge -$ is a Quillen equivalence on Jardine's stable model
  structure by results in~\cite[Sect.~3.4]{J},
  Theorem~\ref{thm:stable-model} implies that
  $\Pro^1\wedge -\colon \MS^\cm(S)\to \MS^\cm(S)$ is a left Quillen equivalence.
  Since $\Pro^1 \simeq S^{2,1} = S^1 \wedge (\Aff^1-\lbrace 0\rbrace ,1)$, also
  $S^1\wedge -$ is a left Quillen equivalence. It induces the
  shift functor in the triangulated structure on
  $\SH(S)$.
  The triangles are those which are isomorphic to the image of
  \begin{equation*} \xymatrix{
    E \ar[r]^-f & F \ar[r] & F/E & E\wedge \Delta^1 \cup_E F \ar[l]_-\sim
    \ar[r] & S^1\wedge E} \end{equation*}
  in $\SH(S)$, where $f\colon E\to F$ is an inclusion
  of $\Pro^1$-spectra. As well,
  one has sphere spectra $S^{p,q} \in \SH(S)$ for
  all integers $p,q\in \mathbb{Z}$.
\end{remark}

\begin{example}\label{example:smash-product}
  Since $\SH(S)$ is an additive category, the canonical map
  $E\vee F\to E\times F$ is a stable equivalence. In
  the special case of $\Pro^1$-suspension spectra,
  the canonical map factors as
  \begin{equation*}\Sigma^\infty_{\Pro^1}A \vee \Sigma^\infty_{\Pro^1}B
  \iso \Sigma^\infty_{\Pro^1} (A\vee B) \to \Sigma^\infty_{\Pro^1} (A\times B)
   \to\Sigma^\infty_{\Pro^1} A  \times \Sigma^\infty_{\Pro^1}B \end{equation*}
  which shows that $\Sigma^\infty_{\Pro^1}(A\times B)$
  contains $\Sigma^\infty_{\Pro^1}(A\wedge B)$ as a retract
  in $\SH(S)$. Thus it is even a direct summand. The latter can be deduced
  as follows. The (reduced) {\em join\/} $(A,a_0) \ast (B,b_0)$ is defined as
  the pushout in the diagram
  \begin{equation*}
  \xymatrix{
    A\times B\times \partial \Delta^1 \cup \lbrace a_0\rbrace \times \lbrace b_0\rbrace
            \times \Delta^1
    \ar[r] \ar[d] & A \vee B \ar[d] \\
    A\times B\times \Delta^1 \ar[r] & A \ast B.   }
  \end{equation*}
  of pointed motivic spaces over $S$. Attaching $A\wedge (\Delta^1,0)$
  and $B\wedge (\Delta^1,0)$ to $A \ast B$ via $A\vee B$ produces a
  pointed motivic space $C$ which is equipped with a sectionwise weak
  equivalence $C\to (A\times B) \wedge S^{1,0}$. Collapsing
  $\lbrace a_0\rbrace \ast B$ and $A\ast \lbrace b_0\rbrace $ inside $C$
  yields a sectionwise
  weak equivalence $C \to (A\wedge B\wedge S^{1,0}) \vee (A\wedge S^{1,0}) \vee
  (B\wedge S^{1,0})$. Since $S^{1,0}$ is invertible in $\SH(S)$, one gets
  a splitting $\Sigma^\infty_{\Pro^1} (A\times B) \simeq
  \bigl( \Sigma^\infty_{\Pro^1} (A\wedge B)\bigr) \vee \bigl(\Sigma^\infty_{\Pro^1} A
  \bigr)\vee \bigl(\Sigma^\infty_{\Pro^1} B\bigr)$ in $\SH(S)$.
\end{example}

For a $\Pro^1$-spectrum $E$ let $\Tr_n E$ denote the $\Pro^1$-spectrum
with
\begin{equation*}
  \bigl(\Tr_n E\bigr)_{m} =
  \begin{cases}
    E_{m} & m\leq n \\
    E_n \wedge (\Pro^1)^{\wedge m-n} =
    \bigl(\Fr_n E_n\bigr)_{m} & m\geq n
  \end{cases}
\end{equation*}
and with the obvious structure maps. The structure maps of $E$
determine maps
$\Tr_n E \to \Tr_{n+1} E$ such that $E = \colim_n \Tr_{n} E$.
The canonical map $\Fr_n E_n \to \Tr_n E$ adjoint to the identity
$\id\colon E_n \to E_n$ is an identity in all levels $\geq n$, and
in particular a stable equivalence. The identity
$\id\colon E_n \wedge (\Pro^1)^{\wedge n} \to \bigl(\Fr_0 E_n)_n$ leads by
adjointness to the map
\begin{equation}\label{eq:14}
  \xymatrix{\Fr_n (E_n)\wedge (\Pro^1)^{\wedge n} \ar[r]^\iso &
   \Fr_n\bigl(E_n \wedge (\Pro^1)^{\wedge n}\bigr) \ar[r] & \Fr_0 E_n }
\end{equation}
and hence to a map $\Fr_n E_n \to \Omega_{\Pro^1}^{n} \bigl(\Fr_0 E_n \bigr)$.
Since the map~(\ref{eq:14}) is an isomorphism in all levels $\geq n$,
it is a stable equivalence. Because $\Omega_{\Pro^1}$ is a Quillen equivalence,
the map $\Fr_n E_n \to \Omega_{\Pro^1}^{n} \bigl((\Fr_0 E_n)^\fib\bigr)$
is a stable equivalence as well if $E_n$ is closed cofibrant. In fact,
the condition on $E_n$ can be removed since $\Fr_n$ preserves all weak
equivalences.
This leads to the following statement.

\begin{lemma}\label{lem:spectrum-colim}
  Any $\Pro^1$-spectrum $E$ is the colimit of a natural sequence
  \begin{equation}\label{eq:13}
    \xymatrix{ \Tr_0 E\ar[r] & \Tr_1 E\ar[r] & \Tr_2 E \ar[r] & \dotsm }
  \end{equation}
  of $\Pro^1$-spectra in which the $n$-th term is
  naturally stably equivalent to $\Fr_n E_n = \Sigma^\infty_{\Pro^1} E_n(-n)$
  from \ref{ex:suspension-spectrum}, and also
  to $\Omega_{\Pro^1}^n \bigl((\Sigma^\infty_{\Pro^1} E_n)^f\bigr)$.
\end{lemma}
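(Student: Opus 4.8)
The plan is to establish, in order: (1) the identity $E=\colim_n \Tr_n E$ together with naturality of the maps $\Tr_n E \to \Tr_{n+1}E$; (2) that the canonical map $\Fr_n E_n \to \Tr_n E$ is a natural stable equivalence; and (3) that $\Tr_n E$ is naturally stably equivalent to $\Omega_{\Pro^1}^n\bigl((\Sigma^\infty_{\Pro^1}E_n)^f\bigr)$. Most of the ingredients are already assembled in the paragraph preceding the statement, so this is mostly a matter of organizing them.

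For (1) I would check everything levelwise from the definitions of $\Tr_n$ and $\Fr_n$ recalled above and in Example~\ref{ex:suspension-spectrum}. At level $m$ the transition map $(\Tr_n E)_m \to (\Tr_{n+1}E)_m$ is $\id_{E_m}$ whenever $m\le n$, and is $\sigma^E_n$ (when $m=n+1$) or $\sigma^E_n\wedge\id$ (when $m>n+1$) otherwise; hence $(\Tr_n E)_m$ stabilizes to $E_m$, so, computing the colimit of $\Pro^1$-spectra levelwise, $\bigl(\colim_n\Tr_n E\bigr)_m=E_m$, and a direct inspection shows the induced structure maps are those of $E$. Functoriality of $\Tr_n$ and naturality of $\Tr_n E \to \Tr_{n+1}E$ in $E$ are immediate from the construction.

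For (2) I would use the canonical map $\Fr_n E_n \to \Tr_n E$ adjoint to $\id_{E_n}$ under the adjunction between $\Fr_n$ and evaluation at level $n$ (Example~\ref{ex:suspension-spectrum}); it is the identity in every level $\ge n$. Now a map of $\Pro^1$-spectra which is a levelwise weak equivalence in all sufficiently large levels is a stable equivalence, since the stabilization $Q^\infty$ of Section~\ref{sec:spectra} depends only on the tail: at level $m$ it is the filtered colimit $\colim_k \Omega_{\Pro^1}^k F^\ell_{m+k}$, and filtered colimits preserve motivic equivalences by Lemma~\ref{lem:fib-filtered}. Thus the canonical map is a stable equivalence, evidently natural in $E$. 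For (3) I would compose this with the chain indicated just above the statement: the isomorphism $\Fr_n(E_n)\wedge(\Pro^1)^{\wedge n}\iso \Fr_n\bigl(E_n\wedge(\Pro^1)^{\wedge n}\bigr)$ followed by the canonical map~(\ref{eq:14}) to $\Fr_0 E_n=\Sigma^\infty_{\Pro^1}E_n$ is an isomorphism in all levels $\ge n$, hence a stable equivalence by the same fact; since $(\Pro^1)^{\wedge n}\wedge -$ is a left Quillen equivalence on $\MS^\cm(S)$ with right adjoint $\Omega_{\Pro^1}^n$ (Remark~\ref{rem:stable-model}), passing to derived adjoints converts it into a stable equivalence $\Fr_n E_n \to \Omega_{\Pro^1}^n\bigl((\Fr_0 E_n)^f\bigr)$, provided the source is cofibrant. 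As $\Fr_n$ preserves all motivic weak equivalences --- smashing with $\Pro^1$ does, by \cite[Lemma~2.20]{DRO:motivic} --- replacing $E_n$ by a closed cofibrant model changes $\Fr_n E_n$ and $\Sigma^\infty_{\Pro^1}E_n$ only up to levelwise, hence stable, equivalence, so the cofibrancy hypothesis may be dropped; choosing all replacements functorially yields naturality.

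The one step that requires genuine care is the derived-adjunction argument in (3): one must arrange that $\Fr_n E_n$ is cofibrant and that $(\Fr_0 E_n)^f$ is a fibrant replacement in $\MS^\cm(S)$, so that the Quillen equivalence $\bigl((\Pro^1)^{\wedge n}\wedge -,\,\Omega_{\Pro^1}^n\bigr)$ detects the stable equivalence through its derived adjoint. This is routine model-categorical bookkeeping and presents no real obstacle; the content of the lemma is essentially a tidy packaging of the observations already made in the preceding paragraph.
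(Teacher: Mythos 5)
Your proposal is correct and follows essentially the same line of reasoning as the paper, which packages the proof into the paragraph preceding the statement: verify $E=\colim_n\Tr_n E$ levelwise, observe that the canonical map $\Fr_n E_n\to\Tr_n E$ and the map~(\ref{eq:14}) are isomorphisms in all levels $\geq n$ and hence stable equivalences, invoke that $\Omega_{\Pro^1}$ is a Quillen equivalence to get the stable equivalence to $\Omega^n_{\Pro^1}\bigl((\Fr_0 E_n)^\fib\bigr)$ when $E_n$ is cofibrant, and drop cofibrancy because $\Fr_n$ preserves all weak equivalences. You merely make explicit a few details the paper leaves implicit — in particular the justification that an eventual levelwise weak equivalence is a stable equivalence via the description of $Q^\infty$ and Lemma~\ref{lem:fib-filtered}.
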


One may use the description in Lemma~\ref{lem:spectrum-colim} for
computations as follows. Say that a $\Pro^1$-spectrum $F$ is {\em finite\/}
if it is stably equivalent to a $\Pro^1$-spectrum $F'$ such that
$\ast \to E'$ is obtained by attaching finitely many cells from
the set~(\ref{eq:17}).

\begin{lemma}\label{lem:filtered-colimit}
  Let $D(0)\to D(1)\to D(2) \to \dotsm $ be a sequence
  of maps of $\Pro^1$-spectra, with colimit $D(\infty)$.
  \begin{enumerate}
    \item\label{item:3} Suppose that $F$ is a finite $\Pro^1$-spectrum. The canonical
      map \begin{equation*}\colim_{i\geq 0} \Hom_{\SH(S)}\bigl(F,D(i)\bigr)
           \to \Hom_{\SH(S)}\bigl(F, D(\infty)\bigr) \end{equation*}
       is an isomorphism.
   \item\label{item:4} For any $\Pro^1$-spectrum $E$ there is a canonical
     short exact sequence
     \begin{equation}\label{eq:18} 0 \to {\varprojlim_{i\geq 0}}^{\!1}
       \bigl[S^{1,0}\wedge D(i),E\bigr] \to
    \bigl[D(\infty),E\bigr] \to
    \varprojlim_{i\geq 0}\bigl[D(i),E\bigr] \to 0
     \end{equation}
     of abelian groups, where $[-,-]$ denotes $\Hom_{\SH(S)}(-,-)$.
  \end{enumerate}
\end{lemma}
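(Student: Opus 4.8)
The two assertions are logically independent, so I would treat them separately; part~(\ref{item:4}) is handled by the classical mapping-telescope argument, and part~(\ref{item:3}) by a cellular induction that reduces everything to a single cell. In both cases I would first replace the given sequence by one of levelwise closed cofibrations $D(0)\hra D(1)\hra\dotsm$ connected to the original by a levelwise weak equivalence, so that the colimit $D(\infty)=\colim_{i\geq 0}D(i)$ formed in $\MS(S)$ computes the homotopy colimit and therefore represents the object called $D(\infty)$ in $\SH(S)$.

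For part~(\ref{item:3}), recall that a finite $\Pro^1$-spectrum $F$ is, up to stable equivalence, built from $\ast$ by attaching finitely many cells from the set~(\ref{eq:17}), so I would induct on the number of cells. If $F'$ is obtained from $F''$ by attaching one cell $\Fr_m(g)$ with $g\in I^c_S$, there is a cofibre sequence $F''\to F'\to F'/F''$ in which $F'/F''$ is $\Fr_m$ of the (finite, cofibrant) cofibre of $g$. Applying $\Hom_{\SH(S)}(-,D(i))$ and $\Hom_{\SH(S)}(-,D(\infty))$ gives two long exact sequences; passing to the filtered colimit over $i$ in the first (filtered colimits of abelian groups being exact), mapping it to the second by the canonical maps, and invoking the five lemma reduces the statement to the case $F=\Fr_m(B)$ with $B$ a cofibrant finitely presentable pointed motivic space.

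For this base case the crucial and genuinely non-formal point, which I expect to be the main obstacle, is that stably fibrant replacement is compatible with sequential colimits. Since $\Pro^1$ is a finitely presentable motivic space, $\Omega_{\Pro^1}=\intHom_{\M_\bullet(S)}(\Pro^1,-)$ commutes with filtered colimits, and hence so does the stabilization functor $Q^\infty$; combined with Lemma~\ref{lem:fib-filtered} (motivic equivalences and closed motivic fibrations with fibrant codomain are closed under filtered colimits), this shows that applying a functorial levelwise fibrant replacement followed by $Q^\infty$ termwise to $D(\bullet)$ produces a stably fibrant $\Pro^1$-spectrum $\tilde D(\infty)=\colim_i\tilde D(i)$ which is stably equivalent to $D(\infty)$. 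Then $\Hom_{\SH(S)}(\Fr_mB,D(\infty))=\pi_0\,\mathrm{Map}(\Fr_mB,\tilde D(\infty))$, and since $\Fr_m$ is left adjoint to evaluation at level $m$ and $B\wedge\Delta^n_+$ is finitely presentable for every $n$, the mapping space $\mathrm{Map}(\Fr_mB,-)$ commutes with filtered colimits; taking $\pi_0$ (which also commutes with filtered colimits) gives $\Hom_{\SH(S)}(\Fr_mB,D(\infty))=\colim_i\Hom_{\SH(S)}(\Fr_mB,D(i))$, which is part~(\ref{item:3}).

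For part~(\ref{item:4}) I would use the telescope cofibre sequence
\begin{equation*}
\textstyle\bigvee_{i\geq 0}D(i)\xrightarrow{\,1-\mathrm{shift}\,}\bigvee_{i\geq 0}D(i)\longrightarrow D(\infty)
\end{equation*}
in $\SH(S)$, where $1-\mathrm{shift}$ sends the $i$-th summand by the difference of its inclusion and of the composite $D(i)\to D(i+1)\hra\bigvee_jD(j)$. Applying the cohomological functor $[-,E]=\Hom_{\SH(S)}(-,E)$ and using that $\SH(S)$ is additive, so that $[\bigvee_iX_i,E]=\prod_i[X_i,E]$, one obtains a long exact sequence in which the maps induced by $1-\mathrm{shift}$ on $\prod_i[D(i),E]$ and on $\prod_i[S^{1,0}\wedge D(i),E]$ are the usual shift-difference maps. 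Since $\varprojlim$ and ${\varprojlim}^{1}$ of a tower of abelian groups are by definition the kernel and cokernel of $1-\mathrm{shift}$ on the product, the resulting four-term exact fragment ${\varprojlim}^{1}[S^{1,0}\wedge D(i),E]\hra[D(\infty),E]\twoheadrightarrow\varprojlim[D(i),E]$ is exactly the short exact sequence~(\ref{eq:18}). This part is essentially formal once the triangulated structure of Theorem~\ref{thm:stable-model} and the telescope sequence are in place; the only care needed is the degree bookkeeping and the identification of ${\varprojlim}^{1}$ with the relevant cokernel.
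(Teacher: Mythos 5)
Your argument is correct and matches the paper's proof: part~(\ref{item:4}) is the identical telescope/distinguished-triangle argument, and part~(\ref{item:3}) rests on the same two observations, namely that stably fibrant replacement commutes with sequential colimits (via $\Omega_{\Pro^1}$ preserving filtered colimits together with Lemma~\ref{lem:fib-filtered}) and that $\Hom_{\MS(S)}(F,-)$ commutes with filtered colimits for a finite $F$. The cellular induction and five-lemma reduction you insert into part~(\ref{item:3}) is an inessential detour --- since the generating cofibrations have finitely presentable domains and codomains, a spectrum built by attaching finitely many of them is itself finitely presentable in $\MS(S)$, which is all the paper needs and invokes directly.
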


\begin{proof}
  Observe first that stable equivalences and closed stable fibrations are detected
  by the functor $Q^\infty$ which is defined in~\ref{defn:stable-equivalence} as a
  sequential colimit.
  Lemma~\ref{lem:fib-filtered}
   implies that stable equivalences and closed stable fibrations
  with closed stably fibrant codomain are closed under filtered colimits.
  Thus by Theorem~\ref{thm:quillen-hocat} one may compute
  \begin{equation*}
    \Hom_{\SH(S)}(F,\colim_{i\geq 0} D(i)) \iso
  \Hom_{\MS(S)} (F,\colim_{i\geq 0} D(i)^\fib)/\simeq \end{equation*}
  for any cofibrant $\Pro^1$-spectrum $F$, where $\simeq$ denotes the
  equivalence relation ``simplicial homotopy''.
  This implies statement~\ref{item:3}
  because $\Hom_{\MS(S)}(F,-)$ commutes with
  filtered colimits if $\ast \ra F$ is obtained by attaching finitely many cells.

  To prove the second statement, let $C$ be the coequalizer of the
  diagram
  \begin{equation*}
    \xymatrix{ \displaystyle\bigvee_{i\geq 0} D(i) \ar@<1.6ex>[r]^-f \ar@<0.1ex>[r]_-g &
      \displaystyle\bigvee_{i\geq 0}\Delta^1_+\wedge D(i)}
  \end{equation*}
  where $f$ resp.~$g$ is defined on the $i$-th summand $D(i)$
  as $D(i) = 1_+\wedge D(i) \hra \Delta^1_+\wedge D(i)$ resp.~$D(i)
  \ra D(i+1) =0_+\wedge D(i+1)\hra \Delta^1_+\wedge D(i+1)$.
  The canonical map $C \ra \colim_{i\geq 0} D(i)$ induced by the
  composition $\Delta^1_+\wedge D(i) \ra D(i) \ra \colim_{i\geq 0} D(i)$
  is a weak equivalence. In the stable homotopy category, which
  is additive, one may take the difference of $f$ and $g$, and thus
  describe $\colim_{i\geq 0} D(i)$ via the
  distinguished triangle
  \begin{equation}\label{eq:22}
      \xymatrix{ \displaystyle\bigvee_{i\geq 0} D(i) \ar@<1ex>[r]^-{f-g} &
      \displaystyle\bigvee_{i\geq 0}D(i) \ar@<1ex>[r] &
      \displaystyle\colim_{i\geq 0} D(i) \ar@<1ex>[r] &
      \displaystyle\bigvee_{i\geq 0} S^{1,0}\wedge D(i).}
  \end{equation}
  Applying $[-,E] \colon \!\! =\Hom_{\SH(S)}(-,E)$ to the triangle~(\ref{eq:22})
  produces a long exact sequence
  \begin{equation*}
    \xymatrix@C=0.9cm{ \dotsm  &
      \displaystyle\prod_{i\geq 0}\bigl[D(i),E\bigr] \ar@<-0.7ex>[l]_-{f^\ast-g^\ast\!\!} &
      \bigl[D(\infty),E\bigr] \ar@<-0.7ex>[l] &
      \displaystyle\prod_{i\geq 0} \bigl[S^{1,0}\wedge D(i),E\bigr] \ar@<-0.7ex>[l]\! &
      \dotsm \ar@<-0.7ex>[l]}
  \end{equation*}
  which may be split into the short exact sequence
  \begin{equation*}
    \xymatrix@C=0.7cm{ 0  & 
      \varprojlim_{i\geq 0} \bigl[D(i),E] \ar[l] &
      \bigl[D(\infty),E\bigr] \ar[l] &
      {\varprojlim^1_{i\geq 0}} \bigl[S^{1,0}\!\wedge D(i),E\bigr] \ar[l] &
      0. \ar[l]}
  \end{equation*}
\end{proof}

\subsection{Symmetric spectra}
\label{sec:symmetric-spectra}

There seems to be no reasonable (i.e.~symmetric monoidal) smash product for
$\Pro^1$-spectra inducing a decent symmetric monoidal smash product
on $\SH(S)$. This will be solved as in \cite{HSS} and \cite{J}.

\begin{definition}\label{defn:symm-spectrum}
A {\em symmetric $\Pro^1$-spectrum\/} $E$ over $S$ consists of
a sequence $(E_0,E_1,\ldots)$ of pointed motivic spaces over $S$,
together with group actions $(\Sigma_n)_+\wedge E_n \to E_{n}$
and structure maps $\sigma_n^E\colon
E_n \wedge \Pro^1\to E_{n+1}$ for all $n\geq 0$.
Iterations of these structure
maps are required to be as equivariant as they can,
using the permutation action
of $\Sigma_n$ on $(\Pro^1)^{\wedge n}$. A map of symmetric $\Pro^1$-spectra
is a sequence of maps of pointed motivic spaces which is compatible with
all the structure (group actions and structure maps).
Call the resulting category $\MSS(S)$.
\end{definition}

\begin{example}\label{ex:symm-spectra}
  Analogous to Example~\ref{ex:suspension-spectrum},
  the $n$-th shifted suspension spectrum $\Fr^\Sigma_n A$
  of a pointed motivic space $A$ has as values
  \begin{equation*}
    (\Fr^\Sigma_n B)_{m+n} =
    \begin{cases}
      \Sigma_{m+n}^+\wedge_{\Sigma m\times \lbrace{1\rbrace}}
      A\wedge (\Pro^1)^{\wedge m} & m\geq 0 \\
      \ast                 & m < 0
    \end{cases}
  \end{equation*}
  where the $m$-th fold smash product $(\Pro^1)^{\wedge m}$
  carries the natural permutation action.
\end{example}

Every symmetric $\Pro^1$-spectrum determines a $\Pro^1$-spectrum
by forgetting the symmetric group actions. Call the resulting functor
$u\colon \MSS(S)\to \MS(S)$. It has a left adjoint $v$, which
is characterized uniquely up to unique isomorphism by the fact that
\begin{equation}\label{eq:15}
  v\bigl(\Fr_n A\bigr) = \Fr_n^\Sigma A.
\end{equation}

The smash product $E\wedge F$
of two symmetric $\Pro^1$-spectra $E$ and $F$ is constructed as follows.
Set $(E\wedge F)_n$ as the coequalizer of the diagram
\begin{equation}\label{eq:12}
  \xymatrix@C=5em{
    {\displaystyle\coprod_{r+1+s=n} }
  \!\!\!\!\Sigma_n^+ {\wedge}_{\Sigma_r \times \Sigma_1 \times \Sigma_s} E_r\wedge\Pro^1\!\!\wedge F_s
   \ar@<-0.1ex>[r]_-{\sigma_r^E\wedge (F_s\circ \mathrm{twist})}
   \ar@<2ex>[r]^-{\sigma_r^E\wedge F_s} &
    \displaystyle\coprod_{r+s=n} \!\!\!\! \Sigma_n^+ \!\wedge_{\Sigma_r \times \Sigma_s} \!E_r\wedge F_s}
 \end{equation}
where the coequalizer is taken in the category of pointed $\Sigma_n$-motivic spaces.
The structure map $\sigma_n^{E\wedge F}$ is induced by the
structure maps $\sigma_0^F,\dotsc,\sigma_n^F$ of $F$.
One may provide natural coherence isomorphisms for associativity,
commutativity and unitality, where the unit is
$\one_S = (S_+,\Pro^1,\Pro^1\wedge \Pro^1,\dotsc,(\Pro^1)^{\wedge n},\dotsc)$
with the obvious permutation action and identities as structure maps.

We proceed with the homotopy theory of symmetric $\Pro^1$-spectra,
as in \cite{J}. As one deduces from \cite[Thm. 7.2]{Hovey:stable-model},
there exists
a cofibrant replacement functor $(-)^\cofrepl\to \Id_{\MSS(S)}$
for the model structure on symmetric $\Pro^1$-spectra with
levelwise weak equivalences and levelwise fibrations.

\begin{definition}
\label{defn:symm-stable-model}
A map $\phi\colon E\to F$ of symmetric
$\Pro^1$-spectra is a {\em levelwise acyclic fibration\/} if
$\phi_n\colon E_n\to F_n$ is an acyclic closed motivic fibration of pointed
motivic spaces over $S$ for all $n\geq 0$.
A map $\phi\colon E\to F$ of symmetric
$\Pro^1$-spectra is a {\em closed cofibration\/} if it has
the left lifting property with respect to all levelwise acyclic
fibrations.
A levelwise fibrant
symmetric $\Pro^1$-spectrum $E$ is {\em closed stably fibrant\/} if the
adjoint $E_n\to \M_X(\Pro^1,E_{n+1})$ of the structure map is
a weak equivalence for every $n\geq 0$.
A map $\phi\colon E\to F$ is a {\em stable equivalence\/} if
the map
\begin{equation*}\sSet_{\MSS_X}(\phi^\cofrepl,G)\colon \sSet_{\MSS_X}(F^\cofrepl,G)\to
 \sSet_{\MSS_X}(E^\cofrepl,G) \end{equation*} is a weak equivalence
of pointed motivic spaces for all closed
stably fibrant symmetric
$\Pro^1$-spectra $G$. The {\em closed stable fibrations\/} are then defined
by the right lifting property.
\end{definition}

\begin{theorem}[Jardine]
\label{thm:quillen-eq}
The classes of stable equivalences, closed cofibrations and
closed stable fibrations from Definition~\ref{defn:symm-stable-model}
constitute a symmetric monoidal model structure
on $\MSS(S)$.
The forgetful functor $u\colon \MSS^\cm(S)\to \MS^\cm(S)$ is a
right Quillen equivalence.
\end{theorem}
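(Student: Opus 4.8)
The plan is to reproduce Jardine's construction of the motivic symmetric stable model structure from \cite{J}, replacing throughout the Morel--Voevodsky unstable model structure by the closed motivic model structure $\M_\bullet^\cm(S)$ of Theorem~\ref{thm:closed-motivic-model}, and to take the symmetric monoidal smash product from the general theory of symmetric spectra \cite{HSS}, \cite{Hovey:stable-model}. First I would equip $\MSS(S)$ with its levelwise model structure, in which weak equivalences and fibrations are defined levelwise and the closed cofibrations are those having the left lifting property against the levelwise acyclic fibrations. This is cofibrantly generated by the maps $\Fr^\Sigma_n(g)$, where $g$ runs over the generating closed cofibrations from~(\ref{eq:3}) resp.~the generating acyclic closed cofibrations from~(\ref{eq:4}) of $\M_\bullet^\cm(S)$; here one uses that $\Pro^1$ is closed cofibrant (Lemma~\ref{lem:closed-emb-cof}), so that the constructions of Remark~\ref{r:prominent-A} apply.

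Next I would pass to the stable model structure of Definition~\ref{defn:symm-stable-model} by a left Bousfield localization, exactly as in \cite[Thm.~2.9]{J}. The stabilization functor is the symmetric-spectrum analogue of the functor $Q^\infty$ used for $\Pro^1$-spectra, and the stable equivalences are detected by it. That this produces a model structure follows from left properness of $\M_\bullet^\cm(S)$ --- a consequence of left properness of the injective model structure of Remark~\ref{rem:injective}, since closed cofibrations are monomorphisms --- together with the fact that the domains and codomains of the generators are finitely presentable, whence stable equivalences and closed stable fibrations with closed stably fibrant codomain are closed under filtered colimits by the argument of Lemma~\ref{lem:fib-filtered}.

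For the symmetric monoidal assertion I would verify the pushout-product axiom. Working with the generating cofibrations $\Fr^\Sigma_n(g)$, the axiom for closed cofibrations follows from the standard symmetric-spectrum computation (\cite{HSS}, \cite{J}), which expresses the relevant pushout products through the pushout product $g\push h$ of generating closed cofibrations of $\M_\bullet^\cm(S)$, together with the pushout-product axiom for the closed motivic model structure (Theorem~\ref{thm:closed-motivic-model}); the acyclic case follows from left properness and the fact that smashing with a cofibrant symmetric spectrum preserves stable equivalences. Moreover the forgetful functor $u$ has a left adjoint $v$ with $v(\Fr_n A)=\Fr^\Sigma_n A$, hence $v(\Fr_n(g))=\Fr^\Sigma_n(g)$ and likewise on the generating acyclic cofibrations from~(\ref{eq:17}); thus $v$ preserves closed cofibrations and acyclic closed cofibrations, i.e.~$u$ is a right Quillen functor.

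The crux, and the step I expect to be the main obstacle, is that $u$ is a \emph{Quillen equivalence}. As in the proof of Theorem~\ref{thm:stable-model} one checks that the stabilization construction and the unstable weak equivalences on $\MSS(S)$ agree with Jardine's, so the stable equivalences coincide with his; since every closed cofibration is a monomorphism, the identity functor from Jardine's symmetric stable model structure to $\MSS^\cm(S)$ is a left Quillen equivalence, and the corresponding statement for $\Pro^1$-spectra holds by Theorem~\ref{thm:stable-model}. Jardine's theorem that the forgetful functor is a right Quillen equivalence for his structures then transfers, through the resulting square of Quillen equivalences, to $\MSS^\cm(S)\to\MS^\cm(S)$. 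The genuine content behind Jardine's comparison is Voevodsky's observation (\cite{V1}) that the cyclic permutation of the three smash factors of $(\Pro^1)^{\wedge 3}$ is $\Aff^1$-homotopic, hence a stable equivalence after $\Pro^1$-suspension, to the identity: this is precisely what forces $\Fr^\Sigma_n(A)$ and $v(\Fr_n A)$ to acquire the same stable homotopy type upon applying $u$, and since by Lemma~\ref{lem:spectrum-colim} any $\Pro^1$-spectrum is the homotopy colimit of its truncations $\Tr_n$, each stably equivalent to $\Fr_n(E_n)$, the derived unit and counit of $(v,u)$ come out to be stable equivalences. Hence $u$ is a right Quillen equivalence.
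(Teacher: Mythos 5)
Your proposal follows essentially the same route as the paper: cite Jardine \cite[Thm.~4.15, Prop.~4.19]{J} for the existence of the stable symmetric monoidal model structure, note that $v$ carries the generating (acyclic) closed cofibrations $\Fr_m(g)$ to $\Fr^\Sigma_m(g)$ so that $(v,u)$ is Quillen, observe that the stable equivalences of $\MSS^\cm(S)$ and of Jardine's structure coincide because the unstable ones do, and transfer Jardine's Quillen equivalence \cite[Thm.~4.31]{J} along the identity functor. The added commentary on Voevodsky's cyclic-permutation observation is accurate background on why Jardine's theorem holds but is not a departure from the paper's argument, which simply cites it.
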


\begin{proof}
  The proof of the first statement follows as in \cite[Thm.~4.15, Prop.~4.19]{J}.
  Note that the stable equivalences in $\MSS^\cm(S)$
  and in Jardine's model structure $\MSS^\mathrm{Jar}(S)$
  coincide, since the unstable
  weak equivalences do so by Remark~\ref{rem:injective}.
  In particular, the identity $\Id\colon \MSS^\cm(S) \to \MSS^\mathrm{Jar}(S)$
  is a left Quillen equivalence. The closed cofibrations
  in $\MSS^\cm(S)$ are generated by the inclusions
  \begin{equation}\label{eq:16}
    \lbrace \Fr^\Sigma_m(g)
      \rbrace_{m \geq 0,\, g\in I^c_S}
  \end{equation}
  with $I^c_S$ defined in~(\ref{eq:3})
  By formula~(\ref{eq:15}), the left adjoint $v$ of $u$ sends the
  generating cofibrations to the generating cofibrations. It follows that
  $v$ is a left Quillen functor. Since
  $v\colon \MS^\mathrm{Jar}(S)\to \MSS^\mathrm{Jar}(S)$ is a Quillen equivalence
  by \cite[Thm.~4.31]{J}, so is the functor
  $v\colon \MS^\cm(S)\to \MSS^\cm(S)$.
\end{proof}

\begin{remark}\label{rem:monoidal-sh}
  Let $\trder  u\colon \SH^\Sigma(S) : = \Ho\bigl(\MSS^\cm(S)\bigr)\to \SH(S)$
  be the total right derived functor of $u$, having $\tlder v$ as a
  left adjoint (and left inverse). Since $\tlder u$ is
  an equivalence, the category $\Ho(\MS^\cm(S))$ inherits a closed
  symmetric monoidal product $\wedge$ by setting
  \begin{equation*}
    E\wedge F \defeq \trder u\bigl(\tlder v (E)\wedge \tlder v(F)\bigr)
  \end{equation*}
  In other words, if $E$ and $F$ are closed cofibrant
  $\Pro^1$-spectra, their smash product in $\SH(S)$ is given by
  the $\Pro^1$-spectrum $u\bigl( (v(E)\wedge v(F))^\fib\bigr)$.
  The unit is $\trder u \bigl(\tlder v (\one)\bigr)\iso \one$,
  the sphere $\Pro^1$-spectrum.
\end{remark}

\begin{notation}\label{not:bigrading}
  For $\Pro^1$-spectra $E$ and $F$ over $S$ define
  the $E$-cohomology and the $E$-homology of $F$ as
  \begin{align}
    E^{p,q}(F) & = \Hom_{\SH(S)}(F,S^{p,q}\wedge E) \label{eq:26}\\
    E_{p,q}(F) & = \Hom_{\SH(S)}(S^{p,q},F\wedge E) \label{eq:27}
  \end{align}
  for all $p,q\in \mathbb{Z}$.
  In the special case $F = \Sigma^\infty_{\Pro^1} A$, where $A$
  is a pointed motivic space over $S$ one writes $E^{p,q}(A)$
  and $E_{p,q}(A)$ instead. Note that
  there is an isomorphism $\Fr_i A \iso S^{-2i,-i} \wedge \Fr_0 A $
  in $\SH(S)$.
\end{notation}

\begin{remark}\label{rem:u-v-triangulated}
  Since $(v,u)$ is a Quillen adjoint pair of stable model categories
  the total derived pair respects in particular the triangulated
  structures. The functor $u$ preserves all colimits,
  thus both $\tlder v$ and $\trder u$ preserve arbitrary
  coproducts.
\end{remark}

\begin{lemma}\label{lem:hocolim-smash}
  Let $E$ and $F$ be $\Pro^1$-spectra. Then $E\wedge F\in \SH(S)$
  may be obtained as the sequential colimit of a sequence whose
  $n$-th term is stably equivalent to
  $\Omega^{2n}_{\Pro^1} (\Sigma^\infty_{\Pro^1} E_n \wedge F_n)^\fib$.
  Thus
  $E \wedge F \cong \textrm{hocolim} \Sigma^\infty_{\Pro^1} (E_n \wedge F_n)(-2n)$
  in $\SH(S)$. 
\end{lemma}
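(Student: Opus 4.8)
The plan is to deduce the statement from two applications of Lemma~\ref{lem:spectrum-colim}, together with the fact that the smash product on $\SH(S)$ commutes with sequential homotopy colimits in each variable. First I would recall from Lemma~\ref{lem:spectrum-colim} that $E \cong \mathrm{hocolim}_n \Tr_n E$ and $F \cong \mathrm{hocolim}_m \Tr_m F$, where $\Tr_n E$ is naturally stably equivalent to $\Fr_n E_n = \Sigma^\infty_{\Pro^1} E_n(-n)$ and likewise for $F$, the transition maps $\Tr_n E \to \Tr_{n+1} E$ being those induced by the structure maps of $E$.

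Next I would establish that $-\wedge-$ preserves sequential homotopy colimits in each variable. By Remark~\ref{rem:monoidal-sh} one has $E\wedge F = \trder u\bigl(\tlder v E \wedge \tlder v F\bigr)$. The functor $\tlder v$, being the total left derived functor of a left Quillen functor (Theorem~\ref{thm:quillen-eq}), preserves homotopy colimits; the smash product on $\MSS^\cm(S)$ is part of a symmetric monoidal model structure (Theorem~\ref{thm:quillen-eq}), so $-\wedge G$ descends to a homotopy-colimit-preserving functor on $\SH^\Sigma(S)$ for every $G$; and $\trder u$ is an equivalence of triangulated categories preserving coproducts (Remark~\ref{rem:u-v-triangulated}), hence preserves sequential homotopy colimits. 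Applying this twice, and using that the diagonal $\mathbb{N}\hookrightarrow \mathbb{N}\times\mathbb{N}$ is cofinal, I obtain $E\wedge F \cong \mathrm{hocolim}_n(\Tr_n E \wedge \Tr_n F)$, with transition maps induced by the structure maps of $E$ and $F$.

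Finally I would identify the $n$-th term. Using the stable equivalences $\Tr_n E \simeq \Fr_n E_n$ and $\Tr_n F \simeq \Fr_n F_n$, the identification $\Fr_i A \cong S^{-2i,-i}\wedge \Sigma^\infty_{\Pro^1} A$ from Notation~\ref{not:bigrading}, the monoidality $\Sigma^\infty_{\Pro^1} A \wedge \Sigma^\infty_{\Pro^1} B \cong \Sigma^\infty_{\Pro^1}(A\wedge B)$ (a special case of Proposition~\ref{MonoidalStr}(1)), and associativity of $\wedge$, I get
\[ \Tr_n E \wedge \Tr_n F \simeq \Fr_n E_n \wedge \Fr_n F_n \cong S^{-4n,-2n}\wedge \Sigma^\infty_{\Pro^1}(E_n\wedge F_n) \cong \Fr_{2n}(E_n\wedge F_n) = \Sigma^\infty_{\Pro^1}(E_n\wedge F_n)(-2n), \]
which by Lemma~\ref{lem:spectrum-colim} (with $2n$ and $E_n\wedge F_n$ in place of $n$ and $E_n$) is in turn stably equivalent to $\Omega^{2n}_{\Pro^1}\bigl((\Sigma^\infty_{\Pro^1}(E_n\wedge F_n))^\fib\bigr)$. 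This gives the first assertion; the displayed isomorphism then follows once one checks that under these equivalences the transition maps of the diagonal sequence agree with the natural maps $\Sigma^\infty_{\Pro^1}(E_n\wedge F_n)(-2n)\to \Sigma^\infty_{\Pro^1}(E_{n+1}\wedge F_{n+1})(-2(n+1))$ induced by the structure maps, which is guaranteed by the naturality clause of Lemma~\ref{lem:spectrum-colim}.

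I expect the main obstacle to be the point-set bookkeeping rather than any conceptual difficulty: one must pass to cofibrant representatives so that $v$ and $\wedge$ may be applied on the nose, verify that the relevant sequential colimits genuinely compute homotopy colimits (this is where Lemma~\ref{lem:fib-filtered} and the cellularity of the generating cofibrations in~(\ref{eq:17}) enter, exactly as in the proof of Lemma~\ref{lem:filtered-colimit}), and track the transition maps carefully enough to identify the diagonal sequence with $\mathrm{hocolim}\ \Sigma^\infty_{\Pro^1}(E_n\wedge F_n)(-2n)$ and not merely with a sequence having the correct terms.
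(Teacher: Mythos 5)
Your proposal is correct and follows essentially the same route as the paper's proof: express $E$ and $F$ as colimits of their truncations via Lemma~\ref{lem:spectrum-colim}, use that the smash product commutes with these colimits and that the diagonal in $\mathbb{N}\times\mathbb{N}$ is final, and then identify the $n$-th term via $\Fr^\Sigma_n E_n \wedge \Fr^\Sigma_n F_n \cong \Fr^\Sigma_{2n}(E_n\wedge F_n)$. The only stylistic difference is that you work in the homotopy categories with derived functors $\tlder v$, $\trder u$ and hocolims preserving them, while the paper stays at the level of (cofibrant) $\Pro^1$-spectra and point-set colimits throughout, deriving only at the end; your closing paragraph correctly flags that this point-set bookkeeping is where the real work resides.
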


\begin{proof}
  Here $E\wedge F \in \SH(S)$ refers to the smash product of
  symmetric $\Pro^1$-spectra associated to closed
  cofibrant replacements $E^\cof \to E$ and $F^\cof \to F$. Because these
  two maps are levelwise weak equivalences,
  we may assume that both $E$ and $F$ are closed cofibrant.
  By Lemma~\ref{lem:spectrum-colim} $E$ and $F$ can be
  expressed as sequential colimits of their truncations. Since $v$
  preserves colimits, $v(E)$ is the sequential colimit of the
  diagram
  \begin{equation*}
    v(\Tr_0 E) = v(\Fr_0 E_0) = \Fr^\Sigma_0 E_0 \to
    v(\Tr_1 E) \to \dotsm
  \end{equation*}
  and similarly for $F$. The stable equivalence $\Fr_n E_n \to \Tr_n E$
  of cofibrant $\Pro^1$-spectra induces a stable equivalence
  $v(\Fr_n E_n) = \Fr^\Sigma_n E_n \to v (\Tr_n E)$. Since
  smashing with a symmetric $\Pro^1$-spectrum preserves colimits,
  one has
  \begin{equation*}
    v(\Tr_m E)\wedge \colim_n v(\Tr_n F) \iso \colim_n \bigl( v(\Tr_m E)
    \wedge v(\Tr_n F)\bigr)
  \end{equation*}
  for every $n$. It follows that $v(E)\wedge v(F)$ is the filtered colimit
  of the diagram sending $(m,n)$ to $v(\Tr_m E) \wedge v(\Tr_n F)$. Since
  the diagonal is a final subcategory in $\mathbb{N}\times\mathbb{N}$,
  there is a canonical isomorphism $\colim_n v(\Tr_n E)\wedge v(\Tr_n F) \iso
  v(E)\wedge v(F)$. Theorem~\ref{thm:quillen-eq} says that $\MSS^\cm(S)$
  is symmetric monoidal, thus the canonical map
  \begin{equation}\label{eq:19}
    \Fr^\Sigma_{2n} (E_n\wedge F_n) \iso \Fr^\Sigma_n E_n \wedge^\Sigma_n E_n
    \to v(\Tr_n E)\wedge v(\Tr_n F)
  \end{equation}
  is a stable equivalence. Let
  $\Fr^\Sigma_{2n} (E_n \wedge F_n) \to \Omega^{2n}_{\Pro^1} \Fr^\Sigma_0
  (E_n \wedge F_n)$ be the canonical map which is adjoint to the unit
  $E_n \wedge F_n \to \Omega^{2n}_{\Pro^1} (\Pro^1)^{\wedge 2n} \wedge
  (E_n \wedge F_n)$.
  As in the case of $\Pro^1$-spectra, the map
  \begin{equation*}\Fr^\Sigma_{2n} (E_n \wedge F_n) \to \Omega^{2n}_{\Pro^1} \Fr^\Sigma_0
  (E_n \wedge F_n) \to \Omega^{2n}_{\Pro^1} \bigl(\Fr^\Sigma_0
  (E_n \wedge F_n)\bigr)^\fib \end{equation*}
  is a stable equivalence. It follows that $v(E)\wedge v(F)$
  is the colimit of a sequence whose $n$-th term is stably equivalent
  to $\Omega^{2n}_{\Pro^1} \bigl(\Fr^\Sigma_0 (E_n \wedge F_n)\bigr)^\fib $.
  Hence it also follows that a fibrant replacement of
  $v(E)\wedge v(F)$ may be obtained as the colimit of a sequence of
  closed stably fibrant symmetric $\Pro^1$-spectra
  whose $n$-th term is stably equivalent
  to $\Omega^{2n}_{\Pro^1} \bigl(\Fr^\Sigma_0 (E_n \wedge F_n)\bigr)^\fib $.
  Since the forgetful functor $u$ preserves colimits, stable equivalences
  of closed stably fibrant symmetric $\Pro^1$-spectra and $\Omega_{\Pro^1}$,
  the $\Pro^1$-spectrum $u\bigl(v(E)\wedge v(F)\bigr)$ is the colimit
  of a sequence of $\Pro^1$-spectra whose $n$-th term is stably equivalent
  to $\Omega^{2n}_{\Pro^1} u \bigl(\bigl(\Fr^\Sigma_0
  (E_n \wedge F_n)\bigr)^\fib\bigr)$.
  The map \begin{equation*}\Fr_0 (E_n \wedge F_n) \to u
  \bigl(\bigl(v \Fr_0 (E_n \wedge F_n)\bigr)^\fib\bigr)\end{equation*}
  is a stable equivalence because $(v,u)$ is a Quillen equivalence~\ref{thm:quillen-eq},
  whence the result.
\end{proof}

As in the case of non-symmetric spectra, one may change the
suspension coordinate as in Remark~\ref{r:prominent-A}. If $A$ is a
pointed motivic space over $S$, let $\MSS_A(S)$ denote
the category of symmetric $A$-spectra over $S$.

\begin{lemma}\label{lem:change-susp-coord}
  A map $A\ra B$ in $\M_\bullet(S)$
  induces a strict symmetric monoidal
  functor $\MSS_A(S)\ra \MSS_B(S)$ having a
  right adjoint. If the map is a motivic weak
  equivalence of closed
  cofibrant pointed motivic spaces, this pair
  is a Quillen equivalence.
\end{lemma}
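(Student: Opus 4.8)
The plan is to realize the functor of the Lemma as extension of scalars along a map of commutative monoids, and then to deduce the Quillen-equivalence statement from a purely levelwise computation together with the stability of left Bousfield localizations. First I would record the standard description of symmetric $A$-spectra over $S$ as left modules, in the category of symmetric sequences of pointed motivic spaces over $S$ with its Day convolution product (the symmetric monoidal structure entering the construction~(\ref{eq:12})), over the commutative monoid $\mathbb{S}_A=(S_+,A,A^{\wedge 2},\dots)$ with its permutation actions; likewise $\MSS_B(S)$ is the category of $\mathbb{S}_B$-modules. A map $\phi\colon A\ra B$ induces a map of commutative monoids $\mathbb{S}_\phi\colon \mathbb{S}_A\ra \mathbb{S}_B$, and I take the functor $\MSS_A(S)\ra \MSS_B(S)$ to be extension of scalars $\phi_\sharp:=\mathbb{S}_B\wedge_{\mathbb{S}_A}(-)$; its right adjoint is restriction of scalars $\phi^\ast$ along $\mathbb{S}_\phi$, which is the identity on underlying symmetric sequences and precomposes the structure maps with $\phi$ — exactly the functor appearing in Remark~\ref{r:prominent-A}. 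Since extension of scalars along a map of commutative monoids is strict symmetric monoidal — the comparison maps $\phi_\sharp(\one_A)\ra \one_B$ and $\phi_\sharp E\wedge \phi_\sharp F\ra \phi_\sharp(E\wedge F)$ being isomorphisms — this settles the first assertion.

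For the remaining assertion, assume $\phi$ is a motivic weak equivalence of closed cofibrant pointed motivic spaces, so that $\phi\wedge X$ is again a motivic weak equivalence for every closed cofibrant $X$. Write $\Fr^\Sigma_m$ for the $m$-th shifted free symmetric-spectrum functor (left adjoint to evaluation in level $m$), used either in $\MSS_A^\cm(S)$ or in $\MSS_B^\cm(S)$ according to the context. Because $\phi^\ast$ leaves underlying spaces unchanged it commutes with evaluation in every level, whence $\phi_\sharp\circ \Fr^\Sigma_m=\Fr^\Sigma_m$; so $\phi_\sharp$ carries the generating closed cofibrations $\Fr^\Sigma_m(g)$, $g\in I^c_S$, of $\MSS_A^\cm(S)$ and of its underlying levelwise model structure to the corresponding generators over $B$, and it carries each stabilization map $\Fr^\Sigma_{m+1}(A\wedge X)\ra \Fr^\Sigma_m(X)$ over $A$ to the composite $\Fr^\Sigma_{m+1}(A\wedge X)\xra{\Fr^\Sigma_{m+1}(\phi\wedge X)}\Fr^\Sigma_{m+1}(B\wedge X)\ra \Fr^\Sigma_m(X)$ over $B$, a stable equivalence by two-out-of-three. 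By Dugger's lemma \cite[Cor.~A2]{Dugger} this makes $(\phi_\sharp,\phi^\ast)$ a Quillen adjunction, both for the levelwise and for the stable (closed motivic) model structures.

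The heart of the argument — and the step I expect to be the main obstacle — is the levelwise Quillen equivalence. Since $\phi^\ast$ both preserves and reflects levelwise motivic weak equivalences, it is enough to show that the unit $E\ra \phi^\ast\phi_\sharp E$ is a levelwise motivic weak equivalence for every cofibrant symmetric $A$-spectrum $E$. For $E=\Fr^\Sigma_m(X)$ with $X$ closed cofibrant this is transparent: in level $n\geq m$ the unit is a finite wedge of copies of $\id_X\wedge \phi^{\wedge(n-m)}\colon X\wedge A^{\wedge(n-m)}\ra X\wedge B^{\wedge(n-m)}$, induced up along $\Sigma_m\times\Sigma_{n-m}\hra \Sigma_n$, hence a motivic weak equivalence, and it is an isomorphism in levels $n<m$. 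I would then propagate this along the cellular filtration of an arbitrary cofibrant $E$: the class of cofibrant $A$-spectra for which the unit is a levelwise equivalence contains the $\Fr^\Sigma_m(X)$ and is closed under cobase change along generating cofibrations, transfinite composition and retracts, because $\phi_\sharp$ preserves these colimits and the pushouts in question are homotopy pushouts in the closed schemewise model structure. The delicate part is exactly this equivariant bookkeeping — handling the free $\Sigma_n$-actions and repeatedly smashing weak equivalences of cofibrant objects — but it is routine; granting it, $(\phi_\sharp,\phi^\ast)$ is a Quillen equivalence for the levelwise model structures.

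To finish, I would pass to the stable model structures. Both $\MSS_A^\cm(S)$ and $\MSS_B^\cm(S)$ are left Bousfield localizations of their levelwise model structures at the respective sets $C_A$ and $C_B$ of stabilization maps $\Fr^\Sigma_{m+1}(A\wedge X)\ra \Fr^\Sigma_m(X)$ resp.~$\Fr^\Sigma_{m+1}(B\wedge X)\ra \Fr^\Sigma_m(X)$, the local objects being precisely the closed stably fibrant spectra of Definition~\ref{defn:symm-stable-model}. By the computation of the second paragraph, the derived functor $\mathbf{L}\phi_\sharp$ sends each map of $C_A$ to a $C_B$-local equivalence, and each map of $C_B$ differs from the image of a map of $C_A$ only by the levelwise weak equivalence $\Fr^\Sigma_{m+1}(\phi\wedge X)$; so the classes of $C_A$- and $C_B$-local equivalences correspond under the levelwise derived equivalence. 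By the compatibility of Quillen equivalences with left Bousfield localization \cite[3.3.20]{Hirschhorn}, the pair $(\phi_\sharp,\phi^\ast)$ is then a Quillen equivalence $\MSS_A^\cm(S)\ra \MSS_B^\cm(S)$, completing the proof.
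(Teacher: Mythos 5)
Your proof is correct and reconstructs exactly what \cite[Thm.~9.4]{Hovey:stable-model} --- the single reference the paper offers in place of a proof --- actually does: extension/restriction of scalars over the sphere monoids, a levelwise Quillen equivalence established first on the free spectra $\Fr^\Sigma_m X$ and then propagated by cell induction (this requires left properness of the levelwise model structure, which holds here), and finally passage through left Bousfield localization by matching the stabilizing sets via the factorization $\phi_\sharp(\zeta^A_{m,X})=\zeta^B_{m,X}\circ\Fr^\Sigma_{m+1}(\phi\wedge X)$ and two-out-of-three, after which \cite[3.3.20]{Hirschhorn} applies. So you have not found a different route but have usefully unpacked the cited one. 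One cosmetic slip: by the formula in Example~\ref{ex:symm-spectra}, level $n$ of $\Fr^\Sigma_m X$ is induced along $\Sigma_{n-m}\hookrightarrow\Sigma_n$ acting on $A^{\wedge(n-m)}$, not along $\Sigma_m\times\Sigma_{n-m}$; either way the underlying map of pointed motivic spaces is a finite wedge of copies of $\id_X\wedge\phi^{\wedge(n-m)}$, so the conclusion that the unit is a motivic weak equivalence of closed cofibrant spaces is unaffected.
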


\begin{proof}
  This is quite formal. For a proof consider
  \cite[Thm.~9.4]{Hovey:stable-model}.
\end{proof}

Because the change of suspension coordinate functors are
lax symmetric monoidal, they preserve (commutative) monoid objects,
that is, (commutative) symmetric ring spectra. Recall
that a functor is lax symmetric monoidal if it is
equipped with natural structure maps as in~(\ref{eq:strict})
which are not necessarily isomorphisms.

\subsection{Stable topological realization}
\label{sec:stable-topol-real}

Let $\Sp = \Sp(\Top, \mathbb{C}\Pro^1)$ be the category of $\mathbb{C}\Pro^1$-spectra
(in $\Top$). An object in $\Sp$ is thus a sequence of pointed compactly
generated topological spaces $E_0,E_1,\dotsc$ with structure maps
$E_n \wedge \mathbb{C}\Pro^1 \to E_{n+1}$. The model structure on $\Sp$ is obtained
as follows: Cofibrations are generated by
\begin{equation*}
  \lbrace \Fr_m^\Top(|\partial \Delta^n \hra \Delta^n|_+)\rbrace_{m,n\geq 0}
\end{equation*}
so that every $\mathbb{C}\Pro^1$-spectrum $E$ has a cofibrant replacement
$E^\cofrepl \to E$ mapping to $E$ via a levelwise acyclic Serre fibration.
For any $\mathbb{C}\Pro^1$-spectrum $E$ and any $n\in \mathbb{Z}$ let
$\pi_nE$ be the colimit of the sequence
\begin{equation*}
\pi_{n+2m} E_m \to \pi_{n+2m+2}
E_m \wedge \mathbb{C}\Pro^1 \to \pi_{n+2(m+1)} E_{1+m}  \to \dotsm
\end{equation*}
where $m\geq 0$ and $n+2m\geq 0$. It is called the
$n$-th stable homotopy group of $E$. Note that homotopy groups
of non-degenerately based compactly generated topological spaces commute with
filtered colimits. If $E$ is cofibrant, $E_n$ is in particular
non-degenerately based for all $n\geq 0$.
A map $f\colon E\to F$ of $\mathbb{C}\Pro^1$-spectra is a {\em stable equivalence\/} if
the induced map $\pi_n f\colon \pi_n E^\cofrepl \to \pi_n F^\cofrepl$ is
an isomorphism for all $n\in \mathbb{Z}$. It is a {\em stable fibration\/}
if it has the right lifting property with respect to all stable acyclic
cofibrations.

Similarly, one may form the category $\Sp^\Sigma = \Sp^\Sigma(\Top,\mathbb{C}\Pro^1)$
of symmetric $\mathbb{C}\Pro^1$-spectra in $\Top$. Cofibrations are generated by
\begin{equation*}
  \lbrace \Fr_m^{\Top,\Sigma}
   (|\partial \Delta^n \hra \Delta^n|_+)\rbrace_{m,n\geq 0}
\end{equation*}
and a symmetric $\mathbb{C}\Pro^1$-spectrum is {\em stably fibrant\/} if
its underlying $\mathbb{C}\Pro^1$-spectrum is stably fibrant.
A map $f\colon E\to F$ of symmetric $\mathbb{C}\Pro^1$-spectra is a
{\em stable equivalence\/} if
the induced map $\sSet_{\Sp^\Sigma}(f^\cofrepl,G)$ of simplicial
sets of maps is
an isomorphism for all stably fibrant symmetric $\mathbb{C}\Pro^1$-spectra $G$.
It is a {\em stable fibration\/}
if it has the right lifting property with respect to all stable acyclic
cofibrations.

\begin{theorem}\label{thm:cpone-spectra}
  Stable equivalences, stable fibrations and cofibrations form
  (symmetric monoidal) model structures on the categories of (symmetric)
  $\mathbb{C}\Pro^1$-spectra in $\Top$.
  The functor forgetting the symmetric group actions is a right
  Quillen equivalence. There is a zig-zag of strict symmetric monoidal
  left Quillen functors connecting $\Sp^\Sigma(\Top,\mathbb{C}\Pro^1)$ and
  $\Sp^\Sigma(\Top,S^1)$. In particular,
  the homotopy category of (symmetric) $\mathbb{C}\Pro^1$-spectra is equivalent
  as a closed symmetric monoidal and triangulated category
  to the stable homotopy category.
\end{theorem}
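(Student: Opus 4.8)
The plan is to obtain Theorem~\ref{thm:cpone-spectra} by specialising the general machinery for spectra in model categories of Hovey~\cite{Hovey:stable-model} and the comparison of spectra with symmetric spectra of~\cite{HSS} and~\cite{J} to the symmetric monoidal model category $\Top$ of pointed compactly generated spaces, together with the elementary observation that, as a pointed topological space, $\CC\Pro^1$ is the even sphere $S^2 = S^1\wedge S^1$. First I would record the input: $\Top$ (pointed, with weak homotopy equivalences) is a cofibrantly generated, left proper, closed symmetric monoidal model category with cofibrant unit $S^0$, and $\CC\Pro^1 = S^2$ is a finite cofibrant pointed space. Hence Hovey's construction applies verbatim and produces the stable model structures on $\Sp(\Top,\CC\Pro^1)$ and on $\Sp^\Sigma(\Top,\CC\Pro^1)$, cofibrantly generated with generating cofibrations the displayed sets $\{\Fr^{\Top}_m(|\partial\Delta^n\hra\Delta^n|_+)\}$ and $\{\Fr^{\Top,\Sigma}_m(|\partial\Delta^n\hra\Delta^n|_+)\}$. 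Moreover $\CC\Pro^1$ satisfies the symmetry condition required for the symmetric case to be symmetric monoidal and to satisfy the monoid axiom: the cyclic permutation of $\CC\Pro^1\wedge\CC\Pro^1\wedge\CC\Pro^1\cong S^6$ is based-homotopic to the identity, because on an even-dimensional sphere every permutation of smash factors acts by a map of degree $+1$. This gives the first assertion.

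Next, that the forgetful functor $u\colon \Sp^\Sigma(\Top,\CC\Pro^1)\to \Sp(\Top,\CC\Pro^1)$ is a right Quillen equivalence is the topological analogue of Theorem~\ref{thm:quillen-eq}; it follows from Hovey's comparison theorem~\cite{Hovey:stable-model} (equivalently from the arguments of~\cite{HSS} and~\cite{J}), using once more the symmetry condition on $\CC\Pro^1$. Its left adjoint $v$ sends $\Fr_m$ to $\Fr^\Sigma_m$, hence maps generating cofibrations to generating cofibrations and is left Quillen, and it is strict symmetric monoidal by construction. This is the second assertion.

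For the third assertion I would use that $\CC\Pro^1 = S^2 = S^1\wedge S^1$ and invoke the standard change-of-suspension-coordinate comparison between $S^1$-spectra and $(S^1\wedge S^1)$-spectra (this is the topological counterpart of Lemma~\ref{lem:change-susp-coord} and a consequence of the general theory in~\cite{Hovey:stable-model}): an $(S^1\wedge S^1)$-symmetric spectrum is, up to a Quillen equivalence, an $S^1$-symmetric spectrum from which one has retained the even terms, with block-permutation symmetric group actions, and the comparison functors can be taken strict symmetric monoidal and left Quillen. This is a Quillen equivalence because $S^1$ is an invertible symmetric object in $\Sp^\Sigma(\Top,S^1)$. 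Chaining it with the equivalence of the previous paragraph gives the required zig-zag of strict symmetric monoidal left Quillen functors connecting $\Sp^\Sigma(\Top,\CC\Pro^1)$ and $\Sp^\Sigma(\Top,S^1)$. Finally, $\Ho\bigl(\Sp^\Sigma(\Top,S^1)\bigr)$ is the classical stable homotopy category, as a closed symmetric monoidal and triangulated category: this is the topological counterpart of~\cite{HSS}, the triangulated structure being the usual one on the homotopy category of any stable model category and the smash product and function spectra descending from the symmetric monoidal model structure. Combining the three comparisons, $\Ho\bigl(\Sp(\Top,\CC\Pro^1)\bigr)$ and $\Ho\bigl(\Sp^\Sigma(\Top,\CC\Pro^1)\bigr)$ are equivalent, as closed symmetric monoidal triangulated categories, to the stable homotopy category.

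The main obstacle is the verification of the symmetry (cyclic-permutation) hypothesis for $\CC\Pro^1$, since this is exactly what makes the symmetric monoidal structure on $\Sp^\Sigma(\Top,\CC\Pro^1)$ and the monoid axiom valid, and what turns $u$ into a Quillen equivalence; the crucial point is that $\CC\Pro^1$ is an \emph{even}-dimensional sphere. The only other point needing care is to arrange all the functors occurring in the zig-zag of the third paragraph to be simultaneously strict symmetric monoidal and left Quillen, which is built into the statements of the change-of-coordinate results in~\cite{Hovey:stable-model}.
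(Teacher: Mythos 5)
Your treatment of the first two assertions matches the paper's proof: both the existence of the (symmetric) stable model structure on $\CC\Pro^1$-spectra in $\Top$ and the statement that the forgetful functor $u$ is a right Quillen equivalence are obtained by specialising~\cite{HSS} (and~\cite{Hovey:stable-model},~\cite{J}) to $\Top$ with $S^1$ replaced by $\CC\Pro^1$. Your observation that $\CC\Pro^1\cong S^2$ is an even sphere, so that the cyclic permutation on $(\CC\Pro^1)^{\wedge 3}$ is based-homotopic to the identity, is exactly the hypothesis that makes these theorems apply; the paper leaves it tacit, but spelling it out is a harmless and useful addition.

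For the third assertion the overall strategy is again the paper's (change of suspension coordinate via~\cite{Hovey:stable-model}), but your informal picture of the comparison is not quite right, and in one place the reference is wrong. A $(S^1\wedge S^1)$-symmetric spectrum is \emph{not}, up to Quillen equivalence, "an $S^1$-symmetric spectrum from which one has retained the even terms": there is no left-or-right Quillen functor in a single direction doing that (restricting to even terms forgets data, inserting odd terms requires a choice, and the symmetric group actions do not simply restrict to block permutations). Nor is Lemma~\ref{lem:change-susp-coord} applicable: that lemma concerns a change along a \emph{weak equivalence} $A\to B$ of cofibrant suspension coordinates, and $S^1\to S^1\wedge S^1$ is not a weak equivalence. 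The correct mechanism, which the paper makes explicit, is a genuine zig-zag through an intermediate double-spectrum category: one forms $\Sp^\Sigma\bigl(\Sp^\Sigma(\Top,\CC\Pro^1),S^1\bigr)$, identifies it by symmetry of spectra-in-spectra with $\Sp^\Sigma\bigl(\Sp^\Sigma(\Top,S^1),\CC\Pro^1\bigr)$, and observes (via \cite[Thm.~9.1]{Hovey:stable-model}) that the two "inner" suspension functors entering this common category from $\Sp^\Sigma(\Top,\CC\Pro^1)$ and from $\Sp^\Sigma(\Top,S^1)$ are strict symmetric monoidal left Quillen equivalences, using that $\CC\Pro^1\wedge -$ and (because $\CC\Pro^1\cong S^1\wedge S^1$) $S^1\wedge -$ are already invertible on the respective sides. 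If you replace your "retain the even terms" heuristic and the citation of Lemma~\ref{lem:change-susp-coord} by this double-stabilisation argument, your proof agrees with the paper's.
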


\begin{proof}
  The statement about the model structures follows as in
  \cite{HSS} if one replaces $S^1$ by $\mathbb{C}\Pro^1$ and
  simplicial sets by compactly generated topological spaces.
  The same holds for the statement about the functor forgetting
  the symmetric group actions. To construct the zig-zag,
  consider the corresponding stable model structure on
  the category of symmetric $S^1$-spectra in the category
  $\Sp^\Sigma(\Top,\mathbb{C}\Pro^1)$, which is isomorphic
  as a symmetric monoidal model category to
  the category of symmetric $\mathbb{C}\Pro^1$-spectra in the
  category $\Sp^\Sigma(\Top,S^1)$ of topological symmetric
  $S^1$-spectra. The suspension spectrum functors
  give a zig-zag
  \begin{equation}\label{eq:21}
    \xymatrix@R=0.7cm{
      \Sp^\Sigma(\Top,\mathbb{C}\Pro^1) \ar[d] & \Sp^\Sigma(\Top,S^1) \ar[d] \\
    \Sp^\Sigma\bigl(\Sp^\Sigma(\Top,\mathbb{C}\Pro^1),S^1\bigr)
    \ar[r]^\iso & \Sp^\Sigma\bigl(\Sp^\Sigma(\Top,S^1)\mathbb{C}\Pro^1\bigr)}
  \end{equation}
  of strict symmetric monoidal left Quillen functors.
  Since $\mathbb{C}\Pro^1\wedge -$ is a left Quillen equivalence on
  the left hand side in the zig-zag~(\ref{eq:21})
  and $S^1\wedge S^1\iso \mathbb{C}\Pro^1$,
  $S^1\wedge -$ is a left Quillen equivalence on the left hand side
  as well. By \cite[Thm.~9.1]{Hovey:stable-model}, the arrow
  pointing to the right in the zig-zag~(\ref{eq:21}) is a Quillen
  equivalence. A similar argument works for the arrow on the right
  hand side, which completes the proof.
\end{proof}

Given a $\Pro^1$-spectrum $E$ over $\CC$, one gets a
$\mathbb{C}\Pro^1$-spectrum $\real_\CC(E) = (\real_\CC E_0,\real_\CC E_1 ,\dotsc)$ with
structure maps
$\real_\CC(E_n)\wedge  \Pro^1 \iso
\real_\CC(E_n \wedge \Pro^1)\to \real_\CC(E_{n+1})$.
The right adjoint for the resulting functor
$\real_\CC\colon \MS(\CC)\to \Sp$ is also obtained by a levelwise application
of $\Sing_\CC$. The same works for symmetric $\Pro^1$-spectra over $\CC$.

\begin{theorem}\label{thm:complex-real}
  The functors $\real_\CC\colon \MS(\CC)\to \Sp$ and
  $\real_\CC\colon \MSS(\CC)\to \Sp^\Sigma$ are left Quillen
  functors, the latter being strict symmetric monoidal.
\end{theorem}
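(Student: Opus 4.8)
The adjunction $(\real_\CC,\Sing_\CC)$ between $\Pro^1$-spectra (resp.\ symmetric $\Pro^1$-spectra) over $\CC$ and (symmetric) $\CC\Pro^1$-spectra has already been described above, so the plan is to verify that $\real_\CC$ preserves closed cofibrations and stable acyclic cofibrations, and, in the symmetric case, that it is strict symmetric monoidal. Everything rests on the compatibility of $\real_\CC$ with the shifted suspension spectrum functors: by construction of $\real_\CC$ one has $\real_\CC(\Pro^1_\CC)=\CC\Pro^1$ (cf.\ Example~\ref{example:real-bgl}), and $\real_\CC\colon \M_\bullet^\cm(\CC)\to \Top_\bullet$ is strict symmetric monoidal by Theorem~\ref{thm:realization}; hence there are natural isomorphisms $\real_\CC\bigl(A\wedge (\Pro^1)^{\wedge m}\bigr)\iso \real_\CC(A)\wedge (\CC\Pro^1)^{\wedge m}$, and consequently natural isomorphisms $\real_\CC\circ \Fr_m\iso \Fr^\Top_m\circ \real_\CC$ and, compatibly with the symmetric group actions, $\real_\CC\circ \Fr^\Sigma_m\iso \Fr^{\Top,\Sigma}_m\circ \real_\CC$.

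Granting this, preservation of closed cofibrations is immediate: the closed cofibrations of $\MS^\cm(\CC)$ are generated by the maps $\Fr_m(g)$ with $m\geq 0$ and $g$ a generating closed cofibration~(\ref{eq:3}) over $\CC$ (Theorem~\ref{thm:stable-model}); each $\real_\CC(g)$ is a cofibration of pointed topological spaces by Theorem~\ref{thm:realization}; $\Fr^\Top_m$ carries cofibrations of $\Top_\bullet$ to cofibrations of $\CC\Pro^1$-spectra; and, being a levelwise left adjoint, $\real_\CC$ preserves colimits and retracts, hence all closed cofibrations. The symmetric case is identical, using the generators~(\ref{eq:16}) and Theorem~\ref{thm:cpone-spectra}.

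For the acyclic cofibrations I would proceed in two steps. First, $\real_\CC$ is left Quillen for the levelwise model structures on both sides: it preserves the generating cofibrations as in the previous paragraph, and it sends a generating levelwise trivial cofibration, which has the form $\Fr_m(j)$ for a generating motivic trivial cofibration $j$ over $\CC$, to $\Fr^\Top_m(\real_\CC j)$, where $\real_\CC j$ is a trivial cofibration of $\Top_\bullet$ by Theorem~\ref{thm:realization} and $\Fr^\Top_m$ of such is a levelwise trivial cofibration of $\CC\Pro^1$-spectra. Second, one passes from the levelwise to the stable structures. The stable model structure on $\Pro^1$-spectra is the left Bousfield localization of the levelwise one at the stabilization maps $\zeta^A_n\colon \Fr_{n+1}(A\wedge \Pro^1)\to \Fr_n(A)$, with $A$ ranging over a set of cofibrant generators \cite{Hovey:stable-model}; by the compatibility isomorphisms above, $\real_\CC(\zeta^A_n)$ is the analogous stabilization map $\Fr^\Top_{n+1}(\real_\CC A\wedge \CC\Pro^1)\to \Fr^\Top_n(\real_\CC A)$, which is a stable equivalence by the very construction of the model structure on $\CC\Pro^1$-spectra. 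The universal property of left Bousfield localization then shows that the Quillen pair descends to the stable model structures, and the symmetric case runs in parallel (alternatively one may invoke \cite{Dugger}'s lemma together with Theorem~\ref{thm:quillen-eq}).

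Finally, that $\real_\CC\colon \MSS(\CC)\to \Sp^\Sigma$ is strict symmetric monoidal is formal given the unstable statement. Each level $(E\wedge F)_n$ is the coequalizer~(\ref{eq:12}) of a diagram built from the pointed spaces $\Sigma_n^+\wedge_{\Sigma_r\times\Sigma_s}E_r\wedge F_s$ and $\Sigma_n^+\wedge_{\Sigma_r\times\Sigma_1\times\Sigma_s}E_r\wedge\Pro^1\wedge F_s$ via the structure maps of $E$ and $F$; since $\real_\CC$ preserves colimits and the free $\Sigma_n$-action constructions, is strict symmetric monoidal on pointed motivic spaces, and satisfies $\real_\CC(\Pro^1_\CC)=\CC\Pro^1$, it takes this coequalizer to the one computing $(\real_\CC E\wedge \real_\CC F)_n$ compatibly with structure maps, and it sends the unit $\one_\CC$ to $(S^0,\CC\Pro^1,(\CC\Pro^1)^{\wedge 2},\dots)=\one_{\Sp^\Sigma}$. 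The only ingredient with any content is the identification $\real_\CC(\Pro^1_\CC)=\CC\Pro^1$ and the naturality of the resulting isomorphisms $\real_\CC\circ \Fr_m\iso \Fr^\Top_m\circ \real_\CC$, which is what lets $\real_\CC$ carry stabilization maps to stabilization maps; once that and Theorems~\ref{thm:realization}, \ref{thm:stable-model} and~\ref{thm:cpone-spectra} are in hand, the left Quillen and monoidality assertions propagate formally.
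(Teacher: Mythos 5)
Your proof is correct, but it takes a genuinely different route from the paper's. You work from the left side: after verifying preservation of generating closed cofibrations via $\real_\CC\circ\Fr_m\iso\Fr^\Top_m\circ\real_\CC$, you argue in two stages, first that $\real_\CC$ is left Quillen for the levelwise model structures (again by inspecting generators), and then that it descends to the stable model structures because it carries the stabilization maps $\Fr_{n+1}(A\wedge\Pro^1)\to\Fr_n(A)$ over to the analogous stable equivalences of $\CC\Pro^1$-spectra, invoking the universal property of left Bousfield localization. The paper instead works entirely from the right adjoint side: it observes that $\Sing_\CC$ commutes with the forgetful functor $u$ and with level evaluation $E\mapsto E_n$, deduces from Theorem~\ref{thm:realization} that $\real_\CC$ preserves generating cofibrations and that $\Sing_\CC$ preserves fibrations and weak equivalences between (stably) fibrant objects, and then closes the argument with one application of Dugger's Lemma \cite[Cor.~A.2]{Dugger}. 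The paper's route avoids any contact with the stable acyclic cofibrations or with the characterization of the stable model structure as a Bousfield localization; your route, while valid, implicitly relies on the coincidence of the $Q^\infty$-detection definition of stable equivalence used in the text with the Bousfield-localization description from \cite{Hovey:stable-model}, which is true but is an extra ingredient the paper does not need. The monoidality argument is essentially the same in both.
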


\begin{proof}
  Since the diagrams
  \begin{equation*}\xymatrix{
    \MSS(\CC) \ar[r]^{u} \ar[d]_{\Sing_\CC} &
      \MS(\CC) \ar[r]^{E\mapsto E_n} \ar[d]_{\Sing_\CC} &
       \M_\bullet(\CC) \ar[d]^{\Sing_\CC} \\
       \Sp^\Sigma \ar[r]^{u} & \Sp \ar[r]^{E\mapsto E_n} & \Top_\bullet}\end{equation*}
  commute, $\real_\CC$ preserves the generating cofibrations
  by~\ref{thm:realization}.
  Then Dugger's Lemma~\cite[Cor.~A.2]{Dugger} implies that
  $\real_\CC$ is a left Quillen functor, because
  $\Sing_\CC$ preserves weak equivalences and fibrations between
  fibrant objects. The fact that $\real_\CC\colon \MSS(\CC)\to \Sp^\Sigma$
  is strict symmetric monoidal follows from the definition of
  the smash product~(\ref{eq:12}).
\end{proof}

\begin{example}\label{example:stable-real-bgl}
  Let $\mathrm{BGL}$ be the $\Pro^1$-spectrum over $\mathbb{C}$ constructed
  in~\ref{ConstructionOfBGL}. Its $n$-th term is a
  pointed motivic space $\mathcal{K}$ weakly equivalent to $\ZZ\times \Gr$.
  One may assume that $\mathcal{K}$ is closed cofibrant. Then
  by Theorem~\ref{thm:complex-real}
  the $n$-th term of $\real_\CC(\mathrm{BGL})$ is weakly equivalent
  to $\mathrm{B}U$. To show that the $\CC\Pro^1$-spectrum
  $\real_\CC(\mathrm{BGL})$ is the one representing complex $K$-theory,
  it suffices to check that the structure map
  $\mathcal{K}\wedge \Pro^1\ra \mathcal{K}$ realizes to the
  structure map $\mathrm{B}U \wedge \CC\Pro^1 \ra \mathrm{B}U$ of
  complex $K$-theory. Consider the diagram
  \begin{equation*}\xymatrix{
    \Hom_{\mathrm{H}_\bullet(\CC)}(\ZZ\times \Gr,\ZZ\times \Gr) \ar[r]^-\iso \ar[d]&
    K_0^\mathrm{\mathrm{alg}}(\ZZ\times \Gr)\ar[d] \\
        \Hom_{\Ho(\Top_\bullet)}\bigl(\real_\CC(\ZZ\times \Gr),\real_\CC(\ZZ\times \Gr)
     \bigr) \ar[r]^-\iso &
    K_0^\mathrm{top}\bigl(\real_\CC(\ZZ\times \Gr)\bigr)}\end{equation*}
  where the vertical map on the left hand side is
  induced by $\real_\CC$ and the vertical map on the right hand side
  is induced by the passage from algebraic to topological complex vector bundles.
  The upper horizontal isomorphism sends the identity to the
  class $\xi_\infty$ described in Remark~\ref{Useful} via
  tautological vector bundles over Grassmannians. The right vertical map
  sends $\xi_\infty$ to the class $\zeta_\infty$
  obtained via the corresponding
  tautological bundles, viewed as complex topological vector bundles.
  Since $\zeta_\infty$ is the image of
  $\id_{\real_\CC(\ZZ\times \Gr)}$ under the lower horizontal
  isomorphism, the diagram commutes at the identity.
  By naturality, it follows that the diagram
  \begin{equation*}
    \xymatrix{
    \Hom_{\mathrm{H}_\bullet(\CC)}(A,\ZZ\times \Gr) \ar[r]^-\iso \ar[d]&
    K_0^\mathrm{\mathrm{alg}}(A)\ar[d] \\
        \Hom_{\Ho(\Top_\bullet)}\bigl(\real_\CC(A),\real_\CC(\ZZ\times \Gr)
     \bigr) \ar[r]^-\iso &
    K_0^\mathrm{top}\bigl(\real_\CC(A)\bigr)}
  \end{equation*}
  commutes for every pointed motivic space $A$ over $\mathbb{C}$. In particular,
  the structure map of $\mathrm{BGL}$ which corresponds to
  $(\xi_\infty) \otimes ([\mathcal{O}(-1)]-[\mathcal{O}])$
  maps to the structure map of the complex $K$-theory spectrum, since it
  corresponds to the ``same'' class, viewed as a difference of
  complex topological vector bundles.
\end{example}

\begin{proposition}\label{prop:base-change-spectra}
  A morphism $f\colon S\rightarrow S^\prime$ of base schemes induces a strict
  symmetric monoidal left Quillen functor
  \begin{equation*}
    f^\ast\colon \MSS(S^\prime)\rightarrow \MSS(S)
  \end{equation*}
  such that $\bigl(f^\ast(E)\bigr)_n = f^\ast(E_n)$.
\end{proposition}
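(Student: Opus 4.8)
The plan is to define $f^\ast$ on symmetric $\Pro^1$-spectra by applying levelwise the strict symmetric monoidal left Quillen functor $f^\ast\colon\M_\bullet^\cm(S^\prime)\to\M_\bullet^\cm(S)$ of Theorem~\ref{thm:closed-motivic-model}, transport the additional structure, and then verify monoidality and the left Quillen property. Given $E=(E_0,E_1,\dots)$ in $\MSS(S^\prime)$, I would set $\bigl(f^\ast E\bigr)_n=f^\ast(E_n)$, with $\Sigma_n$-action $f^\ast$ applied to $(\Sigma_n)_+\wedge E_n\to E_n$ (using that $f^\ast$ of the discrete space $(\Sigma_n)_+\cong\bigvee_{\Sigma_n}S^\prime_+$ is again $(\Sigma_n)_+$, since $f^\ast$ preserves colimits and $f^\ast(S^\prime_+)\cong S_+$), and with structure map the composite
\begin{equation*}
  f^\ast(E_n)\wedge\Pro^1_S\;\cong\;f^\ast(E_n)\wedge f^\ast(\Pro^1_{S^\prime})\;\cong\;f^\ast(E_n\wedge\Pro^1_{S^\prime})\;\xra{f^\ast(\sigma_n^E)}\;f^\ast(E_{n+1}),
\end{equation*}
where the first isomorphism is $f^\ast(\Pro^1_{S^\prime})\cong\Pro^1_S$ (projective lines pull back) and the second is strict monoidality. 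Since $f^\ast$ is symmetric monoidal it respects the permutation actions on $(\Pro^1)^{\wedge k}$, so the equivariance demanded of iterated structure maps comes for free, and $f^\ast$ is compatible with composition of base-scheme morphisms up to coherent isomorphism. To make ``left Quillen'' meaningful, the right adjoint is obtained by applying $f_\ast$ levelwise, which is legitimate because $f_\ast$ is lax symmetric monoidal and $f^\ast\Pro^1_{S^\prime}\cong\Pro^1_S$.

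For strict monoidality I would apply $f^\ast$ to the coequalizer~(\ref{eq:12}) that defines $(E\wedge F)_n$. That diagram is assembled from finite coproducts, free $\Sigma$-actions, smash products and the spaces $(\Pro^1)^{\wedge k}$, all of which $f^\ast$ preserves; hence $f^\ast$ carries it to the coequalizer computing $\bigl(f^\ast E\wedge f^\ast F\bigr)_n$, giving a natural isomorphism $f^\ast(E)\wedge f^\ast(F)\cong f^\ast(E\wedge F)$ compatible with the structure maps, and similarly $f^\ast(\one_{S^\prime})\cong\one_S$. The associativity, commutativity and unit coherence isomorphisms of the smash product are induced termwise from those of $\M_\bullet$, so they are automatically matched; I expect this step to be routine.

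The left Quillen property is where some care is needed, and I would split it into cofibrations and trivial cofibrations. By Theorem~\ref{thm:quillen-eq} the closed cofibrations of $\MSS^\cm(S^\prime)$ are generated by $\{\Fr^\Sigma_m(g)\}_{m\geq 0,\,g\in I^c_{S^\prime}}$ from~(\ref{eq:16}); I would note the canonical isomorphism $f^\ast\bigl(\Fr^\Sigma_m(-)\bigr)\cong\Fr^\Sigma_m\bigl(f^\ast(-)\bigr)$ (because $\Fr^\Sigma_m$ is built from $\Pro^1$-smash powers, free $\Sigma$-actions and colimits) and the inclusion $f^\ast(I^c_{S^\prime})\subseteq I^c_S$ recorded in the proof of Theorem~\ref{thm:closed-schemewise}; together these show $f^\ast$ sends generating closed cofibrations to generating closed cofibrations, so, being a left adjoint, preserves all of them. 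For trivial cofibrations I would invoke Dugger's lemma~\cite[Cor.~A2]{Dugger}: it suffices to send a generating set of trivial cofibrations to stable equivalences. Such a set may be taken (as in~\cite{Hovey:stable-model} and~\cite{J}) to consist of the maps $\Fr^\Sigma_m(j)$ for $j$ a generating closed motivic trivial cofibration, together with pushout products of $(\partial\Delta^n\hra\Delta^n)_+$ with the stabilization maps $\Fr^\Sigma_{n+1}(\Pro^1\wedge X)\to\Fr^\Sigma_n(X)$ for $X$ a domain or codomain of a map in $I^c$. On the first family $f^\ast$ acts as $\Fr^\Sigma_m(f^\ast j)$, a levelwise motivic weak equivalence --- each of its levels is a coproduct of copies of $f^\ast(j)\wedge(\Pro^1)^{\wedge k}$, which is a motivic weak equivalence because such equivalences are closed under smashing with an arbitrary motivic space~\cite[Lemma~2.20]{DRO:motivic} and under coproducts --- hence a stable equivalence; on the second family $f^\ast$ again commutes with $\Fr^\Sigma$, $\Pro^1\wedge-$ and pushout products, so it produces the corresponding map over $S$, which is itself a generating trivial cofibration of $\MSS^\cm(S)$. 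The main obstacle I anticipate is the bookkeeping around isolating a workable generating set of trivial cofibrations for the closed stable structure; once that is in place the whole argument collapses to the two commutation facts above together with the already-established behaviour of $f^\ast$ on $\M_\bullet^\cm$, so nothing genuinely new is required.
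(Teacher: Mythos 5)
Your proposal is correct and takes essentially the same approach as the paper: define $f^\ast$ levelwise with structure maps via strict monoidality and $f^\ast\Pro^1_{S^\prime}\cong\Pro^1_S$, take the right adjoint $f_\ast$ levelwise, check the left Quillen property using the generating (trivial) cofibrations built from $\M_\bullet^{\cm}$ together with Dugger's lemma, and derive strict symmetric monoidality from the coequalizer presentation of the smash product and the fact that the levelwise $f^\ast$ preserves colimits. Your writeup just spells out the steps the paper compresses.
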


\begin{proof}
  The structure maps of $f^\ast(E)$ are defined via the canonical map
  \begin{equation*}\xymatrix@C=3em{f^\ast(E)_n\wedge \Pro^1_{S^\prime}
      \iso f^\ast(E_n \wedge \Pro^1_S)\ar[r]^-{f^\ast(\sigma_n^E)} & f^\ast(E_{n+1})
      =f^\ast(E)_{n+1}.}
  \end{equation*}
  It follows that $f^\ast$ has the functor
  $f_\ast$ as right adjoint, where $\bigl(f_\ast(E))_n = f_\ast(E_n)$ and
  \begin{equation*}
    \xymatrix{\sigma_n^{f_\ast E} = f_\ast E_n \wedge \Pro^1_{S^\prime} \ar[r] &
      f_\ast E_n \wedge f_\ast f^\ast \Pro^1_{S^\prime} \ar[r]^-\iso &
      f_\ast E_n \wedge f_\ast \Pro^1_S  \ar[d] \\
       & f_\ast E_{n+1} & f_\ast(E_n \wedge \Pro^1_S) \ar[l]_-{f_\ast \sigma^E_n}}
  \end{equation*}
  Theorem~\ref{thm:closed-motivic-model} and
  Dugger's lemma imply that $f^\ast$ preserves cofibrations and $f_\ast$
  preserves fibrations. Since $f^\ast\colon \M_\bullet(S^\prime)\ra \M_\bullet(S)$
  is strict symmetric monoidal and preserves all colimits, then so
  is $f^\ast \colon \MSS(S^\prime)\ra \MSS(S)$ by the definition of
  the smash product~(\ref{eq:12}).
\end{proof}

In particular, any complex point
$f\colon \Spec(\mathbb{C})\ra S$ of a base scheme $S$
induces a strict symmetric monoidal
left Quillen functor
\begin{equation}\label{eq:20}
  \MSS(S) \ra \MSS(\mathbb{C})\ra \Sp^\Sigma(\Top,\mathbb{C}\Pro^1)
\end{equation}
to the category of topological $\mathbb{C}\Pro^1$-spectra.

\section{Some results on K-theory}
\label{sec:products-k-theory}

\subsection{Cellular schemes}
\label{sec:cellular-schemes}

Suppose that
$S$ is a regular base scheme.
Recall that an $S$-cellular scheme is an $S$-scheme
$X$ equipped with a filtration
$X_0 \subset X_1 \subset \dots \subset X_n = X$
by closed subsets such that
for every integer
$i \geq 0$
the $S$-scheme
$X_i \smallsetminus X_{i-1}$
is a disjoint union of several copies of the
affine space
$\Aff^i_S$.
We do not assume that
$X$ is connected.
{\it A pointed\/} $S$-{\it cellular scheme\/}
is an $S$-cellular scheme equipped with
a closed $S$-point
$x\colon S \hra X$
such that
$x(S)$ is contained in one of the open cells
(a cell which is an open subscheme of $X$).
The examples we are interested in are
Grassmannians, projective lines and their products.

\begin{lemma}
\label{KofXsmashY}
Let $(X,x)$ and $(Y,y)$ be pointed motivic spaces.
Then the sequence
\begin{equation*}
0 \to K^{}_i(X \wedge Y) \to K^{}_i(X \times Y) \to K^{}_i(X \vee Y) \to 0
\end{equation*}
is short exact and the natural map
\begin{equation*}
K^{}_i(X) \oplus K^{}_i(Y) \to  K^{}_i(X \vee Y)
\end{equation*}
is an isomorphism.
\end{lemma}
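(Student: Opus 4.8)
The plan is to deduce both assertions formally from the fact that $K_\ast$ is a reduced cohomology theory on pointed motivic spaces. By Corollary~\ref{AdjunctionIso} there is a natural isomorphism $K_i(A)\cong \mathrm{BGL}^{-i,0}(\Sigma^\infty_{\Pro^1}A)$ (using that $S^{i,0}$ is invertible in $\SH(S)$), and $\mathrm{BGL}^{\ast,\ast}$ is a cohomology theory on $\SH(S)$; equivalently one may work directly with the representing space $\mathbb{K}^W$ and Puppe sequences.

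The second statement is the wedge axiom. The inclusion $X\to X\vee Y$ has strict cofibre $Y$, and this cofibre sequence is split by the inclusion $Y\to X\vee Y$; applying $\Sigma^\infty_{\Pro^1}$ and then $K_\ast$ yields $K_i(X\vee Y)\cong K_i(X)\oplus K_i(Y)$, and one checks directly that the natural map of the statement, induced by the two collapse maps $X\vee Y\to X$ and $X\vee Y\to Y$, realises this isomorphism (its inverse being restriction along the two inclusions).

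For the first statement I would use the cofibre sequence of pointed motivic spaces
\[
X\vee Y \xra{\,j\,} X\times Y \xra{\,\pi\,} X\wedge Y ,
\]
where $j$ is the inclusion, $\pi$ the collapse map, and the strict cofibre of $j$ is $X\wedge Y$ by the sectionwise definition of the smash product; one first replaces $X$ and $Y$ by cofibrant models (using left properness, or passing to $\M_\bullet^{\mathrm{im}}(S)$ as in Remark~\ref{rem:injective}) so that this is a genuine homotopy cofibre sequence. Applying $\Sigma^\infty_{\Pro^1}$ and $\mathrm{BGL}^{-\ast,0}$ produces a long exact sequence
\[
\cdots \to K_{i+1}(X\vee Y)\xra{\partial} K_i(X\wedge Y)\xra{\pi^\ast} K_i(X\times Y)\xra{j^\ast} K_i(X\vee Y)\xra{\partial} K_{i-1}(X\wedge Y)\to \cdots ,
\]
so the claim is equivalent to the vanishing of every $\partial$, i.e.\ to the surjectivity of $j^\ast$ in all degrees. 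To see this, let $\iota_X\colon X\to X\times Y$ and $\iota_Y\colon Y\to X\times Y$ be the inclusions via the base points (they factor through $j$) and let $p_X\colon X\times Y\to X$, $p_Y\colon X\times Y\to Y$ be the projections. Then $p_X\iota_X=\id$, $p_Y\iota_Y=\id$, while $p_Y\iota_X$ and $p_X\iota_Y$ are constant, so for $(a,b)\in K_i(X)\oplus K_i(Y)$ the class $p_X^\ast(a)+p_Y^\ast(b)$ restricts along $\iota_X$ to $a$ and along $\iota_Y$ to $b$; since $j^\ast$ corresponds to $(\iota_X^\ast,\iota_Y^\ast)$ under the isomorphism of the previous paragraph, it is split surjective. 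Hence $\partial=0$ throughout, $\pi^\ast$ is injective, and the long exact sequence decomposes into the asserted short exact sequences.

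The argument is essentially formal once $K$-theory is seen as a representable cohomology theory, so I expect the only point needing care to be the reduction showing that $X\vee Y\to X\times Y\to X\wedge Y$ is a homotopy cofibre sequence for arbitrary pointed motivic spaces; this is routine via cofibrant replacement, after which everything reduces to the splitting computation above.
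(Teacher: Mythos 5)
Your argument is correct and amounts to the same underlying fact as the paper's, but the route is slightly different. The paper simply cites Example~\ref{example:smash-product}, which establishes the stable splitting $\Sigma^\infty_{\Pro^1}(X\times Y)\simeq \Sigma^\infty_{\Pro^1}(X\wedge Y)\vee\Sigma^\infty_{\Pro^1}X\vee\Sigma^\infty_{\Pro^1}Y$ in $\SH(S)$; applying $\mathrm{BGL}^{-i,0}\iso K_i$ immediately gives $K_i(X\times Y)\iso K_i(X\wedge Y)\oplus K_i(X)\oplus K_i(Y)$ and hence both assertions at once. You instead pass through the cofibre long exact sequence of $X\vee Y\to X\times Y\to X\wedge Y$ and show directly that $j^\ast$ is split surjective via the projections $p_X,p_Y$ and inclusions $\iota_X,\iota_Y$; that splitting computation is in effect a cohomological shadow of the join argument in Example~\ref{example:smash-product}, so the ideas coincide. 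What the paper's reference buys is efficiency (the hard work — showing $\Sigma^\infty_{\Pro^1}(X\wedge Y)$ is a direct summand and not merely a retract — is already done once in the Appendix and reused); what your version buys is self-containedness at the price of re-deriving the splitting at the level of $K$-groups. One small point you flag, and correctly handle, is that $X\vee Y\to X\times Y\to X\wedge Y$ must be promoted to a homotopy cofibre sequence; after cofibrant replacement (or working in the injective model structure where everything is cofibrant) this is fine, and the rest is formal as you say.
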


\begin{proof}
  The exactness of the sequence follows from the isomorphism
  $K_i \iso \mathrm{BGL}^{-i,0}$
  and Example~\ref{example:smash-product}.
  The isomorphism is formal, given the isomorphism
  $K_i \iso \mathrm{BGL}^{-i,0}$.
\end{proof}

\begin{corollary}
\label{AtensorB}
Let
$(X,x)$ and $(Y,y)$
be pointed smooth $S$-schemes.
Let
$a \in K^{}_0(X)$
and
$b \in K^{}_0(Y)$
be such that
$x^\ast(a)=0=y^\ast(b)$
in
$K^{}_0(S)$.
Then the element
$a \otimes b \in K^{}_0(X \times Y)$
belongs to the subgroup
$K^{}_0(X \wedge Y)$.
\end{corollary}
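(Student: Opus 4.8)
The plan is to read the statement off from the splitting established in Lemma~\ref{KofXsmashY}. That lemma --- whose proof rests on the stable splitting $\Sigma^\infty_{\Pro^1}(X\times Y)\iso\Sigma^\infty_{\Pro^1}(X\vee Y)\vee\Sigma^\infty_{\Pro^1}(X\wedge Y)$ of Example~\ref{example:smash-product} --- yields a split short exact sequence, hence a direct sum decomposition
\begin{equation*}
K_0(X\times Y)\iso K_0(X\wedge Y)\oplus K_0(X)\oplus K_0(Y),
\end{equation*}
in which the summand $K_0(X\wedge Y)$ is precisely the kernel of the pair of restriction maps $(i_X^\ast,i_Y^\ast)$ coming from the two axis inclusions $i_X\colon X\to X\times Y$, $z\mapsto(z,y)$, and $i_Y\colon Y\to X\times Y$, $z\mapsto(x,z)$. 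I would begin with the elementary remark that, since $x^\ast(a)=0$ and $y^\ast(b)=0$, the restriction of $a\otimes b$ along the basepoint $(x,y)\colon S\to X\times Y$ equals $x^\ast(a)\cdot y^\ast(b)=0$, so that $a\otimes b$ indeed lives in the group $K_0(X\times Y)$ figuring in Lemma~\ref{KofXsmashY}, inside which $K_0(X\wedge Y)$ sits. It then suffices to prove $i_X^\ast(a\otimes b)=0$ and $i_Y^\ast(a\otimes b)=0$.

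For this I would use that the external product is the cup product $a\otimes b=p_X^\ast(a)\cup p_Y^\ast(b)$ of the pull-backs along the projections $p_X,p_Y$ of $X\times Y$, for the multiplicative structure on $K$-theory recorded in Corollary~\ref{RingIsomorphism}. Since $i_X^\ast$ is a ring homomorphism, $p_X\circ i_X=\id_X$ and $p_Y\circ i_X=y\circ c_X$ with $c_X\colon X\to S$ the structure morphism, one computes
\begin{equation*}
i_X^\ast(a\otimes b)=(p_Xi_X)^\ast(a)\cup(p_Yi_X)^\ast(b)=a\cup c_X^\ast\bigl(y^\ast(b)\bigr)=a\cup c_X^\ast(0)=0,
\end{equation*}
and symmetrically, now invoking $x^\ast(a)=0$, one gets $i_Y^\ast(a\otimes b)=0$.

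Having both axis restrictions of $a\otimes b$ vanish, the displayed decomposition forces $a\otimes b\in K_0(X\wedge Y)$, which is the claim. I do not expect any genuine obstacle: the argument is formal once Lemma~\ref{KofXsmashY} is in hand. The only point deserving a moment's attention is the bookkeeping --- identifying the maps in the short exact sequence of that lemma with the axis restrictions $i_X^\ast,i_Y^\ast$, and keeping straight the reduced and unreduced $K_0$-groups --- but both are immediate from the splitting of the suspension spectrum of a product in Example~\ref{example:smash-product}, so the whole proof should be very short.
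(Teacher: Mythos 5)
Your proof is correct and follows essentially the same line of reasoning as the paper's: both show that $a\otimes b$ restricts to zero on $X\vee Y$ (using $x^\ast(a)=0$ along the axis $\{x\}\times Y$ and $y^\ast(b)=0$ along $X\times\{y\}$) and then invoke the short exact sequence of Lemma~\ref{KofXsmashY}. The paper's version is a two-sentence condensation of precisely the computation you spell out with $i_X^\ast$, $i_Y^\ast$, and the projection formula.
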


\begin{proof}
  Since
  $a \otimes b$
  vanishes on
  $x(S) \times Y$
  and on
  $X \times x(S)$
  it follows that
  $a \otimes b$
  vanishes on
  $X \vee Y$.
  Whence
  $a \otimes b \in K^{}_0(X \wedge Y)$
  by Lemma~\ref{KofXsmashY}.
\end{proof}

We list further useful statements.

\begin{lemma}
\label{KofCellularVar}
Let $X$ be a smooth $S$-cellular scheme. Then the map
$$
K^{}_r(S) \otimes_{K^{}_0(S)} K^{}_0(X) \to K^{}_r(X)
$$
is an isomorphism and
$K^{}_0(X)$
is a free
$K^{}_0(S)$-module of rank equal to the number of cells.
\end{lemma}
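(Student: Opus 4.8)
The plan is to induct on the length $n$ of the cellular filtration $X_0 \subset X_1 \subset \dots \subset X_n = X$, using the localization sequence in Thomason--Trobaugh $K$-theory together with homotopy purity. Write $U^i = X \smallsetminus X_{n-i}$, so that $U^0 = X \smallsetminus X_{n-1}$ is the top open cell-union and $U^n = X$; at each stage $X_j \smallsetminus X_{j-1}$ is a disjoint union of $m_j$ copies of $\Aff^j_S$, smooth over $S$ with trivial normal bundle inside the open subscheme $U^{n-j}$ where it sits closed. The localization sequence for $K$-theory with supports (as recalled in the proof of Lemma~\ref{OmegaSpectrum}) gives, via homotopy purity \cite[Thm.~3.2.29]{MV} and the fact that $K_r$ is represented by $\mathbb{K}^W$, a long exact sequence relating $K_r$ of the successive opens to $K_r$ of a wedge of copies of $S^{2j,j} \wedge S_+$, i.e.\ to $\bigoplus_{m_j} K_{r-2j}^{}(S)$ after using the $(2,1)$-periodicity $K_r(S^{2j,j} \wedge S_+) \iso K_r(S)$ coming from the projective bundle theorem \cite[Thm.~4.1]{TT} (equivalently, from Corollary~\ref{AdjunctionIso}).

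First I would observe that these localization sequences \emph{split}: the open cell $X_j \smallsetminus X_{j-1} \cong \bigsqcup_{m_j} \Aff^j_S$ is $\Aff^1$-homotopy equivalent to $\bigsqcup_{m_j} S$, and the composite $\bigsqcup_{m_j} S \hra X_j \smallsetminus X_{j-1} \hra U^{n-j}$ together with homotopy purity produces a splitting of the boundary map in the Thom-space cofiber sequence
\begin{equation*}
\Sigma^\infty_{\Pro^1} U^{i}_+ \to \Sigma^\infty_{\Pro^1} U^{i-1}_+ \to \Sigma^\infty_{\Pro^1} U^{i-1}/U^i \iso \textstyle\bigvee_{m_j} S^{2j,j},
\end{equation*}
precisely as in the proof of Lemma~\ref{CohOfCellularSpace}. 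Hence $\Sigma^\infty_{\Pro^1} X_+ \iso \bigvee_{j=0}^n \bigvee_{m_j} S^{2j,j}$ in $\SH(S)$, and applying $K_r = \mathrm{BGL}^{-r,0}$ (Corollary~\ref{AdjunctionIso}) yields $K_r(X) \iso \bigoplus_{j,m_j} K_{r-2j}(S) \iso \bigoplus_{j,m_j} K_r(S)$ using the periodicity isomorphism $\mathrm{BGL}^{-r,0}\iso\mathrm{BGL}^{-r+2j,j}$ from Remark~\ref{21Periodicity}.

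Next I would identify the abstract splitting with the module structure map in the statement. Since $K_\ast \iso \mathrm{BGL}^{-\ast,0}$ is an isomorphism of \emph{ring} cohomology theories (Corollary~\ref{RingIsomorphism}), the product $K_r(S) \otimes K_0(X) \to K_r(X)$ is computed inside $\mathrm{BGL}^{\ast,0}$, and on each wedge summand $S^{2j,j}$ it becomes, up to the periodicity isomorphism, the external product $\mathrm{BGL}^{-r,0}(S) \otimes \mathrm{BGL}^{0,0}(S) \to \mathrm{BGL}^{-r,0}(S)$, which is visibly an isomorphism onto that summand because $1 \in K_0(S)$ restricts to a generator of the rank-one free module $K_0(S^{2j,j})$. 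Taking $r=0$ shows $K_0(X)$ is free over $K_0(S)$ on the classes of the cell closures (one for each cell), of rank $\sum_j m_j$; taking general $r$ and summing over the splitting gives the isomorphism $K_r(S) \otimes_{K_0(S)} K_0(X) \iso K_r(X)$.

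The main obstacle is bookkeeping the compatibility between the geometric splitting of $\Sigma^\infty_{\Pro^1} X_+$ and the \emph{multiplicative} structure: one must check that the idempotents projecting onto the wedge summands are realized by actual maps $X_+ \to X_+$ (or rather by the composites through the cell closures) so that the module-map in the statement really matches the decomposition summand-by-summand. This is handled exactly as in Lemma~\ref{CohOfCellularSpace} — the splittings are natural and the cell closures $\overline{\Aff^j} \hra X$ give the required classes in $K_0(X)$ — but verifying that $1 \in K_0(S)$ pulls back to a free generator on each $S^{2j,j}$ rather than to some other unit requires the projective bundle theorem input, which is where all the genuine $K$-theory content enters. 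Everything else is the formal splitting argument already rehearsed twice above.
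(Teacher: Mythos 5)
The central step of your argument---that $\Sigma^\infty_{\Pro^1} X_+ \iso \bigvee_{j}\bigvee_{m_j} S^{2j,j}$ in $\SH(S)$ for any cellular $X$---is not true, and the reasoning offered for it does not actually produce a splitting. First, your citation of Lemma~\ref{CohOfCellularSpace} is a misreading: that proof only \emph{uses} the distinguished triangle $\Sigma^\infty_{\Pro^1} U^i_+ \to \Sigma^\infty_{\Pro^1} U^{i-1}_+ \to \bigvee_{m} S^{2(n-i),n-i}$ through the two-out-of-three property for the class $\mathcal{R}$; it never claims, and does not need, that the triangle splits. Second, your proposed splitting map is in the wrong bidegree. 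The inclusion of $S$-points into the open cell $X_j\smallsetminus X_{j-1}$ gives a morphism $\bigvee S^{0,0}\to \Sigma^\infty_{\Pro^1} U^{i-1}_+$, whereas a section of the collapse onto the Thom space would have to be a morphism $\bigvee S^{2(n-i),n-i}\to \Sigma^\infty_{\Pro^1} U^{i-1}_+$ in a strictly positive degree; homotopy purity identifies the \emph{cofiber}, it does not trivialize the \emph{boundary map}. Third, and decisively, the purported splitting is false in concrete examples: taking $X=\Pro^2$ and passing through the realization functor $r=\real_\CC\circ f^\ast$ of Section~\ref{sec:stable-topol-real} would yield $\Sigma^\infty (\CC\Pro^2)_+ \simeq S^0\vee S^2 \vee S^4$ in the classical stable homotopy category, contradicting the well-known nontriviality of the stable Hopf map $\eta$ attaching the $4$-cell of $\CC\Pro^2$. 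So the boundary maps in these cofiber sequences are genuinely nonzero, even for the simplest cellular schemes, and no amount of bookkeeping with multiplicative structure will rescue a spectrum-level wedge decomposition.

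What \emph{does} split is the induced long exact sequence at the level of $K$-groups, and this is where the paper points: it reduces to the classical Claim~\ref{KofCellularScheme} about $K'$-theory (i.e.\ $G$-theory) of the closed strata $X_j$. There one runs the localization sequence $G_r(X_{j-1})\to G_r(X_j) \to G_r(X_j\smallsetminus X_{j-1})=\bigoplus_{m_j}K_r(S)$, observes that the restriction is surjective on $G_0$ because each structure sheaf $\mathcal{O}_{\overline{\text{cell}}}$ lifts the generator of the corresponding free summand, and then the $K_0(S)$-module structure forces the long exact sequence to break into split short exact sequences in every degree $r$; the freeness of $G_0(X_j)$ and the isomorphism $K_r(S)\otimes_{K_0(S)}G_0(X_j)\to G_r(X_j)$ follow by induction on $j$. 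This argument never asserts a splitting of any cofiber sequence of spectra---the sections exist on homotopy groups, as $K_0(S)$-module maps, not as maps of $\Pro^1$-spectra. Your write-up would need to be redone along these lines (or else cite the $G$-theory computation as the paper does) for the proof to be sound.
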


The Lemma easily follows from a slightly different claim which we consider
as a well-known one.
\begin{claim}
\label{KofCellularScheme}
Under the assumption of the Lemma the map of Quillen's $K$-groups
$$
K_r(S) \otimes_{K_0(S)} K^{\prime}_0(X_j) \to K^{\prime}_r(X_j)
$$
is an isomorphism and
$K^{\prime}_0(X_j)$
is a free
$K_0(S)$-module of the expected rank.
\end{claim}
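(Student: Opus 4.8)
The plan is to prove the stronger statement by induction on $j$, establishing simultaneously that $K^\prime_\ast(X_j)$ is a \emph{free} graded $K_\ast(S)$-module with one basis element for every cell, the basis element attached to a cell $c$ being the class $[\mathcal{O}_{V_c}]$ of the reduced closure $V_c\subset X_j$ of $c$. The tools are Quillen's localization theorem and homotopy invariance for $K^\prime$-theory, together with the fact that $K^\prime_\ast(-)$ is a graded module over $K_\ast(-)$ via the cap product: for any $S$-scheme $Y$ there is a ring map $K_\ast(S)\to K_\ast(Y)$ (pull-back on the $K$-theory of vector bundles, or perfect complexes), and $K_\ast(Y)$ acts on $K^\prime_\ast(Y)$, so $K^\prime_\ast(Y)$ becomes a $K_\ast(S)$-module; the natural map in the statement is $x\otimes y\mapsto x\cdot y$ for this action. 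The projection formula makes flat pull-backs and proper push-forwards $K_\ast(S)$-linear, hence the localization sequence
\[
\cdots\to K^\prime_r(X_{j-1})\xrightarrow{i_\ast}K^\prime_r(X_j)\xrightarrow{\,\rho\,}K^\prime_r(U_j)\xrightarrow{\partial}K^\prime_{r-1}(X_{j-1})\to\cdots
\]
for the closed immersion $X_{j-1}\hra X_j$ with complement $U_j:=X_j\smallsetminus X_{j-1}$ is a sequence of $K_\ast(S)$-modules. Since $U_j$ is a finite disjoint union $\coprod_\alpha\Aff^j_S$, additivity together with iterated homotopy invariance of $K^\prime$-theory gives $K^\prime_\ast(U_j)\iso\bigoplus_\alpha K^\prime_\ast(\Aff^j_S)\iso\bigoplus_\alpha K_\ast(S)$, a free $K_\ast(S)$-module with the class $[\mathcal{O}_{\Aff^j_S}]$ of each $j$-cell as a basis element; here I use that $S$ is regular, so $K^\prime_\ast(S)=K_\ast(S)$.

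The crucial point is that $\rho$ is surjective in every degree, so $\partial=0$. For a $j$-cell $\alpha$ let $V_\alpha\subset X_j$ be its reduced closure. If $\beta\neq\alpha$ is another $j$-cell and $x\in V_\alpha\cap\beta$, then $\beta$ is an open neighbourhood of $x$ disjoint from $\alpha$, contradicting $x\in\overline{\alpha}$; hence $V_\alpha\cap U_j=\alpha$ scheme-theoretically, and therefore $\rho[\mathcal{O}_{V_\alpha}]$ is exactly the chosen basis element of the $\alpha$-summand of $K^\prime_\ast(U_j)$. Sending $[\mathcal{O}_{\Aff^j_S}]\mapsto[\mathcal{O}_{V_\alpha}]$ and extending $K_\ast(S)$-linearly (legitimate precisely because $K^\prime_\ast(U_j)$ is free) defines a $K_\ast(S)$-module section $s$ of $\rho$. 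Consequently the localization sequence breaks into split short exact sequences
\[
0\to K^\prime_\ast(X_{j-1})\xrightarrow{i_\ast}K^\prime_\ast(X_j)\xrightarrow{\,\rho\,}K^\prime_\ast(U_j)\to 0,
\]
and $s$ together with $i_\ast$ identifies $K^\prime_\ast(X_j)$ with $K^\prime_\ast(X_{j-1})\oplus\bigoplus_\alpha K_\ast(S)\cdot[\mathcal{O}_{V_\alpha}]$ as $K_\ast(S)$-modules.

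Now I run the induction. For $j=0$ one has $X_0=\coprod_\alpha S$, so $K^\prime_\ast(X_0)=\bigoplus_\alpha K_\ast(S)$ is free on the classes of the $0$-cells. For the inductive step, note that the closure of a cell of $X_{j-1}$ taken in $X_{j-1}$ coincides with the one taken in $X_j$, and $i_\ast$ carries the corresponding structure-sheaf class to the same class in $X_j$; combining this with the inductive hypothesis and the split sequence above shows that $K^\prime_\ast(X_j)$ is a free $K_\ast(S)$-module with basis $\{\,[\mathcal{O}_{V_c}]:c\text{ a cell of }X_j\,\}$, uniformly in the degree. In particular $K^\prime_0(X_j)$ is free over $K_0(S)$ of the expected rank, and with respect to these bases the natural map $K_r(S)\otimes_{K_0(S)}K^\prime_0(X_j)\to K^\prime_r(X_j)$ is the tautological isomorphism $\bigoplus_c K_r(S)\xrightarrow{\sim}\bigoplus_c K_r(S)$. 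The main obstacle is the discipline of keeping every arrow $K_\ast(S)$-linear: the module structure is genuinely indispensable here, since $K_r(S)$ is not generated by $K_0(S)$, so one cannot deduce surjectivity of $\rho$ in positive degrees from the case $r=0$ alone. Granting that, the geometric input about closures of cells, the vanishing of negative $K^\prime$-groups, and the homological bookkeeping are all routine.
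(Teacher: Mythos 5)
The paper supplies no proof of Claim~\ref{KofCellularScheme}: the authors simply declare it ``well-known'' and move on. Your write-up is the standard localization-sequence argument that they are implicitly invoking, and it is correct. The two points that actually carry weight are exactly the ones you flag: the whole sequence must be treated as a sequence of $K_\ast(S)$-modules (via pull-back and the cap product, with the projection formula making $i_\ast$ and $\rho$ linear), since without that one cannot pass from surjectivity of $\rho$ in degree $0$ to surjectivity in all degrees; and the identification $V_\alpha\cap U_j=\alpha$ for the reduced closure $V_\alpha$ of the open cell $\alpha$, which is what makes $[\mathcal{O}_{V_\alpha}]\mapsto[\mathcal{O}_\alpha]$ under $\rho$ and allows the free module $K'_\ast(U_j)$ to be split off. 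With those in place the induction is routine and yields the stronger graded statement, from which the claim for $K'_0$ and the tensor-product map follow immediately.
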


\begin{lemma}
\label{KofWedge}
Let
$(X,x)$ and $(Y,y)$
be pointed smooth $S$-cellular schemes.
Then the map
$$
K^{}_i(S) \otimes_{K^{}_0(S)} K^{}_0(X \vee Y) \to K^{}_i(X \vee Y)
$$
is an isomorphism and
$K^{}_0(X \vee Y)$
is a projective
$K^{}_0(S)$-module.
\end{lemma}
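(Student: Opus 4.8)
The plan is to reduce the statement about $K_i$ of a wedge to the two previous lemmas. First I would use the short exact sequence from Lemma~\ref{KofXsmashY}, namely
\begin{equation*}
0 \to K^{}_i(X \wedge Y) \to K^{}_i(X \times Y) \to K^{}_i(X \vee Y) \to 0,
\end{equation*}
together with the complementary isomorphism $K^{}_i(X)\oplus K^{}_i(Y)\xra{\iso} K^{}_i(X\vee Y)$ that is also part of that lemma. The latter already splits the computation: $K^{}_i(X\vee Y)\iso K^{}_i(X)\oplus K^{}_i(Y)$, and the asserted map for $X\vee Y$ is the direct sum of the corresponding maps for $X$ and $Y$ (this compatibility just needs a check that the splitting is natural in the pointed scheme, which follows from the naturality of the isomorphism $K_i\iso \mathrm{BGL}^{-i,0}$ used to prove Lemma~\ref{KofXsmashY}).

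Next I would invoke Lemma~\ref{KofCellularVar}: since $X$ and $Y$ are smooth $S$-cellular schemes, the maps
\begin{equation*}
K^{}_i(S)\otimes_{K^{}_0(S)} K^{}_0(X)\to K^{}_i(X),\qquad
K^{}_i(S)\otimes_{K^{}_0(S)} K^{}_0(Y)\to K^{}_i(Y)
\end{equation*}
are isomorphisms, and $K^{}_0(X)$, $K^{}_0(Y)$ are free $K^{}_0(S)$-modules of finite rank. Taking the direct sum of these two isomorphisms and using right-exactness of $K^{}_i(S)\otimes_{K^{}_0(S)}(-)$ to identify $\bigl(K^{}_i(S)\otimes_{K^{}_0(S)} K^{}_0(X)\bigr)\oplus\bigl(K^{}_i(S)\otimes_{K^{}_0(S)} K^{}_0(Y)\bigr)$ with $K^{}_i(S)\otimes_{K^{}_0(S)}\bigl(K^{}_0(X)\oplus K^{}_0(Y)\bigr)$, one gets
\begin{equation*}
K^{}_i(S)\otimes_{K^{}_0(S)}\bigl(K^{}_0(X)\oplus K^{}_0(Y)\bigr)\xra{\iso}
K^{}_i(X)\oplus K^{}_i(Y)\iso K^{}_i(X\vee Y).
\end{equation*}
It remains to identify $K^{}_0(X)\oplus K^{}_0(Y)$ with $K^{}_0(X\vee Y)$ compatibly: applying the $i=0$ case of the displayed argument (or directly Lemma~\ref{KofXsmashY} with $i=0$) gives $K^{}_0(X\vee Y)\iso K^{}_0(X)\oplus K^{}_0(Y)$, and this is where one must check the resulting composite agrees with the map $a\otimes b\mapsto a\cup b$ in the statement — this is routine given that $K^{}_\ast \iso \mathrm{BGL}^{-\ast,0}$ is an isomorphism of \emph{ring} cohomology theories (Corollary~\ref{RingIsomorphism}), so the cup product and the scalar action are compatible with the wedge decomposition. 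Finally, $K^{}_0(X\vee Y)\iso K^{}_0(X)\oplus K^{}_0(Y)$ is a direct sum of free $K^{}_0(S)$-modules, hence free, hence in particular projective, which gives the last assertion.

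The only mildly delicate point is bookkeeping: one must make sure that the identification $K^{}_i(X\vee Y)\iso K^{}_i(X)\oplus K^{}_i(Y)$ is the \emph{same} for all $i$ in a way compatible with the $K^{}_0(S)$-module (cup product) structure, so that the $i=0$ case and the general-$i$ case fit into one commutative square. This is immediate from the fact that the isomorphism $K^{}_\ast\iso \mathrm{BGL}^{-\ast,0}$ is an isomorphism of ring cohomology theories and that Example~\ref{example:smash-product} provides the wedge splitting $\Sigma^\infty_{\Pro^1}(X\times Y)\simeq \Sigma^\infty_{\Pro^1}(X\wedge Y)\vee \Sigma^\infty_{\Pro^1}X\vee \Sigma^\infty_{\Pro^1}Y$ once and for all in $\SH(S)$, independently of $i$. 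No genuine obstacle is expected here; the lemma is essentially a formal consequence of Lemmas~\ref{KofXsmashY} and~\ref{KofCellularVar}.
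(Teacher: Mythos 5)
The paper itself states Lemma~\ref{KofWedge} without a proof (only the neighboring Lemma~\ref{KofSmash} gets a written proof), so there is no argument of the authors to compare against; your approach --- split the wedge via Lemma~\ref{KofXsmashY} and apply Lemma~\ref{KofCellularVar} to each factor --- is certainly the one the authors intend. But there is a genuine gap in the bookkeeping that you have glossed over, and it is visible already in the wording of the lemma, which asserts that $K_0(X\vee Y)$ is \emph{projective}, not \emph{free}.

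The issue is the distinction between reduced and unreduced $K$-theory. In Lemma~\ref{KofXsmashY} the objects $(X,x)$ and $(Y,y)$ are pointed, and the groups $K_i(X)$, $K_i(Y)$ and $K_i(X\vee Y)$ denote the $K$-groups of the \emph{pointed} motivic spaces, that is, reduced $K$-theory. By contrast, Lemma~\ref{KofCellularVar} is a statement about the unreduced $K$-groups $K_i(X)=K_i(X_+)$ of a (not necessarily pointed) smooth $S$-cellular scheme --- indeed its rank count, ``rank equal to the number of cells,'' only makes sense in the unreduced setting. In your chain of isomorphisms you write
$K_i(S)\otimes_{K_0(S)}\bigl(K_0(X)\oplus K_0(Y)\bigr)\to K_i(X)\oplus K_i(Y)\iso K_i(X\vee Y)$
as if the left-hand summands were the free modules produced by Lemma~\ref{KofCellularVar}, while on the right they must be the reduced groups appearing in Lemma~\ref{KofXsmashY}; these are different modules. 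What you actually need is the \emph{reduced} analogue of Lemma~\ref{KofCellularVar} for a pointed cellular scheme: the basepoint $x\colon S\hra X$ induces a natural splitting $K_i(X)\iso \Ker(x^\ast)\oplus K_i(S)$, the unreduced isomorphism of Lemma~\ref{KofCellularVar} respects this splitting, and so $K_i(S)\otimes_{K_0(S)}\Ker\bigl(x^\ast\colon K_0(X)\to K_0(S)\bigr)\to \Ker\bigl(x^\ast\colon K_i(X)\to K_i(S)\bigr)$ is an isomorphism. Once this is noted, the rest of your argument goes through.

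For the final assertion you then get that $K_0(X\vee Y)$ is a direct sum of the modules $\Ker\bigl(x^\ast\colon K_0(X)\to K_0(S)\bigr)$ and $\Ker\bigl(y^\ast\colon K_0(Y)\to K_0(S)\bigr)$, each of which is a direct summand of a finitely generated \emph{free} $K_0(S)$-module and hence projective --- but there is no reason for such a direct summand to be free over an arbitrary regular base, which is exactly why the lemma is phrased with ``projective.'' Your ``hence free, hence in particular projective'' overstates what you have established; only ``projective'' is warranted.
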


\begin{lemma}
\label{KofSmash}
Let
$(X,x)$ and $(Y,y)$
be pointed smooth $S$-cellular schemes.
Then the map
$$
K^{}_i(S) \otimes_{K^{}_0(S)} K^{}_0(X \wedge Y) \to K^{}_i(X \wedge Y)
$$
is an isomorphism and
$K^{}_0(X \wedge Y)$
is a projective
$K^{}_0(S)$-module.
\end{lemma}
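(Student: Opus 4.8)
The plan is to reduce the statement to Lemmas~\ref{KofXsmashY}, \ref{KofCellularVar} and~\ref{KofWedge} by a five-lemma argument. First I would record that the product $X\times Y$ of two smooth $S$-cellular schemes is again a smooth $S$-cellular scheme: intersecting the two cell filtrations produces a filtration of $X\times Y$ whose successive quotients are finite disjoint unions of affine spaces $\Aff^i_S$, and the basepoint $(x,y)$ lies in the product of the two open cells, which is itself an open cell of $X\times Y$. Consequently Lemma~\ref{KofCellularVar} applies to $X\times Y$, so that $K_0(X\times Y)$ is a free $K_0(S)$-module and the map $K_r(S)\otimes_{K_0(S)}K_0(X\times Y)\to K_r(X\times Y)$ is an isomorphism for every $r$.

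Next I would consider the short exact sequence of $K_0(S)$-modules obtained from Lemma~\ref{KofXsmashY} in degree $0$,
\[
0\to K_0(X\wedge Y)\to K_0(X\times Y)\to K_0(X\vee Y)\to 0 .
\]
By Lemma~\ref{KofWedge} the module $K_0(X\vee Y)$ is projective, hence this sequence splits. In particular $K_0(X\wedge Y)$ is a direct summand of the free $K_0(S)$-module $K_0(X\times Y)$, and therefore projective; this is the second assertion of the lemma.

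For the first assertion, applying the functor $K_i(S)\otimes_{K_0(S)}-$ to the split sequence above (which, being split, stays exact) and comparing with the degree-$i$ short exact sequence of Lemma~\ref{KofXsmashY}, one obtains a commutative ladder with exact rows
\[
\xymatrix@C=0.5em{
0 \ar[r] & K_i(S)\otimes_{K_0(S)}K_0(X\wedge Y) \ar[r]\ar[d] &
K_i(S)\otimes_{K_0(S)}K_0(X\times Y) \ar[r]\ar[d]^-\iso &
K_i(S)\otimes_{K_0(S)}K_0(X\vee Y) \ar[r]\ar[d]^-\iso & 0 \\
0 \ar[r] & K_i(X\wedge Y) \ar[r] & K_i(X\times Y) \ar[r] & K_i(X\vee Y) \ar[r] & 0
}
\]
in which the vertical maps are the natural multiplication maps, the middle one being an isomorphism by the first paragraph and the right one by Lemma~\ref{KofWedge}. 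The five lemma then forces the left vertical arrow to be an isomorphism, which proves the first assertion.

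The points to be checked carefully are that products of pointed cellular schemes are again pointed cellular (so that Lemma~\ref{KofCellularVar} is available for $X\times Y$), and that the diagram above actually commutes, i.e.\ that the pullback maps appearing in Lemma~\ref{KofXsmashY} are $K_0(S)$-linear and compatible with the multiplication maps $K_i(S)\otimes_{K_0(S)}K_0(-)\to K_i(-)$; both are routine consequences of the $K_0(S)$-module structure on Thomason-Trobaugh $K$-theory. I expect this bookkeeping, rather than any substantial difficulty, to be the main obstacle.
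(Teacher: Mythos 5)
Your proof is correct and follows the same route as the paper: both compare the two short exact sequences from Lemma~\ref{KofXsmashY} and its base-change, use Lemmas~\ref{KofCellularVar} and~\ref{KofWedge} for the middle and right vertical maps, use projectivity of $K_0(X\vee Y)$ to keep the bottom row exact, and deduce the left vertical isomorphism by the five lemma; your explicit observations that $X\times Y$ is again cellular and that the splitting yields projectivity of $K_0(X\wedge Y)$ are exactly what the paper uses implicitly.
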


\begin{proof}
Consider the commutative diagram
\begin{equation*}
\xymatrix{
K_i(X \wedge Y) \ar[r]^-{\alpha}  &
K_i(X \times Y) \ar[r]^-{\beta}  &
K_i(X \vee Y)  \\
K_i(S) \otimes K_0(X \wedge Y) \ar[r]^-{\gamma} \ar[u]^-{\epsilon}&
K_i(S) \otimes K_0(X \times Y) \ar[r]^-{\delta}\ar[u]_-{\rho} &
K_i(S) \otimes K_0(X \vee Y)\ar[u]_-{\theta}}
\end{equation*}
in which
$K_i$ is written for
$K^{}_i$
and the tensor product is taken over
$K^{}_0(S)$.

The sequence
\begin{equation*}
0 \to K_i(X \wedge Y) \xra{\alpha} K_i(X \times Y) \xra{\beta} K_i(X \vee Y) \to 0
\end{equation*}
is short exact by Lemma~\ref{KofXsmashY}.
In particular it is short exact for $i=0$.
Now the sequence
\begin{equation*}
0 \to
K_i(S) \otimes K_0(X \wedge Y)
\xra{\gamma} K_i(S) \otimes K_0(X \times Y)
\xra{\delta}
K_i(S) \otimes K_0(X \vee Y)
\to 0
\end{equation*}
is short exact since
$K_0(X \vee Y)$
is a projective
$K_0(S)$-module.

The arrows
$\rho$ and $\theta$
are isomorphisms by Lemmas~\ref{KofCellularVar}
and~\ref{KofWedge}
respectively.
Whence
$\epsilon$
is an isomorphism as well.
Finally
$K_0(X \wedge Y)$
is a projective
$K_0(S)$-module
since the sequence
$0 \to K_0(X \wedge Y) \to K_0(X \times Y) \to K_0(X \vee Y) \to 0$
is short exact and
$K_0(X \times Y)$
and
$K_0(X \vee Y)$
are projective
$K_0(S)$-modules.
\end{proof}

As well we need to know that certain
${\varprojlim}^1$-groups vanish.
Given a set $M$ and a smooth $S$-scheme $X$, we write
$M \times X$ for the disjoint union $\bigsqcup_M X$ of $M$
copies of $X$ in the category of motivic spaces over $S$.
Recall that $[-n,n]$
is the set of integers with absolute value $\leq n$.

\begin{lemma}
\label{LimOneVanish}
\begin{equation*}
{\varprojlim}^1K^{}_i(\mathrm{Gr}(n,2n))=0
\end{equation*}
\begin{equation*}
{\varprojlim}^1K^{}_i([-n,n] \times \mathrm{Gr}(n,2n))=0
\end{equation*}
\begin{equation*}
{\varprojlim}^1K^{}_i([-n,n] \times \mathrm{Gr}(n,2n) \times [-n,n] \times \mathrm{Gr}(n,2n))=0
\end{equation*}
\end{lemma}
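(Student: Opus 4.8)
The plan is to deduce all three statements from the Mittag--Leffler property of the towers involved, and to verify that property by showing the transition maps are surjective. Recall that an inverse system $(A_n)_{n\ge 0}$ of abelian groups whose transition maps $A_{n+1}\to A_n$ are surjective satisfies the Mittag--Leffler condition, so that ${\varprojlim}^1 A_n=0$. It therefore suffices, in each case, to show that the restriction homomorphism from the $n$-th level to the $(n-1)$-st level is surjective. All the schemes occurring are smooth $S$-cellular: $\mathrm{Gr}(n,2n)$ is cellular via its Schubert stratification, the finite disjoint union $[-n,n]\times \mathrm{Gr}(n,2n)$ of copies of it is cellular, and the product $[-n,n]\times \mathrm{Gr}(n,2n)\times[-n,n]\times \mathrm{Gr}(n,2n)$ is cellular (take the product of the cellular filtrations). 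Hence by Lemma~\ref{KofCellularVar} the group $K_i$ of each such scheme $X_n$ equals $K_i(S)\otimes_{K_0(S)}K_0(X_n)$ with $K_0(X_n)$ a free $K_0(S)$-module, and the transition map of the $K_i$-tower is obtained from that of the $K_0$-tower by applying the right exact functor $K_i(S)\otimes_{K_0(S)}(-)$. Since this functor preserves surjections, it is enough to check that the three $K_0$-towers have surjective transition maps.

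For the tower $K_0(\mathrm{Gr}(n,2n))$ I would use the standard closed embedding $\iota_n\colon \mathrm{Gr}(n,2n)\hookrightarrow \mathrm{Gr}(n+1,2n+2)$ which realizes $\mathrm{Gr}$ as the colimit, sending an $n$-plane $V\subset k^{2n}$ to $V\oplus k\subset k^{2n+2}$. Under $\iota_n$ the tautological subbundle $\tau_{n+1}$ pulls back to $\tau_n\oplus \mathcal O$ and the tautological quotient bundle $Q_{n+1}$ pulls back to $Q_n\oplus \mathcal O$ (compare the normalization in Remark~\ref{Useful}). By the projective bundle theorem~\cite[Thm.~4.1]{TT}, applied to the partial flag fibrations of the Grassmannians, $K_0(\mathrm{Gr}(n,2n))$ is generated as a $K_0(S)$-algebra by the exterior power classes $\lambda^i[\tau_n]$ and $\lambda^j[Q_n]$. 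Since $\iota_n^\ast\lambda^k[\tau_{n+1}]=\lambda^k[\tau_n\oplus \mathcal O]=\lambda^k[\tau_n]+\lambda^{k-1}[\tau_n]$, and likewise for $Q$, an induction on $k$ beginning with $\lambda^0=1$ shows that every algebra generator of $K_0(\mathrm{Gr}(n,2n))$ lies in the image of $\iota_n^\ast$. Hence $\iota_n^\ast\colon K_0(\mathrm{Gr}(n+1,2n+2))\to K_0(\mathrm{Gr}(n,2n))$ is surjective, which gives the first vanishing statement.

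The remaining two towers reduce to this one. The scheme $[-n,n]\times \mathrm{Gr}(n,2n)$ is the disjoint union $\coprod_{m\in[-n,n]}\mathrm{Gr}(n,2n)$, and the natural map from level $n-1$ to level $n$ is the inclusion $[-(n-1),n-1]\hookrightarrow[-n,n]$ of index sets together with $\iota_{n-1}$ on each copy; on $K_0=\prod_{m\in[-n,n]}K_0(\mathrm{Gr}(n,2n))$ it is the surjective projection onto the factors indexed by $[-(n-1),n-1]$ followed by $\iota_{n-1}^\ast$ in each factor, hence surjective. For the double product one decomposes level $n$ into $(2n+1)^2$ copies of $\mathrm{Gr}(n,2n)\times \mathrm{Gr}(n,2n)$ and uses the K\"unneth isomorphism $K_0(X\times Y)\cong K_0(X)\otimes_{K_0(S)}K_0(Y)$ for smooth $S$-cellular $X,Y$, a consequence of Lemma~\ref{KofCellularVar} and~\cite[Thm.~4.1]{TT}; then $K_0$ of the product is generated as a $K_0(S)$-algebra by the pullbacks of the algebra generators of the two factors, and the same induction as above shows these lie in the image of the restriction map, which is therefore surjective. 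This establishes all three vanishing statements.

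The main obstacle is the surjectivity of the Grassmannian restriction maps on $K_0$: it rests on the explicit structure of $K_0(\mathrm{Gr}(n,2n))$ over $K_0(S)$ (freeness, and generation as an algebra by the $\lambda$-classes of the tautological bundles) together with the compatibility $\iota_n^\ast\tau_{n+1}\cong \tau_n\oplus\mathcal O$. In the situation actually used in the body of the paper, where $S=\Spec(\mathbb Z)$ and the index is $i=1$, this can be bypassed: then $K_1(\mathrm{Gr}(n,2n))\cong(\mathbb Z/2)^{r_n}$ with $r_n$ finite, so every group in each of the three towers is finite, the tower is automatically Mittag--Leffler, and ${\varprojlim}^1=0$ with no geometric input.
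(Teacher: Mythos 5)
Your proof is correct and follows the same route as the paper's one-sentence proof, namely that surjectivity of the transition maps gives the Mittag--Leffler property and hence $\varprojlim^1 = 0$; you supply the actual verification of surjectivity (reduction to $K_0$ via Lemma~\ref{KofCellularVar}, the $\lambda$-class argument for the Grassmannian restriction maps, and the disjoint-union and K\"unneth reductions) that the paper leaves implicit. Your closing remark that over $S = \Spec(\mathbb{Z})$ in degree $i=1$ every term of the tower is finite, so the Mittag--Leffler condition holds automatically, is a useful shortcut for the case the paper actually invokes.
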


\begin{proof}
  This holds since all the bondings map in the towers are surjective.
\end{proof}

\begin{lemma}
\label{KTTofGr}
The canonical maps
\begin{equation*}
K^{}_i(\Gr) \to {\varprojlim}K^{}_i\bigl(\mathrm{Gr}(n,2n)\bigr)
\end{equation*}
\begin{equation*}
K^{}_i(\mathrm{Gr} \times \mathrm{Gr}) \to {\varprojlim}K^{}_i\bigl(\mathrm{Gr}(n,2n) \times \mathrm{Gr}(n,2n)\bigr)
\end{equation*}
are isomorphisms. A similar statement holds for the pointed motivic spaces
$\mathbb{Z} \times \mathrm{Gr}$,
$(\mathbb{Z} \times \mathrm{Gr}) \times \Pro^1$,
$(\mathbb{Z} \times \mathrm{Gr}) \times (\mathbb{Z} \times \mathrm{Gr})$,
$(\mathbb{Z} \times \mathrm{Gr}) \times \Pro^1 \times (\mathbb{Z} \times \mathrm{Gr}) \times \Pro^1$.
\end{lemma}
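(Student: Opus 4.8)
The plan is to realize each of the ind-schemes in the statement as a sequential homotopy colimit of finite smooth cellular $S$-schemes, feed this colimit into the Milnor-type short exact sequence of Lemma~\ref{SmashAndHocolim} applied to the $\Pro^1$-spectrum $\mathrm{BGL}$, and then observe that the resulting $\varprojlim^1$-term vanishes because the bonding maps of the tower are surjective, exactly as in Lemma~\ref{LimOneVanish}.

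First I would pass from $K$-theory to $\mathrm{BGL}$: by Corollary~\ref{AdjunctionIso} there is a natural isomorphism $K_i(A)\cong\mathrm{BGL}^{-i,0}(A)$ for every pointed motivic space $A$, and $K_i(X)=K_i(X_+)$ for $X\in\Sm/S$. Next, each space in question is an exhausting colimit of finite cellular stages along closed embeddings,
\begin{equation*}
\Gr=\colim_n\Gr(n,2n),\qquad
\ZZ\times\Gr=\colim_n\bigl([-n,n]\times\Gr(n,2n)\bigr),
\end{equation*}
and analogously $\Gr\times\Gr=\colim_n\bigl(\Gr(n,2n)\times\Gr(n,2n)\bigr)$, while the spaces $(\ZZ\times\Gr)\times\Pro^1$, $(\ZZ\times\Gr)\times(\ZZ\times\Gr)$ and $(\ZZ\times\Gr)\times\Pro^1\times(\ZZ\times\Gr)\times\Pro^1$ are exhausted by the corresponding finite products $[-n,n]\times\Gr(n,2n)\times\Pro^1\times\cdots$. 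Here one uses the truncation by $[-n,n]$ rather than all of $\ZZ$ at each stage, which is legitimate since $\colim_n[-n,n]=\ZZ$ and finite products commute with filtered colimits of presheaves. In every case the transition maps are closed embeddings of smooth quasi-projective $S$-schemes, hence closed cofibrations of pointed motivic spaces by Lemma~\ref{lem:closed-emb-cof}, and each stage $A_n$ is a smooth cellular $S$-scheme since a finite product of cellular $S$-schemes is again cellular. Because $\Sigma^\infty_{\Pro^1}$ is a left Quillen functor that sends closed cofibrations to closed cofibrations (Theorem~\ref{thm:stable-model}) and preserves colimits, the colimit of the $\Sigma^\infty_{\Pro^1}A_n$ computes $\Sigma^\infty_{\Pro^1}A$ as a homotopy colimit in $\SH(S)$ (compare the discussion following Proposition~\ref{MonoidalStr}). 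Now Lemma~\ref{SmashAndHocolim}, applied with $F=\mathrm{BGL}$, $E=\Sigma^\infty_{\Pro^1}A$ and $E_n=\Sigma^\infty_{\Pro^1}A_n$, yields a short exact sequence
\begin{equation*}
0\to{\varprojlim_n}^{\!1}\mathrm{BGL}^{-i-1,0}(A_n)\to\mathrm{BGL}^{-i,0}(A)\to\varprojlim_n\mathrm{BGL}^{-i,0}(A_n)\to0
\end{equation*}
in which the right-hand arrow is the canonical map. Rewriting via $K_j\cong\mathrm{BGL}^{-j,0}$ this becomes
\begin{equation*}
0\to{\varprojlim_n}^{\!1}K_{i+1}(A_n)\to K_i(A)\to\varprojlim_n K_i(A_n)\to0 .
\end{equation*}
It remains to kill the $\varprojlim^1$-term: by Lemma~\ref{LimOneVanish}, and by the same argument in the cases not literally listed there, the bonding maps $K_j(A_{n+1})\to K_j(A_n)$ are surjective --- indeed by Lemma~\ref{KofCellularVar} they are split surjections of free $K_0(S)$-modules on the cell classes, compatibly with the inclusions --- so ${\varprojlim_n}^{\!1}K_j(A_n)=0$ for every $j$. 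Hence $K_i(A)\to\varprojlim_n K_i(A_n)$ is an isomorphism, which is precisely the assertion for the towers $\{K_i(\Gr(n,2n))\}$, $\{K_i(\Gr(n,2n)\times\Gr(n,2n))\}$, $\{K_i([-n,n]\times\Gr(n,2n))\}$, and the remaining ones listed.

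The computation is otherwise mechanical; the two points deserving care are (a) confirming that each space genuinely equals the colimit of its finite cellular truncations along closed cofibrations, so that the colimit of the suspension spectra is the homotopy colimit --- the only non-formal aspect being the use of the $[-n,n]$-truncations for the spaces involving $\ZZ$; and (b) verifying surjectivity of the bonding maps for the towers built from $\Gr(n,2n)\times\Gr(n,2n)$ and from the $\Pro^1$-factors, which are not literally the ones in Lemma~\ref{LimOneVanish}. I expect (b) to be the main, though minor, obstacle: it requires re-running the cellular bookkeeping for the product schemes rather than citing Lemma~\ref{LimOneVanish} as a black box, using that a product of smooth cellular $S$-schemes is again smooth cellular and that the inclusions respect the cellular filtrations, so that adjoining a $\Pro^1$-factor or a point-factor changes nothing.
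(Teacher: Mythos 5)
Your proposal is correct and follows essentially the same route as the paper: the paper's proof is simply ``This follows from Lemma~\ref{LimOneVanish},'' which, unpacked, is exactly your Milnor $\varprojlim^1$-sequence argument with the $\varprojlim^1$-term killed by surjectivity of the bonding maps. You have merely spelled out the intermediate steps (realizing each space as a colimit along closed cofibrations, invoking Lemma~\ref{SmashAndHocolim}) that the paper leaves implicit, and you correctly flag that a couple of the towers are not literally listed in Lemma~\ref{LimOneVanish} but are handled by the identical surjectivity argument.
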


\begin{proof}
  This follows from Lemma~\ref{LimOneVanish}.
\end{proof}

\begin{lemma}
\label{KofGrsmashGr}
The canonical maps
\begin{equation*}
K^{}_i(\mathrm{Gr} \wedge \mathrm{Gr}) \to {\varprojlim}K^{}_i\bigl(\mathrm{Gr}(n,2n) \wedge \mathrm{Gr}(n,2n)\bigr)
\end{equation*}
\begin{equation*}
K^{}_i(\mathrm{Gr} \wedge \mathrm{Gr} \wedge \Pro^1) \to {\varprojlim}K^{}_i\bigl(\mathrm{Gr}(n,2n) \wedge \mathrm{Gr}(n,2n) \wedge \Pro^1 \bigr)
\end{equation*}
are isomorphisms.
A similar statement holds for the pointed motivic spaces
$(\mathbb{Z} \times \mathrm{Gr}) \wedge (\mathbb{Z} \times \mathrm{Gr})$,
$(\mathbb{Z} \times \mathrm{Gr}) \wedge (\mathbb{Z} \times \mathrm{Gr}) \wedge \Pro^1$
and
$(\mathbb{Z} \times \mathrm{Gr}) \wedge \Pro^1 \wedge (\mathbb{Z} \times \mathrm{Gr}) \wedge \Pro^1$.
\end{lemma}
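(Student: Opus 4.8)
The plan is to deduce both smash-product statements, together with the variants involving $\ZZ\times\Gr$, from the corresponding \emph{product} statements of Lemma~\ref{KTTofGr}, by combining the splitting of Example~\ref{example:smash-product} with the Milnor $\varprojlim^1$-sequence. Write $G_n=\Gr(n,2n)$, so that each pointed motivic space $B$ occurring in the statement is the colimit of its finite stages $B_n$: for instance $B=\Gr\wedge\Gr$ has $B_n=G_n\wedge G_n$, while $B=(\ZZ\times\Gr)\wedge\Pro^1\wedge(\ZZ\times\Gr)\wedge\Pro^1$ has $B_n=([-n,n]\times G_n)\wedge\Pro^1\wedge([-n,n]\times G_n)\wedge\Pro^1$, the $\Pro^1$-factors staying fixed. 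Since $\Sigma^\infty_{\Pro^1}$ and $-\wedge-$ commute with filtered colimits and these colimits compute homotopy colimits, the exact sequence of Lemma~\ref{SmashAndHocolim} (that is, Lemma~\ref{lem:filtered-colimit}), applied with $F=\mathrm{BGL}$ in bidegree $(-i,0)$ and the identification $K_p\iso\mathrm{BGL}^{-p,0}$ of Corollary~\ref{AdjunctionIso}, yields for every $i$ a short exact sequence
\begin{equation*}
0\to{\varprojlim}^{1}K_{i+1}(B_n)\to K_i(B)\to\varprojlim K_i(B_n)\to 0 .
\end{equation*}
So the canonical map of the Lemma is an isomorphism provided ${\varprojlim}^{1}K_j(B_n)=0$ for all $j$.

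To obtain this vanishing I would compare the smash towers with product towers. By Example~\ref{example:smash-product} there is a natural splitting $\Sigma^\infty_{\Pro^1}(A\times A')\simeq\Sigma^\infty_{\Pro^1}(A\wedge A')\vee\Sigma^\infty_{\Pro^1}A\vee\Sigma^\infty_{\Pro^1}A'$ in $\SH(S)$; iterating it gives, for pointed motivic spaces $A_1,\dots,A_k$, a natural wedge decomposition of $\Sigma^\infty_{\Pro^1}(A_1\times\cdots\times A_k)$ having $\Sigma^\infty_{\Pro^1}(A_1\wedge\cdots\wedge A_k)$ as a direct summand. Applying $\mathrm{BGL}^{-j,0}(-)$ and Corollary~\ref{AdjunctionIso}, $K_j(A_1\wedge\cdots\wedge A_k)$ is a natural direct summand of $K_j(A_1\times\cdots\times A_k)$ (the two-variable case being Lemma~\ref{KofXsmashY}). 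Letting the $A_i$ run over $G_n$ or $\ZZ\times G_n$, naturality makes this compatible with the bonding maps, so ${\varprojlim}^{1}K_j(B_n)$ is a direct summand of ${\varprojlim}^{1}K_j(B_n^{\times})$ for the associated product tower $B_n^{\times}$. It thus suffices to prove ${\varprojlim}^{1}K_j=0$ for the product towers $\{G_n^{\times 2}\}$, $\{G_n^{\times 2}\times\Pro^1\}$, $\{([-n,n]\times G_n)^{\times 2}\}$, $\{([-n,n]\times G_n)^{\times 2}\times\Pro^1\}$ and $\{([-n,n]\times G_n)\times\Pro^1\times([-n,n]\times G_n)\times\Pro^1\}$.

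For these the vanishing is obtained exactly as in Lemma~\ref{LimOneVanish}: each $B_n^{\times}$ is a smooth $S$-cellular scheme (a product of Grassmannians, copies of $\Pro^1$ and finite discrete schemes), so by Lemma~\ref{KofCellularVar} one has $K_j(B_n^{\times})\iso K_j(S)\otimes_{K_0(S)}K_0(B_n^{\times})$ and it is enough to check that the bonding maps on $K_0$ are surjective. This holds because $G_n\hra G_{n+1}$ pulls the tautological bundle back to the tautological bundle up to a trivial summand, and the former generates $K_0(G_n)$ as a $K_0(S)$-algebra; surjectivity on $K_0$ then persists under products with the fixed cellular factors. (For the towers literally listed in Lemma~\ref{KTTofGr} one may instead read the vanishing off the isomorphism stated there via the same exact sequence of Lemma~\ref{SmashAndHocolim}.) This completes the argument.

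The one step that is not mere bookkeeping is the naturality of the smash-versus-product splitting of Example~\ref{example:smash-product}: one must be able to choose the retraction $\Sigma^\infty_{\Pro^1}(A\times A')\to\Sigma^\infty_{\Pro^1}(A\wedge A')$ functorially in $(A,A')$ so that it commutes with the inclusions $G_n\hra G_{n+1}$ and hence passes to $\varprojlim$ and $\varprojlim^1$; this is guaranteed by the explicit join construction used to build the splitting (equivalently, by the naturality of the exact sequence in Lemma~\ref{KofXsmashY}). Granting this, the proof is the assembly of Lemmas~\ref{KofXsmashY}, \ref{KTTofGr}, \ref{LimOneVanish}, \ref{KofCellularVar} and the Milnor sequence above.
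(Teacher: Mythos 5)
Your proof is correct and follows the same route as the paper: reduce the smash-tower statement to the product-tower statement of Lemma~\ref{KTTofGr} by means of the natural smash/product splitting underlying Lemma~\ref{KofXsmashY}, and then invoke the $\varprojlim^1$-vanishing from Lemma~\ref{LimOneVanish} (extended to the extra $\Pro^1$-factors via Lemma~\ref{KofCellularVar} and surjectivity of the bonding maps). The paper's one-line proof ``It follows immediately from Lemma~\ref{KTTofGr} and Lemma~\ref{KofXsmashY}'' compresses exactly this argument; your version merely makes the naturality of the splitting and the Milnor sequence explicit.
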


\begin{proof}
It follows immediately from Lemma
\ref{KTTofGr}
and Lemma
\ref{KofXsmashY}.
\end{proof}

\end{document}